\numberwithin{equation}{section}
\numberwithin{equation}{section}
\newcommand{\p}{\partial}
\newcommand{\vphi}{\varphi}
\newcommand{\om}{\omega}
\newcommand{\tri}{\triangle}
\newcommand{\eps}{\epsilon}
\newcommand{\thmref}[1]{Theorem~\ref{#1}}
\newcommand{\lemref}[1]{Lemma~\ref{#1}}
\newcommand{\corref}[1]{Corollary~\ref{#1}}
\newcommand{\Om}{\Omega}
\def\p{\partial}
\def\b{\beta}
\DeclareMathOperator{\tr}{Tr}
\newtheorem{theorem}{Theorem}[section]
\newtheorem{thm}[theorem]{Theorem}
\newtheorem{rem}[theorem]{Remark}
\newtheorem{defn}[theorem]{Definition}
\newtheorem{lem}[theorem]{Lemma}
\newtheorem{prop}[theorem]{Proposition}
\newtheorem{cor}[theorem]{Corollary}
\newtheorem{ques}[theorem]{Question}
\def\Re{\mathop{\rm Re}\nolimits}
\def\tr{\mathop{\rm tr}\nolimits}
\let\ul=\underline
\let\vphi=\varphi
\def\a{\alpha}
\title[Singular scalar curvature equations]{Singular scalar curvature equations}
\author{Kai Zheng}  
\address{University of Chinese Academy of Sciences, Beijing 100049, P.R. China; Tongji University, Shanghai 200092, P.R. China}
\email{KaiZheng@amss.ac.cn}
\begin{document}
\maketitle

\begin{abstract} 
We develop estimates for the equation of scalar curvature of singular metrics with cone angle $\beta>1$, in a big and semi-positive cohomology class on a K\"ahler manifold.
We further derive the Laplacian estimate for the scalar curvature equation of degenerate K\"ahler metrics. We then have several applications of these estimates on the singular constant scalar curvature K\"ahler metrics, which also include the singular K\"ahler-Einstein metrics.
\end{abstract}

\section{Introduction}

Let $(X,\om)$ be a closed K\"ahler manifold. Calabi initialed the study of K\"ahler metrics with constant scalar curvature (cscK) in a given K\"ahler class $[\om]$ on $X$. Recently, Chen-Cheng \cite{MR4301557} solved the geodesic stability conjecture and the properness conjecture on existence of the cscK metrics, via establishing new a priori estimates for the 4th order cscK equation \cite{MR4301558}. However, the Yau-Tian-Donaldson conjecture expects the equivalence between the existence of cscK metrics and algebraic stabilities, which requires further understood of the possible singularity/degeneration developing from the cscK problem. In this article, we work on a priori estimates for the scalar curvature equation of both the singular and degenerate K\"ahler metrics.

Before we move further, we fix some notations. We are given an ample divisor $D$ in $X$, together with its associated line bundle $L_D$. We let $s$ be a holomorphic section of $L_D$ and $h$ be a Hermitian metric on $L_D$. 
We set $\Om$ to be a big and semi-positive cohomology class.

The \textbf{singular scalar curvature equation} in $\Om$ was introduced in \cite{arXiv:1803.09506} c.f. Definition \ref{Singular cscK eps defn}. Within the same article, we derive estimation of its solution when $0<\beta\leq1,$ which results in existence of cscK metrics on some normal varieties. If we further let $\Om$ to be K\"ahler, the singularities is called cone singularities and the solution of the singular scalar curvature equation is called \textbf{cscK cone metric}. The uniqueness, existence  and the necessary direction of the corresponding log version of the Yau-Tian-Donaldson conjecture were proven in \cite{MR3968885,MR4020314,arXiv:1803.09506,arXiv:2110.02518,MR4088684}. In this article, we continue studying the singular scalar curvature equation, when $$\beta>1.$$

The singular scalar curvature equation has the expression
\begin{equation}\label{Singular cscK}
\om_\vphi^n=(\om_{sr}+i\p\bar\p\vphi)^n=e^F \om_{\theta}^n,\quad
\tri_{\vphi} F=\tr_{\vphi}\theta-R,
\end{equation}
where, $\om_{sr}$ is a smooth representative in the class $\Om$ and $R$ is a real-valued function.

We introduce two parameters to approximate the singular equation \eqref{Singular cscK}, $t$ for the lose of positivity and $\eps$ for the degeneration, see Definition \ref{Singular cscK t eps defn}.
We say a solution $(\vphi_{t,\eps},F_{t,\eps})$ to the approximate equation \eqref{Singular cscK t eps} is \textbf{almost admissible}, if it has uniform weighted estimates independent of $t,\eps$, including the $L^\infty$-estimates of both $\vphi_{t,\eps}$ and $F_{t,\eps}$, the gradient estimate of $\vphi_{t,\eps}$ and the $W^{2,p}$-estimate. The precise definition is given in Definition \ref{a priori estimates approximation singular}.

\begin{thm}\label{intro almost admissible}
The solution to the approximate singular scalar curvature equation \eqref{Singular cscK t eps} with bounded entropy is almost admissible.
\end{thm}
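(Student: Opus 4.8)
The plan is to run the Chen--Cheng a priori estimate scheme for the fourth order scalar curvature equation \cite{MR4301558}, carried out throughout in the weighted H\"older and Sobolev spaces adapted to the cone singularity along $D$, and with every constant tracked so as to be uniform in the two regularization parameters $t$ (loss of positivity of the class) and $\eps$ (degeneration of the cone metric). Abbreviating $\vphi=\vphi_{t,\eps}$, $F=F_{t,\eps}$ and $\om_\vphi=\om_{sr}+t\om+\pbp\vphi$, the approximate equation \eqref{Singular cscK t eps} reads $\om_\vphi^n=e^F\om_{\theta,\eps}^n$ and $\tri_\vphi F=\tr_\vphi\theta_\eps-R$. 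The sole hypothesis is the uniform entropy bound $\int_X F\,e^F\,\om_{\theta,\eps}^n\le C$; from it I would extract, in order, the $L^\infty$ bound on $\vphi$, the $L^p$-integrability of $e^F$ for every $p<\infty$, the weighted two-sided $L^\infty$ bound on $F$, the gradient bound on $\vphi$, and the weighted $W^{2,p}$ bound --- which together are exactly the conditions of Definition~\ref{a priori estimates approximation singular}. Two points make this harder than the compact smooth case: the reference form $\om_{sr}+t\om$ is merely semi-positive in the limit $t\to0$, so the potential-theoretic inputs must be taken in the big setting and kept uniform along the degeneration; and the volume forms $\om_{\theta,\eps}^n$ and $\om^n$ differ by a factor $|s|_h^{2a'}$, while the form $\theta_\eps$ and the bisectional curvature of $\om_{\theta,\eps}$ are bounded only after multiplication by a suitable power $|s|_h^{2a}$ --- since $\b>1$ these powers vanish along $D$, so the estimates are genuinely weighted.

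\emph{Step 1 ($L^\infty$ bound on $\vphi$).} Feeding the entropy bound into the complex Monge--Amp\`ere equation $\om_\vphi^n=e^F\om_{\theta,\eps}^n$, and running the argument of \cite{MR4301558} (which needs only the $L\log L$ integrability furnished by the entropy, not a priori $L^p$ control of $e^F$) together with a uniform Moser--Trudinger type inequality for the degenerating family $\Om+t[\om]$, one gets $\|\vphi\|_{L^\infty}\le C$ uniformly in $t,\eps$; normalizing $\sup_X\vphi=0$ fixes the remaining constant. The subtlety is that the volume and capacity bounds underlying this must survive the degeneration $t\to0$, which holds because $\Om$ is big and semi-positive and $\b>1$ keeps the cone volume form comparable to $|s|_h^{2a'}\om^n$ with $a'>0$.

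\emph{Step 2 (the integral estimate $\int_X e^{pF}\om^n\le C(p)$ for all $p<\infty$, and $F\le C$).} This is the heart of the argument. Testing the second equation $\tri_\vphi F=\tr_\vphi\theta_\eps-R$ against $e^{pF}$ and against $\vphi\,e^{pF}$, and integrating by parts against $\om_\vphi^n$, I would combine a weighted lower bound $\theta_\eps\ge -C\,|s|_h^{-2a}\,\om_{\theta,\eps}$, the elementary inequality $\tr_\vphi\om_{\theta,\eps}\ge n\,e^{-F/n}$, the Monge--Amp\`ere equation and the $C^0$ bound of Step 1, to produce an iterative inequality for $p\mapsto\int_X e^{pF}\om^n$. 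Started from the $L\log L$ bound supplied by the entropy and fed the integrability $|s|_h^{-2a}\in L^{q_0}(\om^n)$ (finite precisely because $\b>1$), this bootstraps to $\int_X e^{pF}\om^n\le C(p)$ for every $p$, uniformly in $t,\eps$. The upper bound $F\le C$ then follows by a Moser iteration on the second equation, using the Sobolev inequality of the fixed metric $\om$ in place of that of $\om_\vphi$ --- the conversion between the two done by H\"older and the high-$L^p$ integrability of $e^F$, so that no control of the Sobolev constant of $\om_\vphi$ is needed. Absorbing the negative part of $\theta_\eps$ uniformly and keeping all weight factors under control simultaneously in $t$ and $\eps$ is where essentially all the difficulty lies, and I expect this step to be the main obstacle.

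\emph{Step 3 (Laplacian bound, $F\ge -C$, gradient and $W^{2,p}$).} With $F$ bounded above, apply the Chern--Lu inequality to the identity map $(X,\om_\vphi)\to(X,\om_{\theta,\eps})$: since the bisectional curvature of $\om_{\theta,\eps}$ is bounded above in the weighted sense and $e^F\le C$, a Moser iteration with an auxiliary term $-A\vphi$ (to absorb the positive curvature contribution, using the $C^0$ bound and $\om_{sr}+t\om\le C\om$) gives $\tr_{\om_\vphi}\om_{\theta,\eps}\le C\,|s|_h^{-2a}$. Combined with $\om_\vphi^n=e^F\om_{\theta,\eps}^n$ and $F\le C$, this forces $\om_\vphi$ and $\om_{\theta,\eps}$ to be equivalent up to a power of $|s|_h$, hence the lower bound $F\ge -C$ and the weighted Laplacian bound $\tr_\om\om_\vphi\le C\,|s|_h^{-2b}$; this weighted Hessian bound is precisely a weighted $W^{2,p}$ bound on $\vphi$ for all $p$ below the threshold at which $|s|_h^{-2bp}$ ceases to be $\om^n$-integrable. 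Finally the gradient bound $|\nabla\vphi|\le C$ comes from a Chen--Cheng type Moser iteration applied to $|\nabla\vphi|^2$, now that $\om_\vphi$ and $\om_{\theta,\eps}$ are comparable in the weighted sense. Collecting Steps 1--3 yields every uniform weighted estimate in Definition~\ref{a priori estimates approximation singular}, that is, almost admissibility.
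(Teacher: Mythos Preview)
Your overall architecture --- adapt Chen--Cheng with weights, tracking uniformity in $t,\eps$ --- is the paper's architecture, and Steps 1--2 are broadly on target (the paper's $L^\infty$ block, Section~\ref{Linfty estimate}, uses the auxiliary potential $\vphi_a$ solving \eqref{vphi a} together with the Aleksandrov maximum principle rather than Moser iteration, but the inputs and outputs are the same). The real problems are in Step~3.

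First, the Chern--Lu argument you propose does not go through. You would need a uniform upper bound on the holomorphic bisectional curvature of the reference metric $\om_{\theta_{t,\eps}}$, but the paper only establishes metric equivalence for $\om_{\theta_{t,\eps}}$ (\lemref{big nef approximate reference metric bound}), never curvature control; that would require third-order estimates on the auxiliary Monge--Amp\`ere potential, which are not available uniformly in $\eps$. If instead you run Chern--Lu (or Yau's inequality) targeting the fixed smooth metric $\tilde\om_t$, the differential inequality for $\tr_{\tilde\om_t}\om_\vphi$ picks up the term $\tilde\tri\tilde F$, which via the second equation of \eqref{Singular cscK t eps tilde} contains the genuinely fourth-order term $\tri F$. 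Handling this term is exactly where the paper needs the extra hypothesis $\|\p F\|_{L^\infty(\om_\vphi)}<\infty$ in Theorem~\ref{cscK Laplacian estimate}; it is \emph{not} a consequence of bounded entropy. So a pointwise Laplacian bound is not part of almost admissibility and should not be expected under the stated hypotheses.

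Second, and relatedly, you have conflated the integral $W^{2,p}$ estimate in Definition~\ref{a priori estimates approximation singular} with a pointwise Laplacian bound. The paper obtains the weighted $W^{2,p}$ bound (Section~\ref{W2p estimate}) by a direct integral argument: multiply the differential inequality of Proposition~\ref{W2p estimate: Differential inequality prop} by $u^{p-1}$, integrate, and use integration by parts on the $\tilde\tri\tilde F$ term with the specific choice $b_0=p$ (Proposition~\ref{W2p II inequality}) so that no control on $\p F$ is needed. Likewise the gradient estimate (Section~\ref{Gradient estimate of vphi}) is proved independently by its own weighted differential inequality and either maximum principle or iteration --- it is not deduced from a Laplacian bound. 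Your Step~3 should therefore be replaced by two separate integral arguments, in the order gradient first, then $W^{2,p}$, and neither passes through a pointwise Hessian estimate.

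Finally, you have overlooked the second weight. Because $\Om$ is only big and semi-positive, Kodaira's lemma introduces an effective divisor $E$ and the function $\phi_E=a_0\log h_E$; the bounds on $F$ are $\sup_X(F-\sigma_s\phi_E)\le C$ and $\inf_X(F-\sigma_i\phi_E)\ge -C$, and both the gradient and $W^{2,p}$ estimates carry a factor $|s_E|_{h_E}^{\sigma_E}$ in addition to the cone weight $S_\eps^{\sigma_D}$. Your proposal tracks only the $|s|_h$ weight along $D$.
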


The detailed statement of these estimates will be given in the $L^\infty$-estimates (\thmref{L infty estimates Singular equation}), the gradient estimate of $\vphi$ (\thmref{gradient estimate}), the $W^{2,p}$-estimate (\thmref{w2pestimates degenerate Singular equation}). 

The singular scalar curvature equation extends the singular Monge-Amp\`ere equation. Actually, the reference metric $\om_\theta$ in the singular scalar curvature equation \eqref{Singular cscK}, is defined to be a solution to the singular Monge-Amp\`ere equation
\begin{align}\label{Rictheta intro}
\om_\theta^n=(\om_{sr}+i\p\bar\p\vphi_{\theta})^n=|s|_h^{2\b-2}e^{h_{\theta}}\om^n.
\end{align} The precise definition is given in Definition \ref{Reference metric singular} in Section \ref{Reference metrics}. 
\begin{rem}
There is a large literature on the singular Monge-Amp\`ere equations \cite{MR2746347,MR944606,MR2647006,MR2505296,MR2233716,MR4319060} and the corresponding K\"ahler-Ricci flow \cite{MR3956691,MR2869020,MR3595934,MR4157554}, see also the references therein. However, more effort is required to tackle the new difficulties arising from deriving estimates for the scalar curvature equation \eqref{Singular cscK}, because of its 4th order nature. The tool we apply here is the integration method, instead of the maximum principle.
\end{rem}
\begin{rem}
We extend Chen-Cheng's estimates \cite{MR4301557} to the singular scalar curvature equation \eqref{Singular cscK}.
\end{rem}

\bigskip

If we focus on the degeneration, setting $\Om$ to be a K\"ahler class, then the singular equation \eqref{Singular cscK} is named the \textbf{degenerate scalar curvature equation}. The accurate definition is given in Definition \ref{Degenerate cscK defn} and the corresponding approximation is stated in Definition \ref{Degenerate cscK 1 approximation}.
The almost admissible estimates (Definition \ref{a priori estimates approximation}) for the approximate solution $\vphi_\eps$ are obtained from \thmref{intro almost admissible}, immediately.

In Section \ref{A priori estimates for approximate degenerate cscK equations}, we further show metric equivalence from the volume ratio bound, i.e. to prove the Laplacian estimate for the degenerate scalar curvature equation, \thmref{cscK Laplacian estimate}.

An almost admissible function is called \textbf{$\gamma$-admissible}, if it admits a weighted Laplacian estimate, which is defined in Definition \ref{admissible}.

\begin{thm}\label{admissible estimate}
The almost admissible solution to the approximate degenerate scalar curvature equation \eqref{Degenerate cscK approximation} with bounded $\|\p F_\eps\|_{\vphi_\eps}$ is
\begin{itemize}
\item
admissible, i.e. $\tr_\om\om_{\vphi_\eps}\leq C$, when $\beta> \frac{n+1}{2}$.
\item
$\gamma$-admissible for any $\gamma>0$, when $1<\beta\leq\frac{n+1}{2}$.
\end{itemize}
 \end{thm}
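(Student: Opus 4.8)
The plan is to establish, uniformly in the degeneration parameter $\eps$, a weighted second-order bound
\[
u_\gamma:=(\eps^2+|s|_h^2)^{\gamma}\,\tr_\om\om_{\vphi_\eps}\leq C ,
\]
with respect to a fixed smooth K\"ahler metric $\om$ (in the degenerate case comparable to $\om_{sr}$), with $\gamma=0$ in the range $\beta>\frac{n+1}{2}$ and with arbitrary $\gamma>0$ in the range $1<\beta\leq\frac{n+1}{2}$; these are the two cases of the theorem, and we then let $\eps\to0$. The whole computation is done on the smooth approximate metrics, and only the bounds need be $\eps$-uniform. Writing $\om_{\vphi_\eps}^n=e^{f_\eps}\om^n$ we have $f_\eps=F_\eps+(\beta-1)\log(\eps^2+|s|_h^2)+h_\theta$; by almost admissibility $F_\eps$ and $\vphi_\eps$ are uniformly bounded, and since $\beta>1$ the term $(\beta-1)\log(\eps^2+|s|_h^2)$ is uniformly bounded \emph{above}, so the measure $\om_{\vphi_\eps}^n\simeq(\eps^2+|s|_h^2)^{\beta-1}\om^n$ vanishes along $D$ to order $\beta-1$. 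It is exactly this vanishing, balanced against the dimension $n$, that produces the threshold.

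\textbf{Step 1 (a differential inequality).} Put $\Psi:=\tr_\om\om_{\vphi_\eps}$. The Aubin--Yau/Siu inequality gives $\tri_{\vphi_\eps}\log\Psi\geq\Psi^{-1}\tri_\om f_\eps-B\,\tr_{\vphi_\eps}\om$, with $-B$ a uniform lower bound for the bisectional curvature of $\om$. Decomposing $\tri_\om f_\eps=\tri_\om F_\eps+(\beta-1)\tri_\om\log(\eps^2+|s|_h^2)+\tri_\om h_\theta$ and using $\pbp\log(\eps^2+|s|_h^2)\geq-C\om$ uniformly in $\eps$ --- its large positive part near $D$ only helps, as $\beta>1$ --- the middle term is bounded below by a uniform constant, so $\tri_{\vphi_\eps}\log\Psi\geq\Psi^{-1}\tri_\om F_\eps-B\,\tr_{\vphi_\eps}\om-C$. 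Inserting the weight costs only $\gamma\,\tri_{\vphi_\eps}\log(\eps^2+|s|_h^2)\geq-C\gamma\,\tr_{\vphi_\eps}\om$, and adding $-A\vphi_\eps$, for which $\tri_{\vphi_\eps}\vphi_\eps=n-\tr_{\vphi_\eps}\om_{sr}\leq n-c\,\tr_{\vphi_\eps}\om$, absorbs every $\tr_{\vphi_\eps}\om$ term once $A$ is large. With $G:=\log u_\gamma-A\vphi_\eps$ this yields
\[
\tri_{\vphi_\eps}G\geq\frac{\tri_\om F_\eps}{\Psi}-C .
\]

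\textbf{Step 2 (integration instead of the maximum principle).} The right-hand side is not controlled pointwise, but $\|\p F_\eps\|_{\vphi_\eps}$ is --- this is the only use of the extra hypothesis, and in particular $\int_X|\p F_\eps|_{\vphi_\eps}^2\,\om_{\vphi_\eps}^n\leq C$. So, instead of a maximum point, we multiply the inequality of Step~1 by $\eta^2e^{pG}$ ($\eta$ a cut-off, $p\geq1$) and integrate against $\om_{\vphi_\eps}^n$. Since $\Psi^{-1}\tri_\om F_\eps\,\om_{\vphi_\eps}^n=n\,\Psi^{-1}e^{f_\eps}\,\pbp F_\eps\wedge\om^{n-1}$, integration by parts moves $\dbar$ onto $\eta^2e^{pG}e^{f_\eps}\Psi^{-1}$, producing pairings of $\p F_\eps$, relative to $\om$, with $p\,\p G$, with $\p f_\eps$, and with $-\p\Psi/\Psi$. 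Because $|\p F_\eps|_\om^2\leq\Psi\,|\p F_\eps|_{\vphi_\eps}^2$, one has $\int|\p F_\eps|_\om^2\,\Psi^{-1}\om_{\vphi_\eps}^n\leq\int|\p F_\eps|_{\vphi_\eps}^2\,\om_{\vphi_\eps}^n\leq C$; the first two pairings are then absorbed, with constants polynomial in $p$, into the good first-order term $\int\eta^2e^{pG}|\p G|_{\vphi_\eps}^2\,\om_{\vphi_\eps}^n$ generated by the left-hand side, while the third-order pairing $\p\Psi/\Psi$ is disposed of by Cauchy--Schwarz against that same term, as in Chen--Cheng \cite{MR4301557}. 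The resulting energy estimate for $e^{G/2}$, fed into a Sobolev inequality and iterated in $p$, bounds $\sup_XG$, hence $\sup_Xu_\gamma$; the starting integrability of $u_\gamma$ comes from the $W^{2,p}$-part of almost admissibility. The genuine work is here --- running the third-order absorption with the correct powers of $p$, and arranging all constants, above all the Sobolev constant for the \emph{degenerating} measure $\om_{\vphi_\eps}^n$, to be $\eps$-uniform.

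\textbf{Step 3 (the threshold $\frac{n+1}{2}$, and the role of $\gamma$).} The only obstruction to $\eps$-uniformity is that the Moser iteration must overcome the vanishing weight $(\eps^2+|s|_h^2)^{\beta-1}\simeq\om_{\vphi_\eps}^n/\om^n$ using only the Sobolev gain $\chi=\frac{n}{n-1}$, since the transverse integral $\int(\eps^2+|s|_h^2)^{\beta-1-k}\om^n$ is $\eps$-uniformly finite exactly when $k<\beta$. Counting these competing exponents through the iteration, the unweighted scheme ($\gamma=0$) closes precisely when $2\beta-2>n-1$, i.e. $\beta>\frac{n+1}{2}$, giving $\tr_\om\om_{\vphi_\eps}\leq C$; for $1<\beta\leq\frac{n+1}{2}$ it fails by a bounded deficit, which is restored by inserting any $\gamma>0$ into $u_\gamma$ (since $\chi>1$, any positive buffer suffices), yielding $\gamma$-admissibility for every $\gamma>0$.
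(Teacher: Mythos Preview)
Your overall framework --- a weighted differential inequality for $u_\gamma=S_\eps^\gamma e^{-A\vphi_\eps}\tr_\om\om_{\vphi_\eps}$, integration against $u_\gamma^{2p}\om_{\vphi_\eps}^n$, integration by parts on the $\tri_\om F_\eps$ term, then Moser iteration --- is exactly the paper's strategy. But the sketch hides the genuine difficulties, and two of your claims are not right as stated.

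First, you write that one must arrange ``the Sobolev constant for the \emph{degenerating} measure $\om_{\vphi_\eps}^n$'' to be $\eps$-uniform. There is no such control. The paper instead converts the gradient term to the fixed metric via $\om_{\vphi_\eps}^n\geq C\,S_\eps^{\beta-1}\om^n$ and uses the Sobolev inequality of $\om$; the price is the appearance of the weight $S_\eps^{\beta-1}$, and the iteration is run in a weighted measure $\tilde\mu=S_\eps^{k_\gamma\chi}\om^n$ with $k_\gamma=\gamma+\beta-1+\sigma$. The extra parameter $\sigma\geq0$ is not cosmetic: it is what makes the scheme close when $\frac{n+1}{2}<\beta\leq n$ with $\gamma=0$.

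Second, and more seriously, your Step~2 says the pairing of $\p F_\eps$ with $\p f_\eps$ is ``absorbed''. It is not: $|\p f_\eps|_\om\sim(\beta-1)S_\eps^{-1/2}$ near $D$, so this pairing, together with $|\p S_\eps^\gamma|\sim\gamma S_\eps^{\gamma-1/2}$, produces terms in the integral inequality carrying $S_\eps^{(\gamma-1)/2}$ --- a genuine loss of half a power of the weight (the paper's Remark~\ref{4th term}). A second half-power is lost when you pull the weight through the derivative before applying Sobolev (Remark~\ref{Laplacian estimate: lose weight}). The whole content of the proof is to recover these losses, and this is not an ``exponent count'': the paper needs (i) an \emph{inverse weighted inequality} that trades the bad terms against the good term $\int u^{2p}u^{n/(n-1)}S_\eps^{-(\gamma+\beta-1)/(n-1)}\om_{\vphi_\eps}^n$ via Young's inequality with carefully chosen conjugate exponents, and (ii) a \emph{weighted H\"older inequality} comparing the result to $\|\tilde u\|_{L^{2pa}(\tilde\mu)}$ for some $a<\chi$. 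Working out when all resulting exponents stay subcritical (i.e.\ $k'<1$) is Lemma~\ref{Laplacian estimate: criteria}: one finds that $\gamma=0$ suffices only when $\beta>\frac{n+1}{2}$ (with $\sigma>0$ needed for $\frac{n+1}{2}<\beta\leq n$), while for $1<\beta\leq\frac{n+1}{2}$ one is forced to take $\gamma>(1-\sigma)\frac{n-1}{n}$, hence any $\gamma>0$ works after letting $\sigma\uparrow\frac{1}{n+1}$. Your Step~3 asserts this threshold but does not derive it; the derivation is the substance of the theorem.
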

\begin{rem}
The Laplacian estimate holds for the smooth cscK metric $\beta=1$ in \cite[Theorem 1.7]{MR4301557}, the cscK cone metric when $0<\beta<1$ in \cite[Theorem 5.25]{arXiv:1803.09506} and see also \cite[Question 4.42]{arXiv:1803.09506} for further development. However, when $\beta>1$, the Laplacian estimate is quite different from the one when the cone angle is not larger than $1$. 
\end{rem}

For the degenerate scalar curvature equation, the Laplacian estimates is more involved. Our new approach is \textit{an integration method with weights}. The proof is divided into seven main steps. 

The first step is to transform the approximate degenerate scalar curvature equation \eqref{Degenerate cscK approximation} into an integral inequality with weights. The second step is to deal with the trouble term containing $\tri F$ by integration by parts. Unfortunately, this causes new difficulties, because of the loss of the weights, see Remark \ref{4th term}. 

The third step is to apply the Sobolev inequality to the integral inequality to conclude a rough iteration inequality. However, weights loss in this step again, see Remark \ref{Laplacian estimate: lose weight}. Both the fourth and fifth step are designed to overcome these difficulties. The fourth step aims to construct several inverse weighted inequalities, with the help of introducing two useful parameters $\sigma$ and $\gamma$. While, the fifth step is set for weighted inequalities, by carefully calculating the sub-critical exponent for weights. 

As last, with all these efforts, we arrive at an iteration inequality and proceed the iteration argument to conclude the Laplacian estimates.

\bigskip

After we establish the estimation, in Section \ref{Convergence and regularity}, we have two quick applications on the singular/degenerate cscK metrics, which were introduced in \cite{arXiv:1803.09506} as the $L^1$-limit of the approximate solutions, Definition \ref{Singular metric with prescribed scalar curvature}. 

One is to show regularities of the singular/degenerate cscK metrics. Precision statements are given in \thmref{existence singular} and \thmref{existence degenerate}. 
We see that that the potential function of the degenerate cscK metric is almost admissible and the convergence is smooth outside the divisor $D$. We also find that the volume form $\om^n_\vphi$ is shown to be prescribed with the given degenerate type along $D$ as
$
|s|_h^{2\b-2}\om^n.
$
The volume ratio bound could be further improved to be a global metric bound, under the assumption that the volume ratio of the approximate sequence has bounded gradient. Consequently, the degenerate cscK metric has weighted complex Hessian on the whole manifold $X$. In particular, when the cone angle is larger than half of $n+1$, the complex Hessian is globally bounded.


The other application is to revisit Yau's result on degenerate K\"ahler-Einstein (KE) metrics in the seminal work \cite{MR480350}. 
A special situation of the degenerate scalar curvature equation is the degenerate K\"ahler-Einstein equation
\begin{align}\label{critical pt Ding intro}
\om^n_\vphi=|s|_h^{2\b-2} e^{h_\om-\lambda\vphi +c}  \om^n.
\end{align}
In Section \ref{Reference metric and prescribed Ricci curvature problem}, the variational structure of the degenerate K\"ahler-Einstein metrics will be presented. We will show that they are all critical solutions of a logarithm variant of Ding's functional. Ding functional was introduced in \cite{MR967024}.  

A direct corollary from \thmref{intro almost admissible} and \thmref{admissible estimate} is
\begin{cor}[Yau \cite{MR480350}]\label{admissible estimate KE}
For any $\beta\geq1$, the solution $\vphi_\eps$ to the degenerate K\"ahler-Einstein equation \eqref{critical pt Ding intro} has $L^\infty$-estimate and bounded Hessian.
 \end{cor}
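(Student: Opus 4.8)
The plan is to derive the corollary by checking that the degenerate Kähler--Einstein equation \eqref{critical pt Ding intro} is a special instance of the degenerate scalar curvature equation \eqref{Degenerate cscK approximation} to which both \thmref{intro almost admissible} and \thmref{admissible estimate} apply, so that the only real work is verifying their hypotheses: \emph{bounded entropy} (for the almost admissible estimates) and \emph{bounded $\|\p F_\eps\|_{\vphi_\eps}$} (for the admissible/Hessian estimate). First I would observe that for the KE equation the scalar curvature function $R$ is not prescribed freely but is constant, $R\equiv n\lambda$ (equivalently, taking $\Ricci(\om_\vphi)=\lambda\,\om_\vphi$ off $D$), so the second equation in \eqref{Singular cscK} degenerates: in fact $F_\eps$ is, up to the fixed smooth data $h_\om$, the potential $-\lambda\vphi_\eps+c$ modulo the degeneracy factor $|s|_h^{2\b-2}$. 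Concretely, from \eqref{critical pt Ding intro} one reads off $e^{F_\eps}\om_\theta^n = |s|_h^{2\b-2}e^{h_\om-\lambda\vphi_\eps+c}\om^n$, and since by Definition \ref{Reference metric singular} the reference metric satisfies $\om_\theta^n=|s|_h^{2\b-2}e^{h_\theta}\om^n$, we get $F_\eps = h_\om - h_\theta - \lambda\vphi_\eps + c$, a clean linear relation between $F_\eps$ and $\vphi_\eps$ up to bounded smooth terms.

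The entropy hypothesis of \thmref{intro almost admissible} is the assertion that $\int_X F_\eps\,\om_{\vphi_\eps}^n$ (equivalently $\int_X e^{F_\eps}F_\eps\,\om_\theta^n$) is bounded uniformly in $\eps$. Using the identity $F_\eps = -\lambda\vphi_\eps + (\text{bounded})$, this reduces to a uniform bound on $\int_X \vphi_\eps\,\om_{\vphi_\eps}^n$, i.e. on an energy-type quantity. Here I would invoke the standard pluripotential machinery for the degenerate complex Monge--Ampère equation: since $\Om$ is big and semi-positive and the right-hand side $|s|_h^{2\b-2}e^{h_\om+c}\om^n$ has bounded (indeed $L^p$ for all $p$, as $\b>1$ makes $2\b-2>0$) density, the $L^\infty$-estimate for $\vphi_\eps$ follows from the Kołodziej-type estimate already available in this setting (or directly from \thmref{L infty estimates Singular equation}); the $L^\infty$ bound on $\vphi_\eps$ then gives the $L^\infty$ bound on $F_\eps$ by the linear relation, and the entropy bound is then immediate since $F_\eps$ is bounded and $\om_{\vphi_\eps}^n$ has fixed total mass. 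Thus \thmref{intro almost admissible} applies and $\vphi_\eps$ is almost admissible, giving in particular the claimed $L^\infty$-estimate for every $\b\geq1$.

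For the Hessian bound I would feed this into \thmref{admissible estimate}, whose extra hypothesis is a uniform bound on $\|\p F_\eps\|_{\vphi_\eps}^2 = |\p F_\eps|_{\om_{\vphi_\eps}}^2$. Differentiating the linear relation gives $\p F_\eps = -\lambda\,\p\vphi_\eps + \p(h_\om-h_\theta)$, so $\|\p F_\eps\|_{\vphi_\eps} \leq |\lambda|\,\|\p\vphi_\eps\|_{\vphi_\eps} + \|\p(h_\om-h_\theta)\|_{\vphi_\eps}$; the second term is controlled because $h_\om-h_\theta$ is fixed and $\om_{\vphi_\eps}\geq$ a definite multiple of a fixed metric away from $D$ (and near $D$ one uses the explicit degenerate structure), while for the first term one has $\|\p\vphi_\eps\|_{\vphi_\eps}^2 = \tr_{\om_{\vphi_\eps}}(i\p\vphi_\eps\wedge\bar\p\vphi_\eps)$, which is exactly the kind of quantity bounded by the gradient estimate \thmref{gradient estimate} that is part of "almost admissible". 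Once $\|\p F_\eps\|_{\vphi_\eps}$ is uniformly bounded, \thmref{admissible estimate} yields $\tr_\om\om_{\vphi_\eps}\leq C$ when $\b>\frac{n+1}{2}$ and $\gamma$-admissibility for all $\gamma>0$ when $1<\b\leq\frac{n+1}{2}$; in either case this is precisely the asserted bounded Hessian, and the case $\b=1$ is the original smooth cscK Laplacian estimate \cite[Theorem 1.7]{MR4301557}. The main obstacle I anticipate is not any single estimate but the bookkeeping of the degeneracy weight $|s|_h^{2\b-2}$ in the gradient term near $D$: one must be careful that the "almost admissible" weighted gradient bound genuinely controls $\|\p F_\eps\|_{\vphi_\eps}$ there, i.e. that the weights in \thmref{gradient estimate} match those needed to absorb $\p\vphi_\eps$ in the $\om_{\vphi_\eps}$-norm; this is where invoking the precise form of the weighted estimates from the earlier sections, rather than a naive comparison, will be essential.
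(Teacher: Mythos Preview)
Your strategy of quoting \thmref{intro almost admissible} and \thmref{admissible estimate} as black boxes has a genuine gap, and it is precisely the one you flag in your last paragraph, but more serious than you suggest.

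First, even granting all hypotheses, \thmref{admissible estimate} does \emph{not} give ``bounded Hessian'' when $1<\beta\le \tfrac{n+1}{2}$: it gives only $\gamma$-admissibility, i.e.\ $\tr_\om\om_{\vphi_\eps}\le C\,S_\eps^{-\gamma}$ for every $\gamma>0$, which blows up along $D$. So your sentence ``in either case this is precisely the asserted bounded Hessian'' is wrong for that range of $\beta$. The corollary claims an \emph{unweighted} bound $\tr_\om\om_{\vphi_\eps}\le C$ for all $\beta\ge 1$, and this cannot be extracted from \thmref{admissible estimate} as stated.

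Second, the verification of $\|\p F_\eps\|_{\vphi_\eps}\le C$ is circular. With $F_\eps=-\lambda\vphi_\eps$ you need $|\p\vphi_\eps|^2_{\om_{\vphi_\eps}}$ bounded; the almost-admissible gradient estimate controls $S_\eps^{\sigma_D^1}|\p\vphi_\eps|^2_\om$, not the $\om_{\vphi_\eps}$-norm, and passing between them requires a bound on $\tr_{\om_{\vphi_\eps}}\om$ --- which is equivalent to the Laplacian estimate you are trying to prove. The term $\|\p(h_\om-h_\theta)\|_{\vphi_\eps}$ has the same issue.

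The paper's route avoids both problems by \emph{not} treating \thmref{admissible estimate} as a black box. It reopens the integration scheme of Section~\ref{A priori estimates for approximate degenerate cscK equations} and uses the KE-specific identity
\[
\tri F_\eps=-\lambda\,\tri\vphi_\eps=-\lambda\,(\tr_\om\om_{\vphi_\eps}-n),
\]
to compute the nonlinear term $\mathcal N$ in Proposition~\ref{Laplacian estimate: integration cor} \emph{directly}, without the integration by parts of Step~2. This eliminates the terms $III$ and the bad part of $IV$ responsible for the weight loss (Remark~\ref{4th term}), so the resulting $RHS_1$ contains only $\int_X \tilde u^{2p+1}\om_\vphi^n$. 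The criteria on the parameters $(\sigma,\gamma)$ in Step~5 then collapse to the single condition $\gamma=0$, $1>\sigma>\tfrac{n-\beta}{n-1}$, which is satisfiable for every $\beta\ge 1$. That is why the KE case yields an honest unweighted Laplacian bound for all $\beta\ge 1$, whereas the general scalar-curvature equation needs $\beta>\tfrac{n+1}{2}$. See \thmref{KE Laplacian} and the discussion in Section~\ref{Log Kahler Einstein metric}.
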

 \begin{rem}
The accurate statement is given in \thmref{KE Laplacian}.
\end{rem}
For the degenerate Monge-Amp\`ere problem, the difficult fourth order nonlinear term $\tri F$ in the scalar curvature problem becomes the second order term $\tri \vphi$, which could be controlled directly, comparing Proposition \ref{Laplacian estimate: integration inequality pro} in Section \ref{Step 2} and \eqref{KE tri F} in Section \ref{Log Kahler Einstein metric}. 

\begin{rem}
As a result, Yau's theorem for degenerate K\"ahler-Einstein metrics is recovered, that is when $\lambda\leq 0$, the degenerate Monge-Amp\`ere equation \eqref{critical pt Ding intro} admits a solution with bounded complex Hessian and being smooth outside the divisor $D$.
But, the method we apply to derive the Laplacian estimates is the integration method, which is different from Yau's maximum principle. 
\end{rem}

As a further matter, the integration method with weights is applied to derive the gradient estimates in Section \ref{Gradient estimate of vphi} as well.

\begin{rem}Our results also extend the gradient and the Laplacian estimate for the non-degenerate Monge-Amp\`ere equation, which were obtained by the integral method \cite{MR2993005}.
\end{rem}


At last, as a continuation of Question 1.14 in \cite{MR4020314} and Question 1.9 in \cite{arXiv:1803.09506}, we could propose the following uniqueness question.
\begin{ques}
whether the $\gamma$-admissible degenerate cscK metric constructed above is unique (up to automorphisms)?
\end{ques}
It is worth to compare this question to its counterpart for the cone metrics \cite{MR3496771,MR3405866,MR4020314}.

\begin{rem}
On Riemannian surfaces, there are intensive study on constant curvature metrics with large cone angles, see \cite{MR3990195,MR3340376} and references therein.
\end{rem}

\section{Degenerate scalar curvature problems}\label{cscK}
We denote the curvature form of $h$ to be
$
\Theta_D=-i\p\bar\p\log h,
$
which represents the class $C_1(L_D)$.
The Poincar\'e-Lelong equation gives us that
\begin{align}\label{PL}
2\pi[D]=i\p\bar\p\log|s|_h^2+\Theta_D=i\p\bar\p\log|s|^2.
\end{align}

In this section, we assume that the given K\"ahler class $\Om$ is proportional to $$C_1(X,D):=C_1(X)-(1-\beta)C_1(L_D).$$ W let $\lambda$ be a constant such that
\begin{align}\label{cohomology condition}
C_1(X,D)=\lambda \Om .
\end{align}
This cohomology condition \eqref{cohomology condition} implies the existence of a smooth function $h_\om$ such that
\begin{align}\label{h0}
Ric(\om)=\lambda\om+(1-\beta)\Theta_D+i\p\bar\p h_\om.
\end{align}



\subsection{Critical points of the log Ding functional}\label{Critical points of the log Ding functional}
The log Ding functional is a modification of the Ding functional \cite{MR967024} by adding the weight $|s|_h^{2\b-2}$. 
\begin{defn}For all $\vphi$ in $\mathcal H_\Om$, we define the \textbf{log Ding functional} to be
\begin{align*}
F_\beta(\vphi)=-D_{\om}(\vphi)-\frac{1}{\lambda}\log(\frac{1}{V}\int_X e^{h_\om-\lambda\vphi}|s|_h^{2\b-2}\om^n).
\end{align*}
Here
$
D_{\om}(\vphi)
:=\frac{1}{V}\frac{1}{n+1}\sum_{j=0}^{n}\int_{X}\vphi\om^{j}\wedge\om_{\varphi}^{n-j} .
$
\end{defn}

We compute the critical points of the log Ding functional.
\begin{prop}
The 1st variation of the log Ding functional at $\vphi$ is
\begin{align*}
\delta F_\beta(u)&=-\frac{1}{V}\int_X u \om_\vphi^n+\left(\int_Xe^{h_\om-\lambda\vphi}|s|_h^{2\b-2}\om^n\right)^{-1}  \int_X u e^{h_\om-\lambda\vphi}|s|_h^{2\b-2}\om^n.
\end{align*}
The critical point of the log Ding functional satisfies the following equation
\begin{align}\label{critical pt Ding}
\om^n_\vphi
&=|s|_h^{2\b-2} \frac{V\cdot e^{h_\om-\lambda\vphi }  \om^n}{\int_Xe^{h_\om-\lambda\vphi} |s|_h^{2\b-2} \om^n}.
\end{align}
\end{prop}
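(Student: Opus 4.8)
The plan is to differentiate the two summands of $F_\beta$ separately along the affine path $\vphi_s=\vphi+su$, $s\in(-\delta,\delta)$, which stays in $\cH_\Om$ for small $|s|$ because $\Om$ is K\"ahler and positivity of $\om+i\p\bar\p\vphi_s$ is an open condition, and then to add the two derivatives and set the result to zero.

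First I would treat the Monge--Amp\`ere energy term $D_\om$, which is the Aubin--Mabuchi energy; the claim is $\frac{d}{ds}\big|_{s=0}D_\om(\vphi_s)=\frac1V\int_X u\,\om_\vphi^n$. To see this I differentiate $D_\om(\vphi_s)$ term by term, using $\frac{d}{ds}\big|_{s=0}\om_{\vphi_s}^{n-j}=(n-j)\,i\p\bar\p u\wedge\om_\vphi^{n-j-1}$. The $j$-th term contributes $a_j+(n-j)\int_X\vphi\,\om^j\wedge i\p\bar\p u\wedge\om_\vphi^{n-j-1}$, where $a_j:=\int_X u\,\om^j\wedge\om_\vphi^{n-j}$; since $\om^j\wedge\om_\vphi^{n-j-1}$ is $d$-closed, integration by parts moves $i\p\bar\p$ from $u$ onto $\vphi$, and $i\p\bar\p\vphi=\om_\vphi-\om$ turns the second piece into $(n-j)(a_j-a_{j+1})$. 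Hence the full derivative before the normalization equals $\sum_{j=0}^n a_j+\sum_{j=0}^{n-1}(n-j)(a_j-a_{j+1})$, which telescopes to $(n+1)a_0$; dividing by $V(n+1)$ gives $\frac1V\int_X u\,\om_\vphi^n$.

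Next I would differentiate the log term. Put $G(\vphi):=\int_X e^{h_\om-\lambda\vphi}|s|_h^{2\b-2}\om^n$. Since $\beta>1$ the weight $|s|_h^{2\b-2}$ is bounded and continuous on $X$ and $h_\om$ is smooth, so $G(\vphi)\in(0,\infty)$ and one may differentiate under the integral sign: $\frac{d}{ds}\big|_{s=0}G(\vphi_s)=-\lambda\int_X u\,e^{h_\om-\lambda\vphi}|s|_h^{2\b-2}\om^n$. Therefore $\frac{d}{ds}\big|_{s=0}\bigl(-\tfrac1\lambda\log\tfrac{G(\vphi_s)}{V}\bigr)=G(\vphi)^{-1}\int_X u\,e^{h_\om-\lambda\vphi}|s|_h^{2\b-2}\om^n$, and adding $-\frac{d}{ds}\big|_{s=0}D_\om(\vphi_s)$ yields exactly the stated formula for $\delta F_\beta(u)$.

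Finally, a critical point satisfies $\delta F_\beta(u)=0$ for every $u\in C^\infty(X)$. Both $\frac1V\om_\vphi^n$ and $G(\vphi)^{-1}e^{h_\om-\lambda\vphi}|s|_h^{2\b-2}\om^n$ are finite positive Borel measures of total mass $1$, so the fundamental lemma of the calculus of variations forces them to coincide, that is $\om_\vphi^n=|s|_h^{2\b-2}\,V\,e^{h_\om-\lambda\vphi}\om^n\big/\int_X e^{h_\om-\lambda\vphi}|s|_h^{2\b-2}\om^n$, which is \eqref{critical pt Ding}. The only step that is not purely formal is the integration-by-parts/telescoping identity for $\delta D_\om$; it is classical but relies on the $d$-closedness of $\om^j\wedge\om_\vphi^{n-j-1}$ and on careful bookkeeping of the telescoping sum. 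The cone weight causes no trouble here precisely because $2\beta-2>0$, so it is bounded and continuous; the genuinely delicate analysis in the paper concerns not this variational identity but the a priori estimates for solutions of \eqref{critical pt Ding}.
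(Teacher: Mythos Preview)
Your proof is correct and is exactly the standard direct computation the paper has in mind; the paper itself omits the proof of this proposition, treating both the identity $\delta D_\om(u)=\frac1V\int_X u\,\om_\vphi^n$ and the differentiation of the log term as routine. Your telescoping verification of the Aubin--Mabuchi derivative and the subsequent identification of the critical-point equation via the fundamental lemma are accurate and complete.
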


\begin{defn}[Log K\"ahler-Einstein metric]
We call the solution of \eqref{critical pt Ding}, \textbf{log K\"ahler-Einstein metric}. 
When $0<\beta<1$, it is called K\"ahler-Einstein cone metric, c.f. \cite{MR3761174,MR3911741}.
\end{defn}
Further computation shows that the log K\"ahler-Einstein metric satisfies the following identity
\begin{align}\label{Rictheta}
Ric(\om_\vphi)=\lambda\om_\vphi+2\pi (1-\b)[D].
\end{align}

\begin{prop}
The 2nd variation of the log Ding functional at $\vphi$ is
\begin{align*}
&\delta^2 F_\beta(u,v)=\frac{1}{V}\int_X (\p u,\p v)_\vphi\om_\vphi^n\\
&+\lambda\left\{\frac{\int_X u e^{h_\om-\lambda\vphi}|s|_h^{2\b-2}\om^n \int_Xv e^{h_\om-\lambda\vphi}|s|_h^{2\b-2}\om^n}{(\int_Xe^{h_\om-\lambda\vphi}|s|_h^{2\b-2}\om^n)^{2}}
-\frac{\int_X uv e^{h_\om-\lambda\vphi}|s|_h^{2\b-2}\om^n}{\int_Xe^{h_\om-\lambda\vphi}|s|_h^{2\b-2}\om^n}\right\}.
\end{align*}
\end{prop}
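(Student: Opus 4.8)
The plan is simply to differentiate the first variation formula of the preceding proposition once more. I would fix $\vphi\in\cH_\Om$ and a smooth path $(\vphi_t)$ with $\vphi_0=\vphi$ and $\dot\vphi_0=v$, and compute
\[
\delta^2 F_\beta(u,v)=\frac{d}{dt}\Big|_{t=0}\,\delta F_\beta(u)\big|_{\vphi=\vphi_t},
\]
where on the right $\delta F_\beta(u)$ is the expression from the previous proposition evaluated at $\vphi_t$. Its two summands --- the Monge--Amp\`ere (Aubin--Mabuchi) term and the exponential term --- can be differentiated separately, and since the resulting expression will be manifestly symmetric in $u$ and $v$, no issue of path dependence arises.

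For the term $-\frac1V\int_X u\,\om_{\vphi_t}^n$ I would use $\frac{d}{dt}\om_{\vphi_t}^n=n\,i\p\bar\p v\wedge\om_{\vphi_t}^{n-1}=(\tri_{\vphi_t}v)\,\om_{\vphi_t}^n$ and then integrate by parts on the closed manifold $X$:
\[
-\frac1V\frac{d}{dt}\Big|_{t=0}\int_X u\,\om_{\vphi_t}^n
=-\frac1V\int_X u\,(\tri_\vphi v)\,\om_\vphi^n
=\frac1V\int_X(\p u,\p v)_\vphi\,\om_\vphi^n ,
\]
which is the first line of the claimed formula.

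For the exponential term, I would set $A(t)=\int_X e^{h_\om-\lambda\vphi_t}|s|_h^{2\beta-2}\om^n$ and $B(t)=\int_X u\,e^{h_\om-\lambda\vphi_t}|s|_h^{2\beta-2}\om^n$, so that this summand equals $B(t)/A(t)$. Differentiating under the integral sign with $\frac{d}{dt}e^{h_\om-\lambda\vphi_t}=-\lambda v\,e^{h_\om-\lambda\vphi_t}$ gives
\[
A'(0)=-\lambda\int_X v\,e^{h_\om-\lambda\vphi}|s|_h^{2\beta-2}\om^n,\qquad
B'(0)=-\lambda\int_X uv\,e^{h_\om-\lambda\vphi}|s|_h^{2\beta-2}\om^n,
\]
and the quotient rule $\frac{d}{dt}(B/A)=(B'A-BA')/A^2$ produces exactly the $\lambda\{\cdots\}$ bracket in the statement. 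Summing the two contributions yields the formula.

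I do not expect a genuine obstacle here: the computation is routine. The only points that need a little care are the sign in the integration by parts --- equivalently, fixing the convention $\tri_\vphi=g_\vphi^{i\bar j}\p_i\p_{\bar j}$ consistent with \eqref{Singular cscK} --- and the legitimacy of differentiating $A(t)$ and $B(t)$ under the integral sign. The latter is immediate: for $\beta\ge 1$ the weight $|s|_h^{2\beta-2}$ is bounded on $X$ (and for general $\beta>0$ it still lies in $L^1(\om^n)$), and the integrands depend smoothly on $t$ with locally uniform bounds, so dominated convergence applies.
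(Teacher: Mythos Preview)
Your proof is correct and follows exactly the same approach as the paper: differentiate the first variation once more, using $\frac{d}{dt}\om_{\vphi_t}^n=(\tri_{\vphi_t}v)\,\om_{\vphi_t}^n$ with integration by parts for the Monge--Amp\`ere term and the quotient rule for the exponential term. The paper's proof is just the one-line display of these derivatives, so your version is simply a more detailed rendering of the same computation.
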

\begin{proof}
We continue the proceeding computation to see that
\begin{align*}
&\delta^2 F_\beta(u,v)=-\frac{1}{V}\int_X u \tri_\vphi v \om_\vphi^n\\
&-\left(\int_Xe^{h_\om-\lambda\vphi}|s|_h^{2\b-2}\om^n\right)^{-2} \int_X (-\lambda v) e^{h_\om-\lambda\vphi}|s|_h^{2\b-2}\om^n \int_X u e^{h_\om-\lambda\vphi}|s|_h^{2\b-2}\om^n\\
&+\left(\int_Xe^{h_\om-\lambda\vphi}|s|_h^{2\b-2}\om^n\right)^{-1}  \int_X u  (-\lambda v)  e^{h_\om-\lambda\vphi}|s|_h^{2\b-2}\om^n.
\end{align*}
So the conclusion follows directly.
\end{proof}

Then inserting the critical point equation \eqref{critical pt Ding} into the 2nd variation of the log Ding functional, we have the following corollaries.
\begin{cor}\label{the 2nd variation of the log Ding}
At a log K\"ahler-Einstein metric, the 2nd variation of the log Ding functional becomes
\begin{align*}
\delta^2 F_\beta(u,v)
&=\frac{1}{V}\int_X (\p u,\p v)_\vphi\om_\vphi^n+\lambda\left\{\frac{1}{V^2}\int_X u\om_\vphi^n\int_X v\om_\vphi^n-\frac{1}{V}\int_X  u v\om_\vphi^n\right\}.
\end{align*}
\end{cor}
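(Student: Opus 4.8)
The plan is to obtain the corollary by substituting the critical point equation \eqref{critical pt Ding} directly into the general second variation formula proved in the preceding proposition; no new analysis is needed, only a bookkeeping computation. First I would abbreviate the normalizing constant $Z:=\int_X e^{h_\om-\lambda\vphi}|s|_h^{2\b-2}\om^n$, which is finite and strictly positive since for $\beta>1$ the weight $|s|_h^{2\b-2}$ is bounded, so that dividing by $Z$ and $Z^2$ below is legitimate. With this notation, \eqref{critical pt Ding} says precisely that the weighted volume form appearing there satisfies $|s|_h^{2\b-2}e^{h_\om-\lambda\vphi}\om^n=\tfrac{Z}{V}\,\om_\vphi^n$; one sanity-checks consistency by integrating both sides over $X$, the left side giving $Z$ and the right side $\tfrac{Z}{V}\int_X\om_\vphi^n=Z$.

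Next I would rewrite the three weighted integrals occurring in the curly bracket of the second variation formula. By the displayed identity, $\int_X u\,|s|_h^{2\b-2}e^{h_\om-\lambda\vphi}\om^n=\tfrac{Z}{V}\int_X u\,\om_\vphi^n$, and likewise for $v$ and for the product $uv$. Substituting these simultaneously into the bracket, the first quotient $\big(\int u\,(\cdots)\big)\big(\int v\,(\cdots)\big)\big/Z^2$ becomes $\tfrac{1}{V^2}\int_X u\,\om_\vphi^n\int_X v\,\om_\vphi^n$, and the second quotient $\big(\int uv\,(\cdots)\big)\big/Z$ becomes $\tfrac{1}{V}\int_X uv\,\om_\vphi^n$; the factors of $Z$ cancel in each case. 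The Dirichlet term $\tfrac{1}{V}\int_X(\p u,\p v)_\vphi\om_\vphi^n$ is untouched, so reading off the result gives exactly the asserted expression for $\delta^2F_\beta(u,v)$ at a log K\"ahler–Einstein metric.

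Since the computation is entirely routine, there is no genuine obstacle; the only points requiring a moment of care are that $Z$ be a nonzero finite constant (ensured by the boundedness of the weight when $\beta>1$) and that the substitution be carried out in all three integrals at once so that the normalizing constants cancel cleanly rather than piecemeal. Hence the corollary follows immediately from the second variation proposition together with \eqref{critical pt Ding}.
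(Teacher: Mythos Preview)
Your proposal is correct and follows exactly the approach the paper indicates: the corollary is obtained by direct substitution of the critical point equation \eqref{critical pt Ding} into the general second variation formula, with the normalizing constants cancelling as you describe. One small remark: the finiteness and positivity of $Z$ does not actually require $\beta>1$ (the weight $|s|_h^{2\beta-2}$ is integrable for all $\beta>0$), but this does not affect the argument.
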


\begin{cor}\label{log Ding convex}The log Ding functional is convex at its critical points, if one of the following condition holds
\begin{itemize}
\item $C_1(X,D)\leq 0$,\label{convex negative}
\item $X$ is a projective Fano manifold, $D$ stays in the linear system of $|K_X^{-1}|$ and $\Om=C_1(X)$, $0< \beta\leq 1$. \label{convex positive}
\end{itemize}
\end{cor}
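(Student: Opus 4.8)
The plan is to invoke Corollary \ref{the 2nd variation of the log Ding} and analyze the sign of the two displayed terms under each hypothesis. The first term $\frac{1}{V}\int_X(\p u,\p v)_\vphi\om_\vphi^n$ is the usual Dirichlet-type form, which is manifestly nonnegative when $u=v$ (so it contributes favorably to convexity); the work is in controlling the second term $\lambda\{\frac{1}{V^2}\int_X u\om_\vphi^n\int_X v\om_\vphi^n-\frac1V\int_X uv\,\om_\vphi^n\}$, which at $u=v$ becomes $\lambda(\frac{1}{V^2}(\int_X u\,\om_\vphi^n)^2-\frac1V\int_X u^2\om_\vphi^n)$. By the Cauchy--Schwarz inequality applied to the probability measure $\frac1V\om_\vphi^n$, the bracket $\frac{1}{V^2}(\int_X u\,\om_\vphi^n)^2-\frac1V\int_X u^2\om_\vphi^n\leq 0$, with equality iff $u$ is constant a.e.

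For the first bullet, $C_1(X,D)\leq 0$ means $\lambda\Om=C_1(X,D)$ with $\Om$ K\"ahler hence $[\om]$-positive, forcing $\lambda\leq 0$. Then $\lambda\cdot(\text{nonpositive bracket})\geq 0$, so both terms are $\geq 0$ and $\delta^2F_\beta(u,u)\geq 0$; I would note it is in fact strictly positive unless $\p u=0$, giving convexity at the critical points. For the second bullet, $X$ Fano, $D\in|K_X^{-1}|$ and $\Om=C_1(X)$ with $0<\beta\leq1$ forces $\lambda=1>0$, so the second term now has the \emph{wrong} sign and one must show the Dirichlet term dominates. Here I would use the sharp Poincar\'e (first-eigenvalue) inequality on $(X,\om_\vphi)$: the log K\"ahler-Einstein equation \eqref{Rictheta} gives $Ric(\om_\vphi)=\om_\vphi+2\pi(1-\beta)[D]\geq\om_\vphi$ as currents since $1-\beta\geq0$, and a Lichnerowicz--Obata--type argument (valid in this singular setting, e.g. via the weighted Bochner formula or by approximating and using \cite{arXiv:1803.09506}) yields $\frac1V\int_X|\p u|^2_\vphi\om_\vphi^n\geq \frac1V\int_X u^2\om_\vphi^n-\frac{1}{V^2}(\int_X u\,\om_\vphi^n)^2$ for all $u$. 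Subtracting, $\delta^2F_\beta(u,u)\geq 0$, which is exactly convexity at the critical points.

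The main obstacle I expect is justifying the first-eigenvalue inequality $\lambda_1(\om_\vphi)\geq \lambda$ in the second case when $\beta\leq1$ and $\om_\vphi$ has cone singularities along $D$: one cannot quote the smooth Lichnerowicz--Obata theorem directly. The cleanest route is to observe that the relevant Bochner--Reilly integration only sees $Ric(\om_\vphi)\geq\om_\vphi$ in the sense of the smooth part together with the nonnegative boundary/current contribution from $2\pi(1-\beta)[D]$ (which has the favorable sign precisely because $\beta\leq1$), so the standard integrated Bochner formula $\int_X(|\bar\p\nabla u|^2-|\nabla u|^2+Ric(\nabla u,\overline{\nabla u}))\om_\vphi^n=0$ survives, and the current term only helps. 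Alternatively, approximate $\om_\vphi$ by the cone-angle approximations of \cite{arXiv:1803.09506} and pass to the limit in the eigenvalue estimate; either way the inequality $\lambda_1\geq 1$ holds and closes the argument. I would also remark that these are exactly the two regimes in which the Ding functional is known to be geodesically convex, so convexity of the second variation at critical points is consistent with—and a local shadow of—that global fact.
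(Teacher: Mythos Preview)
Your approach matches the paper's proof: Cauchy--Schwarz handles the first bullet when $\lambda\le 0$, and the first-eigenvalue (Lichnerowicz-type) lower bound for the K\"ahler--Einstein cone metric handles the second bullet. The paper cites \cite{MR3761174} for that eigenvalue bound rather than sketching a Bochner argument, but the content is the same.

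One correction: in the second bullet you compute $\lambda=1$, but in fact $D\in|K_X^{-1}|$ gives $C_1(L_D)=C_1(X)$, so $C_1(X,D)=C_1(X)-(1-\beta)C_1(X)=\beta\,C_1(X)=\beta\,\Om$, hence $\lambda=\beta$. Correspondingly, \eqref{Rictheta} reads $Ric(\om_\vphi)=\beta\,\om_\vphi+2\pi(1-\beta)[D]$, and the Lichnerowicz-type bound yields $\lambda_1(\om_\vphi)\ge\beta$, not $\ge 1$. Since the second-variation term you need to dominate carries the factor $\lambda=\beta$, this is exactly what is required, so the argument still closes once the correct value of $\lambda$ is used.
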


\begin{proof}
Since $\lambda\Om=C_1(X,D)$ by \eqref{cohomology condition}, we have $\lambda\leq 0$. The lemma follows from applying the Cauchy-Schwarz inequality to \lemref{the 2nd variation of the log Ding}.

From the assumptions of the second statement, we have $C_1(X,D)=\beta C_1(X)=\beta\Om$ and then $\lambda=\beta$ by \eqref{cohomology condition}. Thus convexity follows from the lower bound of the first eigenvalue of the Laplacian for a K\"ahler-Einstein cone metric \cite{MR3761174}. 
\end{proof}


\subsection{Reference metric and prescribed Ricci curvature problem}\label{Reference metric and prescribed Ricci curvature problem}

We choose a smooth $(1,1)$-form $\theta\in C_1(X,D).$
Then there exists a smooth function $h_\theta$ such that
\begin{align}\label{h0}
Ric(\om)=\theta+(1-\beta)\Theta_D+i\p\bar\p h_\theta.
\end{align} 
\begin{defn}[Reference metric]\label{Reference metric}
A reference metric $\om_\theta$ is defined to be the solution of the degenerate Monge-Amp\`ere equation
\begin{align}\label{Rictheta}
\om^n_\theta=|s|_h^{2\b-2}e^{h_\theta} \om^n
\end{align}
 under the normalisation condition 
 \begin{align}\label{Rictheta normalisation}
\int_X |s|_h^{2\b-2}e^{h_\theta} \om^n=V.
\end{align}
\end{defn}
The reference metric $\om_\theta=\om+i\p\bar\p\vphi_\theta$ satisfies the equation for prescribing the Ricci curvature problem
\begin{align}\label{Rictheta ricci}
Ric(\om_\theta)=\theta+2\pi (1-\b)[D].
\end{align}
The equation \eqref{Rictheta} is a special case of the degenerate complex Monge-Amp\`ere equation \eqref{critical pt Ding}.

\begin{lem}With the reference metric $\om_\theta$ in \eqref{Rictheta}, the log Ding functional is rewritten as
\begin{align*}
F_\beta(\vphi)=-D_{\om}(\vphi)-\frac{1}{\lambda}\log(\frac{1}{V}\int_X e^{h_\om-h_\theta-\lambda\vphi}\om_\theta^n).
\end{align*}
The critical point satisfies the following equation
\begin{align}\label{critical pt Ding theta}
\om^n_\vphi
=\frac{V\cdot e^{h_\om-h_\theta-\lambda\vphi} \om^n_\theta}{\int_Xe^{h_\om-h_\theta-\lambda\vphi}\om^n_\theta}.
\end{align}
\end{lem}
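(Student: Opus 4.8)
The plan is entirely algebraic: substitute the defining identity \eqref{Rictheta} of the reference metric into the definition of $F_\beta$ and into the critical point equation \eqref{critical pt Ding} already obtained above. No new analytic estimate is required, so the proof should be short.

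First I would record that \eqref{Rictheta} says $|s|_h^{2\b-2}\om^n=e^{-h_\theta}\om_\theta^n$, an equality of (possibly singular) volume forms on $X$; both sides are finite measures charging no mass on the divisor $D$, so the identity may be integrated freely against bounded functions over all of $X$. In particular
\begin{align*}
\int_X e^{h_\om-\lambda\vphi}|s|_h^{2\b-2}\om^n=\int_X e^{h_\om-h_\theta-\lambda\vphi}\om_\theta^n .
\end{align*}
Since the remaining term $D_\om(\vphi)$ in the log Ding functional involves only $\vphi$, $\om$ and $\om_\vphi$, and not the weight $|s|_h^{2\b-2}$, it is left unchanged; plugging the displayed identity into the definition of $F_\beta$ then gives the first assertion verbatim.

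For the critical point equation I would start from \eqref{critical pt Ding},
\begin{align*}
\om^n_\vphi=|s|_h^{2\b-2}\,\frac{V\,e^{h_\om-\lambda\vphi}\,\om^n}{\int_X e^{h_\om-\lambda\vphi}|s|_h^{2\b-2}\om^n},
\end{align*}
and again replace $|s|_h^{2\b-2}\om^n$ by $e^{-h_\theta}\om_\theta^n$ in both numerator and denominator; the exponential factors combine as $e^{h_\om-\lambda\vphi}\cdot e^{-h_\theta}=e^{h_\om-h_\theta-\lambda\vphi}$, which is precisely \eqref{critical pt Ding theta}. Equivalently, one may recompute $\delta F_\beta(u)=0$ directly from the rewritten functional, which reproduces the same equation; I would include whichever derivation is shorter. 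There is no genuine obstacle here: the only point deserving an explicit word is that \eqref{Rictheta} is an equality of singular volume forms valid on $X\setminus D$ with no mass on the pluripolar set $D$, which is what legitimizes integrating it over all of $X$ and hence identifying the two forms of the log Ding functional.
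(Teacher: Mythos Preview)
Your proposal is correct and matches the paper's approach: the lemma is stated without a separate proof environment, and the paper simply records the substitution identity $e^{h_\om-\lambda\vphi}\om^n_\theta= e^{h_\om-\lambda\vphi}|s|_h^{2\b-2}e^{h_\theta}\om^n$ coming from \eqref{Rictheta}, which is exactly what you do. Your remark about the measures placing no mass on $D$ is a reasonable extra word of justification, but otherwise nothing differs.
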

From the log K\"ahler-Einstein equation \eqref{critical pt Ding} and the reference metric equation \eqref{Rictheta}, we have
\begin{align*}
 e^{h_\om-\lambda\vphi}\om^n_\theta= e^{h_\om-\lambda\vphi}|s|_h^{2\b-2}e^{h_\theta} \om^n=e^{h_\theta}\om^n_\vphi.
\end{align*}
The 2nd variation of the log Ding functional at $\vphi$ is
\begin{align*}
\delta^2 F_\beta(u,v)&=\frac{1}{V}\int_X (\p u,\p v)_\vphi\om_\vphi^n\\
&+\lambda\left\{\frac{\int_X u e^{h_\om-h_\theta-\lambda\vphi}\om^n_\theta\int_Xv e^{h_\om-h_\theta-\lambda\vphi}\om^n_\theta}{(\int_Xe^{h_\om-h_\theta-\lambda\vphi}\om^n_\theta)^{2}}
-\frac{\int_X uv e^{h_\om-h_\theta-\lambda\vphi}\om^n_\theta}{\int_Xe^{h_\om-h_\theta-\lambda\vphi}\om^n_\theta}\right\}.
\end{align*}

\subsection{Degenerate Monge-Amp\`ere equations}
The degenerate Monge-Amp\`ere equations \eqref{critical pt Ding} and \eqref{Rictheta} could be summarised as
\begin{align}\label{general}
\om^n_\vphi=|s|_h^{2\b-2} e^{F(x,\vphi)}\om^n.
\end{align}
When the cone angle $\beta>1$,  we define a solution of \eqref{general} as following.
\begin{defn}\label{admissible}
A function $\vphi$ is said to be $\gamma$-\textbf{admissible} for some $\gamma\geq 0$, if
\begin{itemize}
\item $\tr_\om\om_{\vphi}\leq C  |s|_h^{-2\gamma}$, on $X$,
\item $\vphi$ is smooth on $M$.
\end{itemize}
Moreover, we say a function $\vphi$ is a $\gamma$-admissible solution to the equation \eqref{general}, if it is a $\gamma$-admissible function and satisfies \eqref{general} on $M$.
When $\gamma=0$, the function $\vphi$ is said to be \textbf{admissible}.
\end{defn}

Yau initiated the study of these degenerate Monge-Amp\`ere equations in his seminal article \cite{MR480350} on Calabi conjecture.
For the K\"ahler-Einstein cone metrics with cone angle $0<\beta\leq1$, see \cite{MR3761174} and references therein.


\subsubsection{Approximation of the degenerate Monge-Amp\`ere equations}\label{Approximation}

In this section, we discuss some properties of the approximation of the reference metric.
We let
$$
\mathfrak h_\eps:=(\beta-1)\log S_\eps, \quad S_\eps:=|s|^2_h+\eps.
$$
We define $\om_{\theta_\eps}$ to be the approximation of the reference metric \eqref{Rictheta},
\begin{align}\label{Rictheta approximation}
\om^n_{\theta_\eps}=e^{h_\theta+\mathfrak h_\eps+c_\eps} \om^n
\text{ with }
\int_X e^{h_\theta+\mathfrak h_\eps+c_\eps} \om^n=V.
\end{align}
By \eqref{h0}, its Ricci curvature has the formula.
\begin{lem}
\begin{align}\label{Ricomthetaeps}
Ric(\om_{\theta_\eps})&=Ric(\om)-i\p\bar\p h_\theta-i\p\bar\p\mathfrak h_\eps\\
&=\theta+(1-\beta)\Theta_D+(1-\beta)i\p\bar\p\log S_\eps.\nonumber
\end{align}
\end{lem}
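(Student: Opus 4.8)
The plan is a direct computation using the transformation rule for the Ricci form under a change of volume form. First I would start from the defining identity \eqref{Rictheta approximation}, namely $\om^n_{\theta_\eps}=e^{h_\theta+\mathfrak h_\eps+c_\eps}\om^n$, which in particular shows that $\om_{\theta_\eps}$ is a smooth K\"ahler metric for each fixed $\eps>0$ (the weight $S_\eps=|s|_h^2+\eps$ is smooth and strictly positive). Then I would apply
\[
Ric(\om_{\theta_\eps})=Ric(\om)-i\p\bar\p\log\frac{\om^n_{\theta_\eps}}{\om^n}=Ric(\om)-i\p\bar\p\bigl(h_\theta+\mathfrak h_\eps+c_\eps\bigr),
\]
and observe that $c_\eps$ is a constant, so $i\p\bar\p c_\eps=0$; this already gives the first displayed equality $Ric(\om_{\theta_\eps})=Ric(\om)-i\p\bar\p h_\theta-i\p\bar\p\mathfrak h_\eps$.

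For the second equality I would substitute the identity \eqref{h0}, $Ric(\om)=\theta+(1-\beta)\Theta_D+i\p\bar\p h_\theta$, so that the two $i\p\bar\p h_\theta$ terms cancel and one is left with $Ric(\om_{\theta_\eps})=\theta+(1-\beta)\Theta_D-i\p\bar\p\mathfrak h_\eps$. Finally, recalling the definition $\mathfrak h_\eps=(\beta-1)\log S_\eps$, one has $-i\p\bar\p\mathfrak h_\eps=(1-\beta)\,i\p\bar\p\log S_\eps$, which yields the claimed formula.

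There is no genuine obstacle here; the statement is a bookkeeping identity. The only points that require a little care are the sign convention for the Ricci form (consistent with $\Theta_D=-i\p\bar\p\log h$ used throughout), and the fact that the normalising constant $c_\eps$ drops out under $i\p\bar\p$. I would also remark that letting $\eps\to 0$ and invoking the Poincar\'e--Lelong equation \eqref{PL}, $i\p\bar\p\log|s|_h^2+\Theta_D=2\pi[D]$, formally recovers $Ric(\om_\theta)=\theta+2\pi(1-\beta)[D]$, consistently with \eqref{Rictheta ricci}; this is not needed for the proof but explains why the approximation is the right one.
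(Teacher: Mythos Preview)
Your proof is correct and matches the paper's approach: the lemma is stated there with only the hint ``By \eqref{h0}'', and your computation is exactly the standard derivation via the transformation rule $Ric(\om_{\theta_\eps})=Ric(\om)-i\p\bar\p\log(\om^n_{\theta_\eps}/\om^n)$ followed by substituting \eqref{h0}. The paper later spells out the identical argument for the $(t,\eps)$-version in \lemref{Ric om theta t eps}, confirming that your steps are precisely what was intended.
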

\begin{lem}\label{h eps} 
There exists a nonnegative constant $C$ such that
\begin{align}\label{h eps equation}
 CS_\eps^{-1}\om\geq  i\p\bar\p \log S_\eps\geq -\frac{|s|^2_h}{S_\eps}\Theta_D\geq -C\om.
\end{align}
\end{lem}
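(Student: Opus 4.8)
The plan is to derive \eqref{h eps equation} from nothing more than the chain rule for $i\p\bar\p\log$ together with the Poincar\'e--Lelong identity \eqref{PL}. First note that, since $S_\eps=|s|^2_h+\eps\ge\eps>0$ everywhere on $X$, each of the forms occurring in \eqref{h eps equation} is smooth on all of $X$; hence it is enough to prove the three inequalities on the dense open set $X\setminus D$, where $|s|^2_h>0$, and they then extend to $X$ by continuity. On $X\setminus D$ the chain rule gives
\[
i\p\bar\p\log S_\eps=\frac{i\p\bar\p|s|^2_h}{S_\eps}-\frac{i\p|s|^2_h\wedge\bar\p|s|^2_h}{S_\eps^2},
\]
and substituting $i\p\bar\p|s|^2_h=\frac{i\p|s|^2_h\wedge\bar\p|s|^2_h}{|s|^2_h}-|s|^2_h\,\Theta_D$ --- which is just \eqref{PL} on $X\setminus D$, rewritten via the chain rule for $\log|s|^2_h$ --- turns this into the equivalent identity
\[
i\p\bar\p\log S_\eps=\frac{\eps}{|s|^2_h\,S_\eps^2}\,i\p|s|^2_h\wedge\bar\p|s|^2_h-\frac{|s|^2_h}{S_\eps}\,\Theta_D .
\]

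I would then read the three inequalities straight off these two identities. The form $i\p|s|^2_h\wedge\bar\p|s|^2_h=i\,\alpha\wedge\bar\alpha$, with $\alpha=\p|s|^2_h$, is semi-positive, so the first term on the right of the second identity is $\ge 0$; hence $i\p\bar\p\log S_\eps\ge-\frac{|s|^2_h}{S_\eps}\Theta_D$, which is the middle inequality. For the rightmost inequality, $\Theta_D$ is a fixed smooth $(1,1)$-form on the compact manifold $X$, so $\Theta_D\le C\om$, and since $0\le|s|^2_h/S_\eps<1$ we get $-\frac{|s|^2_h}{S_\eps}\Theta_D\ge-\frac{|s|^2_h}{S_\eps}\,C\om\ge-C\om$. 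For the leftmost (upper) inequality I would instead use the first identity: the term $-\frac{i\p|s|^2_h\wedge\bar\p|s|^2_h}{S_\eps^2}$ is $\le 0$ and may be discarded, while $i\p\bar\p|s|^2_h$, being smooth on the compact $X$, satisfies $i\p\bar\p|s|^2_h\le C\om$, so $i\p\bar\p\log S_\eps\le\frac{i\p\bar\p|s|^2_h}{S_\eps}\le C\,S_\eps^{-1}\om$. Taking $C$ to be the largest of the finitely many constants produced above finishes the proof.

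I do not expect any genuine obstacle here: the argument is a two-line computation followed by crude bounds on the fixed forms $\Theta_D$ and $i\p\bar\p|s|^2_h$, and the only place demanding attention is the sign bookkeeping in the $i\p\bar\p$ chain rule. (If one prefers a single identity valid on all of $X$, one rewrites $\frac{i\p|s|^2_h\wedge\bar\p|s|^2_h}{|s|^2_h}$ as $i\langle\nabla^{1,0}s,\nabla^{1,0}s\rangle_h$ for the Chern connection $\nabla$ of $(L_D,h)$, a smooth semi-positive form bounded above by $C\om$, and runs the same estimates.)
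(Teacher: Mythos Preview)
Your proof is correct and follows essentially the same approach as the paper: both start from the chain rule expansion of $i\p\bar\p\log S_\eps$, then rewrite it (via the Poincar\'e--Lelong identity on $X\setminus D$) as $\frac{\eps}{|s|^2_h S_\eps^2}\,i\p|s|^2_h\wedge\bar\p|s|^2_h-\frac{|s|^2_h}{S_\eps}\Theta_D$ and read off the inequalities. Your derivation of this identity is in fact a bit more direct---a single substitution---whereas the paper reaches it through two intermediate algebraic steps; and your remark that the singular factor $\frac{i\p|s|^2_h\wedge\bar\p|s|^2_h}{|s|^2_h}$ extends smoothly as $i\langle\nabla^{1,0}s,\nabla^{1,0}s\rangle_h$ is a useful clarification the paper leaves implicit.
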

\begin{proof}
It follows from the calculation 
\begin{align*}
 i\p\bar\p \log S_\eps&=\frac{i\p\bar\p |s|^2_h}{S_\eps}-\frac{i\p|s|^2_h \bar\p|s|^2_h }{S_\eps^2}.
\end{align*}
The upper bound is direct to see, by using $|\p S_\eps|_{\om}\leq C S_\eps^{\frac{1}{2}}$, that
\begin{align*}
 i\p\bar\p \log S_\eps\leq CS_\eps^{-1}.
\end{align*}

While, making use of the identity
\begin{align*}
|s|^4_h i\p\bar\p \log|s|^2_h&=|s|^2_hi\p\bar\p |s|^2_h-i\p|s|^2_h \bar\p|s|^2_h,
\end{align*}
we have 
\begin{align*}
 i\p\bar\p \log S_\eps&=\frac{i\p\bar\p |s|^2_h}{S_\eps}
+\frac{|s|^4_h i\p\bar\p \log|s|^2_h - |s|^2_2i\p\bar\p |s|^2_h }{S_\eps^2}\\
 &=\frac{\eps i\p\bar\p |s|^2_h}{S_\eps^2}
+\frac{|s|^4_h i\p\bar\p \log|s|^2_h  }{S_\eps^2}.
\end{align*}
Replaced by the identity above again, it is further reduced to
\begin{align*}
&=\frac{\eps |s|^2_h i\p\bar\p \log|s|^2_h}{S_\eps^2}
+\frac{\eps i\p|s|^2_h \bar\p|s|^2_h}{|s|^2_hS_\eps^2}
+\frac{|s|^4_h i\p\bar\p \log|s|^2_h  }{S_\eps^2}\\
&=\frac{|s|^2_h i\p\bar\p \log|s|^2_h  }{S_\eps}+\frac{\eps i\p|s|^2_h \bar\p|s|^2_h}{|s|^2_hS_\eps^2}
 \geq -\frac{|s|^2_h}{S_\eps}\Theta_D.
\end{align*}
\end{proof}

\subsubsection{Approximation of the log KE metrics}
Similarly, the approximation for the log KE metric is defined to be
\begin{align}\label{critical pt Ding approximation}
\om^n_{\vphi_\eps}
=  e^{h_\om+\mathfrak h_\eps-\lambda\vphi_\eps +c_\eps}  \om^n,\quad \int_Xe^{h_\om+\mathfrak h_\eps-\lambda\vphi_\eps+c_\eps} \om^n=V.
\end{align}
They are critical points of the approximation of the log Ding functional
\begin{align*}
F^\eps_\beta(\vphi)=-D_{\om}(\vphi)-\frac{1}{\lambda}\log[\frac{1}{V}\int_X e^{h_\om-\lambda\vphi}S_\eps^{\b-1}\om^n].
\end{align*}

When $\beta>1$, we say an admissible solution to \eqref{critical pt Ding} is a degenerate KE metric. If $0<\beta< 1$, the H\"older space $C^{2,\a,\beta}$ was introduced in \cite{MR2975584}. A KE cone metric is a $C^{2,\a,\beta}$ solution to \eqref{critical pt Ding} and smooth on $M$.

Corresponding to Corollary \ref{log Ding convex}, we have
\begin{prop}\label{smooth approximation KE}
The following statements of existence of log KE metric and its smooth approximation hold.
\begin{itemize}
\item
When $\lambda>0$ and $0<\beta\leq 1$, we have $F^\eps_\beta \geq F_\beta$. Furthermore, if $F_\beta$ is proper, then there exists a KE cone metric of cone angle $\beta$, which has smooth approximation \eqref{critical pt Ding approximation}.
\item
When $\lambda<0$ and $\beta> 1$, we have $F^\eps_\beta \geq F_\beta$. Furthermore,  there exists a degenerate KE metric, which has smooth approximation \eqref{critical pt Ding approximation}.
\end{itemize}
\end{prop}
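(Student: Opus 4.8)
The plan is to compare the approximate log Ding functional $F^\eps_\beta$ with its limit $F_\beta$ directly, and then to exploit properness/coercivity together with known existence theory for the perturbed Monge-Amp\`ere equations. First I would establish the pointwise monotonicity that yields $F^\eps_\beta \geq F_\beta$. The only term that differs between the two functionals is the logarithmic energy term $-\frac{1}{\lambda}\log\bigl(\frac{1}{V}\int_X e^{h_\om-\lambda\vphi} S_\eps^{\b-1}\om^n\bigr)$ versus $-\frac{1}{\lambda}\log\bigl(\frac{1}{V}\int_X e^{h_\om-\lambda\vphi}|s|_h^{2\b-2}\om^n\bigr)$, since $D_\om(\vphi)$ is unchanged. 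When $\beta>1$ we have $S_\eps^{\b-1}=(|s|_h^2+\eps)^{\b-1}\geq |s|_h^{2(\b-1)}=|s|_h^{2\b-2}$ pointwise, so the integral in the approximate functional dominates the singular one; when $0<\beta\leq 1$ (so $\b-1\le 0$) the inequality reverses, $S_\eps^{\b-1}\le |s|_h^{2\b-2}$. In both stated cases the sign of $\lambda$ is precisely the one that makes $-\frac{1}{\lambda}\log(\cdot)$ turn the respective integral inequality into $F^\eps_\beta\ge F_\beta$: for $\lambda>0,\ 0<\beta\le1$ the smaller integral gives a larger functional, and for $\lambda<0,\ \beta>1$ the larger integral likewise gives a larger functional. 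This is the routine part.

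Next I would treat existence. For the case $\lambda<0$, $\beta>1$: the functional $F_\beta$ is convex along geodesics (by Corollary \ref{log Ding convex}, since $C_1(X,D)=\lambda\Om\le 0$), and the linear-growth term $-D_\om(\vphi)$ together with the now \emph{convex} (because $\lambda<0$) logarithmic term forces coercivity on the space of normalized potentials; more concretely, the approximate equation \eqref{critical pt Ding approximation} with $\lambda<0$ is a standard complex Monge-Amp\`ere equation with a strictly monotone zeroth-order term and a bounded smooth right-hand side (note $\mathfrak h_\eps=(\beta-1)\log S_\eps$ is globally bounded above and below once $\eps>0$ is fixed, as $S_\eps\in[\eps,\ \sup|s|_h^2+\eps]$), so Aubin--Yau gives a unique smooth solution $\vphi_\eps$ for each $\eps>0$. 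Then one passes to the limit: using the $L^\infty$-estimate and the monotonicity $F^\eps_\beta\ge F_\beta$ to control $F_\beta(\vphi_\eps)$ from above uniformly in $\eps$, one extracts an $L^1$-limit $\vphi$; the almost admissible / Laplacian estimates from \thmref{intro almost admissible} and \thmref{admissible estimate} (in fact Corollary \ref{admissible estimate KE}, since this is the K\"ahler-Einstein case) upgrade this to a genuine admissible solution of \eqref{critical pt Ding}, smooth on $M$. For the case $\lambda>0$, $0<\beta\le1$, the equation is of Fano type and no unconditional existence holds; here one assumes $F_\beta$ is proper, transfers properness to $F^\eps_\beta$ via $F^\eps_\beta\ge F_\beta$, and invokes the Chen--Cheng / Berman--Boucksom--Guedj--Zeriahi type theory (as in \cite{MR3761174}) to get a KE cone metric of cone angle $\beta$ with the smooth approximation \eqref{critical pt Ding approximation}.

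The main obstacle I anticipate is the limiting argument in the degenerate ($\beta>1$) case: one must ensure the a priori estimates of \thmref{intro almost admissible} apply \emph{uniformly in $\eps$} to the sequence $\vphi_\eps$, which requires checking the bounded-entropy hypothesis there — but for the Monge-Amp\`ere (KE) equation the entropy term is controlled by the $L^\infty$-bound of $\vphi_\eps$ together with the explicit form of the density $e^{h_\om+\mathfrak h_\eps-\lambda\vphi_\eps+c_\eps}$, whose logarithm is $|s|_h^{2\b-2}$-integrable uniformly. A secondary point is normalization: one must keep track of the constants $c_\eps$ in \eqref{critical pt Ding approximation} and show they stay bounded (this follows from the volume normalization and dominated convergence, since $S_\eps^{\b-1}\to|s|_h^{2\b-2}$ in $L^1(\om^n)$ for $\beta>1$), so that the limit equation is genuinely \eqref{critical pt Ding}. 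With these in hand the convergence is smooth on $M$ by local elliptic regularity away from $D$.
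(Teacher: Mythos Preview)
The paper does not give a self-contained proof of this proposition; it is stated as a summary, with the $\beta>1$ existence part effectively deferred to Corollary \ref{existence log KE} and the estimates of Sections \ref{A priori estimates for approximate degenerate cscK equations}--\ref{W2p estimate}. Your argument fills in exactly what the paper leaves implicit, and the monotonicity step $F^\eps_\beta\ge F_\beta$ is correct as written: the pointwise comparison $S_\eps^{\beta-1}\gtrless |s|_h^{2\beta-2}$ according to the sign of $\beta-1$, combined with the sign of $-\lambda^{-1}$, gives the stated inequality in both regimes.

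Your existence sketch is also aligned with the paper's logic. For $\lambda<0$, $\beta>1$, the approximate equation \eqref{critical pt Ding approximation} has smooth bounded data for each fixed $\eps>0$ and a monotone zeroth-order term, so Aubin--Yau produces $\vphi_\eps$; the uniform-in-$\eps$ estimates and convergence are precisely what \thmref{KE Laplacian} and Corollary \ref{existence log KE} supply. For $\lambda>0$, $0<\beta\le 1$, transferring properness via $F^\eps_\beta\ge F_\beta$ and invoking the cone KE existence theory (as in \cite{MR3761174}) is the intended route. One minor point: you do not actually need the bounded-entropy hypothesis of \thmref{intro almost admissible} here, since for the KE equation $F_\eps=-\lambda\vphi_\eps$ and the $L^\infty$-estimate of $\vphi_\eps$ (uniform in $\eps$, from the Monge--Amp\`ere $C^0$-theory with $L^p$ right-hand side) already controls everything; the paper takes this shortcut in Section \ref{Log Kahler Einstein metric}. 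Your remark on the boundedness of $c_\eps$ via dominated convergence is correct and is used tacitly throughout the paper.
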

\subsection{Degenerate scalar curvature equation}\label{cscK pde}

In this section, we assume $n\geq 2$ and consider the degenerate case when $\beta>1$.
Recall that $\theta$ is a smooth $(1,1)$-form in $C_1(X,D)$ and $\om_\theta$ is the reference metric defined in \eqref{Rictheta}. We also set $\mathfrak h:=-(1-\b)\log |s|_h^2$ and define 
\begin{align}\label{S const}
\underline S_\b=\frac{C_1(X,D)[\om]^{n-1}}{[\om]^n}.
\end{align}
\begin{defn}\label{Degenerate cscK defn} 
The degenerate scalar curvature equation is defined as
\begin{equation}\label{Degenerate cscK}
\om_{\vphi}^n=e^F \om_{\theta}^n,\quad
\tri_{\vphi} F=\tr_{\vphi}\theta-R.
\end{equation}
Here, the reference metric $\om_\theta$ is introduced in Definition \eqref{Reference metric}.
When $R=\underline S_\b$, a solution to the degenerate scalar curvature equation is called a degenerate cscK metric.
\end{defn}
Direct computation shows that
\begin{lem}The scalar curvature of the degenerate scalar curvature equation satisfies that
\begin{align}
S(\om_\vphi)=R \text{ on }M.
\end{align}
\end{lem}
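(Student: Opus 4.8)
The plan is a direct pointwise computation of $Ric(\om_\vphi)$ on $M$, where $|s|_h$ is nowhere vanishing and everything in sight is smooth. First I would combine the first equation of \eqref{Degenerate cscK} with the definition \eqref{Rictheta} of the reference metric to obtain the Monge--Amp\`ere type identity
\begin{align*}
\om_\vphi^n=e^F\om_\theta^n=e^F|s|_h^{2\b-2}e^{h_\theta}\om^n ,
\end{align*}
so that on $M$ one may take logarithms: $\log\om_\vphi^n=F+(\b-1)\log|s|_h^2+h_\theta+\log\om^n$. Applying $-i\p\bar\p$ and using $Ric(\om_\vphi)=-i\p\bar\p\log\om_\vphi^n$ and $Ric(\om)=-i\p\bar\p\log\om^n$, this gives
\begin{align*}
Ric(\om_\vphi)=Ric(\om)-i\p\bar\p F-(\b-1)i\p\bar\p\log|s|_h^2-i\p\bar\p h_\theta .
\end{align*}

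Next I would substitute the defining relation \eqref{h0} for $h_\theta$, namely $Ric(\om)=\theta+(1-\b)\Theta_D+i\p\bar\p h_\theta$, which cancels the $i\p\bar\p h_\theta$ terms and leaves
\begin{align*}
Ric(\om_\vphi)=\theta+(1-\b)\bigl(\Theta_D+i\p\bar\p\log|s|_h^2\bigr)-i\p\bar\p F .
\end{align*}
On $M$ the section $s$ is holomorphic and nowhere zero, so $i\p\bar\p\log|s|_h^2=-\Theta_D$ there; equivalently the Poincar\'e--Lelong identity \eqref{PL} reads $2\pi[D]=0$ on $M$. Hence the middle term vanishes and $Ric(\om_\vphi)=\theta-i\p\bar\p F$ on $M$.

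Finally, taking the trace with respect to $\om_\vphi$ and using $S(\om_\vphi)=\tr_\vphi Ric(\om_\vphi)$ together with $\tr_\vphi(i\p\bar\p F)=\tri_\vphi F$, one obtains $S(\om_\vphi)=\tr_\vphi\theta-\tri_\vphi F$ on $M$; plugging in the second equation of \eqref{Degenerate cscK}, $\tri_\vphi F=\tr_\vphi\theta-R$, gives $S(\om_\vphi)=R$ on $M$, as claimed. I expect no serious obstacle: the only point requiring a word of care is that these manipulations are legitimate precisely on $M$, where $\log|s|_h^2$ is smooth and the divisor current does not contribute. This restriction is consistent with the fact that the analogous Ricci identity for the log K\"ahler--Einstein metric picks up the extra term $2\pi(1-\b)[D]$ across $D$.
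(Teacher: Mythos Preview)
Your proof is correct and is precisely the ``direct computation'' the paper alludes to but does not write out. The paper offers no explicit argument beyond that phrase, and your chain --- combining \eqref{Degenerate cscK} with \eqref{Rictheta}, invoking \eqref{h0} and \eqref{PL} on $M$, then tracing --- is exactly the intended route.
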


At last, we close this section by making use of the reference metric \eqref{Rictheta} then rewriting \eqref{Degenerate cscK} as below with respect to the smooth K\"ahler metric $\om$. We let 
\begin{align*}
 f=-h_\theta-\mathfrak h, \quad \tilde F=F-f.
\end{align*}

\begin{lem}The degenerate scalar curvature equation satisfies the following equations
\begin{equation}\label{Degenerate cscK 1}
\om_{\vphi}^n=e^{\tilde F} \om^n,\quad
\tri_{\vphi} \tilde F=\tr_{\vphi}(\theta-i\p\bar\p f)-R.
\end{equation} 
\end{lem}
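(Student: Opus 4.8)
The plan is straightforward: this is a bookkeeping lemma, rewriting the degenerate scalar curvature equation \eqref{Degenerate cscK} with respect to the fixed smooth background metric $\om$ instead of the reference metric $\om_\theta$. First I would start from the defining relation \eqref{Rictheta} of the reference metric, namely $\om_\theta^n=|s|_h^{2\b-2}e^{h_\theta}\om^n$, which with $\mathfrak h=-(1-\b)\log|s|_h^2=(\b-1)\log|s|_h^2$ reads $\om_\theta^n=e^{h_\theta+\mathfrak h}\om^n$, hence $\om_\theta^n=e^{-f}\om^n$ with $f=-h_\theta-\mathfrak h$. Substituting this into the first equation of \eqref{Degenerate cscK} gives $\om_\vphi^n=e^F\om_\theta^n=e^{F-f}\om^n=e^{\tilde F}\om^n$, which is the first identity in \eqref{Degenerate cscK 1}.

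For the second identity, I would use the elementary fact that for the Laplacian $\tri_\vphi$ of $\om_\vphi$ and any two cohomologous K\"ahler forms $\om_\theta$, $\om$, one has $\tr_\vphi\om_\theta-\tr_\vphi\om=\tri_\vphi(\vphi_\theta)$ — more precisely $\tr_\vphi(\om_\theta-\om)=\tr_\vphi(i\p\bar\p\vphi_\theta)=\tri_\vphi\vphi_\theta$ since $\om_\theta=\om+i\p\bar\p\vphi_\theta$. Equivalently, and more usefully here, $\tri_\vphi\log\bigl(\om_\theta^n/\om^n\bigr)=\tr_\vphi(\Ric(\om)-\Ric(\om_\theta))$ away from $D$, which with $\log(\om_\theta^n/\om^n)=-f$ gives $-\tri_\vphi f=\tr_\vphi(\Ric(\om)-\Ric(\om_\theta))$. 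Then $\tri_\vphi\tilde F=\tri_\vphi F-\tri_\vphi f=\tri_\vphi F+\tr_\vphi(\Ric(\om)-\Ric(\om_\theta))$; combining with $\tri_\vphi F=\tr_\vphi\theta-R$ from \eqref{Degenerate cscK} and using \eqref{Rictheta ricci}, i.e. $\Ric(\om_\theta)=\theta+2\pi(1-\b)[D]$ so that $\Ric(\om)-\Ric(\om_\theta)=\Ric(\om)-\theta$ on $M$, I would identify the result with $\tr_\vphi(\theta-i\p\bar\p f)-R$ on $M$ after checking that $\tr_\vphi\theta+\tr_\vphi(\Ric(\om)-\Ric(\om_\theta))=\tr_\vphi(\theta-i\p\bar\p f)$, which follows from $-i\p\bar\p f=i\p\bar\p h_\theta+i\p\bar\p\mathfrak h=\Ric(\om)-\theta-(1-\b)\Theta_D+(1-\b)i\p\bar\p\log|s|_h^2=\Ric(\om)-\theta$ on $M$ by \eqref{h0} and the Poincar\'e--Lelong equation \eqref{PL} (the divisor contributions cancel on $M=X\setminus D$).

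The only genuinely delicate point — hardly an obstacle, but worth a remark — is that all these manipulations take place on $M=X\setminus D$, where the metrics are smooth and the logarithms and $i\p\bar\p$ operations are legitimate; the term $i\p\bar\p\log|s|_h^2$ carries the current $2\pi[D]$ globally, but on $M$ it is a smooth form equal to $-\Theta_D$, so the cancellations above are valid pointwise on $M$. I would therefore phrase the computation as an identity of smooth tensors on $M$, which is exactly the content claimed in \eqref{Degenerate cscK 1}. No iteration, estimate, or hard analysis is involved; the lemma is purely a change of reference form.
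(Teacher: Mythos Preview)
Your proof is correct. The paper states this lemma without proof, as it follows immediately from the definitions, and your treatment of the first identity is exactly the intended one.

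For the second identity, your route through Ricci curvatures and the Poincar\'e--Lelong equation is valid but more elaborate than needed. Since $\tilde F=F-f$, one has directly
\[
\tri_\vphi\tilde F=\tri_\vphi F-\tri_\vphi f=(\tr_\vphi\theta-R)-\tr_\vphi(i\p\bar\p f)=\tr_\vphi(\theta-i\p\bar\p f)-R,
\]
using only linearity and the tautology $\tri_\vphi f=\tr_\vphi(i\p\bar\p f)$. Your detour --- computing $-i\p\bar\p f=i\p\bar\p h_\theta+i\p\bar\p\mathfrak h$ and then invoking \eqref{h0} and \eqref{PL} to identify this with $\Ric(\om)-\Ric(\om_\theta)$ on $M$ --- is correct, but it just reproves the identity $-i\p\bar\p f=-i\p\bar\p\log(\om_\theta^n/\om^n)=\Ric(\om_\theta)-\Ric(\om)$, which is the definition of the Ricci form; it is not needed for the lemma. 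Your closing remark that everything should be read on $M=X\setminus D$, where $f$ and $i\p\bar\p f$ are smooth, is appropriate.
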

\subsection{Log $K$-energy}\label{log K energy section}
Motivated from the log $K$-energy for the cone angle $\beta$ in $(0,1]$ in \cite{MR4020314}, we define the log $K$-energy for large cone angle $\beta>1$ to be
\begin{defn}\label{logKenergy}
The log $K$-energy is defined as
\begin{align}\label{log K energy}
\nu_\beta(\vphi)
&:=E_\beta(\vphi)
+J_{-\theta}(\vphi)+\frac{1}{V}\int_M (\mathfrak h+h_\theta)\om^n, \quad \forall \vphi\in \mathcal H_\Om.
\end{align}
Here $\om_\theta$ is the reference metric, which is the admissible solution to the degenerate complex Monge-Amp\`ere equation \eqref{Rictheta}. Also, the log entropy is defined in terms of the reference metric $\om_\theta$ as
\begin{align*}
E_\beta(\vphi)=\frac{1}{V}\int_M\log\frac{\om^n_\vphi}{\om_\theta^n}\om_\vphi^{n}
\end{align*}
and the $j_{\chi}$- and $J_{\chi}$-functionals are defined to be
\begin{align*}
j_{\chi}(\vphi):=\frac{1}{V}\int_{X}\vphi
\sum_{j=0}^{n-1}\om^{j}\wedge
\om_\vphi^{n-1-j}\wedge \chi,\quad
J_{\chi}(\vphi):=j_{\chi}(\vphi)-\underline{\chi} D_{\om}(\vphi).
\end{align*}
\end{defn}

\begin{cor}For all $\vphi$ in $\mathcal H_\Om$, we have
\begin{align*}
\nu_\beta(\vphi)&=\nu_1(\vphi)+(1-\beta)\frac{1}{V}\int_X\log|s|^2_h(\omega^n_\vphi-\om^n)
+(1-\b)J_{\Theta_D}(\vphi)\\
&=\nu_1(\vphi)+(1-\beta)\cdot[D_{\om,D}(\vphi)
-\frac{\mathrm{Vol}(D)}{V}\cdot D_{\om}(\vphi)].
\end{align*}
Here $\nu_1(\vphi)=E_1(\vphi)
+J_{-Ric(\om)}(\vphi)$ is the Mabuchi $K$ energy,  the entropy of $\om^n_\vphi$ is
	\begin{align*}
	&E_1(\vphi)=\frac{1}{V}\int_X \log\frac{\om^n_{\vphi}}{\om^n}\om^n_{\vphi}.
	\end{align*} and,  
the corresponding volume and the normalisation functional on the divisor $D$ are defined to be
\begin{align*}
\mathrm{Vol}(D)=\int_D \Omega^{n-1},\quad D_{\om,D}(\vphi)=\frac{n}{V}\int_0^1\int_D \p_t\vphi \om_\vphi^{n-1}dt.
\end{align*}
\end{cor}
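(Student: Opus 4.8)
The strategy is to expand all the functionals appearing in the definition of $\nu_\beta$ and to track the terms involving the weight $\mathfrak h = -(1-\beta)\log|s|_h^2$. The starting point is the definition $\nu_\beta(\vphi) = E_\beta(\vphi) + J_{-\theta}(\vphi) + \frac{1}{V}\int_M(\mathfrak h + h_\theta)\om^n$. First I would rewrite the log entropy $E_\beta$ using the reference metric equation \eqref{Rictheta}, namely $\om_\theta^n = |s|_h^{2\b-2}e^{h_\theta}\om^n$, so that
\[
E_\beta(\vphi) = \frac{1}{V}\int_M \log\frac{\om_\vphi^n}{\om_\theta^n}\om_\vphi^n
= \frac{1}{V}\int_M \log\frac{\om_\vphi^n}{\om^n}\om_\vphi^n - \frac{1}{V}\int_M\big((2\b-2)\log|s|_h + h_\theta\big)\om_\vphi^n.
\]
The first term on the right is exactly $E_1(\vphi)$, so the difference $E_\beta - E_1$ picks up $-\frac{1}{V}\int_M(\mathfrak h + h_\theta)\om_\vphi^n$ (using $\mathfrak h = -(1-\b)\log|s|_h^2 = (2\b-2)\log|s|_h$, which should be checked for signs). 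Combining with the constant term $\frac{1}{V}\int_M(\mathfrak h+h_\theta)\om^n$ gives a clean contribution $(1-\beta)\frac{1}{V}\int_X\log|s|_h^2(\om_\vphi^n - \om^n)$ plus the $h_\theta$ pieces, which must then be matched against the difference of the $J$-terms.

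Next I would handle the $J$-functionals. Since $\nu_1(\vphi) = E_1(\vphi) + J_{-\Ric(\om)}(\vphi)$, the comparison reduces to analyzing $J_{-\theta}(\vphi) - J_{-\Ric(\om)}(\vphi) = J_{\Ric(\om)-\theta}(\vphi)$ by linearity of $\chi \mapsto J_\chi$ in $\chi$. Here one invokes the cohomological/pointwise identity \eqref{h0}: $\Ric(\om) = \theta + (1-\beta)\Theta_D + i\p\bar\p h_\theta$, so $\Ric(\om) - \theta = (1-\beta)\Theta_D + i\p\bar\p h_\theta$. The $i\p\bar\p h_\theta$ piece contributes via integration by parts (using the definition of $j_\chi$ and $D_\om$) the terms that cancel the $h_\theta$ leftovers from the entropy computation, while the $(1-\beta)\Theta_D$ piece yields $(1-\beta)J_{\Theta_D}(\vphi)$. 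The Poincaré–Lelong equation \eqref{PL}, $2\pi[D] = i\p\bar\p\log|s|_h^2 + \Theta_D$, then lets one rewrite $J_{\Theta_D}$ and the $\int\log|s|_h^2(\om_\vphi^n-\om^n)$ term together in terms of divisor data. For the second displayed equality, I would identify $D_{\om,D}(\vphi) = \frac{n}{V}\int_0^1\int_D \p_t\vphi\,\om_\vphi^{n-1}dt$ as the natural "restriction to $D$" of the functional $D_\om$, recognizing that the current $[D]$ integrated against forms is computed by this boundary term, and use $\mathrm{Vol}(D) = \int_D\Omega^{n-1}$ together with $\underline{\Theta_D}$ (the average of $\Theta_D$ against $\Omega^{n-1}$, which equals $\mathrm{Vol}(D)/V$ up to normalization) to combine $j_{\Theta_D}$ and $-\underline{\Theta_D}D_\om$ into the stated form.

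The main obstacle I expect is bookkeeping of the $i\p\bar\p h_\theta$ terms: one has to show that the entropy-generated $h_\theta$-integrals against $\om_\vphi^n$ and $\om^n$ combine exactly with $J_{i\p\bar\p h_\theta}(\vphi)$ (which, since $i\p\bar\p h_\theta$ is exact, should telescope by integration by parts to $\frac{1}{V}\int h_\theta(\om^n - \om_\vphi^n)$ — this is the standard identity that $j_{i\p\bar\p\psi}(\vphi) - \underline{i\p\bar\p\psi}\,D_\om(\vphi) = \frac{1}{V}\int\psi(\om^n - \om_\vphi^n)$ since $\underline{i\p\bar\p\psi} = 0$). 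A second, more delicate point is making rigorous the manipulation of $\log|s|_h^2$, which is not a smooth function but an $L^1$ function with a logarithmic singularity along $D$; the integrations by parts producing the current $[D]$ require a regularization argument (e.g. working with $S_\eps = |s|_h^2 + \eps$ as in \S\ref{Approximation} and passing to the limit), and one must confirm that all integrals converge — $\log|s|_h^2 \in L^1(\om^n)$ and against $\om_\vphi^n$ the convergence uses the bounded entropy / admissibility, which is why this corollary sits after the a priori estimates. Once the divisor terms are correctly assembled, identifying the combination $D_{\om,D}(\vphi) - \frac{\mathrm{Vol}(D)}{V}D_\om(\vphi)$ is a matter of matching definitions.
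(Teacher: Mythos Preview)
Your approach is correct and is exactly the natural computation; the paper states this corollary without proof, so there is nothing to compare against. Your decomposition $E_\beta = E_1 - \frac{1}{V}\int_M(\mathfrak h + h_\theta)\om_\vphi^n$, the linearity $J_{-\theta} - J_{-\Ric(\om)} = J_{\Ric(\om)-\theta} = (1-\beta)J_{\Theta_D} + J_{i\p\bar\p h_\theta}$ via \eqref{h0}, and the telescoping identity $J_{i\p\bar\p\psi}(\vphi) = \frac{1}{V}\int_X\psi(\om_\vphi^n-\om^n)$ are all on target, and the $h_\theta$ terms do cancel as you expect. The second equality is then indeed Poincar\'e--Lelong plus matching $\underline{\Theta_D}$ with $\mathrm{Vol}(D)/V$.

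One correction, however: your remark that ``this is why this corollary sits after the a priori estimates'' is misplaced. The corollary appears in Section~\ref{log K energy section}, \emph{before} any of the estimates in Sections~\ref{A priori estimates for approximate degenerate cscK equations}--\ref{W2p estimate}, and it is a purely formal identity valid for all $\vphi\in\mathcal H_\Om$. For such smooth $\vphi$, the volume form $\om_\vphi^n$ is a smooth positive multiple of $\om^n$, so $\log|s|_h^2\in L^1(\om_\vphi^n)$ is automatic and requires no entropy bound. The integration by parts that produces the current $[D]$ from $i\p\bar\p\log|s|_h^2$ is the standard Poincar\'e--Lelong computation (and yes, can be justified by the $S_\eps$ regularization if one wishes), but it does not depend on the a priori estimates developed later in the paper.
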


\begin{cor}\label{K and log K}
Writing in terms of the smooth background metric $\om$, we have
\begin{align}\label{log K energy equation}
\nu_\beta(\vphi)=E_1(\vphi)
+J_{-\theta}(\vphi)+\frac{1}{V}\int_M (\mathfrak h+h_\theta)(\om^n-\om_\vphi^n),
\end{align}for all $\vphi\in \mathcal H_\Om$.
\end{cor}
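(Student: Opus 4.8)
The plan is to prove \eqref{log K energy equation} by a direct substitution that trades the log entropy $E_\beta$ (taken relative to the reference metric $\om_\theta$) for the ordinary entropy $E_1$ (taken relative to $\om$), at the cost of an explicit weight term. First I would use the definition $\mathfrak h=-(1-\b)\log|s|_h^2$ to rewrite the weight in the reference-metric equation $\om_\theta^n=|s|_h^{2\b-2}e^{h_\theta}\om^n$ (Definition \ref{Reference metric}) as $\om_\theta^n=e^{\mathfrak h+h_\theta}\om^n$. Taking logarithms pointwise on $M$ gives
\[
\log\frac{\om_\vphi^n}{\om_\theta^n}=\log\frac{\om_\vphi^n}{\om^n}-(\mathfrak h+h_\theta),
\]
and integrating against $\tfrac1V\om_\vphi^n$ yields
\[
E_\beta(\vphi)=\frac1V\int_M\log\frac{\om_\vphi^n}{\om_\theta^n}\,\om_\vphi^n
=E_1(\vphi)-\frac1V\int_M(\mathfrak h+h_\theta)\,\om_\vphi^n .
\]

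Second I would substitute this into the definition of $\nu_\beta$ in Definition \ref{logKenergy}: the term $\tfrac1V\int_M(\mathfrak h+h_\theta)\om^n$ appearing there combines with the term $-\tfrac1V\int_M(\mathfrak h+h_\theta)\om_\vphi^n$ coming from $E_\beta$ to produce exactly $\tfrac1V\int_M(\mathfrak h+h_\theta)(\om^n-\om_\vphi^n)$, while $J_{-\theta}(\vphi)$ is carried over unchanged. This is precisely \eqref{log K energy equation}.

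Since the argument is pure bookkeeping, I do not expect a genuine obstacle; the only point to watch is integrability near $D$. As $\b>1$, in a local coordinate $\mathfrak h=(\b-1)\log|s|_h^2$ behaves like $(\b-1)\log|z_1|^2$, which is locally integrable, and $\om^n$, $\om_\vphi^n$ are smooth volume forms on $X$ since $\vphi\in\mathcal H_\Om$; hence all four integrals converge absolutely and the pointwise identity for the logarithms may be integrated term by term. As a consistency check one can instead start from the preceding corollary, which expresses $\nu_\beta$ through $\nu_1=E_1+J_{-Ric(\om)}$: writing $Ric(\om)=\theta+(1-\b)\Theta_D+i\p\bar\p h_\theta$ (the defining property of $h_\theta$), using linearity of $\chi\mapsto J_\chi$, and noting that $J_{-i\p\bar\p h_\theta}(\vphi)=\tfrac1V\int_X h_\theta(\om^n-\om_\vphi^n)$ by integration by parts together with the telescoping identity $\sum_{j=0}^{n-1}\om^j\wedge\om_\vphi^{n-1-j}\wedge i\p\bar\p\vphi=\om_\vphi^n-\om^n$, one recovers the same formula, which confirms the computation.
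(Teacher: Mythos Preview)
Your proof is correct and is exactly the direct substitution the paper has in mind: the corollary is stated without proof because it follows immediately from Definition~\ref{logKenergy} by writing $\om_\theta^n=e^{\mathfrak h+h_\theta}\om^n$ and splitting the log entropy, precisely as you do. Your remark on integrability of $\mathfrak h$ near $D$ and the consistency check via the preceding corollary are fine and not needed for the argument, but do no harm.
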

We see that the last term has $\beta$ involved,
\begin{align}
\frac{-(1-\b)}{V}\int_M \log |s|_h^2 (\om^n-\om_\vphi^n).
\end{align}



\subsection{Approximate degenerate scalar curvature equation}
We first define the approximate degenerate cscK equation.
\begin{defn}
We say $\om_{\vphi_\eps}$ is an approximation of the degenerate scalar curvature equation, if it satisfies the following PDEs
\begin{equation}\label{Degenerate cscK approximation}
\om_{\vphi_\eps}^n=e^{F_\eps} \om_{\theta_\eps}^n,\quad
\tri_{\vphi_\eps} F_\eps=\tr_{\vphi_\eps}{\theta}-R.
\end{equation}
Particularly, when $R$ is a constant, we call it the approximate degenerate cscK equation.
\end{defn}

Then we write the approximate equation \eqref{Degenerate cscK approximation} in terms of the smooth background metric $\om$.
\begin{lem}The approximate degenerate scalar curvature equation satisfies the equations
\begin{equation}\label{Degenerate cscK 1 approximation}
\om_{\vphi_\eps}^n=e^{\tilde F_\eps} \om^n,\quad
\tri_{\vphi_\eps} \tilde F_\eps=\tr_{\vphi_\eps}(\theta-i\p\bar\p \tilde f_\eps)-R.
\end{equation} 
Here 
$
\tilde F_\eps=F_\eps-\tilde f_\eps,\quad 
\tilde f_\eps=-h_\theta-\mathfrak h_\eps-c_\eps.
$
\end{lem}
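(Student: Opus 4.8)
The plan is purely computational: the two forms of the equation differ only by the choice of reference volume form and reference $(1,1)$-form, so I would substitute the defining identity of the approximate reference metric into \eqref{Degenerate cscK approximation} and use that $\tri_{\vphi_\eps}$ is linear and annihilates constants. This is the approximate analogue of the rewriting \eqref{Degenerate cscK 1} of \eqref{Degenerate cscK}. First, for the volume form equation: by the definition \eqref{Rictheta approximation} of the approximate reference metric, $\om_{\theta_\eps}^n = e^{h_\theta+\mathfrak h_\eps+c_\eps}\,\om^n$, whence the first equation of \eqref{Degenerate cscK approximation} becomes
\[
\om_{\vphi_\eps}^n = e^{F_\eps}\,\om_{\theta_\eps}^n = e^{F_\eps+h_\theta+\mathfrak h_\eps+c_\eps}\,\om^n .
\]
Since $\tilde f_\eps = -h_\theta-\mathfrak h_\eps-c_\eps$ and $\tilde F_\eps = F_\eps-\tilde f_\eps = F_\eps+h_\theta+\mathfrak h_\eps+c_\eps$, the exponent on the right is precisely $\tilde F_\eps$, which is the first equation of \eqref{Degenerate cscK 1 approximation}.

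For the Laplacian equation, I would split $\tri_{\vphi_\eps}\tilde F_\eps = \tri_{\vphi_\eps}F_\eps - \tri_{\vphi_\eps}\tilde f_\eps$ by linearity. The first term equals $\tr_{\vphi_\eps}\theta - R$ by the second equation of \eqref{Degenerate cscK approximation}. For the second term, with the standard convention $\tri_{\vphi_\eps}u = \tr_{\vphi_\eps}(i\p\bar\p u)$ already in force in \eqref{Singular cscK} and \eqref{Degenerate cscK} (and visible in the term $\tr_\vphi(i\p\bar\p f)$ of \eqref{Degenerate cscK 1}), one has $\tri_{\vphi_\eps}\tilde f_\eps = \tr_{\vphi_\eps}(i\p\bar\p\tilde f_\eps)$, and the additive constant $c_\eps$ drops out under $i\p\bar\p$. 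Therefore
\[
\tri_{\vphi_\eps}\tilde F_\eps = \tr_{\vphi_\eps}\theta - R - \tr_{\vphi_\eps}(i\p\bar\p\tilde f_\eps) = \tr_{\vphi_\eps}(\theta - i\p\bar\p\tilde f_\eps) - R ,
\]
which is the second equation of \eqref{Degenerate cscK 1 approximation}.

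This statement carries no genuine obstacle — it is a bookkeeping reformulation with no analytic content. The only two points worth recording are that the constant $c_\eps$ sits inside $\tilde f_\eps$ but disappears from $i\p\bar\p\tilde f_\eps$, so it only renormalizes the volume form and does not enter the linearized equation; and that, in contrast with \eqref{Degenerate cscK 1}, here $S_\eps = |s|_h^2+\eps$ is a smooth strictly positive function on all of $X$, so $\om_{\theta_\eps}$, $\vphi_\eps$, $F_\eps$ and $\tilde F_\eps$ are smooth on $X$ and the identities \eqref{Degenerate cscK 1 approximation} hold on the whole manifold rather than merely on $M$.
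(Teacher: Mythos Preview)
Your proof is correct and is exactly the intended argument: the paper states this lemma without proof, since it is a direct substitution of \eqref{Rictheta approximation} into \eqref{Degenerate cscK approximation} together with linearity of $\tri_{\vphi_\eps}$, precisely as you wrote. Your remarks on $c_\eps$ dropping out of $i\p\bar\p\tilde f_\eps$ and on smoothness for $\eps>0$ are accurate and helpful context.
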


This section is devoted to prove the Laplacian estimates of the smooth solution $(\vphi_\eps,F_\eps)$ for the approximate degenerate scalar curvature equations \eqref{Degenerate cscK approximation}  or \eqref{Degenerate cscK 1 approximation}.

\subsection{Almost admissible solutions}
In order to clarify the idea of the Laplacian estimates. We introduce the following definition. 

\begin{defn}[Almost admissible solution for degenerate equations]\label{a priori estimates approximation}
We say $\vphi_\eps$ is an \text{almost admissible solution} to the approximate degenerate scalar curvature equation \eqref{Degenerate cscK 1 approximation}, if the following uniform estimates independent of $\eps$ hold
\begin{itemize}
\item $L^\infty$-estimates in \thmref{L infty estimates degenerate equation}: 
\begin{align}\label{almost admissible C0}
\|\vphi_\eps\|_{\infty},\quad \|F_\eps\|_{\infty}\leq C;
\end{align}
\item gradient estimate of $\vphi$ in \thmref{gradient estimate}: 
\begin{align}\label{almost admissible C1 sigmaD}
S_\eps^{\sigma^1_D}\|\p\vphi\|_{L^\infty(\om)}^2\leq C, \quad1>\sigma^1_D>\max\{1-\frac{2\beta}{n+2},0\};
\end{align}

\item $W^{2,p}$-estimate in \thmref{w2pestimates degenerate equation}: for any $p\geq 1$, 
\begin{align}\label{almost admissible w2p sigmaD}
\int_X   (\tr_{\om}\om_{\vphi_\eps})^{p}   S_\eps^{\sigma^2_D}\om^n\leq C(p),\quad\sigma^2_D:=(\beta-1)\frac{n-2}{n-1+p^{-1}}.
\end{align}
\end{itemize}

\end{defn}
\begin{rem}
These three estimates will be obtained for more general equation, that is the singular scalar curvature equation introduced in Section \ref{Singular cscK metrics}, see \thmref{L infty estimates Singular equation}, \thmref{gradient estimate} and \thmref{w2pestimates degenerate Singular equation}.
\end{rem}
This definition of the almost admissible solution further gives us the following estimates of $\tilde F_\eps$, 
\begin{lem}\label{Laplacian estimate: F}
Assume the $L^\infty$-estimate of $F_\eps$ holds. 
Then there exists a uniform nonnegative constant $C$ depending on $\theta$, $\|h_\theta+c_{\eps} \|_{C^2(\om)}$ and $\Theta_D$ such that
\begin{align*}
&\tilde F_\eps\leq C,\quad \tri \tilde F_\eps\geq \tri F_\eps-C,\\
&C[1+(\beta-1)S_\eps^{-1}]\tr_{\vphi_\eps}\om-R\geq \tri_{\vphi_\eps} \tilde F_\eps\geq -C \tr_{\vphi_\eps}\om-R.
\end{align*}
\end{lem}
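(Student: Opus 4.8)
\textbf{Proof proposal for Lemma~\ref{Laplacian estimate: F}.}
The plan is to exploit the relations $\tilde F_\eps = F_\eps - \tilde f_\eps$ and $\tilde f_\eps = -h_\theta - \mathfrak h_\eps - c_\eps = -h_\theta - (\beta-1)\log S_\eps - c_\eps$, and to push all estimates through to the quantities $\tilde f_\eps$, $i\p\bar\p \tilde f_\eps$, and $\tr_{\vphi_\eps}(i\p\bar\p\tilde f_\eps)$. First I would establish the upper bound $\tilde F_\eps \leq C$. Since $\beta > 1$ and $S_\eps = |s|^2_h + \eps \geq \eps$, on compact $X$ we have $|s|^2_h$ bounded, hence $\log S_\eps$ is bounded \emph{above} (by $\log(\sup_X |s|^2_h + 1)$ once $\eps \leq 1$, say), so $\mathfrak h_\eps = (\beta-1)\log S_\eps \leq C$; combined with the $L^\infty$-bound on $F_\eps$, the boundedness of $h_\theta$ and the normalisation constant $c_\eps$ (which is uniformly bounded because $\int_X e^{h_\theta+\mathfrak h_\eps+c_\eps}\om^n = V$ and $\mathfrak h_\eps$ is bounded above with $e^{\mathfrak h_\eps} = S_\eps^{\beta-1}\in L^1$ uniformly), we get $\tilde F_\eps = F_\eps + h_\theta + \mathfrak h_\eps + c_\eps \leq C$. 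Note that no lower bound on $\tilde F_\eps$ is claimed, which is consistent with $\mathfrak h_\eps \to -\infty$ along $D$ as $\eps \to 0$.

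Next I would handle the second-derivative statements. Taking $i\p\bar\p$ of the identity $\tilde F_\eps = F_\eps + h_\theta + \mathfrak h_\eps + c_\eps$ gives $i\p\bar\p\tilde F_\eps = i\p\bar\p F_\eps + i\p\bar\p h_\theta + (\beta-1)i\p\bar\p\log S_\eps$, and then taking the trace with respect to $\om$ yields $\tri\tilde F_\eps = \tri F_\eps + \tri h_\theta + (\beta-1)\tri\log S_\eps$. The term $\tri h_\theta$ is bounded in absolute value by $\|h_\theta\|_{C^2(\om)} \leq C$. For the term $(\beta-1)\tri\log S_\eps$, I invoke \lemref{h eps}: from $i\p\bar\p\log S_\eps \geq -C\om$ we get, taking $\om$-trace, $\tri\log S_\eps \geq -nC$, so $(\beta-1)\tri\log S_\eps \geq -(\beta-1)nC \geq -C$. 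Combining, $\tri\tilde F_\eps \geq \tri F_\eps - C$, which is the second inequality.

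Finally, for the two-sided bound on $\tri_{\vphi_\eps}\tilde F_\eps$, I start from the PDE \eqref{Degenerate cscK 1 approximation}, namely $\tri_{\vphi_\eps}\tilde F_\eps = \tr_{\vphi_\eps}(\theta - i\p\bar\p\tilde f_\eps) - R$. Now $-i\p\bar\p\tilde f_\eps = i\p\bar\p h_\theta + (\beta-1)i\p\bar\p\log S_\eps$, so $\theta - i\p\bar\p\tilde f_\eps = \theta + i\p\bar\p h_\theta + (\beta-1)i\p\bar\p\log S_\eps$. Using \lemref{h eps} in the form $CS_\eps^{-1}\om \geq i\p\bar\p\log S_\eps \geq -C\om$, together with $-C\om \leq \theta + i\p\bar\p h_\theta \leq C\om$ (valid since $\theta$ is a fixed smooth form and $h_\theta\in C^2$), we obtain the form inequality
\[
-C\om \;\leq\; \theta - i\p\bar\p\tilde f_\eps \;\leq\; C\bigl[1 + (\beta-1)S_\eps^{-1}\bigr]\om .
\]
Since $\om_{\vphi_\eps} > 0$, taking $\tr_{\vphi_\eps}$ preserves these inequalities (trace against a positive $(1,1)$-form is monotone), giving $-C\tr_{\vphi_\eps}\om \leq \tr_{\vphi_\eps}(\theta - i\p\bar\p\tilde f_\eps) \leq C[1+(\beta-1)S_\eps^{-1}]\tr_{\vphi_\eps}\om$; subtracting $R$ throughout and recalling the PDE yields exactly the displayed chain of inequalities. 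I do not expect a genuine obstacle here: the only point requiring care is tracking that the constant $c_\eps$ is uniformly bounded and that all the $\om$-trace monotonicity arguments use $\om_{\vphi_\eps}>0$ rather than any uniform positivity — the substance is entirely packaged inside \lemref{h eps}, which supplies the crucial two-sided control of $i\p\bar\p\log S_\eps$.
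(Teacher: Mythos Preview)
Your proposal is correct and follows essentially the same route as the paper: the paper's proof also writes $\tilde F_\eps=F_\eps+h_\theta+(\beta-1)\log S_\eps+c_\eps$ for the upper bound, and then derives the remaining inequalities from the identities $\tri_{\vphi_\eps}\tilde F_\eps=\tr_{\vphi_\eps}[\theta+i\p\bar\p(h_\theta+\mathfrak h_\eps)]-R$ and $i\p\bar\p\tilde F_\eps=i\p\bar\p(F_\eps+h_\theta+\mathfrak h_\eps)$ together with the two-sided bound on $i\p\bar\p\log S_\eps$ from \lemref{h eps}. Your write-up simply supplies more detail (the boundedness of $c_\eps$, the monotonicity of $\tr_{\vphi_\eps}$) than the paper's terse version.
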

\begin{proof}
The upper bound of $\tilde F_\eps$ follows from the $L^\infty$-estimate of $F_\eps$ and 
\begin{align*}
\tilde F_\eps=F_\eps+h_\theta+(\beta-1)\log S_\eps+c_\eps.
\end{align*}
The rest results are obtained from the following identities
\begin{align*}
\tri_{\vphi_\eps} \tilde F_\eps=\tr_{\vphi_\eps}[\theta+i\p\bar\p (h_\theta+\mathfrak h_\eps)]-R,\quad
i\p\bar\p \tilde F_\eps=i\p\bar\p (F_\eps + h_\theta+\mathfrak h_\eps)
\end{align*}
and the lower bound of $i\p\bar\p\mathfrak h_\eps=(\beta-1)i\p\bar\p\log S_\eps$ from \lemref{h eps}. 
\end{proof}

\begin{rem}
This lemma tells us that the directions of the inequalities above when $\beta>1$, are exactly opposite to their counterparts when $0<\beta<1$, where $\tilde F_\eps$ has lower bound and both $\tri_{\vphi_\eps} \tilde F_\eps$ and $\tri \tilde F_\eps$ have upper bound.
\end{rem}

\begin{lem}\label{Laplacian estimate: p F}
Assume the gradient estimate of $F_\eps$ holds i.e. 
\begin{align}\label{almost admissible C1 f sigmaD}
\|\p F_\eps\|_{L^\infty(\om_{\vphi_\eps})}\leq C.
\end{align} Then there exists a uniform constant $C$ depending on $\|h_\theta+c_{\eps} \|_{C^1(\om)}$ and $\sup_X  S_\eps^{-\frac{1}{2}}|\p S_\eps|_\om$ such that
\begin{align*}
 |\p F_\eps|^2\leq C \tr_\om\om_{\vphi_\eps},\quad
|\p\tilde F|^2\leq C[1+\tr_\om\om_{\vphi_\eps}+\frac{(\beta-1)^2}{S_\eps}].
\end{align*}
\end{lem}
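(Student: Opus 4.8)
The plan is to exploit the two identities recorded at the end of \lemref{Laplacian estimate: F}, namely $\tilde F_\eps = F_\eps + h_\theta + (\beta-1)\log S_\eps + c_\eps$ and $i\p\bar\p\tilde F_\eps = i\p\bar\p(F_\eps + h_\theta + \mathfrak h_\eps)$, and to differentiate the first of these to pass from bounds on $\p F_\eps$ to bounds on $\p\tilde F_\eps$. First I would compute $\p\tilde F_\eps = \p F_\eps + \p h_\theta + (\beta-1)\dfrac{\p S_\eps}{S_\eps} + \p c_\eps$, since $c_\eps$ is a constant (in $x$) the last term drops; then by the elementary inequality $|a+b+c|^2 \le 3(|a|^2+|b|^2+|c|^2)$ applied in the metric $\om$ one gets $|\p\tilde F|^2_\om \le 3|\p F_\eps|^2_\om + 3|\p h_\theta|^2_\om + 3(\beta-1)^2 \dfrac{|\p S_\eps|^2_\om}{S_\eps^2}$. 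The middle term is bounded by $\|h_\theta + c_\eps\|_{C^1(\om)}^2$, and the last term is bounded by $C(\beta-1)^2 S_\eps^{-1}$ using the hypothesis $\sup_X S_\eps^{-1/2}|\p S_\eps|_\om \le C$, i.e. $|\p S_\eps|^2_\om \le C S_\eps$, so that $|\p S_\eps|^2_\om / S_\eps^2 \le C S_\eps^{-1}$.

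For the first claimed inequality, $|\p F_\eps|^2 \le C\,\tr_\om\om_{\vphi_\eps}$, the point is that the hypothesis \eqref{almost admissible C1 f sigmaD} controls the gradient of $F_\eps$ in the metric $\om_{\vphi_\eps}$, not in $\om$, and one converts between the two. Writing $|\p F_\eps|^2_\om = \om^{i\bar j}\p_i F_\eps \overline{\p_j F_\eps}$ and comparing with $|\p F_\eps|^2_{\om_{\vphi_\eps}} = \om_{\vphi_\eps}^{i\bar j}\p_i F_\eps\overline{\p_j F_\eps}$, at a point one diagonalizes $\om_{\vphi_\eps}$ with respect to $\om$, say eigenvalues $\mu_1,\dots,\mu_n$; then $|\p F_\eps|^2_\om = \sum \mu_k |(\p F_\eps)_k|^2 \le (\max_k \mu_k)\sum \mu_k^{-1}|(\p F_\eps)_k|^2 \cdot \max_k \mu_k \le \ldots$ — more cleanly, $|\p F_\eps|^2_\om = \sum_k \mu_k |(\p F_\eps)_k|^2$ while $|\p F_\eps|^2_{\om_{\vphi_\eps}} = \sum_k \mu_k^{-1}|(\p F_\eps)_k|^2$, and since $\mu_k \le \sum_j \mu_j = \tr_\om\om_{\vphi_\eps}$ for each $k$, we get $|\p F_\eps|^2_\om \le (\tr_\om\om_{\vphi_\eps})\cdot \mu_k^{-1}$-weighted sum$\le (\tr_\om\om_{\vphi_\eps})\,|\p F_\eps|^2_{\om_{\vphi_\eps}} \le C\,\tr_\om\om_{\vphi_\eps}$ by \eqref{almost admissible C1 f sigmaD}. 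Then feeding $|\p F_\eps|^2_\om \le C\,\tr_\om\om_{\vphi_\eps}$ into the estimate of the previous paragraph yields $|\p\tilde F|^2 \le C[1 + \tr_\om\om_{\vphi_\eps} + (\beta-1)^2 S_\eps^{-1}]$, absorbing the $|\p h_\theta|^2$ term into the constant $1$.

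I do not expect a serious obstacle here: the lemma is a bookkeeping consequence of the definition of $\tilde F_\eps$, the triangle inequality for the norm $|\cdot|_\om$, the pointwise comparison $\mu_k \le \tr_\om\om_{\vphi_\eps}$ between a single eigenvalue and the trace, and the elementary control $|\p S_\eps|^2_\om \le C S_\eps$ on the regularized defining function (which is exactly the input absorbed into the constant in the statement). The only mild subtlety worth flagging is keeping the two metrics straight — the gradient hypothesis on $F_\eps$ is with respect to $\om_{\vphi_\eps}$ whereas the conclusion is phrased with respect to $\om$ — and ensuring the constant genuinely depends only on the quantities listed, i.e.\ $\|h_\theta + c_\eps\|_{C^1(\om)}$ and $\sup_X S_\eps^{-1/2}|\p S_\eps|_\om$, and is in particular independent of $\eps$.
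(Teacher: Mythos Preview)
Your approach is essentially identical to the paper's: the paper records the inequality $|\p F_\eps|^2 \leq |\p F_\eps|^2_{\vphi_\eps}\cdot \tr_\om\om_{\vphi_\eps}$ in one line, then expands $\p\tilde F_\eps = \p(F_\eps + h_\theta + (\beta-1)\log S_\eps)$, applies the triangle inequality, and uses $|\p S_\eps|_\om \leq C S_\eps^{1/2}$ exactly as you do.

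One small bookkeeping slip worth fixing: in your eigenvalue argument the two displayed formulas $|\p F_\eps|^2_\om = \sum_k \mu_k|(\p F_\eps)_k|^2$ and $|\p F_\eps|^2_{\om_{\vphi_\eps}} = \sum_k \mu_k^{-1}|(\p F_\eps)_k|^2$ cannot both hold in the same coordinate system. In an $\om$-orthonormal frame diagonalizing $\om_{\vphi_\eps}$ with eigenvalues $\mu_k$, one has $|\p F_\eps|^2_\om = \sum_k |(\p F_\eps)_k|^2$ and $|\p F_\eps|^2_{\om_{\vphi_\eps}} = \sum_k \mu_k^{-1}|(\p F_\eps)_k|^2$; then $\sum_k |(\p F_\eps)_k|^2 = \sum_k \mu_k\cdot\mu_k^{-1}|(\p F_\eps)_k|^2 \leq (\sum_j\mu_j)\sum_k\mu_k^{-1}|(\p F_\eps)_k|^2$, which is the inequality you want. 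Your conclusion is correct; only the intermediate line needs this correction.
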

\begin{proof}
We also make use of the inequality
\begin{align*}
|\p F_\eps|^2\leq |\p F_\eps|^2_{\vphi_\eps} \cdot \tr_\om\om_{\vphi_\eps}.
\end{align*}
We compute and see that 
\begin{align*}
|\p\tilde F_\eps|^2&=|\p (F_\eps+h_\theta+(\beta-1)\log S_\eps)|^2\\
&\leq C[1+|\p F_\eps|^2+(\beta-1)^2\frac{|\p |s|_h^2|^2}{S_\eps^2}]\\
&\leq C[1+\tr_\om\om_{\vphi_\eps}+\frac{(\beta-1)^2}{S_\eps}].
\end{align*}
In the last inequality, we use $|\p S_\eps|\leq  C_{2.4} |S_\eps|^{\frac{1}{2}}$ .
\end{proof}
\subsection{Approximation of log $K$-energy}In this subsection, we consider the degenerate cscK equation.
\begin{defn}
\begin{equation}\label{Degenerate cscK 1 approximation const}
\om_{\vphi_\eps}^n=e^{\tilde F_\eps} \om^n,\quad
\tri_{\vphi_\eps} \tilde F_\eps=\tr_{\vphi_\eps}(\theta-i\p\bar\p \tilde f_\eps)-\underline S_\beta.
\end{equation} 
\end{defn}

\begin{defn}[Approximate log $K$-energy]\label{log K energy approximation}
The approximate log $K$-energy is defined as
\begin{align}\label{log K energy approximation equation}
\nu^\eps_\beta(\vphi)=E_1(\vphi)
+J_{-\theta}(\vphi)-\frac{1}{V}\int_X \tilde f_\eps (\om^n-\om_\vphi^n), \quad \forall \vphi\in \mathcal H_\Om.
\end{align}
Here $\tilde f_\eps=-(\beta-1)\log S_\eps-h_\theta-c_\eps$.
\end{defn}

\begin{lem}[\cite{arXiv:1803.09506} Lemma 3.12]\label{1st derivative}
The first derivative of the approximate log $K$-energy is
\begin{align*}
\p_t\nu^\eps_\beta=\frac{1}{V}\int_X \vphi_t\left[\tri_\vphi \log\frac{\om^n_\vphi}{\om^n}-\tr_\vphi(\theta-i\p\bar\p \tilde f_\eps)+\underline S_\b\right]
\omega_\vphi^{n} .
\end{align*}
Its critical point satisfies the approximate degenerate cscK equation \eqref{Degenerate cscK 1 approximation const}.
\end{lem}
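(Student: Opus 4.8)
The statement to prove is Lemma~\ref{1st derivative}, computing the first variation of the approximate log $K$-energy and identifying its critical points with solutions of the approximate degenerate cscK equation \eqref{Degenerate cscK 1 approximation const}.

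\medskip

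The plan is to differentiate each of the three pieces of $\nu^\eps_\beta$ along a smooth path $\vphi_t \in \mathcal H_\Om$ and add the results. First I would recall the standard variational formulae: writing $\dot\vphi = \p_t\vphi$, one has $\p_t\frac{1}{V}\int_X \dot\vphi\,\om_\vphi^n$-type identities, and crucially $\p_t E_1(\vphi) = \frac{1}{V}\int_X \dot\vphi\,(-\tri_\vphi\log\frac{\om^n_\vphi}{\om^n}+ \text{Ricci terms})\,\om_\vphi^n$; more precisely, the entropy term $E_1$ has the well-known first variation $\p_t E_1 = \frac{1}{V}\int_X \dot\vphi\big(\tri_\vphi\log\frac{\om^n_\vphi}{\om^n} - \tr_\vphi \Ric(\om)\big)\om_\vphi^n$ after integration by parts (using $\p_t \om_\vphi^n = \tri_\vphi\dot\vphi\,\om_\vphi^n$ and $\p_t\log\frac{\om^n_\vphi}{\om^n} = \tri_\vphi\dot\vphi$). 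Next, for the $J_{-\theta}$ term I would use the standard fact that $\p_t J_\chi(\vphi) = \frac{1}{V}\int_X \dot\vphi\,(\underline\chi\,\om_\vphi^n - n\,\chi\wedge\om_\vphi^{n-1})/\dots$; concretely $\p_t j_\chi(\vphi) = \frac{1}{V}\int_X \dot\vphi\, n\,\chi\wedge\om_\vphi^{n-1}$ and $\p_t D_\om(\vphi) = \frac{1}{V}\int_X \dot\vphi\,\om_\vphi^n$, so that $\p_t J_{-\theta}(\vphi) = \frac{1}{V}\int_X \dot\vphi\big({-}n\,\theta\wedge\om_\vphi^{n-1} + \underline\theta\,\om_\vphi^n\big) = \frac{1}{V}\int_X \dot\vphi\big(-\tr_{\vphi}\theta + \underline S_\b\big)\om_\vphi^n$, using $n\,\theta\wedge\om_\vphi^{n-1}/\om_\vphi^n = \tr_\vphi\theta$ and $\underline\theta = \underline S_\b$ since $\theta \in C_1(X,D)$. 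Finally the third term $-\frac1V\int_X \tilde f_\eps(\om^n - \om_\vphi^n)$ has $t$-derivative $\frac1V\int_X \tilde f_\eps\,\p_t\om_\vphi^n = \frac1V\int_X \tilde f_\eps\,\tri_\vphi\dot\vphi\,\om_\vphi^n = \frac1V\int_X \dot\vphi\,\tri_\vphi\tilde f_\eps\,\om_\vphi^n$ after integrating by parts (the $\om^n$ piece is $t$-independent).

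\medskip

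Then I would assemble: $\p_t\nu^\eps_\beta = \frac1V\int_X \dot\vphi\Big[\tri_\vphi\log\frac{\om^n_\vphi}{\om^n} - \tr_\vphi\Ric(\om) - \tr_\vphi\theta + \underline S_\b + \tri_\vphi\tilde f_\eps\Big]\om_\vphi^n$. Now I would use $\tri_\vphi\tilde f_\eps = \tr_\vphi(i\p\bar\p\tilde f_\eps)$ together with $\Ric(\om) = \theta + (1-\beta)\Theta_D + i\p\bar\p h_\theta$ from \eqref{h0} and the definitions $\tilde f_\eps = -(\beta-1)\log S_\eps - h_\theta - c_\eps$, $\mathfrak h_\eps = (\beta-1)\log S_\eps$, to simplify $-\tr_\vphi\Ric(\om) + \tri_\vphi\tilde f_\eps - \tr_\vphi\theta$. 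The point is that $i\p\bar\p\tilde f_\eps = -i\p\bar\p h_\theta - i\p\bar\p\mathfrak h_\eps$, so $-\Ric(\om) + i\p\bar\p\tilde f_\eps = -\theta - (1-\beta)\Theta_D - i\p\bar\p h_\theta - i\p\bar\p h_\theta - i\p\bar\p\mathfrak h_\eps$; hmm, I should be careful and instead note that what we want to land on is $-\tr_\vphi(\theta - i\p\bar\p\tilde f_\eps)$, i.e. the bracket should reduce to $\tri_\vphi\log\frac{\om^n_\vphi}{\om^n} - \tr_\vphi(\theta - i\p\bar\p\tilde f_\eps) + \underline S_\b$, which forces $-\tr_\vphi\Ric(\om) + \tri_\vphi\tilde f_\eps = \tr_\vphi(i\p\bar\p\tilde f_\eps) - \tr_\vphi\theta$, equivalently $\tr_\vphi\Ric(\om) = \tr_\vphi\theta$ — so here I would recognize that the entropy is taken \emph{relative to $\om_\theta$} in the original $K$-energy but in \eqref{log K energy approximation equation} relative to $\om$, and the Ricci-type form appearing is actually $\theta - i\p\bar\p\tilde f_\eps$ by construction of $\om_{\theta_\eps}$ via \eqref{Ricomthetaeps}, $\Ric(\om_{\theta_\eps}) = \theta + (1-\beta)\Theta_D + (1-\beta)i\p\bar\p\log S_\eps$. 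This bookkeeping, reconciling the background forms so that the coefficient of $\tr_\vphi$ in the bracket is exactly $-(\theta - i\p\bar\p\tilde f_\eps)$, is the one genuinely delicate step; everything else is the routine variational calculus above. Alternatively, since the lemma is cited as \cite[Lemma 3.12]{arXiv:1803.09506}, one may simply observe that the formula differs from the $0<\beta\le1$ case only in the sign of $\beta-1$ inside $\tilde f_\eps$ and $\mathfrak h_\eps$, and the same computation goes through verbatim.

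\medskip

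For the second assertion, setting $\p_t\nu^\eps_\beta = 0$ for all variations $\dot\vphi$ forces $\tri_\vphi\log\frac{\om^n_\vphi}{\om^n} = \tr_\vphi(\theta - i\p\bar\p\tilde f_\eps) - \underline S_\b$ pointwise (on $M$, where everything is smooth), by the fundamental lemma of the calculus of variations applied with the measure $\om_\vphi^n$; combined with $\om^n_{\vphi_\eps} = e^{\tilde F_\eps}\om^n$, i.e. $\tilde F_\eps = \log\frac{\om^n_\vphi}{\om^n}$, this is precisely the second equation of \eqref{Degenerate cscK 1 approximation const}. I expect no obstacle here beyond noting that critical points are a priori only smooth on $M = X\setminus D$, so the identity is asserted there; the first equation of \eqref{Degenerate cscK 1 approximation const} is just the definition of $\tilde F_\eps$ and carries no content.
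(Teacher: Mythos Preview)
Your overall strategy is correct and is the standard one (the paper itself gives no proof here, only a citation to \cite{arXiv:1803.09506}), but there is a genuine error in your computation of $\p_t E_1$ that causes the confusion in the middle of your proposal.

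The first variation of the entropy $E_1(\vphi)=\frac{1}{V}\int_X \log\frac{\om_\vphi^n}{\om^n}\,\om_\vphi^n$ is
\[
\p_t E_1=\frac{1}{V}\int_X (\tri_\vphi\dot\vphi)\,\om_\vphi^n+\frac{1}{V}\int_X \log\frac{\om_\vphi^n}{\om^n}\,(\tri_\vphi\dot\vphi)\,\om_\vphi^n
=\frac{1}{V}\int_X \dot\vphi\,\tri_\vphi\log\frac{\om_\vphi^n}{\om^n}\,\om_\vphi^n,
\]
with \emph{no} $-\tr_\vphi\Ric(\om)$ term. You seem to have conflated the variation of $E_1$ with the variation of the full Mabuchi energy $\nu_1=E_1+J_{-\Ric(\om)}$; the Ricci term in the latter comes from the $J_{-\Ric(\om)}$ piece, not from the entropy. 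In $\nu^\eps_\beta$ the $J$-term is $J_{-\theta}$, so its variation contributes $-\tr_\vphi\theta+\underline S_\b$ as you correctly wrote, and there is no Ricci term anywhere.

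With this correction the assembly is immediate:
\[
\p_t\nu^\eps_\beta=\frac{1}{V}\int_X\dot\vphi\Big[\tri_\vphi\log\frac{\om_\vphi^n}{\om^n}-\tr_\vphi\theta+\underline S_\b+\tri_\vphi\tilde f_\eps\Big]\om_\vphi^n
=\frac{1}{V}\int_X\dot\vphi\Big[\tri_\vphi\log\frac{\om_\vphi^n}{\om^n}-\tr_\vphi(\theta-i\p\bar\p\tilde f_\eps)+\underline S_\b\Big]\om_\vphi^n,
\]
and no ``delicate bookkeeping'' with $\Ric(\om)$, $\Theta_D$, or \eqref{h0} is needed at all. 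Your argument for the second assertion (critical points satisfy \eqref{Degenerate cscK 1 approximation const}) is fine.
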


\begin{lem}\label{uniform energy bound}
Let $\vphi_\eps$ be the solution to the approximate degenerate cscK equation \eqref{Degenerate cscK 1 approximation const} with bounded entropy $E^\beta_{t,\eps}=\frac{1}{V}\int_X F_{\eps}\om_{\vphi_{\eps}}^n$. Then
\begin{align}
\nu^\eps_\beta(\vphi_\eps)\geq \nu_\beta(\vphi_\eps)-C.
\end{align}
\end{lem}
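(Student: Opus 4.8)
The plan is to compare the approximate log $K$-energy $\nu^\eps_\beta$ with the genuine log $K$-energy $\nu_\beta$ directly from their integral expressions, and to show that the $\eps$-dependent error is controlled purely by the $L^\infty$-estimate of $F_\eps$ (equivalently the entropy bound on $\om_{\vphi_\eps}^n$). By \corref{K and log K} we have
\begin{align*}
\nu_\beta(\vphi)=E_1(\vphi)+J_{-\theta}(\vphi)+\frac{1}{V}\int_M(\mathfrak h+h_\theta)(\om^n-\om_\vphi^n),
\end{align*}
while by Definition \ref{log K energy approximation},
\begin{align*}
\nu^\eps_\beta(\vphi)=E_1(\vphi)+J_{-\theta}(\vphi)-\frac{1}{V}\int_X\tilde f_\eps(\om^n-\om_\vphi^n).
\end{align*}
Since the $E_1$ and $J_{-\theta}$ terms agree, subtracting gives
\begin{align*}
\nu^\eps_\beta(\vphi_\eps)-\nu_\beta(\vphi_\eps)=\frac{1}{V}\int_X\bigl[-\tilde f_\eps-(\mathfrak h+h_\theta)\bigr](\om^n-\om_{\vphi_\eps}^n).
\end{align*}
First I would simplify the integrand: recalling $\tilde f_\eps=-(\beta-1)\log S_\eps-h_\theta-c_\eps$ and $\mathfrak h=-(1-\beta)\log|s|_h^2=(\beta-1)\log|s|_h^2$, the $h_\theta$ cancels and one is left with
\begin{align*}
-\tilde f_\eps-(\mathfrak h+h_\theta)=(\beta-1)\log S_\eps+c_\eps-(\beta-1)\log|s|_h^2=(\beta-1)\log\frac{S_\eps}{|s|_h^2}+c_\eps.
\end{align*}
Since $S_\eps=|s|_h^2+\eps\geq|s|_h^2$ and $\beta>1$, the function $(\beta-1)\log(S_\eps/|s|_h^2)$ is \emph{nonnegative}, and $c_\eps$ is a uniformly bounded normalising constant (it is determined by $\int_X e^{h_\theta+\mathfrak h_\eps+c_\eps}\om^n=V$ and converges as $\eps\to 0$). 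So the error term splits as a nonnegative piece times $(\om^n-\om_{\vphi_\eps}^n)$ plus a bounded-constant piece times $(\om^n-\om_{\vphi_\eps}^n)$.

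The second step is to control these two pieces. The $c_\eps$ piece is trivial: $\frac{1}{V}\int_X c_\eps(\om^n-\om_{\vphi_\eps}^n)=c_\eps\cdot\frac{1}{V}(V-V)=0$ since $[\om_{\vphi_\eps}]=[\om]$, so it contributes nothing (or at worst a term $O(|c_\eps|)$ which is uniformly bounded if one is sloppy about $\int\om_{\vphi_\eps}^n$). For the main piece, write $w_\eps:=(\beta-1)\log(S_\eps/|s|_h^2)\geq 0$; we must bound $\frac{1}{V}\int_X w_\eps(\om^n-\om_{\vphi_\eps}^n)=\frac{1}{V}\int_X w_\eps\,\om^n-\frac{1}{V}\int_X w_\eps\,\om_{\vphi_\eps}^n$ from below. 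The term $-\frac{1}{V}\int_X w_\eps\,\om_{\vphi_\eps}^n\geq-\frac{1}{V}\int_X w_\eps\,\om^n\cdot\|e^{F_\eps}\cdot(\om_{\theta_\eps}^n/\om^n)\|$ — more precisely, using $\om_{\vphi_\eps}^n=e^{\tilde F_\eps}\om^n$ with $\tilde F_\eps\leq C$ from \lemref{Laplacian estimate: F}, we get $\int_X w_\eps\,\om_{\vphi_\eps}^n\leq e^C\int_X w_\eps\,\om^n$. Hence it suffices to show $\int_X w_\eps\,\om^n$ is uniformly bounded: but $0\leq w_\eps=(\beta-1)\log(1+\eps/|s|_h^2)$ is dominated pointwise by $(\beta-1)\log(1+\eps/|s|_h^2)$, which converges monotonically to $0$ a.e. and is bounded by the $\eps=\eps_0$ value, an $L^1(\om^n)$ function since $\log|s|_h^2\in L^1$ near $D$. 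So $\int_X w_\eps\,\om^n\leq C$ uniformly, giving $\nu^\eps_\beta(\vphi_\eps)-\nu_\beta(\vphi_\eps)\geq-C$, which is the claim.

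The main obstacle — and the place where the entropy hypothesis genuinely enters — is controlling $\int_X w_\eps\,\om_{\vphi_\eps}^n$, i.e.\ integrating the (mildly singular, but in our case $\eps$-regularised and nonnegative) weight against the solution's volume form rather than the background $\om^n$. If one did not have the $L^\infty$-bound on $\tilde F_\eps$ (equivalently on $F_\eps$, equivalently the entropy bound), this step could fail. In the present setup $\|F_\eps\|_\infty\leq C$ is exactly what "bounded entropy $E^\beta_\eps$" plus the $L^\infty$-theory (\thmref{L infty estimates degenerate equation}) provides, so one reduces cleanly to the $\om^n$-integral, which is elementary by the uniform $L^1$-domination above. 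I would also double-check the sign bookkeeping in the definition of $\tilde f_\eps$ versus $\mathfrak h$ and $f=-h_\theta-\mathfrak h$ to make sure the cancellation of $h_\theta$ is exact and the residual weight has the favourable sign $\beta-1>0$; this is the one spot where an off-by-a-sign error would reverse the inequality.
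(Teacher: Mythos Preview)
Your proposal is correct and follows essentially the same approach as the paper's proof: you compute the difference $\nu^\eps_\beta-\nu_\beta$ as the integral of $(\beta-1)\log(S_\eps/|s|_h^2)+c_\eps$ against $\om^n-\om_{\vphi_\eps}^n$, observe that the $c_\eps$ part vanishes and the remaining integrand is nonnegative, and then use the volume ratio bound $e^{\tilde F_\eps}\leq C$ from \lemref{Laplacian estimate: F} (which is where the entropy hypothesis enters via the $L^\infty$-estimate of $F_\eps$) to reduce the $\om_{\vphi_\eps}^n$-integral to the uniformly bounded $\om^n$-integral. The only cosmetic difference is that you spell out the uniform $L^1$ bound on $w_\eps$ via monotone domination, while the paper records it as a single constant $C_2$.
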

\begin{proof}
Comparing the log $K$-energy with its approximation \eqref{log K energy approximation equation}, we have
\begin{align*}
\nu_\beta^\eps-\nu_\beta=\frac{1}{V}\int_M [(\b-1)(\log S_\eps-\log |s|_h^2)+c_\eps](\om^n-\om_\vphi^n).
\end{align*} 
We see that
\begin{align*}
\frac{\beta-1}{V}\int_X( \log S_\eps - \log |s|_h^2)\om^n\geq 0.
\end{align*}
We use the volume ratio bound from Corollary \ref{Laplacian estimate: F} and the $L^\infty$-estimate of $F$ in \thmref{L infty estimates Singular equation} to show that
\begin{align*}
\frac{\beta-1}{V}\int_X(\log S_\eps-\log |s|_h^2 )\om_{\vphi_\eps}^n
&=\frac{\beta-1}{V}\int_X(\log S_\eps-\log |s|_h^2)e^{\tilde F_\eps} \om^n\\
&\leq C_1 \frac{\beta-1}{V}\int_X( \log S_\eps-\log |s|_h^2) \om^n\leq C_2.
\end{align*}
In which, the constants $C_1, C_2$ are independent of $\eps$. Thus the lemma is proved.
\end{proof}

We compute the second variation of the approximate log $K$-energy and obtain the upper bound of $\nu^\eps_\beta(\vphi_\eps)$.
\begin{prop}\label{upper bound of approximate log K energy prop}
The second derivative of the approximate log $K$-energy at the critical point is
\begin{align*}
\delta^2 \nu^\eps_\beta(u,v)
&=\frac{1}{V}\int_X (\p\p u, \p\p v)\omega_\vphi^{n}+\frac{1}{V}\int_X [Ric(\om)-\theta+i\p\bar\p\tilde f_\eps](\p u, \p v)\om^n_\vphi.
\end{align*}
\end{prop}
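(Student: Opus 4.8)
The plan is to differentiate the first variation formula from \lemref{1st derivative} once more in the direction $v$, evaluating at the critical point where the approximate degenerate cscK equation \eqref{Degenerate cscK 1 approximation const} holds. First I would recall that $\p_t \nu^\eps_\beta = \frac{1}{V}\int_X \vphi_t\left[\tri_\vphi \log\frac{\om^n_\vphi}{\om^n}-\tr_\vphi(\theta-i\p\bar\p \tilde f_\eps)+\underline S_\b\right]\omega_\vphi^{n}$, and write the bracketed quantity as $-\mathcal{S}(\vphi)$, a perturbed scalar-curvature-type operator (indeed $\tri_\vphi\log\frac{\om^n_\vphi}{\om^n} = -S(\om_\vphi) + \tr_\vphi Ric(\om)$, so $\mathcal{S}(\vphi) = S(\om_\vphi) - \tr_\vphi(Ric(\om) - \theta + i\p\bar\p\tilde f_\eps) - \underline S_\b$ up to sign conventions). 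Then $\delta^2\nu^\eps_\beta(u,v)$ equals $\frac{1}{V}\int_X u\,(D\mathcal{S}|_\vphi)(v)\,\om_\vphi^n$ plus a term coming from differentiating the volume form $\om_\vphi^n$, which vanishes at a critical point since $\mathcal{S}(\vphi)=0$ there.

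The core computation is therefore the linearisation of $\mathcal{S}$ at $\vphi$. This is classical: the linearisation of the scalar curvature $S(\om_\vphi)$ in the direction $v$ is $-\mathcal{D}^*\mathcal{D}v - (\nabla S, \nabla v)_\vphi$ in Lichnerowicz-operator form, but for the purposes of this identity I would instead use the more elementary route of integrating against $u$ and integrating by parts twice, which produces the Lichnerowicz bilinear form $\frac{1}{V}\int_X(\p\p u,\p\p v)\om_\vphi^n$ symmetrically, plus lower-order curvature terms. The key steps in order: (i) differentiate $\log\frac{\om^n_\vphi}{\om^n}$ to get $\tri_\vphi v$, hence $\tri_\vphi\log\frac{\om^n_\vphi}{\om^n}$ linearises to $\tri_\vphi\tri_\vphi v$ minus correction terms from differentiating $\tri_\vphi$ itself; (ii) differentiate $\tr_\vphi\chi$ for the fixed form $\chi := \theta - i\p\bar\p\tilde f_\eps$, using $\delta(g^{i\bar j}_\vphi) = -g^{i\bar k}_\vphi g^{l\bar j}_\vphi \p_k\p_{\bar l}v$, giving $-\langle i\p\bar\p v, \chi\rangle_\vphi$; (iii) integrate the resulting expression against $u\,\om_\vphi^n$ and integrate by parts to symmetrise, at which point the fourth-order terms collapse into $\int_X(\p\p u,\p\p v)_\vphi\om_\vphi^n$ and the Ricci curvature of $\om_\vphi$ appears through commuting derivatives (the Bochner–Kodaira identity $\tri_\vphi\tri_\vphi v + \text{curvature} = \mathcal{D}^*\mathcal{D}v$-type manipulation); (iv) rewrite $Ric(\om_\vphi)$ in terms of $Ric(\om)$ via $Ric(\om_\vphi) = Ric(\om) - i\p\bar\p\log\frac{\om^n_\vphi}{\om^n}$, and use the critical point equation to trade $\log\frac{\om^n_\vphi}{\om^n}$ against $\chi$, so that the net curvature contribution becomes exactly $Ric(\om) - \theta + i\p\bar\p\tilde f_\eps$ paired with $(\p u,\p v)$.

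The main obstacle I expect is bookkeeping in step (iii)–(iv): correctly tracking how the Ricci term of $\om_\vphi$ is generated when commuting covariant derivatives in the double Laplacian, and then converting it back to the reference metric $\om$ using the Monge–Ampère part of \eqref{Degenerate cscK 1 approximation const}. In particular one must verify that the first-order terms $(\nabla S(\om_\vphi),\nabla v)$ and the analogous terms from $\chi$ either cancel or are absorbed into the stated $Ric(\om)-\theta+i\p\bar\p\tilde f_\eps$ coefficient — this is where the specific structure of the perturbed energy (the fact that $\tilde f_\eps$ enters precisely so as to make $\mathcal{S}$ a genuine gradient) does the work. Since this is the approximate (smooth) setting with $\vphi_\eps$ smooth, no regularity subtleties intervene and all integrations by parts are legitimate; the identity is then a formal consequence of the computation, mirroring the classical second variation of the Mabuchi $K$-energy with $Ric(\om)$ replaced by $\theta - i\p\bar\p\tilde f_\eps$ in the appropriate slot.
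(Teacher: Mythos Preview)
The paper states this proposition without proof, treating it as a standard second-variation computation for a twisted $K$-energy (the first variation is cited from \cite{arXiv:1803.09506}, and the second variation is simply asserted). Your proposal outlines exactly the classical route one would take: differentiate the first variation from \lemref{1st derivative}, drop the term coming from $\delta(\om_\vphi^n)$ since the bracket vanishes at the critical point, linearise $S(\om_\vphi)$ and $\tr_\vphi(\theta-i\p\bar\p\tilde f_\eps)$, and integrate by parts so that the fourth-order piece becomes $\int_X(\p\p u,\p\p v)_\vphi\om_\vphi^n$ while the commutator of covariant derivatives produces $Ric(\om_\vphi)$, which you then rewrite via $Ric(\om_\vphi)=Ric(\om)-i\p\bar\p\log\frac{\om_\vphi^n}{\om^n}$ and the critical-point equation. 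This is correct and is the expected argument; there is no alternative approach in the paper to compare against.

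One small clarification worth making explicit in your write-up: in step (iv) you do \emph{not} actually need the critical-point equation to convert $Ric(\om_\vphi)$ into $Ric(\om)-\theta+i\p\bar\p\tilde f_\eps$; what happens is that the linearisation of $-S(\om_\vphi)+\tr_\vphi\alpha$ (with $\alpha=Ric(\om)-\theta+i\p\bar\p\tilde f_\eps$) paired against $u$ and integrated by parts yields $\int(\p\p u,\p\p v)+\int(Ric(\om_\vphi)-\alpha)(\p u,\p v)$ plus a term $\int u\,(\nabla(\tr_\vphi\alpha - S),\nabla v)$, and it is \emph{this} last first-order term that vanishes because $S(\om_\vphi)-\tr_\vphi\alpha=\underline S_\b$ is constant at the critical point. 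The $Ric(\om_\vphi)$ does not get replaced; rather, the twist $\alpha$ was already sitting there from the $J_{-\theta}$ and $\tilde f_\eps$ pieces of the functional. With that bookkeeping straightened out, your outline goes through. Hmm, wait --- re-examining the target formula, the curvature coefficient is $\alpha=Ric(\om)-\theta+i\p\bar\p\tilde f_\eps$, not $Ric(\om_\vphi)-\alpha$. So in fact the Bochner step gives $Ric(\om_\vphi)$, the linearisation of $\tr_\vphi\alpha$ contributes $-\alpha$ after one integration by parts, and then $Ric(\om_\vphi)=Ric(\om)-i\p\bar\p\tilde F_\eps$ combined with the critical equation $\tri_\vphi\tilde F_\eps=\tr_\vphi(\theta-i\p\bar\p\tilde f_\eps)-\underline S_\b$ is what collapses everything to the stated $\alpha$. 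Your step (iv) is therefore needed after all, and your description of it is accurate.
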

Note that, by \eqref{Ricomthetaeps}, we have $ i\p\bar\p \log S_\eps\geq- \frac{|s|^2_h}{S_\eps}\Theta_D$ and
\begin{align*}
Ric(\om)-\theta+i\p\bar\p\tilde f_\eps=Ric(\om_{\theta_\eps})-\theta=(1-\beta)\Theta_D+(1-\beta)i\p\bar\p\log S_\eps.
\end{align*} 
When $\beta\leq 1$, we have 
\begin{align*}
Ric(\om)-\theta+i\p\bar\p\tilde f_\eps\geq (1-\beta)\Theta_D-(1-\beta) \frac{|s|^2_h}{S_\eps}\Theta_D=(1-\beta)\frac{\eps}{S_\eps}\Theta_D.
\end{align*}
\begin{cor}
The approximate log $K$-energy is convex at its critical points, when $\beta=1$, or $0<\beta< 1$ and $\Theta_D\geq 0$.
\end{cor}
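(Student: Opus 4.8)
The plan is to read off convexity directly from the second-variation formula in Proposition \ref{upper bound of approximate log K energy prop} together with the curvature computation displayed immediately after it. Recall that
\[
\delta^2 \nu^\eps_\beta(u,v)=\frac{1}{V}\int_X (\p\p u, \p\p v)\om^n_\vphi+\frac{1}{V}\int_X [Ric(\om)-\theta+i\p\bar\p\tilde f_\eps](\p u,\p v)\om^n_\vphi,
\]
and that the displayed identity rewrites the curvature term as $Ric(\om)-\theta+i\p\bar\p\tilde f_\eps=Ric(\om_{\theta_\eps})-\theta=(1-\beta)\Theta_D+(1-\beta)i\p\bar\p\log S_\eps$. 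The first term $\frac{1}{V}\int_X(\p\p u,\p\p u)\om^n_\vphi$ is manifestly nonnegative for any $\vphi$, so everything comes down to showing the second term is nonnegative for the two special values of $\beta$ in the statement.

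First I would treat the case $\beta=1$: here $(1-\beta)=0$, so the curvature term vanishes identically and $\delta^2\nu^\eps_\beta(u,u)=\frac{1}{V}\int_X(\p\p u,\p\p u)\om^n_\vphi\geq 0$, which is exactly convexity at a critical point. Next I would treat the case $0<\beta<1$ with $\Theta_D\geq 0$: combining the bound $i\p\bar\p\log S_\eps\geq -\frac{|s|_h^2}{S_\eps}\Theta_D$ from \lemref{h eps} (which is also recalled just before the corollary via \eqref{Ricomthetaeps}) with $1-\beta>0$, one gets
\[
Ric(\om)-\theta+i\p\bar\p\tilde f_\eps\geq (1-\beta)\Theta_D-(1-\beta)\frac{|s|_h^2}{S_\eps}\Theta_D=(1-\beta)\frac{\eps}{S_\eps}\Theta_D\geq 0,
\]
using $\Theta_D\geq 0$ and $\frac{\eps}{S_\eps}>0$ in the last step. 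Pairing this nonnegative $(1,1)$-form against $(\p u,\p u)$ (which is a nonnegative quantity pointwise since it is $|\p u|^2$ measured in the form) and integrating against $\om^n_\vphi>0$ gives a nonnegative contribution, so again $\delta^2\nu^\eps_\beta(u,u)\geq 0$.

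In both cases I would conclude by noting that a functional whose second variation is nonnegative at a critical point is convex there, which is the assertion of the corollary. This argument is essentially a bookkeeping exercise assembling Proposition \ref{upper bound of approximate log K energy prop}, the identity \eqref{Ricomthetaeps}, and \lemref{h eps}; there is no genuine obstacle. The only point requiring a little care is the sign chase for $0<\beta<1$: one must track that the factor $1-\beta$ is positive (so the inequality from \lemref{h eps} is not reversed) and that the leftover term $(1-\beta)\frac{\eps}{S_\eps}\Theta_D$ is nonnegative precisely under the hypothesis $\Theta_D\geq 0$ — it is exactly here that the semipositivity assumption on the curvature of $L_D$ is used, and dropping it would break the argument.
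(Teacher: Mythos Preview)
Your proof is correct and follows essentially the same approach as the paper. The paper in fact displays this very computation in the text between Proposition \ref{upper bound of approximate log K energy prop} and the corollary (rewriting $Ric(\om)-\theta+i\p\bar\p\tilde f_\eps$ via \eqref{Ricomthetaeps} and then invoking the lower bound from \lemref{h eps} to get $(1-\beta)\frac{\eps}{S_\eps}\Theta_D$), and the corollary is stated without a separate proof; your write-up makes that implicit deduction explicit, including the trivial $\beta=1$ case and the observation that $\Theta_D\geq 0$ is exactly what makes the residual term nonnegative.
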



\section{Laplacian estimate for degenerate scalar curvature equation}\label{A priori estimates for approximate degenerate cscK equations}
\begin{thm}[Laplacian estimate]\label{cscK Laplacian estimate}
Suppose that $\vphi_\eps$ is an almost admissible solution to \eqref{Degenerate cscK 1 approximation} and the gradient estimate $\|\p F_\eps\|_{L^\infty(\om_{\vphi_\eps})}$ holds. 
Assume $\beta>1$ and the degenerate exponent satisfies that
\begin{equation}\label{Laplacian estimate: degenerate exponent}
\left\{
\begin{aligned}
 &\sigma_D=0,\text{ when }\beta>\frac{n+1}{2};\\
&\sigma_D>0,\text{ when } \beta\leq\frac{n+1}{2}.
   \end{aligned}
\right.
\end{equation} 
Then there exists a uniform constant $C$ such that
\begin{align}\label{Laplacian estimate gamma}
\tr_\om\om_{\vphi_\eps}\leq C \cdot S_\eps^{-\sigma_D}.
\end{align}
The uniform constant $C$ depends on the gradient estimate of $\vphi_\eps$, the $W^{2,p}$-estimate, $\beta, \sigma_D, c_\eps, n$ and
\begin{align*}
&\inf_{i\neq j}R_{i\bar i j\bar j}(\om), \quad C_S(\om), \quad \Theta_D,\quad \sup_X  S_\eps,\quad \sup_X  S_\eps^{-\frac{1}{2}}|\p S_\eps|_\om,\quad \|h_\theta+c_{\eps} \|_{C^2(\om)}.
\end{align*}

\end{thm}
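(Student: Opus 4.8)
The plan is to establish the weighted Laplacian bound \eqref{Laplacian estimate gamma} by the integration method outlined in the introduction, carried out in seven steps. Throughout, write $u=\tr_\om\om_{\vphi_\eps}$ and let $w=S_\eps^{\sigma_D}$ be the weight. First I would run the Chern--Lu/Aubin--Yau computation for $\tri_{\vphi_\eps}\log u$, using $\inf_{i\neq j}R_{i\bar i j\bar j}(\om)$ to control the curvature term and \lemref{Laplacian estimate: F} together with \lemref{Laplacian estimate: p F} to absorb the terms involving $\tri F_\eps$ and $|\p F_\eps|^2$; the output is a differential inequality of the form $\tri_{\vphi_\eps}\log u\geq -C\tr_{\vphi_\eps}\om-C-(\text{terms in }\tri F_\eps)$. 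Multiplying by a test function of the shape $u^{p}w^{\alpha}$ (with $\alpha$ depending on $\sigma_D$ and a parameter $\sigma$), integrating against $\om_{\vphi_\eps}^n=e^{\tilde F_\eps}\om^n$, and integrating by parts converts this into an $L^2$-type integral inequality with weights — this is Step 1 (Proposition \ref{Laplacian estimate: integration inequality pro}) and Step 2, the latter handling the $\tri F_\eps$ term by a further integration by parts at the cost of losing part of the weight, exactly as flagged in Remark \ref{4th term}.

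In Step 3, I would feed the resulting inequality into the Sobolev inequality of $(X,\om)$ (constant $C_S(\om)$) to obtain a rough reverse-Hölder/iteration inequality of the form $\|u\,w^{a}\|_{L^{p\chi}}\leq (Cp)^{?}\|u\,w^{b}\|_{L^{p}}^{?}$ for the Sobolev exponent $\chi=\frac{n}{n-1}$, but with a mismatch between the weights $w^a$ and $w^b$ (Remark \ref{Laplacian estimate: lose weight}). Steps 4 and 5 repair this: Step 4 introduces the auxiliary parameters $\sigma,\gamma$ and proves a family of ``inverse weighted'' inequalities, i.e. inequalities that trade a higher power of $u$ against a larger negative power of $S_\eps$, using the $W^{2,p}$-estimate \eqref{almost admissible w2p sigmaD} (whose exponent $\sigma^2_D=(\beta-1)\frac{n-2}{n-1+p^{-1}}$ is precisely what makes the bookkeeping close); Step 5 then pins down the sub-critical exponent for the weight by computing, as a function of $\beta$ and $n$, how large a negative power of $S_\eps$ can be integrated against the Monge--Ampère measure — this is where the threshold $\beta=\frac{n+1}{2}$ and the dichotomy in \eqref{Laplacian estimate: degenerate exponent} emerge: when $\beta>\frac{n+1}{2}$ the surplus integrability lets one take $\sigma_D=0$, while for $1<\beta\leq\frac{n+1}{2}$ one must keep a strictly positive $\sigma_D$ to absorb the borderline weight loss.

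Combining Steps 3--5 yields, in Step 6, a genuine Moser iteration inequality $\|u\,S_\eps^{\sigma_D}\|_{L^{p_{k+1}}}\leq (C p_k)^{C/p_k}\|u\,S_\eps^{\sigma_D}\|_{L^{p_k}}$ along a geometric sequence $p_k=p_0\chi^k$, and Step 7 iterates it: the product $\prod_k (Cp_k)^{C/p_k}$ converges, and the starting $L^{p_0}$-norm is finite by the $W^{2,p}$-estimate with $p=p_0$, so $\|u\,S_\eps^{\sigma_D}\|_{L^\infty}\leq C$, which is \eqref{Laplacian estimate gamma}. All constants are manifestly controlled by the quantities listed in the statement. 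The main obstacle is Step 4--5: making the weight exponents match through the integration-by-parts, the Sobolev step, and the $W^{2,p}$ input simultaneously, so that the iteration inequality has the \emph{same} weight $S_\eps^{\sigma_D}$ on both sides and the sub-critical condition on $\sigma_D$ reduces exactly to \eqref{Laplacian estimate: degenerate exponent}; everything else is a (careful but routine) bookkeeping of exponents and repeated use of $|\p S_\eps|_\om\leq C S_\eps^{1/2}$ and the bounds of \lemref{Laplacian estimate: F}--\lemref{Laplacian estimate: p F}.
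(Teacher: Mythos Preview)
Your plan follows the paper's architecture closely and would succeed, but one mechanism is misidentified and would stall you if you tried to execute it as written.

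In your Step 4 you say the inverse weighted inequalities are obtained ``using the $W^{2,p}$-estimate \eqref{almost admissible w2p sigmaD}''. That is not how the paper (or, in fact, how the argument can) work. The $W^{2,p}$-estimate only gives a bound on $\int (\tr_\om\om_\vphi)^p S_\eps^{\sigma_D^2}\om^n$; it cannot \emph{absorb} the bad terms on the right of the integral inequality, because those bad terms carry the same power of $u$ but a worse $S_\eps$-weight. The actual absorbing device is the positive term
\[
LHS_2=\int_X \tilde u^{2p}\,u^{\frac{n}{n-1}}\,S_\eps^{-\frac{\gamma+\beta-1}{n-1}}\,\om_\vphi^n,
\]
which is produced in Step 1 from the term $C_{1.4}\,u\,\tr_\vphi\om$ in the differential inequality via the elementary bound $\tr_\vphi\om\geq v^{1/(n-1)}e^{-\tilde F/(n-1)}$. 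This term has \emph{both} a strictly higher power of $u$ (namely $u^{n/(n-1)}$) \emph{and} a negative $S_\eps$-exponent, so Young's inequality lets each bad integrand $\tilde u^{2p}u^{a}S_\eps^{b}$ (with $a\leq 1$) be split as $\tau\,LHS_2+C(\tau)\int \tilde u^{2p}S_\eps^{\gamma+\sigma-k'}\om_\vphi^n$; the calculus of the resulting exponents $k'$ (Lemmas \ref{Laplacian estimate: RHS1 good}--\ref{Laplacian estimate: criteria}) is exactly where the threshold $\beta=\frac{n+1}{2}$ appears. The $W^{2,p}$-estimate is used only once, at the very end of the iteration, to bound the initial norm $\|\tilde u\|_{L^{2a\tilde p_0}(\tilde\mu)}$.

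A second, smaller correction: the iteration is not the clean Moser form you wrote. After the weighted H\"older step (Proposition \ref{Weighted inequality}) the inequality reads $\|\tilde u\|_{L^{2p\chi}(\tilde\mu)}^{2p}\leq C p^3\|\tilde u\|_{L^{2pa}(\tilde\mu)}^{2p}+C$ with a weighted measure $\tilde\mu=S_\eps^{k_\gamma\chi}\om^n$ and an exponent $1<a<\chi$, so the iteration ratio is $\chi_a=\chi/a$, not $\chi$. This is harmless for convergence but matters for tracking the weight through the iteration.
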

\begin{proof}
In this proof, we let $C_1$ to be a constant determined in \eqref{Laplacian estimate: C1}. We denote the degenerate exponent $\sigma_D$ by $\gamma$ and also set
\begin{align*}
v:=\tr_\om\om_{\vphi_\eps},\quad w:=e^{-C_1\vphi_\eps} v, \quad u:= S_\eps^\gamma w=S_\eps^\gamma e^{-C_1\vphi_\eps} \tr_\om\om_{\vphi_\eps} .
\end{align*}
We will omit the lower index $\eps$ for convenience. We will apply the integration method and the proof of this theorem will be divided into several steps as following.
\end{proof}

\subsection{Step 1: integral inequality}
We now transform the approximate degenerate scalar curvature equations \eqref{Degenerate cscK 1 approximation} into an integral inequality.
\begin{prop}[Differential inequality]\label{Laplacian estimate: integral inequality pre prop}
Assume that $\gamma$ is a nonnegative real number.
Then there exist constants $C_{1.4}$ \eqref{Laplacian estimate: C1} and $C_{1.5}$ \eqref{Laplacian estimate: C15} such that the following differential inequality holds,
\begin{align*}
\tri_\vphi u
\geq C_{1.4}u  \tr_\vphi\om+e^{-C_1\vphi}\tri F\cdot S_\eps^\gamma-C_{1.5}(1+u).
\end{align*}
\end{prop}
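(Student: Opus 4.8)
The plan is to obtain the differential inequality by applying the classical second-order Bochner/Yau-type computation to $w = e^{-C_1\vphi}\tr_\om\om_\vphi$ and then multiplying through by the weight $S_\eps^\gamma$. First I would recall the standard inequality for $\tri_\vphi(\tr_\om\om_\vphi)$ coming from the Chern--Lu / Aubin--Yau formula: on $M$ one has
\begin{align*}
\tri_\vphi\log v \geq \frac{\tri \tilde F_\eps}{v} - C\,\tr_\vphi\om,
\end{align*}
where $C$ absorbs the lower bound $\inf_{i\neq j}R_{i\bar i j\bar j}(\om)$ of the bisectional curvature of the fixed background metric $\om$ and $v=\tr_\om\om_\vphi$; here the term $\tri\tilde F_\eps$ enters because $\log\det(\om_\vphi)=\tilde F_\eps+\log\det\om$ from the first equation of \eqref{Degenerate cscK 1 approximation}. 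Combining this with $\tri_\vphi(-C_1\vphi) = -C_1(n - \tr_\vphi\om)$ and choosing $C_1$ strictly larger than the curvature constant $C$ (this choice is recorded as $C_{1.4}$ in \eqref{Laplacian estimate: C1}), the bad term $-C\,\tr_\vphi\om$ is dominated and one gets
\begin{align*}
\tri_\vphi w \geq C_{1.4}\, w\,\tr_\vphi\om + e^{-C_1\vphi}\tri\tilde F_\eps - C(1+w).
\end{align*}
Then I would pass from $\tri\tilde F_\eps$ to $\tri F_\eps$: by \lemref{Laplacian estimate: F} we have $\tri\tilde F_\eps \geq \tri F_\eps - C$, with $C$ depending on $\theta$, $\Theta_D$ and $\|h_\theta+c_\eps\|_{C^2(\om)}$, which only worsens the constant $C_{1.5}$.

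The second part is to insert the weight $S_\eps^\gamma$. Writing $u = S_\eps^\gamma w$, the product rule gives
\begin{align*}
\tri_\vphi u = S_\eps^\gamma \tri_\vphi w + w\,\tri_\vphi(S_\eps^\gamma) + 2\Re\langle \p S_\eps^\gamma, \p w\rangle_\vphi.
\end{align*}
The point is that every one of the two extra terms is controlled by $C(1+u)$, possibly plus a harmless multiple of $u\,\tr_\vphi\om$ that gets absorbed by shrinking $C_{1.4}$. For the Hessian term, $\tri_\vphi(S_\eps^\gamma) = \gamma S_\eps^{\gamma-1}\tri_\vphi S_\eps + \gamma(\gamma-1)S_\eps^{\gamma-2}|\p S_\eps|^2_\vphi$; using $i\p\bar\p S_\eps = i\p\bar\p|s|_h^2 \geq -C\om$ (from the curvature of $h$) and $|\p S_\eps|_\om \leq C S_\eps^{1/2}$ (the bound already used in \lemref{h eps}, here denoted $C_{2.4}$), one finds $|w\,\tri_\vphi(S_\eps^\gamma)| \leq C w\, S_\eps^{\gamma}\tr_\vphi\om \cdot S_\eps^{-1}\cdot S_\eps + \ldots$; the key is that the negative power $S_\eps^{-1}$ is always compensated by a factor $S_\eps$ coming either from the good bound on $i\p\bar\p S_\eps$ or from $|\p S_\eps|^2_\om \leq C S_\eps$, so that no uncontrolled negative power of $S_\eps$ survives, and the whole term is $\leq C\, u\,\tr_\vphi\om$, absorbable into $C_{1.4} u\tr_\vphi\om$ after a slight decrease of $C_{1.4}$. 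The gradient cross term is handled by Cauchy--Schwarz: $2\Re\langle \p S_\eps^\gamma,\p w\rangle_\vphi \leq \delta S_\eps^\gamma \frac{|\p w|^2_\vphi}{w} + \delta^{-1} w\, S_\eps^{\gamma-2}|\p S_\eps|^2_\vphi$, and the first piece is reabsorbed using the pointwise bound $|\p w|^2_\vphi \leq C w^2 \tr_\vphi\om$ that comes from the same Bochner computation (or more precisely is folded into it before the weight is introduced — this is the standard device of keeping the gradient-of-trace term on the good side), while the second piece is again $\leq C u\tr_\vphi\om$ by the $C_{2.4}$ bound.

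I expect the main obstacle to be \emph{bookkeeping the weight exponents precisely enough to see that no negative power of $S_\eps$ is left over}: the identity for $i\p\bar\p\log S_\eps$ in \lemref{h eps} shows that $i\p\bar\p S_\eps$ itself is only bounded below by $-C\om$ (not by $-CS_\eps\om$), so the term $\gamma S_\eps^{\gamma-1}\tri_\vphi(i\p\bar\p S_\eps)$ naively contains $S_\eps^{\gamma-1}$, a negative power when $\gamma<1$; the resolution is that one should \emph{not} expand $\tri_\vphi S_\eps^\gamma$ blindly but rather group it so that it is compared against $u\,\tr_\vphi\om = S_\eps^\gamma w\,\tr_\vphi\om$, and then the offending factor is $S_\eps^{-1}\tri_\vphi(i\p\bar\p S_\eps)$, which one bounds by $C\tr_\vphi\om \cdot S_\eps^{-1}$; this is still a negative power, so in fact the clean way is to use instead the sharper grouping $\gamma S_\eps^\gamma \cdot \frac{i\p\bar\p S_\eps}{S_\eps}$ and the inequality $\frac{i\p\bar\p S_\eps}{S_\eps} \geq \frac{i\p\bar\p |s|_h^2}{S_\eps} \geq -\frac{C|s|_h^2}{S_\eps}\om \geq -C\om$, which \emph{is} a genuine $-C\om$ bound with no negative power because $|s|_h^2 \leq S_\eps$. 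Carrying out this grouping carefully — and similarly for the $|\p S_\eps|^2$ term via $|\p S_\eps|^2_\om/S_\eps \leq C_{2.4}^2$ — is exactly where the constants $C_{1.4}$ and $C_{1.5}$ get pinned down, and it is the only delicate point; everything else is the routine Aubin--Yau estimate already used for the cone case $\beta\leq 1$ in \cite{arXiv:1803.09506}.
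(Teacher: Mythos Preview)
Your Yau-type inequality for $\log w$ and the reduction $\tri\tilde F_\eps\geq\tri F_\eps-C$ via \lemref{Laplacian estimate: F} are both fine; the gap is in the weight step, where two of your explicit claims are false and the argument does not close.

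First, there is no pointwise inequality $|\p w|^2_\vphi\leq Cw^2\tr_\vphi\om$: the gradient $\p w$ contains the third-order quantity $\p(\tr_\om\om_\vphi)$, which is not controlled by any combination of $w$ and $\tr_\vphi\om$. Your parenthetical hedge (``fold it in before the weight'') gestures in the right direction but is not what you actually wrote down. Second, and more fatally, the Cauchy--Schwarz remainder $\delta^{-1}\gamma^2 w\,S_\eps^{\gamma-2}|\p S_\eps|^2_\vphi$ is \emph{not} bounded by $Cu\,\tr_\vphi\om$. Using $|\p S_\eps|^2_\vphi\leq|\p S_\eps|^2_\om\tr_\vphi\om$ together with $|\p S_\eps|^2_\om\leq C_{2.4}^2 S_\eps$ yields only $C\,S_\eps^{-1}u\,\tr_\vphi\om$, and that stray $S_\eps^{-1}$ cannot be absorbed into $C_{1.4}u\,\tr_\vphi\om$ near $D$. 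The same negative power appears in the $\gamma(\gamma-1)S_\eps^{\gamma-2}|\p S_\eps|^2$ piece of your expansion of $\tri_\vphi S_\eps^\gamma$ when $0<\gamma<1$, which is exactly the range used later (see \eqref{Laplacian estimate: RHS2 parameters condition}).

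The paper sidesteps all of this by computing with $\log u=-C_1\vphi+\log v+\gamma\log S_\eps$ throughout. The weight then enters additively as $\gamma\,\tri_\vphi\log S_\eps$, and the lower bound $i\p\bar\p\log S_\eps\geq -C_{1.2}\om$ from \lemref{h eps} applies directly with no negative power of $S_\eps$. One obtains
\[
\tri_\vphi\log u\;\geq\;(C_1-C_{1.1}-\gamma C_{1.2})\,\tr_\vphi\om+\frac{\tri\tilde F}{v}-C_1 n,
\]
and then passes to $u$ by the identity $\tri_\vphi u=u\,\tri_\vphi\log u+u|\p\log u|^2_\vphi\geq u\,\tri_\vphi\log u$, discarding the full nonnegative term $u|\p\log u|^2_\vphi$ in one stroke. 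This is precisely what kills your cross term: expanding $|\p\log u|^2_\vphi=|\gamma\,\p\log S_\eps+\p\log w|^2_\vphi\geq 0$ shows that the two good gradient pieces (from $w$ and from $S_\eps^\gamma$) and the cross term are packaged together into a single nonnegative quantity that is simply dropped. If you insist on the product-rule route, you must keep \emph{both} good gradient terms and run Cauchy--Schwarz with $\delta=1$ so they exactly cancel the cross term --- but that is just the $\log u$ computation in disguise.
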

\begin{proof}
According to Yau's computation,
\begin{align}\label{Yau computation}
\tri_\vphi \log (\tr_{\om}\om_\vphi) &\geq \frac{g_\vphi^{k\bar l}{R^{i\bar j}}_{k\bar l}(\om)g_{\vphi i\bar j}-S(\om)+\tri \tilde F}{\tr_{\om}\om_\vphi}\notag\\
		&\geq - C_{1.1}\tr_\vphi\om+\frac{\tri \tilde F}{\tr_{\om}\om_\vphi}.
\end{align} The constant $-C_{1.1}$ is the lower bound the bisectional curvature of $\om$, i.e. $\inf_{i\neq j}R_{i\bar i j\bar j}(\om)$.
Moreover, 
\begin{align}\label{Laplacian estimate: laplacian w}
\tri_\vphi \log w
		&\geq (C_1- C_{1.1})\tr_\vphi\om+\frac{\tri \tilde F}{v}-C_1 n.
\end{align}
Also, due to \lemref{h eps}, there exists $C_{1.2}$ depending on $\Theta_D$ such that
\begin{align}\label{Laplacian estimate: C12}
\tri_\vphi \log S_\eps\geq -C_{1.2}\tr_\vphi\om.
\end{align}
Adding together, we establish the differential inequality for $u= w S_\eps^\gamma$,
\begin{align*}
\tri_\vphi \log u\geq  (C_1- C_{1.1}-\gamma C_{1.2})\tr_\vphi\om+\frac{\tri \tilde F}{v}-C_1 n.
\end{align*}Note that we need the positivity of $\gamma$, i.e. $\gamma\geq 0$.

The lower bound of $\tri\tilde F\geq \tri F-C_{1.3}$ follows from Corollary \ref{Laplacian estimate: F}. The constant $C_{1.3}$ is 
\begin{align}\label{Laplacian estimate: C13}
-C_{1.3}&:=\inf_X [\tri(h_\theta+\mathfrak h_\eps)]
=\inf_X \tri h_\theta+(\beta-1)\inf_X\tri\log S_\eps\\
&= \inf_X \tri h_\theta-(\beta-1)C_{1.2}n\notag.
\end{align}
Choosing sufficiently large nonnegative $C_1$ such that 
\begin{align}\label{Laplacian estimate: C1}
C_{1.4}:=C_1-C_{1.1}-\gamma \cdot C_{1.2}\geq1,
\end{align}
we find
\begin{align*}
\tri_\vphi \log u
		\geq C_{1.4}\tr_\vphi\om+\frac{\tri F-C_{1.3}}{v}-C_1 n.
\end{align*}
Therefore, it is further written in the form
\begin{align}\label{Laplacian estimate: tri phi u}
\tri_\vphi u&= u\cdot[\tri_\vphi \log u+|\p\log u|_\vphi^2]\geq u \tri_\vphi \log u\notag\\
&\geq C_{1.4}u  \tr_\vphi\om+e^{-C_1\vphi}(\tri F-C_{1.3})S_\eps^\gamma-C_1 n u.
\end{align}

Setting
\begin{align}\label{Laplacian estimate: C15}
C_{1.5}:=2\max\{C_{1.3}e^{-C_1\inf_X\vphi}\sup_X S_\eps^\gamma,C_1n\},
\end{align} and inserting it to \eqref{Laplacian estimate: tri phi u}, we thus obtain the expected differential inequality.


\end{proof}

We introduce a notion $\tilde u=u+K$ for some nonnegative constant $K$ and set
\begin{align*}
&LHS_1:=\int_X|\p\tilde  u^p|^2S_\eps^\gamma\om^n_\vphi,\quad
LHS_2:=\int_X\tilde  u^{2p}u^{\frac{n}{n-1}}S_\eps^{-\frac{\gamma+\beta-1}{n-1}}\om_\vphi^n.
\end{align*} 
\begin{prop}[Integral inequality]\label{Laplacian estimate: integration cor}
There exists constants $C_{1.6}(C_1, \sup_X\vphi)$, and $C_{1.7}$ depending on $C_{1.4}$, $\inf_X\vphi$, $\sup_X F$, $\sup_X h_\theta$, $C_1$, $c_\eps$ such that
\begin{align}\label{Laplacian estimate: integration cor}
&\frac{2C_{1.6}}{p}LHS_1
+ C_{1.7}   LHS_2 
\leq RHS:=\mathcal N+C_{1.5}\int_X\tilde  u^{2p+1}\om_\vphi^n,\\
&\mathcal N:=-\int_Xe^{-C_1\vphi}\tilde u^{2p}\tri F S_\eps^{\gamma}e^{\tilde F}\om^n.\notag
\end{align}

\end{prop}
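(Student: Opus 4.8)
The plan is to test the differential inequality of Proposition~\ref{Laplacian estimate: integral inequality pre prop} against the function $\tilde u^{2p}$ and integrate by parts. Since $\tilde u=u+K$ with $K$ a constant we have $\nabla\tilde u=\nabla u$, so on the compact manifold $X$ (for fixed $\eps$ the solution $(\vphi_\eps,F_\eps)$ is smooth) integration by parts gives
\[
\int_X\tilde u^{2p}\,\tri_\vphi u\,\om_\vphi^n=-2p\int_X\tilde u^{2p-1}|\p\tilde u|_\vphi^2\,\om_\vphi^n=-\frac{2}{p}\int_X\tilde u\,|\p\tilde u^p|_\vphi^2\,\om_\vphi^n .
\]
Multiplying the differential inequality by $\tilde u^{2p}\ge 0$, integrating against $\om_\vphi^n$, using $\om_\vphi^n=e^{\tilde F}\om^n$ to identify $-\int_X e^{-C_1\vphi}\tilde u^{2p}\,\tri F\,S_\eps^\gamma\,\om_\vphi^n$ with $\mathcal N$, and choosing $K\ge 1$ so that $1+u\le\tilde u$, one obtains after rearrangement
\[
\frac{2}{p}\int_X\tilde u\,|\p\tilde u^p|_\vphi^2\,\om_\vphi^n+C_{1.4}\int_X\tilde u^{2p}\,u\,\tr_\vphi\om\,\om_\vphi^n\le\mathcal N+C_{1.5}\int_X\tilde u^{2p+1}\,\om_\vphi^n=RHS .
\]
It then suffices to bound the two terms on the left from below by $\tfrac{2C_{1.6}}{p}LHS_1$ and $C_{1.7}LHS_2$ respectively.

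For the gradient term I would combine the elementary pointwise inequality $|\p f|_\vphi^2\ge(\tr_\om\om_\vphi)^{-1}|\p f|^2$ with the lower bound $\tilde u\ge u=S_\eps^\gamma e^{-C_1\vphi}\tr_\om\om_\vphi\ge e^{-C_1\sup_X\vphi}S_\eps^\gamma\,\tr_\om\om_\vphi$; together these give the pointwise estimate $\tilde u\,|\p\tilde u^p|_\vphi^2\ge C_{1.6}\,S_\eps^\gamma|\p\tilde u^p|^2$ with $C_{1.6}=C_{1.6}(C_1,\sup_X\vphi)$, hence $\tfrac{2}{p}\int_X\tilde u\,|\p\tilde u^p|_\vphi^2\om_\vphi^n\ge\tfrac{2C_{1.6}}{p}LHS_1$.

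For the curvature term the key point is the pointwise lower bound
\[
\tr_\vphi\om\ge c_n\,(\tr_\om\om_\vphi)^{\frac{1}{n-1}}\left(\frac{\om^n}{\om_\vphi^n}\right)^{\frac{1}{n-1}},
\]
which is the Newton--Maclaurin inequality $\sigma_{n-1}(\lambda)^{n-1}\ge n^{n-2}\sigma_1(\lambda)\,\sigma_n(\lambda)^{n-2}$ applied to the eigenvalues $\lambda$ of $\om_\vphi$ relative to $\om$. Substituting $\om_\vphi^n/\om^n=e^{\tilde F}=e^{F+h_\theta+c_\eps}S_\eps^{\beta-1}$ and using the $L^\infty$-bounds on $F$ and $h_\theta$ gives $(\om^n/\om_\vphi^n)^{1/(n-1)}\ge c\,S_\eps^{-(\beta-1)/(n-1)}$; since $u=S_\eps^\gamma e^{-C_1\vphi}\tr_\om\om_\vphi$ and $\vphi$ is bounded, this rewrites as $\tr_\vphi\om\ge c\,u^{1/(n-1)}S_\eps^{-(\gamma+\beta-1)/(n-1)}$, so that $u\,\tr_\vphi\om\,\om_\vphi^n\ge c\,u^{n/(n-1)}S_\eps^{-(\gamma+\beta-1)/(n-1)}\om_\vphi^n$ and $C_{1.4}\int_X\tilde u^{2p}\,u\,\tr_\vphi\om\,\om_\vphi^n\ge C_{1.7}LHS_2$ with $C_{1.7}$ depending on $C_{1.4},\inf_X\vphi,\sup_X F,\sup_X h_\theta,C_1,c_\eps$. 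Assembling the three bounds proves the proposition.

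The main obstacle is the weight bookkeeping: the factor $S_\eps^\gamma$ built into $u$, the degenerate factor $S_\eps^{\beta-1}$ coming out of the volume form $\om_\vphi^n$, and the test power $\tilde u^{2p}$ must conspire to leave precisely the sub-critical exponent $-(\gamma+\beta-1)/(n-1)$ that appears in $LHS_2$, and producing the lower bound for $\tr_\vphi\om$ in exactly that form (via Newton--Maclaurin together with the Monge--Amp\`ere equation $\om_\vphi^n=e^{\tilde F}\om^n$) is the crux. The genuinely delicate fourth-order quantity $\mathcal N$ containing $\tri F$ is not touched here; it is carried along unchanged and confronted in the next step, where it is handled by a further integration by parts.
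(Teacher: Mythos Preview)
Your proof is correct and follows essentially the same route as the paper: test the differential inequality against $\tilde u^{2p}$, integrate by parts, then use $\tilde u\,|\p\tilde u^p|_\vphi^2\ge C_{1.6}\,S_\eps^\gamma|\p\tilde u^p|^2$ (via $v|\cdot|_\vphi^2\ge|\cdot|^2$ and $u=e^{-C_1\vphi}S_\eps^\gamma v$) for the gradient term and $\tr_\vphi\om\ge v^{1/(n-1)}e^{-\tilde F/(n-1)}$ for the curvature term. The only cosmetic difference is that the paper states the latter ``fundamental inequality'' without a dimensional constant (it holds with $c_n=1$), and that your choice $K\ge1$ to get $1+u\le\tilde u$ is made explicit whereas the paper absorbs this into $C_{1.5}$.
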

\begin{proof}
Multiplied with $\tilde u^{2p}, p\geq 1$ and integrated with respect to $\om^n_\vphi$ on $X$, the differential inequality in Proposition \ref{Laplacian estimate: integral inequality pre prop} yields
\begin{align*}
&\frac{2}{p}\int_X\tilde  u|\p\tilde  u^p|^2_\vphi\om^n_\vphi
=-\int_X\tilde  u^{2p}\tri_\vphi \tilde  u\om^n_\vphi\\
&\leq -\int_X\tilde u^{2p}[C_{1.4}u  \tr_\vphi\om+e^{-C_1\vphi}\tri F S_\eps^\gamma-C_{1.5} \tilde  u]\om^n_\vphi.
\end{align*}
Replacing $u$ by $e^{-C_1\vphi}vS_\eps^\gamma$ and using the relation between two norms $|\cdot|_\vphi$ and $|\cdot|$, i.e. $v |\cdot|_\vphi\geq |\cdot|$, we observe the lower bound of the gradient term
\begin{align}\label{Laplacian estimate: C16}
\frac{2}{p}\int_X\tilde  u|\p\tilde  u^p|^2_\vphi\om^n_\vphi
&\geq \frac{2}{p}\int_X  u|\p\tilde  u^p|^2_\vphi\om^n_\vphi
=\frac{2}{p}\int_Xe^{-C_1\vphi}v S_\eps^\gamma |\p\tilde  u^p|^2_\vphi\om^n_\vphi\notag\\
&\geq C_{1.6}(C_1,\sup_X\vphi) \frac{2}{p}\int_X |\p\tilde  u^p|^2 S_\eps^\gamma\om^n_\vphi.
\end{align}
Consequently, we regroup positive terms to the left hand side of the integral inequality to conclude that
\begin{align}\label{Laplacian estimate: integral inequality pre}
&\frac{2C_{1.6}}{p}\int_X|\p\tilde  u^p|^2S_\eps^\gamma\om^n_\vphi
+ C_{1.4} \int_X\tilde  u^{2p} u \tr_\vphi\om\om^n_\vphi
\leq RHS.
\end{align}

Substituting $\tilde F=F+h_\theta+(\beta-1)\log S_\eps+c_\eps$ into the fundamental inequality 
$\tr_\vphi\om\geq v^{\frac{1}{n-1}}e^{-\frac{\tilde F}{n-1}}$, we see that
\begin{align*}
 \tr_\vphi\om
\geq u^{\frac{1}{n-1}}e^{\frac{C_1\vphi}{n-1}}S_\eps^{\frac{-\gamma}{n-1}}e^{-\frac{F+h_\theta+(\beta-1)\log S_\eps+c_\eps}{n-1}}
\geq C_{1.7}u^{\frac{1}{n-1}}S_\eps^{-\frac{\gamma+\beta-1}{n-1}}.
\end{align*} 
Thus
\begin{align*}
\int_X\tilde  u^{2p} u \tr_\vphi\om\om^n_\vphi
\geq C_{1.7}\int_X\tilde  u^{2p}u^{\frac{n}{n-1}}S_\eps^{-\frac{\gamma+\beta-1}{n-1}}\om_\vphi^n.
\end{align*} 
The constant $C_{1.7}$ depends on $n,C_1,\inf\vphi, \sup F,\sup h_\theta,c_\eps$.
Therefore, \eqref{Laplacian estimate: integration cor} is obtained by inserting this inequality to \eqref{Laplacian estimate: integral inequality pre}.
\end{proof}
\begin{rem}
The constant $C_{1.3}$ depends on the cone angle $\beta\geq1$.
\end{rem}
\begin{rem}
Actually, we could choose $K=0$ and derive the integral inequality for $u$,
\begin{align*}
\frac{2C_{1.6}}{p}\int_X|\p  u^p|^2S_\eps^\gamma\om^n_\vphi
+ C_{1.4} \int_X  u^{2p+1}  \tr_\vphi\om\om^n_\vphi
\leq RHS.
\end{align*} 

We denote $k:=(2p+1)\gamma+(\beta-1)$.
With the help of the $L^\infty$-estimates of $\vphi$ and $F$, this inequality is simplified to be
\begin{align}\label{Laplacian estimate: inverse weighted inequalities general}
&\int_Xu^{2p+1}\tr_\vphi\om\om^n_\vphi
\geq C_{1.7}   \int_Xw^{2p+\frac{n}{n-1}}S_\eps^{k-\frac{\beta-1}{n-1}}\om^n.
\end{align} 
\end{rem}

\subsection{Step 2: nonlinear term containing $\tri F$}\label{Step 2}
The term $\mathcal N$ containing $\tri F$ in the $RHS$ of the integral inequality \eqref{Laplacian estimate: integration cor} requires further simplification by integration by parts.
\begin{prop}[Integral inequality]\label{Laplacian estimate: integration inequality pro} 
There exists a constant $C_{2.5}$ depending on $\inf_X\vphi$, $\sup_X F$, $\|\p\vphi\|_{L^{\infty}(\om)}$, $\|\p F\|_{L^{\infty}(\om_\vphi)}$, the constant in \lemref{Laplacian estimate: F} and \lemref{Laplacian estimate: p F}, and the constants $C_{1.5}$, $
C_{1.6}$, $C_{1.7}$, $\beta$
such that
\begin{align}
&p^{-1}LHS_1
+  LHS_2 \leq RHS
\leq p C_{2}\cdot RHS_1,\label{Laplacian estimate: integral inequality}\\
&RHS_1:=\int_X \tilde u^{2p}[ u+1+u^{\frac{1}{2}}S_\eps^{\frac{\gamma}{2}}+u^{\frac{1}{2}}S_\eps^{\frac{\gamma-\sigma_D^1}{2}}
+u^{\frac{1}{2}} S_\eps^{\frac{\gamma-1}{2}} 
 ] \om_\vphi^n.\label{Laplacian estimate: RHS1 defn}
\end{align}
\end{prop}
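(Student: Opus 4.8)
The goal of Proposition~\ref{Laplacian estimate: integration inequality pro} is to handle the only genuinely bad term on the right side of the integral inequality \eqref{Laplacian estimate: integration cor}, namely $\mathcal N = -\int_X e^{-C_1\vphi}\tilde u^{2p}\,\tri F\, S_\eps^\gamma e^{\tilde F}\om^n$, which involves the fourth-order quantity $\tri F$ and, a priori, no good sign. The plan is to rewrite $\tri F$ as a divergence and integrate by parts, moving one derivative onto the factor $\tilde u^{2p}$ (producing the gradient term that we can absorb into $LHS_1$) and onto the weight $e^{-C_1\vphi}S_\eps^\gamma e^{\tilde F}$. I would first write $\int_X (\cdots)\tri F\, \om^n = \int_X (\cdots) i\p\bar\p F\wedge \om^{n-1}$ up to the usual combinatorial constant, and then integrate by parts twice: once to land on $i\p(\cdots)\wedge\bar\p F$ (or symmetrically $i\bar\p(\cdots)\wedge\p F$), which is where the control $\|\p F\|_{L^\infty(\om_\vphi)}$ from \lemref{Laplacian estimate: p F} enters, and where the factor $\tr_\om\om_{\vphi_\eps}$ it produces gets re-expressed in terms of $u$ and $v$. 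The derivative hitting $\tilde u^{2p}$ gives roughly $p\,\tilde u^{2p-1}\,|\p\tilde u|$, and combining with a Cauchy--Schwarz (Young) split one pays a factor $p$ and recovers a term $\lesssim \frac{1}{p}LHS_1$ that cancels on the left, plus lower-order terms collected into $RHS_1$.

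The concrete bookkeeping is as follows. After the integration by parts, the terms one gets are: (i) a gradient-of-$\tilde u^{2p}$ term, estimated by Young's inequality as $\eta\, p^{-1} LHS_1 + C_\eta\, p \int_X \tilde u^{2p-2}|\p\tilde u|^2_\om\,(\cdots)$ — with $\eta$ small enough that $\eta p^{-1}LHS_1$ is absorbed on the left; (ii) a term where $\p$ hits $e^{-C_1\vphi}$, contributing $|\p\vphi|_\om$ which is controlled by the gradient estimate $\|\p\vphi\|_{L^\infty(\om)}$ (here the weight $S_\eps^{\gamma}$ versus $S_\eps^{\gamma-\sigma_D^1}$ distinction in \eqref{Laplacian estimate: RHS1 defn} appears, via \eqref{almost admissible C1 sigmaD}); (iii) a term where $\p$ hits $S_\eps^\gamma$, giving a factor $\gamma\, S_\eps^{-1}|\p S_\eps|_\om \lesssim \gamma S_\eps^{-1/2}$, producing the weight $S_\eps^{\gamma-1}$ in $RHS_1$; (iv) a term where $\p$ hits $e^{\tilde F}$, contributing $|\p\tilde F|_\om$ bounded via \lemref{Laplacian estimate: p F} by $C[1+\tr_\om\om_{\vphi_\eps}+(\beta-1)^2 S_\eps^{-1}]^{1/2} \lesssim u^{1/2}e^{C_1\vphi/2}S_\eps^{-\gamma/2}\cdot(\cdots)+\cdots$, which after redistributing the weights contributes both the $u$ and the $u^{1/2}S_\eps^{(\gamma-1)/2}$ terms of $RHS_1$; and (v) boundary terms vanish since everything is smooth on $M$ and the weights $S_\eps^\gamma$, $e^{\tilde F}$ degenerate mildly enough near $D$ — strictly, one first does the computation on $X\setminus\{S_\eps<\delta\}$ with a cutoff and lets $\delta\to 0$, using the uniform $W^{2,p}$ bound \eqref{almost admissible w2p sigmaD} to kill the cutoff error. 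Throughout, $e^{\tilde F}\om^n = \om^n_\vphi$ converts the $\om^n$-integrals back to $\om^n_\vphi$-integrals, and the $L^\infty$-bounds on $\vphi$ and $F$ (hence on $\tilde F$ from \lemref{Laplacian estimate: F}) swallow the exponential prefactors into constants. Collecting everything gives the stated $RHS\leq p\,C_2\cdot RHS_1$ with $RHS_1$ as in \eqref{Laplacian estimate: RHS1 defn}.

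The main obstacle is item (iv): the appearance of $\tr_\om\om_{\vphi_\eps}$ (i.e.\ $v$, or $u$ after the weight) to the power $1/2$ inside the integrand after integrating by parts against $\bar\p F$. One must make sure this half-power of $u$, multiplied against $\tilde u^{2p}$, does not overpower the $LHS_2$ term $\int_X \tilde u^{2p}u^{n/(n-1)}S_\eps^{-(\gamma+\beta-1)/(n-1)}\om_\vphi^n$ that we are trying to keep on the left — equivalently, one needs $\frac12 < \frac{n}{n-1}$, which is automatic for $n\geq 2$, but the \emph{weights} must also be tracked carefully so that the $u^{1/2}$-terms in $RHS_1$ carry weights $S_\eps^{\gamma/2}$, $S_\eps^{(\gamma-\sigma_D^1)/2}$, $S_\eps^{(\gamma-1)/2}$ with exponents that remain sub-critical relative to $-(\gamma+\beta-1)/(n-1)$; this sub-critical matching is exactly what Steps~4 and~5 are later built to exploit. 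The second subtlety is the factor of $p$: one must extract it cleanly so that the final iteration has the right growth in $p$, which forces the particular Young splitting (putting the small constant on the $LHS_1$ side and paying $p$ on the $RHS_1$ side) rather than the reverse. With those two points handled, the remaining estimates are routine applications of Cauchy--Schwarz, the uniform $L^\infty$/gradient/$W^{2,p}$ bounds, and Lemmas~\ref{Laplacian estimate: F}--\ref{Laplacian estimate: p F}.
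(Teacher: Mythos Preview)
Your plan is correct and matches the paper's proof: integrate $\mathcal N$ by parts against $\bar\p F$, split according to whether $\p$ lands on $e^{-C_1\vphi}$, $\tilde u^{2p}$, $S_\eps^\gamma$, or $e^{\tilde F}$, and control each piece using $|\p F|_\om^2\le C\,\tr_\om\om_\vphi$, the weighted gradient bound on $\vphi$, $|\p S_\eps^\gamma|\lesssim S_\eps^{\gamma-1/2}$, and \lemref{Laplacian estimate: p F} for $|\p\tilde F|$; the $\tilde u^{2p}$-term is handled by Cauchy--Schwarz so that half of it is absorbed into $p^{-1}LHS_1$.

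Two small corrections. In your item (i), the residual after Young's inequality should involve $|\p F|^2$, not $|\p\tilde u|^2$: the term is $2\int_X e^{-C_1\vphi}\tilde u^{p}(\p\tilde u^p,\p F)S_\eps^\gamma\om_\vphi^n$, which splits into $\tfrac{C_{1.6}}{p}LHS_1$ plus $\tfrac{p}{C_{1.6}}\int_X e^{-2C_1\vphi}\tilde u^{2p}|\p F|^2S_\eps^\gamma\om_\vphi^n$, and then $|\p F|^2\le C\,v=C\,e^{C_1\vphi}S_\eps^{-\gamma}u$ yields the $\int_X\tilde u^{2p}u\,\om_\vphi^n$ contribution. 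Second, your item (v) is unnecessary: at this stage $\eps>0$ is fixed, so $S_\eps\ge\eps>0$ and all quantities are smooth on the closed manifold $X$; integration by parts produces no boundary terms and no cutoff is needed.
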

\begin{proof}
By integration by parts, we split the nonlinear term $\mathcal N$ into four sub-terms,
\begin{align}\label{Laplacian estimate: tri F}
&\mathcal N:=I+II+III+IV
=-C_1\int_X e^{-C_1\vphi}(\p \vphi,\p F)\tilde u^{2p}S_\eps^{\gamma} e^{\tilde F}\om^n\\
&+2p\int_Xe^{-C_1\vphi} \tilde u^{2p-1}(\p \tilde u, \p F) S_\eps^{\gamma} e^{\tilde F}\om^n\notag\\
&+\int_Xe^{-C_1\vphi}\tilde u^{2p}(\p F,\p S_\eps^{\gamma}) e^{\tilde F}\om^n\notag
+\int_Xe^{-C_1\vphi}\tilde u^{2p}(\p F,\p \tilde F)S_\eps^{\gamma} e^{\tilde F}\om^n\notag.
\end{align}
The assumption on the gradient estimate of the volume ratio $F$ and \lemref{Laplacian estimate: p F} give us that
$$
|\p F|\leq C e^{\frac{C_1}{2}\vphi}S_\eps^{-\frac{\gamma}{2}}u^{\frac{1}{2}}.
$$
Inserting it with the gradient estimate of $\vphi$ in \eqref{almost admissible C1 sigmaD}. i.e. $$|\p\vphi|\leq C S_\eps^\frac{-\sigma^1_D}{2},\quad \sigma^1_D<1$$ to the first term, we see that
\begin{align*}
I&\leq C_1\int_X e^{-C_1\vphi}|\p \vphi||\p F|\tilde u^{2p}S_\eps^{\gamma} \om_\vphi^n
\leq  C_{2.1}\int_X e^{-\frac{C_1}{2}\vphi}\tilde u^{2p}u^{\frac{1}{2}}S_\eps^{\frac{\gamma-\sigma_D^1}{2}} \om_\vphi^n,
\end{align*}
where, the constant $C_{2.1}$ depends on $\|S_\eps^\frac{\sigma^1_D}{2}\p\vphi\|_{L^{\infty}(\om)},\|\p F\|_{L^{\infty}(\om_\vphi)}$.

The second term is nothing but using H\"older's inequality and the bound of $|\p F|^2$,
\begin{align*}
II&=2\int_Xe^{-C_1\vphi}\tilde  u^{p}(\p\tilde  u^p, \p F) S_\eps^{\gamma} \om_\vphi^n\\
&\leq \frac{C_{1.6}}{p}\int_X|\p\tilde  u^p|^2S_\eps^\gamma\om^n_\vphi
+\frac{p}{C_{1.6}}\int_Xe^{-2C_1\vphi}\tilde u^{2p} |\p F|^2 S_\eps^{\gamma} \om^n_\vphi\\
& \leq \frac{C_{1.6}}{p}\int_X|\p\tilde  u^p|^2S_\eps^\gamma\om^n_\vphi
+p C_{2.2}\int_Xe^{-C_1\vphi}\tilde  u^{2p}u  \om^n_\vphi.
\end{align*}

In order to bound the third term, we use $|\p F|$ again and the fact that
\begin{align}\label{Laplacian estimate: C24}
|\p S^\gamma_\eps|\leq \gamma C_{2.4} |S_\eps|^{\gamma-\frac{1}{2}}.
\end{align}
As a result, we have
\begin{align*}
III\leq  \gamma C_{2.3}\int_Xe^{-\frac{C_1}{2}\vphi} \tilde u^{2p}u^{\frac{1}{2}} S_\eps^{\frac{\gamma-1}{2}}  \om^n_\vphi.
\end{align*}

Due to \lemref{Laplacian estimate: p F} again, we get
$$|\p\tilde F|^2\leq C[1+e^{\frac{C_1}{2}\vphi}S_\eps^{-\frac{\gamma}{2}}u^{\frac{1}{2}}+(\beta-1)S_\eps^{-\frac{1}{2}}].$$
Then the fourth term is bounded by
\begin{align*}
IV&\leq\int_Xe^{-\frac{C_1}{2}\vphi}\tilde u^{2p}u^{\frac{1}{2}}|\p \tilde F|S_\eps^{\frac{\gamma}{2}} \om_\vphi^n\\
&\leq C_{2.5}[\int_Xe^{-\frac{C_1}{2}\vphi}\tilde u^{2p}u^{\frac{1}{2}}S_\eps^{\frac{\gamma}{2}} \om_\vphi^n
+\int_X\tilde u^{2p}u\om_\vphi^n\\
&+(\beta-1)\int_Xe^{-\frac{C_1}{2}\vphi}\tilde u^{2p}u^{\frac{1}{2}}S_\eps^{\frac{\gamma-1}{2}} \om_\vphi^n].
\end{align*}

Inserting them back to \eqref{Laplacian estimate: tri F}, we have the bound of $\mathcal N$.
Substituting $\mathcal N$ in \eqref{Laplacian estimate: integral inequality pre} and note that $\sigma_D^1< 1$, 
we have thus proved \eqref{Laplacian estimate: integral inequality}. 
\end{proof}
\begin{rem}
When $\gamma=0$,  the third term $III=0$.
\end{rem}
\begin{rem}
When $\beta=1$, $\p\tilde F=\p F+\p h_\theta$. Then the fourth term $IV=\int_Xe^{-C_1\vphi}u^{2p}(\p F,\p \tilde F)S_\eps^{\gamma} e^{\tilde F}\om^n$.
\end{rem}
\begin{rem}\label{4th term}
In the third and the fourth term, the power of $S_\eps$ loses $\frac{1}{2}$, which cause troubles.
\end{rem}

\subsection{Step 3: rough iteration inequality}
We will apply the Sobolev inequality to the gradient term 
$$
LHS_1=\int_X|\p\tilde u^p|^2S_\eps^\gamma\om^n_\vphi
$$
in \eqref{Laplacian estimate: integral inequality}.
We set
\begin{align*}
 k_\gamma:=\gamma+\beta-1+\sigma, \quad \chi:=\frac{n}{n-1}, \quad \tilde\mu:=S_\eps^{k_\gamma \chi}\om^n.
\end{align*} 
It is direct to see that
$ k+\sigma=k_\gamma+2p\gamma$ and
\begin{align*}
\|\tilde u\|^{2p\chi}_{L^{2p\chi}(\tilde\mu)}=\int_X(\tilde u^{2p}S_\eps^{k_\gamma})^\chi\om^n=\int_X\tilde u^{2p\chi}\tilde \mu.
\end{align*}

\begin{prop}[Rough iteration inequality]\label{Rough iteration inequality prop}
There exists a constant $C_{3}$ depending on $C_{1.6}$, $C_{1.7}$, the dependence in $C_2$, $\inf_X F$, $\inf_X h_\theta$, $c_\eps$ and the Sobolev constant $C_S(\om)$ such that
\begin{align}\label{Laplacian estimate: Rough iteration inequality}
\|\tilde u\|^{2p}_{L^{2p\chi}(\tilde\mu)}+p  LHS_2
\leq C_{3}(p^2  RHS_1 +  RHS_2+1)
\end{align}
where $RHS_1$ is given in \eqref{Laplacian estimate: RHS1 defn} and 
\begin{align}\label{Laplacian estimate: RHS2 defn}
RHS_2:=\int_X (u^2 \tilde u^{2p-2}S_\eps^{\gamma+\sigma}
+C_{3.1}  u^2 \tilde u^{2p-2}S_\eps^{\gamma+\sigma-1} ) \om_\vphi^n.
\end{align}
\end{prop}

\begin{proof}We need to deal with the weights. 
Recall the equations of the volume form $\om^n_\vphi$ from \eqref{Degenerate cscK approximation} and the volume form of the approximate reference metric $\om^n_{\theta_\eps}$ by \eqref{Rictheta approximation}, 
\begin{align*}
\om^n_{\vphi_\eps}=e^{\tilde F_\eps}\om,\quad e^{\tilde F_\eps}=e^{F_\eps+h_\theta+c_\eps}S_\eps^{\beta-1}.
\end{align*}
We assume $S_\eps\leq 1$.
We see that there exists a constant $C_{3.0}$ depending on $\inf_X F$, $\inf_X h_\theta$ and $c_\eps$ such that
\begin{align*}
LHS_1
&\geq C_{3.0} \int_X|\p\tilde  u^p|^2S_\eps^{\gamma+\beta-1}\om^n
\geq C_{3.0} \int_X|\p \tilde u^p|^2S_\eps^{k_\gamma}\om^n,\quad \sigma\geq 0.
\end{align*}

Using \lemref{Laplacian estimate: key trick} with $p_1=1, p_2=p-1$, we have
\begin{align*}
LHS_1
\geq C_{3.0} \int_X|\p (u\tilde u^{p-1})|^2S_\eps^{k_\gamma}\om^n.
\end{align*}
Further calculation shows that 
\begin{align}\label{Laplacian estimate: main term of LHS}
=C_{3.0} [ \int_X|\p (u\tilde u^{p-1} S_\eps^{\frac{k_\gamma}{2}})|^2\om^n- \int_Xu^2\tilde  u^{2p-2} |\p S_\eps^{\frac{k_\gamma}{2}}|^2\om^n].
\end{align}

We now make use of the Sobolev inequality to the first term in \eqref{Laplacian estimate: main term of LHS} with $f=u\tilde u^{p-1} S_\eps^{\frac{k_\gamma}{2}}$, which states 
\begin{align*}
\|f\|_{L^{2\chi}(\om)}\leq C_S (\|\p f\|_{L^{2}(\om)}+\| f\|_{L^{2}(\om)}),
\end{align*} 
that is
\begin{align*}
 \int_X|\p (u\tilde u^{p-1} S_\eps^{\frac{k_\gamma}{2}})|^2\om^n
 &\geq  C_S^{-1} ( \int_X|u\tilde  u^{p-1} S_\eps^{\frac{k_\gamma}{2}}|^{2\chi}\om^n)^{\chi^{-1}}
 - \int_Xu^2\tilde  u^{2p-2} S_\eps^{k_\gamma}\om^n.
\end{align*}
Note that the power of the weight is increasing from $k_\gamma$ to $k_\gamma\chi$.

Substituting $u=\tilde u-K$ and $\tilde\mu=S_\eps^{k_\gamma \chi}\om^n$ in the main term, we get
\begin{align*}
& \int_X|u\tilde  u^{p-1} S_\eps^{\frac{k_\gamma}{2}}|^{2\chi}\om^n
= \int_X|\tilde u-K |^{2\chi}\tilde  u^{(p-1)2\chi}\tilde\mu\\
&\geq C(n)[\int_X\tilde  u^{2p\chi}\tilde\mu
-K^{2\chi}\int_X\tilde  u^{2(p-1)\chi}\tilde\mu].
\end{align*}
With the help of Young's inequality
\begin{align*}
\tilde  u^{2(p-1)\chi}\leq\frac{p-1}{p} \tilde  u^{2p\chi}+1\leq \tilde  u^{2p\chi}+1,
\end{align*}
choosing $0<K<1$ such that $1-K^{2\chi}\geq \frac{1}{2}$, we obtain
\begin{align*}
\int_X|u\tilde  u^{p-1} S_\eps^{\frac{k_\gamma}{2}}|^{2\chi}\om^n
\geq C(n)[ \frac{1}{2}\int_X\tilde  u^{2p\chi}\tilde\mu-K^{2\chi}]= \frac{C(n)}{2}[\int_X\tilde  u^{2p\chi}\tilde\mu-1].
\end{align*}

Using $|\p S_\eps|^2\leq C_{2.4}S_\eps$, we can estimate the second term in \eqref{Laplacian estimate: main term of LHS},
\begin{align}\label{Laplacian estimate: trouble term} 
\int_Xu^2 \tilde u^{2p-2} |\p S_\eps^{\frac{k_\gamma}{2}}|^2\om^n
&= \frac{k_\gamma^2}{4} \int_Xu^2 \tilde u^{2p-2} S_\eps^{k_\gamma-2}|\p S_\eps|^2\om^n\notag\\
&\leq C_{3.1} \int_X u^2 \tilde u^{2p-2} S_\eps^{k_\gamma-1}  \om^n.
\end{align}

At last, we add these inequalities together to see that 
\begin{align*}
LHS_1\geq C_{3.2} \{&C_S^{-1}  \left( \int_X\tilde  u^{2p\chi}\tilde\mu-1 \right)^{\chi^{-1}}
 - \int_Xu^2 \tilde u^{2p-2} S_\eps^{k_\gamma}\om^n\\
 &-C_{3.1} \int_Xu^2 \tilde u^{2p-2}S_\eps^{k_\gamma-1}  \om^n\}.
\end{align*}
Inserting this inequality to the integral inequality \eqref{Laplacian estimate: integral inequality}, we obtain the rough iteration inequality \eqref{Laplacian estimate: Rough iteration inequality}.

\end{proof}

\begin{rem}\label{Laplacian estimate: lose weight}
We observe that $1$ is subtracted from the weight of $S_\eps$ in the second term of $RHS_2$ \eqref{Laplacian estimate: RHS2 defn}, which causes difficulties presented in the weighted inequality, Proposition \ref{Weighted inequality}. We will solve this problem by making use of the inverse weighted inequalities, Proposition \ref{inverse weighted inequalities}.
\end{rem}
\begin{rem}
When $\beta=1$ and $\gamma=\sigma=0$, the trouble term \eqref{Laplacian estimate: trouble term} vanishes.
\end{rem}

We end this section by computing the auxiliary inequality.
\begin{lem}\label{Laplacian estimate: key trick}
We write $p=p_1+p_2$ with $p_1,p_2\geq 0$.
\begin{align*}
 \int_X|\p \tilde u^p|^2S_\eps^{k_\gamma}\om^n\geq \int_X|\p (u^{p_1}\tilde u^{p_2})|^2S_\eps^{k_\gamma}\om^n.
\end{align*}
\end{lem}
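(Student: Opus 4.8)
The plan is to observe that both sides of the claimed inequality carry the \emph{same} nonnegative weight $S_\eps^{k_\gamma}$, so it suffices to prove the pointwise bound $|\p\tilde u^p|^2 \ge |\p(u^{p_1}\tilde u^{p_2})|^2$ on $M$ and then integrate against $S_\eps^{k_\gamma}\om^n$. Here I use that $\tilde u = u + K$ with $K\ge 0$ a constant and $u = S_\eps^\gamma e^{-C_1\vphi}\tr_\om\om_{\vphi}$, which is strictly positive on $M$ since $\om_\vphi$ is a K\"ahler metric; in particular $0 < u \le \tilde u$ and $\p\tilde u = \p u$.

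First I would differentiate. The chain rule gives $\p\tilde u^p = p\,\tilde u^{p-1}\p u$, while the product and chain rules give $\p(u^{p_1}\tilde u^{p_2}) = \bigl(p_1 u^{p_1-1}\tilde u^{p_2} + p_2 u^{p_1}\tilde u^{p_2-1}\bigr)\p u = u^{p_1-1}\tilde u^{p_2-1}\bigl(p_1\tilde u + p_2 u\bigr)\p u$, using $\p\tilde u = \p u$ in both terms. Taking squared $\om$-norms, the desired pointwise inequality is equivalent to $u^{p_1-1}\tilde u^{p_2-1}\bigl(p_1\tilde u + p_2 u\bigr) \le p\,\tilde u^{p-1}$. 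Dividing through by $\tilde u^{p-1} = \tilde u^{\,p_1+p_2-1}$ and setting $t := u/\tilde u \in (0,1]$ reduces the claim to the elementary one-variable inequality $p_1\,t^{p_1-1} + p_2\,t^{p_1} \le p_1 + p_2 = p$. For $t\in(0,1]$ one has $t^{p_1}\le 1$, hence $p_2 t^{p_1}\le p_2$; and since $p_1 = 1$ in the application (more generally whenever $p_1\ge 1$) one has $t^{p_1-1}\le 1$, hence $p_1 t^{p_1-1}\le p_1$; adding the two gives the bound. The degenerate case $p_1 = 0$ is trivial, since then $u^{p_1}\tilde u^{p_2} = \tilde u^{p}$ identically. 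Multiplying the resulting pointwise inequality $|\p(u^{p_1}\tilde u^{p_2})|^2 \le |\p\tilde u^p|^2$ by $S_\eps^{k_\gamma}\ge 0$ and integrating over $X$ yields the lemma.

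I do not expect any real obstacle here: the statement is an elementary convexity/monotonicity fact about powers of the comparable quantities $u\le\tilde u$. The only points requiring a word of care are the strict positivity of $u$ on $M$ and the inequality $u\le\tilde u$, both immediate from $K\ge 0$, together with the monotonicity $t^{p_1-1}\le 1$ on $(0,1]$ — which is precisely where the hypothesis $p_1\ge 1$ (applied with $p_1=1$) is used, and which would fail for $0<p_1<1$ when $K>0$.
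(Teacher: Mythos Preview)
Your proposal is correct and follows essentially the same route as the paper: both compute $\p(u^{p_1}\tilde u^{p_2})=u^{p_1-1}\tilde u^{p_2-1}(p_1\tilde u+p_2 u)\,\p\tilde u$ and compare the scalar coefficient with $p\,\tilde u^{p-1}$ using $u\le\tilde u$. You are in fact slightly more careful than the paper in isolating the step $t^{p_1-1}\le 1$, which indeed requires $p_1\ge 1$ (or $p_1=0$); the paper's applications of the lemma use only $p_1=1$, so this restriction is harmless.
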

\begin{proof}
It is a direct computation
\begin{align*}
&\p (u^{p_1}\tilde u^{p_2})
=\p u^{p_1}\tilde u^{p_2}+u^{p_1}\p(\tilde u^{p_2})
=p_1 u^{p_1-1}\p u\tilde u^{p_2}+u^{p_1}p_2\tilde u^{p_2-1}\p \tilde u\\
&=\p\tilde u u ^{p_1-1} \tilde u^{p_2-1}[p_1\tilde u+p_2 u]
\leq p \p\tilde u \tilde u^{p-1}=\p(\tilde u^p),
\end{align*}
where we use $u\leq \tilde u$.
\end{proof}

\subsection{Step 4: weighted inequality}\label{Step 4}
We compare the left term $\|\tilde u\|^{2p}_{L^{2p\chi}(\tilde\mu)}$ of the rough iteration inequality, Proposition \ref{Rough iteration inequality prop}, with the right terms in ${RHS_1}$ and ${RHS_2}$, which are of the form
\begin{align*}
\int_X\tilde u^{2p} S_\eps^{\gamma+\sigma-k'}\om_\vphi^n.
\end{align*}

\begin{prop}[Weighted inequality]\label{Weighted inequality}
Assume that $n\geq 2$ and $ k'<1$.
Then there exists $1<a<\chi=\frac{n}{n-1}$ such that
\begin{align*}
\int_X\tilde u^{2p} S_\eps^{\gamma+\sigma-k'}\om_\vphi^n
\leq C\int_X\tilde u^{2p} S_\eps^{k_\gamma-k'}\om^n 
\leq C_{4.1} \|\tilde u\|^{2p}_{L^{2pa}(\tilde\mu)}
\end{align*}
where $C_{4.1}=\| S_\eps^{k_\gamma-k_\gamma\chi-k'}\|_{L^{c}(\tilde\mu)}$ is finite for some $c>n$.
\end{prop}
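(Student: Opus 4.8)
The plan is to prove the weighted inequality Proposition~\ref{Weighted inequality} by a two-stage estimate: first convert the integral against $\om_\vphi^n$ into one against $\om^n$ by absorbing the volume ratio $e^{\tilde F_\eps}$, then apply H\"older's inequality to split off a power of $\tilde u$ against the fixed measure $\tilde\mu$. Recall that $\om_\vphi^n=e^{\tilde F_\eps}\om^n$ with $e^{\tilde F_\eps}=e^{F_\eps+h_\theta+c_\eps}S_\eps^{\beta-1}$, so using the uniform $L^\infty$-bounds on $F_\eps$ and $h_\theta+c_\eps$ we obtain
\begin{align*}
\int_X\tilde u^{2p} S_\eps^{\gamma+\sigma-k'}\om_\vphi^n
\leq C\int_X\tilde u^{2p} S_\eps^{\gamma+\sigma-k'+\beta-1}\om^n
=C\int_X\tilde u^{2p} S_\eps^{k_\gamma-k'}\om^n,
\end{align*}
where I use $k_\gamma=\gamma+\beta-1+\sigma$ as defined in Step~3. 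This disposes of the first inequality in the statement and reduces everything to comparing $\int_X\tilde u^{2p}S_\eps^{k_\gamma-k'}\om^n$ with a power of $\|\tilde u\|_{L^{2pa}(\tilde\mu)}$.

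For the second inequality I would apply H\"older's inequality with exponents $a$ and its conjugate $c=\frac{a}{a-1}$ to the decomposition of the integrand $\tilde u^{2p}S_\eps^{k_\gamma-k'} = \bigl(\tilde u^{2p}S_\eps^{k_\gamma\chi}\bigr)^{1/a}\cdot S_\eps^{k_\gamma-k'-k_\gamma\chi/a}$, chosen precisely so that the first factor raised to the power $a$ is the integrand of $\|\tilde u\|_{L^{2pa}(\tilde\mu)}^{2pa}$ (since $\tilde\mu=S_\eps^{k_\gamma\chi}\om^n$). This gives
\begin{align*}
\int_X\tilde u^{2p}S_\eps^{k_\gamma-k'}\om^n
\leq \Bigl(\int_X\tilde u^{2pa}\tilde\mu\Bigr)^{1/a}
\Bigl(\int_X S_\eps^{(k_\gamma-k'-k_\gamma\chi/a)c}\om^n\Bigr)^{1/c}
=\|\tilde u\|_{L^{2pa}(\tilde\mu)}^{2p}\cdot\|S_\eps^{k_\gamma-k_\gamma\chi-k'}\|_{L^{c}(\tilde\mu)},
\end{align*}
once one checks the exponent bookkeeping $(k_\gamma-k'-k_\gamma\chi/a)c = (k_\gamma-k_\gamma\chi-k')c + k_\gamma\chi$, which rewrites the last factor in terms of $\tilde\mu$. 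So $C_{4.1}=\|S_\eps^{k_\gamma-k_\gamma\chi-k'}\|_{L^c(\tilde\mu)}$ as claimed.

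The crux is then the finiteness of $C_{4.1}$ together with the requirement $1<a<\chi$ and $c>n$. Since $|s|_h^2$ is (up to a smooth positive factor) the square of a holomorphic section vanishing along the smooth divisor $D$, a local computation near $D$ in coordinates where $s$ is a coordinate function shows that $\int_X S_\eps^{-\delta}\om^n$ is finite (uniformly in $\eps$) precisely when $\delta<1$; more generally an integrability exponent $\eta$ against $\om^n$ is admissible iff $\eta>-1$. Unwinding, $\|S_\eps^{k_\gamma-k_\gamma\chi-k'}\|_{L^c(\tilde\mu)}^c=\int_X S_\eps^{(k_\gamma-k_\gamma\chi-k')c+k_\gamma\chi}\om^n$, so the integrability condition becomes $(k_\gamma-k_\gamma\chi-k')c+k_\gamma\chi>-1$. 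One computes $k_\gamma-k_\gamma\chi=-k_\gamma/(n-1)<0$ and plugs in; the hypothesis $k'<1$ is exactly what leaves room to choose $a$ close enough to $\chi$ (equivalently $c$ close enough to $n$ from above) so that the exponent stays $>-1$. I expect the main obstacle to be this final exponent calculus: verifying that as $a\uparrow\chi$ (so $c\downarrow n$) the quantity $(k_\gamma-k_\gamma\chi-k')c+k_\gamma\chi$ tends to a limit strictly greater than $-1$ under the sole assumption $k'<1$ (using $\chi=n/(n-1)$ and $k_\gamma=\gamma+\beta-1+\sigma$ with $\gamma,\sigma\geq0$, $\beta>1$), and hence that a valid choice of $a$ with $1<a<\chi$ exists. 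The local integrability statement near $D$ for powers of $S_\eps$, uniform in $\eps$, is standard but should be cited or proved in a preliminary lemma.
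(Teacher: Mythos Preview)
Your overall strategy matches the paper's exactly: pass from $\om_\vphi^n$ to $\om^n$ via the upper bound on $e^{\tilde F_\eps}$, rewrite the weight relative to $\tilde\mu=S_\eps^{k_\gamma\chi}\om^n$, and then apply H\"older with conjugate exponents $(a,c)$. The bookkeeping you outline is correct and produces precisely $C_{4.1}=\|S_\eps^{k_\gamma-k_\gamma\chi-k'}\|_{L^c(\tilde\mu)}$.

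The gap is in the exponent calculus you flagged but did not carry out. Since $k_\gamma-k_\gamma\chi=-k_\gamma/(n-1)$, the quantity $(k_\gamma-k_\gamma\chi-k')c+k_\gamma\chi$ evaluated at $c=n$ equals $-nk'$ (the $k_\gamma$ contributions cancel), and it is strictly decreasing in $c$ because the coefficient $-k_\gamma/(n-1)-k'$ is negative. Hence with your integrability threshold $>-1$ one needs $-nk'>-1$, i.e.\ $k'<1/n$, which is strictly stronger than the stated hypothesis $k'<1$. The paper instead imposes the condition $2[(k_\gamma-k_\gamma\chi-k')c+k_\gamma\chi]+2n>0$, i.e.\ an $\alpha>-n$ threshold for $\int_X S_\eps^\alpha\om^n$; this yields $c<c_0:=n\,\dfrac{k_\gamma+n-1}{k_\gamma+k'(n-1)}$ and $c_0>n$ precisely when $k'<1$, as asserted. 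So to recover the proposition under the hypothesis $k'<1$ you must use the paper's integrability convention (which it applies consistently in the analogous gradient and $W^{2,p}$ computations); with the $>-1$ threshold you stated, the argument only closes for $k'<1/n$.
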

\begin{proof}
From $\tilde\mu=S_\eps^{k_\gamma \chi}\om^n=S_\eps^{(\gamma+\beta-1+\sigma) \chi}\om^n$, we compute
\begin{align*}
&\int_X\tilde u^{2p} S_\eps^{k_\gamma-k'}\om^n
=\int_X \tilde u^{2p}S_\eps^{k_\gamma-k_\gamma\chi-k'} \tilde\mu.
\end{align*}
By the generalisation of H\"older's inequality with $\frac{1}{a}+\frac{1}{c}=1$, this term is dominated by
\begin{align*}
 \|\tilde u\|^{2p}_{L^{2pa}(\tilde \mu)}(\int_X S_\eps^{(k_\gamma-k_\gamma\chi-k')c} \tilde\mu)^{\frac{1}{c}}.
\end{align*}
In order to make sure the last integral is finite, it is sufficient to ask $2(k_\gamma-k_\gamma\chi-k')c+2k_\gamma\chi+2n>0$, which is equivalent to
\begin{align*}
c<n\frac{k_\gamma+n-1}{k_\gamma+k'(n-1)}:=c_0.
\end{align*} 
Since $k'<1$, we have $c_0>n$. Then, we could choose $c$ between $n$ and $c_0$ such that $a<\frac{n}{n-1}$.

\end{proof}


\subsection{Step 5: inverse weighted inequality}\label{Step 3: inverse weighted inequalities}
Our tour to bound each term in $RHS_1$ \eqref{Laplacian estimate: RHS1 defn} and $RHS_2$ \eqref{Laplacian estimate: RHS2 defn}
\begin{align*}
 RHS_1&=\int_X \tilde u^{2p}[u+1+u^{\frac{1}{2}}S_\eps^{\frac{\gamma}{2}}+u^{\frac{1}{2}}S_\eps^{\frac{\gamma-\sigma_D^1}{2}}
+u^{\frac{1}{2}} S_\eps^{\frac{\gamma-1}{2}} 
 ] \om_\vphi^n,\\
RHS_2&=\int_X (u^2\tilde u^{2p-2}S_\eps^{\gamma+\sigma}
+C_{3.1}u^2  \tilde u^{2p-2}S_\eps^{\gamma+\sigma-1} ) \om_\vphi^n,
\end{align*}
is via applying Young's inequality repeatedly, with the help of the positive term
\begin{align*}
 LHS_2=p\int_X\tilde  u^{2p}u^{\frac{n}{n-1}}S_\eps^{-\frac{\gamma+\beta-1}{n-1}}\om_\vphi^n.
\end{align*}
According to Proposition \ref{Weighted inequality}, the second term in $RHS_2$ is the trouble term.

\begin{prop}[Inverse weighted inequality]\label{inverse weighted inequalities}
Assume that the parameters $\sigma$ and $\gamma$ satisfy $\sigma+\gamma<1$ and
\begin{equation}\label{Laplacian estimate: RHS2 parameters condition}
\left\{
\begin{aligned}
 &\sigma=\gamma=0,\text{ when }\beta>n;\\
&\frac{1}{2}\geq\sigma>\frac{n-\beta}{n-1},\quad \gamma=0,\text{ when } \frac{n+1}{2}< \beta\leq n;\\
&\sigma<\frac{1}{n+1},\quad \gamma>(1-\sigma)\frac{n-1}{n},\text{ when } 1\leq \beta\leq \frac{n+1}{2}.
   \end{aligned}
\right.
\end{equation}
Then there exists an exponent $k'<1$ such that
\begin{align*}
\|\tilde u\|^{2p}_{L^{2p\chi}(\tilde\mu)}
\leq C_5 [p^3  \int_X \tilde u^{2p} S_\eps^{\gamma+\sigma-k'}  \om_\vphi^n+1].
\end{align*}
\end{prop}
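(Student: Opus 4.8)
The plan is to bound the right‑hand side $C_3(p^2\,RHS_1+RHS_2+1)$ of the rough iteration inequality (Proposition \ref{Rough iteration inequality prop}) term by term, using the strictly positive term $LHS_2=p\int_X\tilde u^{2p}u^{n/(n-1)}S_\eps^{-(\gamma+\beta-1)/(n-1)}\om_\vphi^n$ on the left as the target of Young's inequality. Abbreviate $\chi=\tfrac{n}{n-1}$ and let $q=\tfrac{\gamma+\beta-1}{n-1}>0$ denote the \emph{critical} weight exponent carried by $LHS_2$. Every term of $RHS_1$ has the shape $\int_X\tilde u^{2p}u^{a}S_\eps^{m}\om_\vphi^n$ with $a\in\{0,\tfrac12,1\}\subset[0,\chi)$ (here $n\ge2$ enters), and both terms of $RHS_2$ can, after the short case check $u^{2}\tilde u^{2p-2}\le u^{\chi}\tilde u^{2p-\chi}\le u^{\chi}\tilde u^{2p}+1$ (using $u\le\tilde u$ and $\chi\le2$), be put in the form $\int_X u^{\chi}\tilde u^{2p}S_\eps^{m}\om_\vphi^n$ plus a convergent constant (convergent because $\beta>1$).

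The mechanism is the one‑parameter Young splitting: for $0\le a<\chi$,
\[
\tilde u^{2p}u^{a}S_\eps^{m}=\bigl(\tilde u^{2p}u^{\chi}S_\eps^{-q}\bigr)^{a/\chi}\bigl(\tilde u^{2p}S_\eps^{r}\bigr)^{1-a/\chi}\le \delta\,\tilde u^{2p}u^{\chi}S_\eps^{-q}+C_\delta\,\tilde u^{2p}S_\eps^{r},\qquad r:=\frac{m\chi+qa}{\chi-a}.
\]
Integrating against $\om_\vphi^n$ and choosing $\delta$ small absorbs the first piece into $p^{-1}LHS_2$, hence into the $pLHS_2$ on the left; the second piece is of the desired form \emph{provided} $r>\gamma+\sigma-1$, since then $S_\eps^{r}\le S_\eps^{\gamma+\sigma-k'}$ on $\{S_\eps\le1\}$ for any $k'\in(\gamma+\sigma-r,1)$, and putting $k':=\max\{\gamma+\sigma,\ \max_i(\gamma+\sigma-r_i)\}$ gives $k'<1$ once all $r_i>\gamma+\sigma-1$ and $\gamma+\sigma<1$ (the latter is assumed). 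Running through $RHS_1$: the $a=0$ term and the terms weighted by $S_\eps^{\gamma/2}$ and $S_\eps^{(\gamma-\sigma_D^1)/2}$ give residual exponents $>\gamma+\sigma-1$ for free — for the middle one this uses that the gradient exponent can be chosen with $\sigma_D^1\in(\max\{1-\tfrac{2\beta}{n+2},0\},1)$ — while the term weighted by $S_\eps^{(\gamma-1)/2}$ unwinds to the single constraint $\sigma<\tfrac{\beta}{n+1}$, which holds in each regime ($\sigma=0$; $\sigma\le\tfrac12<\tfrac{\beta}{n+1}$; $\sigma<\tfrac1{n+1}\le\tfrac{\beta}{n+1}$, using $\beta\ge1$).

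The delicate point is the second term of $RHS_2$, namely $C_{3.1}\int_X u^{2}\tilde u^{2p-2}S_\eps^{\gamma+\sigma-1}\om_\vphi^n$: its weight is a full power of $S_\eps$ below the critical level (Remark \ref{Laplacian estimate: lose weight}), so no Young splitting can manufacture a sub‑critical good term from it. Instead, after the reduction to $\int_X u^{\chi}\tilde u^{2p}S_\eps^{\gamma+\sigma-1}\om_\vphi^n$ plus a convergent constant, I would invoke the hypotheses directly: they are calibrated precisely so that $\gamma+\sigma-1\ge-q$, i.e. $q\ge1-\gamma-\sigma$, which reads $\beta\ge n$ when $\sigma=\gamma=0$, $\sigma\ge\tfrac{n-\beta}{n-1}$ when $\gamma=0$, and in general $\gamma\ge\tfrac{n-\beta-(n-1)\sigma}{n}$ — the last implied by $\gamma>(1-\sigma)\tfrac{n-1}{n}$ because $\beta\ge1$. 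Hence $S_\eps^{\gamma+\sigma-1}\le S_\eps^{-q}$ on $\{S_\eps\le1\}$ and this term is $\le p^{-1}LHS_2$, absorbed; the trivially true inequality $\gamma+\sigma\ge-q$ disposes of the first term of $RHS_2$ the same way. Collecting everything, the $LHS_2$‑pieces cancel against $pLHS_2$, the convergent constants collect into the ``$+1$'', all good terms combine into $C\int_X\tilde u^{2p}S_\eps^{\gamma+\sigma-k'}\om_\vphi^n$, and tracking the powers of $p$ (the $p^2$ weight on $RHS_1$, the $pC_2$ in $RHS\le pC_2\,RHS_1$, and the Young parameters) yields the prefactor $p^{3}$.

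I expect the main obstacle to be exactly this trouble term: being one full power of $S_\eps$ below critical, it can only be dumped wholesale into $LHS_2$, and this succeeds precisely under the sharp thresholds $\beta>n$, $\beta>\tfrac{n+1}{2}$ together with the displayed lower bounds on $\sigma,\gamma$ — the inequality $q\ge1-\gamma-\sigma$ is the single algebraic fact forcing the three‑case split. A secondary bookkeeping nuisance is exhibiting one $k'<1$ that simultaneously dominates every residual exponent $r_i$ and also satisfies $k'\ge\gamma+\sigma$ (needed for the unweighted contributions), which is where the global assumption $\sigma+\gamma<1$ is used.
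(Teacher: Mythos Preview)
Your approach is essentially the paper's: the same Young splitting on each $RHS_1$ term against $LHS_2$, producing the same residual exponents $k'_1,\dots,k'_5$ (your formula $r=(m\chi+qa)/(\chi-a)$ reproduces the paper's $k_i$ exactly), the same verification that $\sigma<\beta/(n+1)$ in all three regimes, and the same algebraic criterion $q>1-\gamma-\sigma$ (equivalently $\gamma>\gamma_0=1-\beta/n-\sigma(n-1)/n$) for the $RHS_2$ trouble term. The paper packages these as three separate lemmas, but the content is identical.

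The one point where you diverge is the handling of the trouble term $C_{3.1}\int u^2\tilde u^{2p-2}S_\eps^{\gamma+\sigma-1}\om_\vphi^n$. After your reduction to $u^\chi\tilde u^{2p}$ (which should read $u^2\tilde u^{2p-2}\le K^{-\chi}u^\chi\tilde u^{2p}$, not ``$\le u^\chi\tilde u^{2p}+1$''), you bound $S_\eps^{\gamma+\sigma-1}\le S_\eps^{-q}$ and absorb the whole term into $p^{-1}LHS_2$. This gives a \emph{fixed} multiple of $p^{-1}LHS_2$, which absorbs into $pLHS_2$ on the left only for $p\ge p_0$ with $p_0$ depending on $C_3,C_{3.1},K$ --- so you prove the inequality only for large $p$. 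The paper instead runs one more Young splitting on this term to get $\tau LHS_2$ plus a residual $\int\tilde u^{2p}S_\eps^{k_7}$ with $k'_7<1$; the strict inequality $\gamma>\gamma_0$ (not $\ge$) is exactly what makes $k'_7<1$, and the small $\tau$ absorbs for every $p\ge1$. Your version suffices for the iteration in Step~6 (where the starting index can be taken $\ge p_0$), but does not literally yield the proposition as stated for all $p$.
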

\begin{proof}
The proof of the inverse weighted inequality is divided into \lemref{Laplacian estimate: RHS1 good} for $RHS_1$, \lemref{Laplacian estimate: RHS2 bound} for $RHS_2$ and \lemref{Laplacian estimate: criteria} for examining the criteria. Adding the resulting inequalities of $RHS_1$ and $RHS_2$, we have
\begin{align*}
p^2  RHS_1 +  RHS_2&\leq 
 \tau pLHS_2+ p^3 C(\tau) \int_X \tilde u^{2p} S_\eps^{\gamma+\sigma-\max\{k_2',k_5'\}}  \om_\vphi^n\\
&+\tau pLHS_2+  \frac{C(\tau)}{p} \int_X \tilde u^{2p}S_\eps^{\gamma+\sigma-k'_7}  \om_\vphi^n.
\end{align*}
We set a new $k'$ to be $\max\{k_2',k_5',k_7'\}$.
Inserting this inequality to the rough iteration inequality \eqref{Laplacian estimate: Rough iteration inequality} and choosing sufficiently small $\tau$, we therefore obtain the asserted inequality.

\end{proof}
\begin{lem}\label{Laplacian estimate: RHS1 good}
Assume that 
\begin{align}\label{Laplacian estimate: RHS1 good condition}
 \sigma<\frac{\beta}{n+1},\quad \gamma+\sigma<1.
\end{align} Let $k'=\max\{1+\sigma-\frac{\beta}{n+1},\gamma+\sigma\}$. Then
\begin{align*}
 RHS_1
 \leq \frac{
 \tau}{p}LHS_2+ p C(\tau) \int_X \tilde u^{2p} S_\eps^{\gamma+\sigma-\max\{k_2',k_5'\}}  \om_\vphi^n.
\end{align*}
The exponents $k'_2,k_5'$ are given in the following proof, see \eqref{Laplacian estimate: RHS1 exponents k2} and \eqref{Laplacian estimate: RHS1 exponents}, respectively.
\end{lem}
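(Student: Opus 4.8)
The goal is to absorb every summand of $RHS_1$ into a small multiple of $LHS_2=p\int_X\tilde u^{2p}u^{\frac{n}{n-1}}S_\eps^{-\frac{\gamma+\beta-1}{n-1}}\om_\vphi^n$ plus an error term carrying the weight $S_\eps^{\gamma+\sigma-k'}$ with $k'<1$. The mechanism is Young's inequality applied termwise: each summand of $RHS_1$ has the shape $\tilde u^{2p}u^{\lambda}S_\eps^{\mu}\om_\vphi^n$ with $\lambda\in\{0,\tfrac12,1\}$, and I want to split the factor $u^{\lambda}S_\eps^{\mu}$ as a product $\big(u^{\frac{n}{n-1}}S_\eps^{-\frac{\gamma+\beta-1}{n-1}}\big)^{\theta}\cdot\big(S_\eps^{\gamma+\sigma-k'}\big)^{1-\theta}$ up to constants. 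Matching the power of $u$ forces $\theta=\lambda(n-1)/n$, and then matching the power of $S_\eps$ determines the required value of $k'$ as a function of $\lambda,\mu,\beta,\gamma,\sigma,n$. Concretely I would handle the five summands in order: the term $\tilde u^{2p}u$ ($\lambda=1$) goes entirely into $LHS_2$ up to an $S_\eps^{\gamma+\sigma-k'}$-remainder; the constant term $\tilde u^{2p}$ ($\lambda=0$) is already of the error form once one checks $\gamma+\sigma<1$; and the three half-power terms $u^{1/2}S_\eps^{\gamma/2}$, $u^{1/2}S_\eps^{(\gamma-\sigma^1_D)/2}$, $u^{1/2}S_\eps^{(\gamma-1)/2}$ each use Young with $\theta=(n-1)/(2n)$, producing three candidate exponents; the worst of these, together with the $\tilde u^{2p}u$ contribution, is recorded as $\max\{k_2',k_5'\}$. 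The constant $C(\tau)$ collects the Young constants, which scale polynomially in $p$, accounting for the stated $pC(\tau)$ prefactor. Throughout I use the $L^\infty$-bounds on $\vphi_\eps$ and $F_\eps$ to replace $e^{\pm C_1\vphi}$, $e^{\tilde F}$ by constants, and I use $S_\eps\le 1$ to discard harmless positive powers of $S_\eps$.

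**Key computation.** For a half-power term $\int_X\tilde u^{2p}u^{1/2}S_\eps^{a}\om_\vphi^n$, Young's inequality with conjugate exponents $\frac{2n}{n-1}$ and $\frac{2n}{n+1}$ gives, for any $\tau>0$,
\begin{align*}
\int_X\tilde u^{2p}u^{1/2}S_\eps^{a}\om_\vphi^n
\le \frac{\tau}{p}\int_X\tilde u^{2p}u^{\frac{n}{n-1}}S_\eps^{-\frac{\gamma+\beta-1}{n-1}}\om_\vphi^n
+C(\tau)p\int_X\tilde u^{2p}S_\eps^{b}\om_\vphi^n,
\end{align*}
where $b$ is the solution of $\frac{n-1}{2n}\cdot\big(-\frac{\gamma+\beta-1}{n-1}\big)+\frac{n+1}{2n}\,b=a$, i.e. $b=\frac{2n}{n+1}a+\frac{\gamma+\beta-1}{n+1}$. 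Plugging $a=\gamma/2$ gives $b=\frac{n\gamma}{n+1}+\frac{\gamma+\beta-1}{n+1}$, and I then set $k_2'$ so that $\gamma+\sigma-k_2'=b$; the condition $k_2'<1$ unwinds precisely to $\sigma<\frac{\beta}{n+1}$ after simplification — which is hypothesis \eqref{Laplacian estimate: RHS1 good condition} — with the remaining two half-power terms giving weaker constraints (since $S_\eps^{-\sigma^1_D/2}$ and $S_\eps^{-1/2}$ only improve the exponent, using $\sigma^1_D<1$). The term $\tilde u^{2p}u$ is pure $LHS_2$ up to a remainder with exponent $k'=\gamma+\sigma$, and the constant term contributes $k'=\gamma+\sigma$ as well once $\gamma+\sigma<1$; hence $k'=\max\{1+\sigma-\frac{\beta}{n+1},\gamma+\sigma\}$ as claimed.

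**Main obstacle.** The delicate point is not the Young splitting itself but the bookkeeping of which exponent $k'$ is actually largest and verifying it stays strictly below $1$ in the stated parameter ranges — this is exactly where the three cone-angle regimes in \eqref{Laplacian estimate: RHS2 parameters condition} enter, and where $\sigma^1_D<1$ from the gradient estimate \eqref{almost admissible C1 sigmaD} is essential to keep the term $u^{1/2}S_\eps^{(\gamma-\sigma^1_D)/2}$ under control. I would also need to be careful that the $\tau/p$ coefficient on $LHS_2$ — rather than $\tau$ — survives, since the iteration in the final step will multiply by powers of $p$, and losing a factor of $p$ here would break the Moser iteration. The proof proper is a routine but careful verification of these exponent inequalities; I defer it, invoking Lemma \ref{Laplacian estimate: criteria} for the final checking of the criteria.
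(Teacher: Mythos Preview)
Your overall strategy is exactly the paper's: apply Young's inequality termwise, with conjugate exponents $\frac{2n}{n-1}$ and $\frac{2n}{n+1}$ for the half-power terms and $\frac{n}{n-1}$, $n$ for the $\tilde u^{2p}u$ term, to peel off $\frac{\tau}{p}LHS_2$ and leave a remainder $\int_X\tilde u^{2p}S_\eps^{\gamma+\sigma-k'}\om_\vphi^n$. Your formula $b=\frac{2na+\gamma+\beta-1}{n+1}$ is also correct.

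However, your identification of the worst term is backwards, and this is a genuine gap. Since $S_\eps\le1$, a \emph{more negative} exponent $a$ makes $S_\eps^{a}$ \emph{larger}, hence the term harder to control; through your formula, smaller $a$ yields smaller $b$ and therefore \emph{larger} $k'=\gamma+\sigma-b$. So among the three half-power terms with $a=\gamma/2$, $(\gamma-\sigma_D^1)/2$, $(\gamma-1)/2$, the last is the worst, not the first. Concretely, for $a=\gamma/2$ one gets $k'=\sigma-\frac{\beta-1}{n+1}$, and $k'<1$ reads $\sigma<\frac{n+\beta}{n+1}$, which is \emph{not} the hypothesis $\sigma<\frac{\beta}{n+1}$; that hypothesis comes precisely from the fifth term $a=(\gamma-1)/2$, giving $k_5'=\sigma-\frac{\beta-1-n}{n+1}$ and $k_5'<1\iff\sigma<\frac{\beta}{n+1}$. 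Your claim that the extra factors $S_\eps^{-\sigma_D^1/2}$ and $S_\eps^{-1/2}$ ``only improve the exponent'' is the opposite of the truth.

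There is also a labeling mismatch: in the paper, $k_2'$ is attached to the constant term $\int_X\tilde u^{2p}\om_\vphi^n=\int_X\tilde u^{2p}S_\eps^{\gamma+\sigma-(\gamma+\sigma)}\om_\vphi^n$, so $k_2'=\gamma+\sigma$, which is why the second hypothesis $\gamma+\sigma<1$ appears. Your $k_2'$ refers to a different term. Similarly, the $\tilde u^{2p}u$ term, after Young with exponents $\frac{n}{n-1}$ and $n$, leaves a remainder with exponent $k_1'=\sigma-(\beta-1)$, not $\gamma+\sigma$. None of this changes the final $\max\{k_2',k_5'\}$, but your derivation as written would not arrive there correctly.
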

\begin{proof}
We now establish the estimates for the five terms in $RHS_1$.
The 1st one is decomposed as
\begin{align*}
&\int_X \tilde u^{2p} u  \om_\vphi^n
=\int_X (\tilde  u^{2p}u^{\frac{n}{n-1}}S_\eps^{-\frac{\gamma+\beta-1}{n-1}})^{\frac{1}{a_1}}\tilde u^{2p\frac{1}{b_1}}u^{1-\frac{n}{n-1}\frac{1}{a_1}}S_\eps^{\frac{\gamma+\beta-1}{n-1}\frac{1}{a_1}} \om_\vphi^n.
\end{align*}
By Young's inequality with small $\tau$, it implies that
\begin{align*}
\int_X \tilde u^{2p} u  \om_\vphi^n
\leq  \frac{\tau}{ 4p }LHS_2+ p C \int_X \tilde u^{2p} u^{(1-\frac{n}{n-1}\frac{1}{a_1})b_1}S_\eps^{\frac{\gamma+\beta-1}{n-1}\frac{b_1}{a_1}}  \om_\vphi^n.
\end{align*}
The conjugate exponents we choose are
$a_1=\frac{n}{n-1}$ and $b_1=n.$
Accordingly, we have the exponent over $u$ is zero and
\begin{align*}
k_1:=\frac{\gamma+\beta-1}{n-1}\frac{b_1}{a_1}=\gamma+\beta-1.
\end{align*}

The estimate of the 3rd term is proceeded in the same way,
\begin{align*}
\int_X \tilde u^{2p} u^{\frac{1}{2}}S_\eps^{\frac{\gamma}{2}} \om_\vphi^n
&=\int_X (\tilde  u^{2p}u^{\frac{n}{n-1}}S_\eps^{-\frac{\gamma+\beta-1}{n-1}})^{\frac{1}{a_2}}\tilde u^{2p\frac{1}{b_2}}u^{\frac{1}{2}-\frac{n}{n-1}\frac{1}{a_2}}S_\eps^{\frac{\gamma}{2}+\frac{\gamma+\beta-1}{n-1}\frac{1}{a_2}} \om_\vphi^n\\
&\leq \frac{\tau}{ 4p}LHS_2+ p C \int_X \tilde u^{2p} u^{(\frac{1}{2}-\frac{n}{n-1}\frac{1}{a_2})b_2}S_\eps^{(\frac{\gamma}{2}+\frac{\gamma+\beta-1}{n-1}\frac{1}{a_2})b_2}  \om_\vphi^n.
\end{align*}
The exponent $a_2$ is set to be $\frac{2n}{n-1}$. Hence, the exponent above $u$ vanishes. Moreover, $b_2=\frac{2n}{n+1}$ and
\begin{align*}
k_2:=(\frac{\gamma}{2}+\frac{\gamma+\beta-1}{n-1}\frac{1}{a_2})b_2
= \gamma+\frac{\beta-1}{n+1}.
\end{align*}

The 4th and 5th terms are treated by Young's inequality with small $\tau$, as well. The estimate for the 4th term is  
\begin{align*}
&\int_X \tilde u^{2p} u^{\frac{1}{2}}S_\eps^{\frac{\gamma-\sigma_D^1}{2}} \om_\vphi^n
\leq \frac{\tau}{4 p}LHS_2+ p C \int_X \tilde u^{2p} u^{(\frac{1}{2}-\frac{n}{n-1}\frac{1}{a_2})b_2}S_\eps^{k_4}  \om^n
\end{align*}
and the exponent is
\begin{align*}
k_4:=(\frac{\gamma-\sigma_D^1}{2}+\frac{\gamma+\beta-1}{n-1}\frac{1}{a_2})b_2
= \gamma+\frac{\beta-1-n\sigma_D^1}{n+1}.
\end{align*}

While, the estimate for the 5th term is 
\begin{align*}
&\int_X \tilde u^{2p} u^{\frac{1}{2}}S_\eps^{\frac{\gamma-1}{2}} \om_\vphi^n
\leq \frac{\tau}{ 4p}LHS_2+ p C \int_X \tilde u^{2p} u^{(\frac{1}{2}-\frac{n}{n-1}\frac{1}{a_2})b_2}S_\eps^{k_5}  \om^n,
\end{align*}
with the exponent satisfying
\begin{align*}
k_5:=(\frac{\gamma-1}{2}+\frac{\gamma+\beta-1}{n-1}\frac{1}{a_2})b_2= \gamma+\frac{\beta-1-n}{n+1}.
\end{align*}

We let $k_i'$ satisfy $k_i=\gamma+\sigma-k_i'$. Then we summary the exponents from the above estimates to see that
\begin{align}\label{Laplacian estimate: RHS1 exponents}
&k_1'=\gamma+\sigma-(\gamma+\beta-1)=\sigma-(\beta-1),\notag\\
&k_3'=\gamma+\sigma-( \gamma+\frac{\beta-1}{n+1})=\sigma-\frac{\beta-1}{n+1},\notag\\
&k_4'=\gamma+\sigma-(\gamma+\frac{\beta-1-n\sigma_D^1}{n+1})
=\sigma-\frac{\beta-1-n\sigma_D^1}{n+1},\notag\\
&k_5'=\gamma+\sigma-(\gamma+\frac{\beta-1-n}{n+1})=\sigma-\frac{\beta-1-n}{n+1}.
\end{align}
Since $\sigma_D^1\leq 1$, we observe that $k_5'$ is the largest one among these four exponents.
We further compute that
\begin{align*}
k_5'-1=\sigma-\frac{\beta}{n+1},
\end{align*}
which is negative under the hypothesis of our lemma, i.e. $\sigma<\frac{\beta}{n+1}$.

The integrand of the 2nd term is decomposed as,
\begin{align*}
\int_X\tilde u^{2p}\om_\vphi^n=
\int_X\tilde u^{2p}S_\eps^{\gamma+\sigma-k_2'}\om_\vphi^n, 
\end{align*}
where we choose the exponent 
\begin{align}\label{Laplacian estimate: RHS1 exponents k2}
k_2':=\gamma+\sigma<1.
\end{align}
Therefore, we set the exponent to be $\max\{k_2',k_5'\}$ and obtain the required inequality for $RHS_1$.

\end{proof}

We then derive the estimates for the terms in $RHS_2$.
\begin{lem}\label{Laplacian estimate: RHS2 bound}
Assume the following condition holds
\begin{align}\label{Laplacian estimate: RHS2 condition}
\gamma>1-\frac{\beta}{n}-\sigma(1-\frac{1}{n}):=\gamma_0.
\end{align} Then 
\begin{align*}
RHS_2
 \leq \tau pLHS_2+  \frac{C(\tau)}{p} \int_X \tilde u^{2p}S_\eps^{\gamma+\sigma-k'_7}  \om_\vphi^n,
\end{align*}
where $k'_7<1$ is given in \eqref{Laplacian estimate: k7}.
\end{lem}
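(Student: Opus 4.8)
\textbf{Proof plan for Lemma \ref{Laplacian estimate: RHS2 bound}.}

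The strategy is the same ``decompose and apply Young'' scheme used for $RHS_1$ in \lemref{Laplacian estimate: RHS1 good}, now carried out for the two terms
\begin{align*}
\int_X u^2 \tilde u^{2p-2}S_\eps^{\gamma+\sigma}\om_\vphi^n,\qquad \int_X u^2 \tilde u^{2p-2}S_\eps^{\gamma+\sigma-1}\om_\vphi^n,
\end{align*}
the second being the trouble term flagged in \remref{Laplacian estimate: lose weight}. For each, I would first bound $u^2\tilde u^{2p-2}\leq \tilde u^{2p}$ (using $u\leq\tilde u$, as in \lemref{Laplacian estimate: key trick}) to reduce to integrals of the form $\int_X \tilde u^{2p}S_\eps^{\gamma+\sigma-k}\om_\vphi^n$; but since $k=1$ for the trouble term, that crude bound is not enough to reach $k'<1$. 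Instead I would keep a small surviving power of $u$ and interpolate against $LHS_2=p\int_X\tilde u^{2p}u^{n/(n-1)}S_\eps^{-(\gamma+\beta-1)/(n-1)}\om_\vphi^n$ via Young's inequality with a small parameter $\tau$: writing the integrand as a product of $\bigl(\tilde u^{2p}u^{n/(n-1)}S_\eps^{-(\gamma+\beta-1)/(n-1)}\bigr)^{1/a}$ and a remainder, choosing the conjugate exponent $a$ exactly so that the leftover power of $u$ vanishes, so that the $\tau$-term is absorbed into $LHS_2$ and the complementary term is $\frac{C(\tau)}{p}\int_X\tilde u^{2p}S_\eps^{\gamma+\sigma-k_7'}\om_\vphi^n$ for an explicit $k_7'$.

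The bookkeeping goes as follows. For the term with weight $S_\eps^{\gamma+\sigma}$ I would split $u^2\tilde u^{2p-2}S_\eps^{\gamma+\sigma}$, use H\"older/Young with the conjugate pair forced by cancelling the $u$-exponent (so $b=\frac{n}{n-1}$-type as in the $RHS_1$ computation), and read off the resulting weight exponent $k_6$ with $k_6=\gamma+\sigma-k_6'$; this term is harmless since no $S_\eps^{-1}$ appears. For the trouble term with weight $S_\eps^{\gamma+\sigma-1}$ the same manipulation produces
\begin{align*}
k_7':=1+\sigma-\frac{\beta}{n}-\gamma\cdot\frac{n-1}{n}\cdot\Bigl(\text{something}\Bigr),
\end{align*}
and imposing $k_7'<1$ is precisely the condition $\gamma>\gamma_0:=1-\frac{\beta}{n}-\sigma(1-\frac1n)$ stated in \eqref{Laplacian estimate: RHS2 condition}: the $\frac{\beta}{n}$ comes from distributing the unit drop in the $S_\eps$-power through the exponent $n/(n-1)$ of $LHS_2$, and the $\sigma(1-\frac1n)$ and the $\gamma$-weight come from how $\sigma$ and $\gamma$ enter the measure $\tilde\mu$ and the base power. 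Collecting the two contributions and taking $k_7'$ to be the larger of the two exponents yields the claimed inequality, with $C(\tau)$ depending on $\tau$, $\beta$, $\sigma$, $\gamma$, $n$ through the finitely many applications of Young's inequality.

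\textbf{Main obstacle.} The delicate point is the trouble term: the loss of one power of $S_\eps$ (from $|\p S_\eps^{k_\gamma/2}|^2\sim S_\eps^{k_\gamma-1}$, see \eqref{Laplacian estimate: trouble term}) must be recovered by the \emph{negative} power of $S_\eps$ sitting in $LHS_2$, namely $S_\eps^{-(\gamma+\beta-1)/(n-1)}$. That recovery is only partial, and quantifying exactly how much is gained --- tracking the interplay of the three parameters $\beta,\gamma,\sigma$ through the forced choice of conjugate exponents --- is where the constraint $\gamma>\gamma_0$ is born. Getting the arithmetic of these exponents right, so that every leftover $u$-power is killed while $k_7'$ lands strictly below $1$, is the crux; once the exponents are pinned down the inequalities themselves are routine applications of Young's inequality with the absorbing parameter $\tau$.
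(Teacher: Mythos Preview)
Your proposal follows essentially the same route as the paper: isolate the trouble term $\int_X u^2\tilde u^{2p-2}S_\eps^{\gamma+\sigma-1}\om_\vphi^n$, interpolate it against $LHS_2$ via Young's inequality with a small parameter $\tau$, then read off the weight exponent $k_7'$ and check that $k_7'<1$ is exactly the condition $\gamma>\gamma_0$. One caveat worth flagging (present in the paper's wording as well): the phrase ``choose $a$ so the leftover $u$-power vanishes'' would force $a=\tfrac{n}{2(n-1)}\le1$, which is not a valid Young exponent for $n\ge2$; the fix is to take any $a_7>1$, bound the residual positive power of $u$ by $\tilde u$, and use $\tilde u\ge K$ to absorb the resulting deficit in the $\tilde u$-exponent --- the crucial point being that the algebra gives $k_7'<1\iff\gamma>\gamma_0$ \emph{independently} of the choice of $a_7>1$.
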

\begin{proof}The first term in $RHS_2$ is a good term. On the other hand, the trouble term is treated with the help of $LHS_2$,
\begin{align*}
&RHS_2^2:=\int_X u^2 \tilde u^{2p-2}S_\eps^{\gamma+\sigma-1}  \om_\vphi^n\\
&=\int_X (\tilde  u^{2p}u^{\frac{n}{n-1}}S_\eps^{-\frac{\gamma+\beta-1}{n-1}})^{\frac{1}{a_7}}
\tilde u^{2p\frac{1}{b_7}-2}  u^{2-\frac{n}{n-1}\frac{1}{a_7}}S_\eps^{\gamma+\sigma-1+\frac{\gamma+\beta-1}{n-1}\frac{1}{a_7}} \om_\vphi^n,
\end{align*}
by applying an argument analogous to the proof of $RHS_1$.
By Young's inequality, we get
\begin{align*}
RHS_2^2
\leq \tau LHS_2+  C \int_X \tilde u^{2p-2b_7}  u^{2b_7-\frac{n}{n-1}\frac{b_7}{a_7}}S_\eps^{k_7}  \om_\vphi^n.
\end{align*}
Using $\tilde u\geq K$ and choosing $a_7=\frac{n}{2(n-1)}$,
\begin{align*}
RHS_2^2\leq \tau LHS_2+  C \int_X \tilde u^{2p}S_\eps^{k_7}  \om_\vphi^n.
\end{align*}
The exponent
\begin{align*}
k_7:=[\gamma+\sigma-1+\frac{\gamma+\beta-1}{n-1}\frac{1}{a_7}]b_7.
\end{align*}
Direct computation shows that
\begin{align}\label{Laplacian estimate: k7}
k'_7:=\gamma+\sigma-k_7=\frac{-\gamma n-\sigma(n-1)+a_7(n-1)-\beta+1}{(a_7-1)(n-1)}.
\end{align}
Thus the conclusion $k'_7<1$ holds, under the condition \eqref{Laplacian estimate: RHS2 condition}.

\end{proof}

At last, we examine the conditions \eqref{Laplacian estimate: RHS1 good condition} and \eqref{Laplacian estimate: RHS2 condition}.
\begin{lem}\label{Laplacian estimate: criteria}
Assume that the parameters $\sigma$ and $\gamma$ satisfy $\sigma+\gamma<1$ and \eqref{Laplacian estimate: RHS2 parameters condition}.
Then the condition \eqref{Laplacian estimate: RHS1 good condition} and \eqref{Laplacian estimate: RHS2 condition} hold.
\end{lem}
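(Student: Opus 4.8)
The plan is to verify the two inequalities \eqref{Laplacian estimate: RHS1 good condition}, namely $\sigma<\frac{\beta}{n+1}$ together with $\gamma+\sigma<1$, and \eqref{Laplacian estimate: RHS2 condition}, namely $\gamma>\gamma_0:=1-\frac{\beta}{n}-\sigma(1-\frac{1}{n})$, in each of the three regimes of the cone angle $\beta$ listed in \eqref{Laplacian estimate: RHS2 parameters condition}. This is a purely elementary case check; the only subtlety is keeping track of which constraints are active in which range.

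\smallskip

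First I would dispose of the range $\beta>n$, where we take $\sigma=\gamma=0$. Then $\gamma+\sigma=0<1$ and $\sigma=0<\frac{\beta}{n+1}$ are immediate, so \eqref{Laplacian estimate: RHS1 good condition} holds. For \eqref{Laplacian estimate: RHS2 condition} we must check $0=\gamma>\gamma_0=1-\frac{\beta}{n}$, i.e. $\beta>n$, which is exactly the hypothesis. Next, in the range $\frac{n+1}{2}<\beta\leq n$ we have $\gamma=0$ and $\frac{n-\beta}{n-1}<\sigma\leq\frac12$. Here $\gamma+\sigma=\sigma\leq\frac12<1$; and $\sigma\leq\frac12<\frac{\beta}{n+1}$ because $\beta>\frac{n+1}{2}$, giving \eqref{Laplacian estimate: RHS1 good condition}. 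For \eqref{Laplacian estimate: RHS2 condition} with $\gamma=0$ we need $0>\gamma_0=1-\frac{\beta}{n}-\sigma\frac{n-1}{n}$, i.e. $\sigma\frac{n-1}{n}>1-\frac{\beta}{n}=\frac{n-\beta}{n}$, i.e. $\sigma>\frac{n-\beta}{n-1}$, which is precisely the assumed lower bound on $\sigma$.

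\smallskip

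The remaining case $1\leq\beta\leq\frac{n+1}{2}$ is the one to watch, since now $\gamma>0$ is forced and both constraints genuinely interact. We are given $\sigma<\frac{1}{n+1}$ and $\gamma>(1-\sigma)\frac{n-1}{n}$, together with the standing assumption $\sigma+\gamma<1$. For \eqref{Laplacian estimate: RHS1 good condition}: $\gamma+\sigma<1$ is the standing assumption, and $\sigma<\frac{1}{n+1}\leq\frac{\beta}{n+1}$ since $\beta\geq1$. For \eqref{Laplacian estimate: RHS2 condition} I would show the assumed lower bound on $\gamma$ already exceeds $\gamma_0$: since $\beta\geq1$,
\begin{align*}
\gamma_0=1-\frac{\beta}{n}-\sigma\frac{n-1}{n}\leq 1-\frac1n-\sigma\frac{n-1}{n}=\frac{n-1}{n}-\sigma\frac{n-1}{n}=(1-\sigma)\frac{n-1}{n}<\gamma,
\end{align*}
which is exactly what is needed. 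The main (and really only) obstacle is organizational: making sure that the interval for $(\sigma,\gamma)$ prescribed in \eqref{Laplacian estimate: RHS2 parameters condition} is nonempty in this last regime and consistent with $\sigma+\gamma<1$ — one checks $(1-\sigma)\frac{n-1}{n}+\sigma=\frac{n-1}{n}+\frac{\sigma}{n}<\frac{n-1}{n}+\frac{1}{n(n+1)}<1$, so small admissible $\gamma$ can indeed be chosen below $1-\sigma$. Having verified \eqref{Laplacian estimate: RHS1 good condition} and \eqref{Laplacian estimate: RHS2 condition} in all three ranges, the lemma follows.
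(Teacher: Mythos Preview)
Your proof is correct and follows essentially the same case-by-case verification as the paper's own argument: in each of the three ranges of $\beta$ you check $\sigma<\frac{\beta}{n+1}$, $\sigma+\gamma<1$, and $\gamma>\gamma_0$ using exactly the same elementary inequalities. The additional nonemptiness check you include at the end is not in the paper but is a sensible sanity remark.
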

\begin{proof}
We check that, when $\beta>n$ and $\sigma=\gamma=0$, 
the condition \eqref{Laplacian estimate: RHS1 good condition} is satisfied, i.e.
\begin{align*}
 \sigma=0<\frac{n}{n+1}<\frac{\beta}{n+1}.
\end{align*}and
Meanwhile, the condition \eqref{Laplacian estimate: RHS2 condition} is satisfies, too. That is
\begin{align*}
\gamma_0=1-\frac{\beta}{n}-\sigma(1-\frac{1}{n})=1-\frac{\beta}{n}<0=\gamma.
\end{align*}

Similarly, the second criterion implies that
\begin{align*}
\frac{\beta}{n+1}>\frac{1}{2}\geq \sigma.
\end{align*}and
\begin{align*}
\gamma_0=1-\frac{\beta}{n}-\sigma(1-\frac{1}{n})<1-\frac{\beta}{n}-\frac{n-\beta}{n-1}(1-\frac{1}{n})=0=\gamma.
\end{align*}

The third criterion also deduces that
\begin{align*}
\frac{\beta}{n+1}\geq\frac{1}{n+1}> \sigma.
\end{align*}and
\begin{align*}
\gamma_0=1-\frac{\beta}{n}-\sigma(1-\frac{1}{n})\leq (1-\sigma)\frac{n-1}{n}<\gamma.
\end{align*}

Therefore, any one of the three criteria in \eqref{Laplacian estimate: RHS2 condition} guarantees both the conditions \eqref{Laplacian estimate: RHS1 good condition} and \eqref{Laplacian estimate: RHS2 condition}.
\end{proof}

\begin{rem}\label{rough W2p estimate}
Using Young's inequality similar to the proof above, we could also obtain 
a $W^{2,p}$ estimate for $\tr_\om\om_\vphi$. However, this bound relies on the bound of $\p F$. Alternatively, we will obtain an accurate $W^{2,p}$ estimate in \thmref{w2pestimates degenerate Singular equation} in Section \ref{W2p estimate}, without any condition on $\p F$.
\end{rem}



\subsection{Step 6: iteration}\label{Step 6: iteration}
Combining the weighted inequality Proposition \ref{Weighted inequality} with the weighted inequality Proposition \ref{inverse weighted inequalities}, we obtain that




\begin{prop}[Iteration inequality]\label{Iteration inequality 1}
Assume that the parameters $\sigma$ and $\gamma$ satisfy $\sigma+\gamma<1$ and \eqref{Laplacian estimate: RHS2 parameters condition}. Then it holds
\begin{align}
\|\tilde u\|^{2p}_{L^{2p\chi}(\tilde\mu)}
\leq C_6 [p^3 \|\tilde u\|^{2p}_{L^{2pa}(\tilde\mu)}+1].
\end{align}

\end{prop}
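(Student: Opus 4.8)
The plan is to chain together the two inequalities established in Steps~4 and~5. First I would invoke the inverse weighted inequality, Proposition~\ref{inverse weighted inequalities}: under the hypothesis $\sigma+\gamma<1$ together with \eqref{Laplacian estimate: RHS2 parameters condition}, it produces an exponent $k'<1$ and yields
\begin{align*}
\|\tilde u\|^{2p}_{L^{2p\chi}(\tilde\mu)}
\leq C_5\Big[p^3\int_X \tilde u^{2p} S_\eps^{\gamma+\sigma-k'}\om_\vphi^n+1\Big].
\end{align*}
The content of this first step is that the right-hand side now contains only a single weighted integral against $\om_\vphi^n$, with no gradient term and no leftover factor of $u$, and the weight exponent $\gamma+\sigma-k'$ is sub-critical precisely because $k'<1$; this strict inequality is exactly what Lemma~\ref{Laplacian estimate: criteria} verified, case by case, for the three ranges of $\beta$ encoded in \eqref{Laplacian estimate: RHS2 parameters condition}.

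Second, I would feed this single integral into the weighted inequality, Proposition~\ref{Weighted inequality}. Since $k'<1$, that proposition applies with the same exponent and furnishes some $1<a<\chi=\frac{n}{n-1}$ and a finite constant $C_{4.1}=\| S_\eps^{k_\gamma-k_\gamma\chi-k'}\|_{L^{c}(\tilde\mu)}$ (with $c>n$) such that
\begin{align*}
\int_X \tilde u^{2p} S_\eps^{\gamma+\sigma-k'}\om_\vphi^n\leq C_{4.1}\,\|\tilde u\|^{2p}_{L^{2pa}(\tilde\mu)}.
\end{align*}
Substituting into the previous display and setting $C_6:=C_5\max\{C_{4.1},1\}$ gives
\begin{align*}
\|\tilde u\|^{2p}_{L^{2p\chi}(\tilde\mu)}\leq C_6\big[p^3\|\tilde u\|^{2p}_{L^{2pa}(\tilde\mu)}+1\big],
\end{align*}
which is the asserted iteration inequality. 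Observe that no new power of $p$ is introduced in this last step, so the $p^3$ coming from Proposition~\ref{inverse weighted inequalities} is preserved.

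The step I expect to matter most is not in this short argument — which is essentially a bookkeeping of constants once Propositions~\ref{Weighted inequality} and~\ref{inverse weighted inequalities} are available — but rather the compatibility of the exponents that was already handled upstream: the inverse weighted inequality manufactures $k'=\max\{k_2',k_5',k_7'\}$ by Young's inequality applied to $RHS_1$ and $RHS_2$, and one needs $k'<1$ under the parameter constraints \eqref{Laplacian estimate: RHS2 parameters condition} so that the weighted inequality can then be invoked with the \emph{same} $k'$; this is Lemma~\ref{Laplacian estimate: criteria}, which is simply cited here. A minor point is that $C_{4.1}$ is uniform in $\eps$: this holds because $c$ may be taken strictly between $n$ and $c_0=n\frac{k_\gamma+n-1}{k_\gamma+k'(n-1)}>n$, which makes the defining integral of $C_{4.1}$ convergent with a bound independent of $\eps$.
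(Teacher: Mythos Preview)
Your proposal is correct and follows exactly the paper's approach: the paper's proof of Proposition~\ref{Iteration inequality 1} is simply the statement that one combines Proposition~\ref{Weighted inequality} with Proposition~\ref{inverse weighted inequalities}, and your argument spells out that combination precisely. The additional remarks you make about the exponent compatibility and the uniformity of $C_{4.1}$ in $\eps$ are accurate and consistent with the paper's setup.
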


We normalise the measure to be one. The norm $\|\tilde u\|_{L^{2p\chi}(\tilde\mu)}$ is increasing in $p$.
We assume $\|\tilde u\|_{L^{2p_0\chi}(\tilde\mu)}\geq 1$ for some $p_0\geq 1$, otherwise it is done.

We develop the iterating process from the iteration inequality
\begin{align}\label{Laplacian estimate: iteration}
&\|\tilde u\|_{L^{2p\frac{n}{n-1}}(\tilde\mu)}
\leq p^{\frac{3}{2}p^{-1}} C_7^{\frac{1}{2}p^{-1}} \|\tilde u\|_{L^{2pa}(\tilde\mu)}.
\end{align}
Setting
\begin{align*}
\chi_a:=\frac{\frac{n}{n-1}}{a}>1,\quad p=\chi_a^i,\quad i=0,1,2,\cdots
\end{align*}
and iterating \eqref{Laplacian estimate: iteration} with $p=\chi_a^m$,
\begin{align*}
&\|\tilde u\|_{L^{2\chi_a^m\frac{n}{n-1}}(\tilde\mu)}
\leq \chi_a^{\frac{3}{2}m\chi_a^{-m}} C_{7}^{\frac{1}{2}\chi_a^{-m}} \|\tilde u\|_{L^{2\chi_a^ma}(\tilde\mu)},
\end{align*}
which is
\begin{align*}
=\chi_a^{\frac{3}{2}m\chi_a^{-m}} C^{\frac{1}{2}\chi_a^{-m}} _{7} \|\tilde u\|_{L^{2\chi_a^{m-1}\frac{n}{n-1}}(\tilde\mu)}.
\end{align*}
We next apply \eqref{Laplacian estimate: iteration} again with $p=\chi^{m-1}$,
\begin{align*}
&\leq \chi_a^{\frac{3}{2}m\chi_a^{-m}+\frac{3}{2}(m-1)\chi_a^{-(m-1)}}C_{7}^{\frac{1}{2}[\chi_a^{-m}+\chi_a^{-(m-1)}]}  \|\tilde u\|_{L^{2\chi_a^{m-1}a}(\tilde\mu)}.
\end{align*}

We choose $i_0$ such that $\tilde p_0=\chi_a^{i_0}\geq p_0$.
Repeating the argument above, we arrive at
\begin{align*}
&\leq \chi_a^{\frac{3}{2}\sum_{i=i_0}^m i\chi_a^{-i}}C_{7}^{\frac{1}{2}\sum_{i=i_0}^m{\chi_a^{-i}}}\|\tilde u\|_{L^{2a\tilde p_0}(\tilde\mu)}.
\end{align*}
Since these two series $\sum_{i=i_0}^\infty i\chi_a^{-i}$ and $\sum_{i=i_0}^\infty{\chi_a^{-i}}$ are convergent, we take $m\rightarrow \infty$ and conclude that
\begin{align*}
&\|\tilde u\|_{L^{\infty}}\leq C \|\tilde u\|_{L^{2a\tilde p_0}(\tilde\mu)}\leq  C[ \|u\|_{L^{2a\tilde p_0}(\tilde\mu)}+\|K\|_{L^{2a\tilde p_0}(\tilde\mu)}].
\end{align*}

In order to obtain the uniform bound of $\tr_\om\om_{\vphi}\cdot S_\eps^{\gamma}$, we apply the $L^\infty$ bound of $\vphi$ and $L^{2a\tilde p_0}$ bound of $\tr_\om\om_\vphi$ from Definition \ref{a priori estimates approximation}, it is left to compare the exponent of the weight in the integral 
\begin{align*}
\int_X e^{-C_12a\tilde p_0\vphi}(\tr_\om\om_{\vphi})^{2a\tilde p_0}S_\eps^{2a\tilde p_0\gamma+(\gamma+\beta-1+\sigma)\frac{n}{n-1}}\om^n
\end{align*} with the exponent  $\sigma^2_D$ in \eqref{almost admissible w2p sigmaD}, 
\begin{align*}
[(2a\tilde p_0+1)\gamma+\beta-1+\sigma]\frac{n}{n-1} \geq \sigma^2_D=(\beta-1)\frac{n-2}{n-1+(2a\tilde p_0)^{-1}}.
\end{align*}

In conclusion, we obtain the Laplacian estimate
\begin{align*}
v_\eps:=\tr_\om\om_{\vphi_\eps}\leq C S_\eps^{-\gamma}.
\end{align*}
Moreover, $\gamma=0$, when $\beta>\frac{n+1}{2}$.

\subsection{Laplacian estimate for degenerate KE equation}\label{Log Kahler Einstein metric}
We apply the weighted integration method developed for the degenerate scalar curvature equation to the degenerate KE problem, that provides an alternative proof of Yau's Laplacian estimate for the approximate degenerate KE equation \eqref{critical pt Ding approximation}, 
\begin{align}\label{critical pt Ding approximation estimates}
\om^n_{\vphi_\eps}
=  e^{h_\om+\mathfrak h_\eps-\lambda\vphi_\eps +c_\eps}  \om^n.
\end{align}
Comparing with the approximate degenerate scalar curvature equation \eqref{Degenerate cscK 1 approximation}, we have $\theta=\lambda\om$, $R=\underline S_\b=\lambda n$ and 
\begin{align*}
F_\eps=-\lambda\vphi_\eps,\quad \tilde f_\eps=-h_\om-\mathfrak h_\eps -c_\eps, \quad \tilde F_\eps=F_\eps-\tilde f_\eps.
\end{align*}
We also have 
\begin{align}\label{KE tri F}
\tri F_\eps=-\lambda\tri \vphi_\eps=-\lambda(\tr_\om\om_{\vphi_\eps}-n).
\end{align}

\begin{thm}\label{KE Laplacian}Suppose $\vphi_\eps$ is a solution to the approximate KE equation \eqref{critical pt Ding approximation estimates}.
Then there exists a uniform constant $C$ such that
\begin{align}\label{Laplacian estimate}
\tr_\om\om_{\vphi_\eps}\leq C \text{ on } X
\end{align}
where the uniform constant $C$ depends on
\begin{align*}
\|\vphi_\eps\|_\infty,\quad \|h_\om\|_\infty,\quad\inf_X\tri h_\om, \quad
 \inf_{i\neq j}R_{i\bar i j\bar j}(\om), \quad C_S(\om),\quad \Theta_D, 
\end{align*}
and $\lambda,\quad\beta, \quad c_\eps, \quad n$.
\end{thm}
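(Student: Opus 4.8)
The strategy is to deduce \thmref{KE Laplacian} as a special case of the Laplacian estimate \thmref{cscK Laplacian estimate}, by verifying that the approximate degenerate KE equation \eqref{critical pt Ding approximation estimates} fits the framework of the approximate degenerate scalar curvature equation and, crucially, that the extra hypothesis on $\|\p F_\eps\|_{L^\infty(\om_{\vphi_\eps})}$ is automatic here and can even be bypassed. Indeed, with $\theta=\lambda\om$, $R=\underline S_\b=\lambda n$ and $F_\eps=-\lambda\vphi_\eps$, the $L^\infty$-estimate of $F_\eps$ follows directly from $\|\vphi_\eps\|_\infty$; so $\vphi_\eps$ is almost admissible (the gradient and $W^{2,p}$ estimates being supplied by \thmref{gradient estimate} and \thmref{w2pestimates degenerate Singular equation}). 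The key simplification, flagged in the text around \eqref{KE tri F}, is that the fourth-order term $\tri F_\eps = -\lambda(\tr_\om\om_{\vphi_\eps}-n)$ is now purely second order, so the nonlinear term $\mathcal N$ of Proposition \ref{Laplacian estimate: integration cor} becomes
\begin{align*}
\mathcal N = \lambda\int_X e^{-C_1\vphi}\tilde u^{2p}(v-n)S_\eps^{\gamma}e^{\tilde F}\om^n
= \lambda\int_X e^{-C_1\vphi}\tilde u^{2p}(v-n)S_\eps^{\gamma}\om_\vphi^n,
\end{align*}
which is already of the schematic form $\int_X \tilde u^{2p}(u+1)\om_\vphi^n$ appearing in $RHS_1$ (after replacing $v = e^{C_1\vphi}S_\eps^{-\gamma}u$). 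Hence no integration by parts is needed and Step 2 (Section \ref{Step 2}) is rendered trivial: the troublesome half-powers of $S_\eps$ in $RHS_1$ (the terms $u^{1/2}S_\eps^{(\gamma-1)/2}$, etc.) simply do not arise, and neither does any dependence on $\|\p F_\eps\|_{L^\infty(\om_{\vphi_\eps})}$.

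\smallskip

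The remaining steps run exactly as in the proof of \thmref{cscK Laplacian estimate}. First I would set up, as in Step 1, the quantities $v=\tr_\om\om_{\vphi_\eps}$, $w=e^{-C_1\vphi}v$, $u=S_\eps^\gamma w$ and obtain the differential inequality $\tri_\vphi u \geq C_{1.4}u\tr_\vphi\om + e^{-C_1\vphi}\tri F\,S_\eps^\gamma - C_{1.5}(1+u)$ from Yau's computation \eqref{Yau computation} together with \lemref{h eps} and \lemref{Laplacian estimate: F} (here $h_\theta$, $c_\eps$ are replaced by $h_\om$, $c_\eps$ and $\beta\geq 1$ still gives the right sign for $i\p\bar\p\mathfrak h_\eps$). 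Substituting $\tri F = -\lambda(v-n)$ and noting that $-\lambda v \geq 0$ when $\lambda\leq 0$ — which holds automatically since $C_1(X,D)=\lambda\Om$ — actually makes the fourth-order term \emph{helpful}: it can be absorbed into the good term $C_{1.4}u\tr_\vphi\om$ or simply dropped. When $\lambda>0$ (the Fano range), one instead keeps $-\lambda v S_\eps^\gamma e^{-C_1\vphi} = -\lambda u$ on the right and absorbs it into $C_{1.5}u$ after enlarging $C_{1.5}$. Either way we land on the integral inequality of Proposition \ref{Laplacian estimate: integration inequality pro} with $RHS_1$ containing only the terms $u$ and $1$. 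Then Steps 3--6 (the Sobolev inequality giving the rough iteration inequality Proposition \ref{Rough iteration inequality prop}, the weighted inequality Proposition \ref{Weighted inequality}, the inverse weighted inequalities Proposition \ref{inverse weighted inequalities}, and the Moser iteration of Section \ref{Step 6: iteration}) apply verbatim, with the parameter constraints \eqref{Laplacian estimate: RHS2 parameters condition}; since here the full Laplacian bound $\tr_\om\om_{\vphi_\eps}\leq C$ is claimed for \emph{all} $\beta\geq 1$, one takes $\sigma=\gamma=0$ throughout, which is legitimate because without the half-power terms the obstruction forcing $\gamma>0$ when $\beta\leq\frac{n+1}{2}$ disappears — the only surviving terms $\int_X\tilde u^{2p}u\,\om_\vphi^n$ and $\int_X\tilde u^{2p}\om_\vphi^n$ are controlled using $LHS_2$ via Young's inequality with the conjugate exponents $a_1=\frac{n}{n-1}$, $b_1=n$ exactly as in \lemref{Laplacian estimate: RHS1 good}, requiring only $\sigma<\frac{\beta}{n+1}$, which $\sigma=0$ trivially satisfies.

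\smallskip

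The main point to be careful about — and the place where this proof genuinely differs from \thmref{cscK Laplacian estimate} — is confirming that with $\gamma=\sigma=0$ the trouble term $RHS_2 = \int_X u^2\tilde u^{2p-2}S_\eps^{-1}\,\om_\vphi^n$ arising from $|\p S_\eps^{k_\gamma/2}|^2$ in \eqref{Laplacian estimate: trouble term} also vanishes: indeed $k_\gamma = \gamma+\beta-1+\sigma = \beta-1$, and while this is nonzero for $\beta>1$, the term $\int_X u^2\tilde u^{2p-2}S_\eps^{k_\gamma-1}\om^n = \int_X u^2\tilde u^{2p-2}S_\eps^{\beta-2}\om^n$ must still be absorbed. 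Here the condition \eqref{Laplacian estimate: RHS2 condition}, namely $\gamma > 1-\frac{\beta}{n}-\sigma(1-\frac{1}{n}) = 1-\frac{\beta}{n}$, is \emph{not} satisfied by $\gamma=0$ when $\beta<n$. So the honest statement is: for $\beta\geq n$ one gets the global bound immediately with $\gamma=0$; for $1\leq\beta<n$ one must argue that, \emph{because} the half-power terms are absent, the trouble term can be handled differently — e.g. by using that $S_\eps^{\beta-2}\leq S_\eps^{-1}$ is still integrable against $\om_\vphi^n = e^{\tilde F_\eps}\om^n = e^{h_\om+c_\eps}S_\eps^{\beta-1}\om^n$, giving the harmless weight $S_\eps^{\beta-2}$, and then absorbing $\int_X u^2\tilde u^{2p-2}S_\eps^{\beta-2}\om_\vphi^n$ into $LHS_2 = p\int_X\tilde u^{2p}u^{n/(n-1)}S_\eps^{-(\beta-1)/(n-1)}\om_\vphi^n$ by Young's inequality after checking the exponent bookkeeping (exactly \lemref{Laplacian estimate: RHS2 bound} with $\sigma=\gamma=0$, where one verifies $k_7'<1$). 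I expect this exponent check — showing $k_7' = \frac{-\beta+1+a_7(n-1)}{(a_7-1)(n-1)}<1$ with $a_7=\frac{n}{2(n-1)}$ for all $\beta\geq 1$ — to be the one genuinely new computation, and the main (though modest) obstacle; everything else is a transcription of the cscK argument with the fourth-order difficulty amputated. Finally, passing to the limit in $\eps$ and invoking \thmref{intro almost admissible} and \thmref{admissible estimate} gives \corref{admissible estimate KE}.
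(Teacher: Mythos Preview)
Your overall strategy matches the paper's: substitute $\tri F_\eps=-\lambda(v-n)$ into the nonlinear term $\mathcal N$, so that Step~2 is bypassed and $RHS_1$ collapses to $\int_X\tilde u^{2p+1}\om_\vphi^n$; then run Steps~3--6 unchanged. You also correctly spot that the trouble term $RHS_2$ from \eqref{Laplacian estimate: trouble term} survives, since $k_\gamma=\beta-1+\sigma+\gamma>0$ whenever $\beta>1$.

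The gap is in your handling of $RHS_2$. You propose $\sigma=\gamma=0$ throughout and flag the check ``$k_7'<1$ for all $\beta\geq1$'' as the one remaining computation. But that check fails: condition \eqref{Laplacian estimate: RHS2 condition}, which is exactly $k_7'<1$, reads $\gamma>1-\tfrac{\beta}{n}-\sigma\tfrac{n-1}{n}$; with $\sigma=\gamma=0$ this forces $\beta>n$, not $\beta\geq1$. So for $1<\beta\leq n$ your scheme does not close, and the ``modest obstacle'' you anticipate is in fact fatal to the choice $\sigma=0$.

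The paper's fix is to keep $\gamma=0$ (so the conclusion is still the unweighted bound $\tr_\om\om_{\vphi_\eps}\leq C$) but take $\sigma\in\bigl(\tfrac{n-\beta}{n-1},1\bigr)$, which is a nonempty interval for every $\beta>1$. With this choice the three surviving conditions $k_1'<1$, $k_2'<1$, and \eqref{Laplacian estimate: RHS2 condition} read $\sigma<\beta$, $\sigma<1$, and $\sigma>\tfrac{n-\beta}{n-1}$, all satisfied; see \eqref{Laplacian estimate: RHS2 parameters condition ke}. The point is that $\sigma$ enters only the auxiliary measure $\tilde\mu=S_\eps^{k_\gamma\chi}\om^n$ in the iteration, not the quantity $u=e^{-C_1\vphi}v$ whose $L^\infty$ bound is sought, so increasing $\sigma$ costs nothing in the conclusion while making $RHS_2$ absorbable.
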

\begin{proof}
Substituting \eqref{KE tri F} into the nonlinear term \eqref{Laplacian estimate: integration cor}, we have 
\begin{align*}
\mathcal N=\lambda\int_Xe^{-C_1\vphi}\tilde u^{2p}(v-n) S_\eps^{\gamma}\om_\vphi^n
 =\lambda\int_X\tilde u^{2p}u\om_\vphi^n-n\lambda\int_Xe^{-C_1\vphi}\tilde u^{2p} S_\eps^{\gamma}\om_\vphi^n.
\end{align*}
The integral inequality is reduced immediately to
\begin{align}\label{Laplacian estimate: integral inequality pre prop KE}
&\frac{2C_{1.6}}{p}LHS_1
+ C_{1.7}   LHS_2 
\leq \mathcal (\lambda+C_{1.5})\int_X\tilde  u^{2p+1}\om_\vphi^n.
\end{align}
Modified the constants, it becomes
\begin{align}\label{Laplacian estimate: integral inequality KE}
&p^{-1}LHS_1+   LHS_2 
\leq C_2 RHS_1:=C_2 \int_X\tilde  u^{2p+1}\om_\vphi^n.
\end{align}

Following the argument in Proposition \ref{Rough iteration inequality prop}, we have the rough iteration inequality
\begin{align}\label{Laplacian estimate: RHSLHS KE}
\|\tilde u\|^{2p}_{L^{2p\chi}(\tilde\mu)}+p  LHS_2
\leq C_{3}(p  RHS_1 +  RHS_2+1).
\end{align}

Examining the estimates from \lemref{Laplacian estimate: RHS1 good} for $RHS_1$, \lemref{Laplacian estimate: RHS2 bound} for $RHS_2$, we deduce the inverse weighted inequalities 
\begin{align*}
\|\tilde u\|^{2p}_{L^{2p\chi}(\tilde\mu)}
\leq C_5 [p  \int_X \tilde u^{2p} S_\eps^{\gamma+\sigma-k'}  \om_\vphi^n+1]\text{ for some }k'<1,
\end{align*}
under the conditions
\begin{equation*}
\left\{
\begin{aligned}
 &\sigma-(\beta-1)<1;\\
&\sigma+\gamma<1;\\
&\gamma>1-\frac{\beta}{n}-\sigma(1-\frac{1}{n}),
   \end{aligned}
\right.
\end{equation*}
which are actually automatically satisfied from the criteria
\begin{equation}\label{Laplacian estimate: RHS2 parameters condition ke}
\gamma=0,\quad 1>\sigma>\frac{n-\beta}{n-1}.
\end{equation}


Therefore, we could apply the weighted inequality, Proposition \ref{Weighted inequality}, to derive the iteration inequality. Then we further employ the iteration techniques as Section \ref{Step 6: iteration} to conclude the $L^\infty$ norm of $$\tilde u=e^{-C_1\vphi} \tr_\om\om_{\vphi_\eps} +K$$ in terms of the $W^{2,p}$-estimate, which could be obtained from \thmref{w2pestimates degenerate Singular equation} in a similar way, or from the argument in Remark \ref{rough W2p estimate}.
\end{proof}


\section{Singular cscK metrics}\label{Singular cscK metrics}
In \cite{MR4020314}, we introduced the singular cscK metrics and proved serval existence results, where we focus on the case $0<\beta<1$. In this section, we consider $\beta>1$ and obtain the $L^\infty$ estimate, the gradient estimate and the $W^{2,p}$ estimate for the singular cscK metrics.

We start from the basic setup of the singular scalar curvature equation we introduced in \cite{MR4020314}.
\begin{defn}
A real $(1,1)$-cohomology class $\Om$ is called
\textit{big}, if it contains a \textit{K\"ahler current}, which is a closed positive $(1,1)$-current $T$ satisfying $T\geq t\om_K$ for some $t>0$. A big class $\Om$ is defined to be \textit{semi-positive}, if it admits a smooth closed $(1,1)$-form representative.
\end{defn}


Recall that $\om $ is a K\"ahler metric on $X$. We let $\om_{sr}$ be a smooth representative in the big and semi-positive class $\Om$.


\subsection{Perturbed K\"ahler metrics}\label{Perturbed Kahler metrics}
In order to modify $\om_{sr}$ to be a K\"ahler metric,
one way is to perturb $\om_{sr}$ by adding another K\"ahler metric,
\begin{align*}
\om_t :=\om_{sr}+t\cdot \om\in \Om_t:=\Om+t[\om],\quad\text{for all }t>0.
\end{align*}
The other way is to apply Kodaira's Lemma, namely there exists a sufficiently small number $a_0$ and an effective divisor $E$ such that $\Om-a_0 [E]$ is ample and
\begin{align}
\om_{K}:=\om_{sr}+ i\p\bar\p \phi_E>0,\quad \phi_E:=a_0\log h_E
\end{align}
is a K\"ahler metric. In which, $h_E$ is a smooth Hermitian metric on the associated line bundle of $E$ and
$s_E$ is the defining section of $E$. 

We also write
\begin{align*}
\tilde\om_t:=\om_K+ t\om.
\end{align*}
\begin{lem}\label{metrics equivalence}
The three K\"ahler metrics $\om,\om_K, \tilde\om_t$ are all equivalent,
\begin{align*}
\om_K\leq\tilde\om_t\leq\om_K+\om,\quad C_K^{-1}\om\leq\om_K\leq C_K\om.
\end{align*}
\end{lem}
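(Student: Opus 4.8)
The plan is to establish the three comparisons in turn; each reduces to positivity of the forms involved together with the compactness of $X$, once one records that $\om_K$ is a bona fide smooth K\"ahler metric on all of $X$.

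First I would note that since $h_E$ is a smooth Hermitian metric on the line bundle associated to $E$, the term $i\p\bar\p\phi_E=a_0\,i\p\bar\p\log h_E$ is, up to the constant $a_0$ and a sign, the Chern curvature of $h_E$; in particular it is a globally defined smooth closed real $(1,1)$-form on $X$ (no singularity enters, only the smooth local weights of $h_E$). Hence $\om_K=\om_{sr}+i\p\bar\p\phi_E$ is a smooth closed real $(1,1)$-form, and by the choice of $a_0$ and $h_E$ supplied by Kodaira's lemma it is strictly positive everywhere, so $\om_K$ is a smooth K\"ahler metric representing the ample class $\Om-a_0[E]$.

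Next, for the inner inequalities I would restrict to the relevant range $0<t\leq 1$ and simply observe that $\tilde\om_t-\om_K=t\om$ is positive since $t>0$ and $\om$ is K\"ahler, which gives $\om_K\leq\tilde\om_t$, while $(\om_K+\om)-\tilde\om_t=(1-t)\om\geq 0$ gives $\tilde\om_t\leq\om_K+\om$. For the outer statement $C_K^{-1}\om\leq\om_K\leq C_K\om$ I would argue by compactness: the functions $\tr_\om\om_K$ and $\tr_{\om_K}\om$ are smooth and everywhere finite on the compact $X$, because both metrics are smooth and positive definite, hence bounded above; setting $C_K:=\max\{\sup_X\tr_\om\om_K,\ \sup_X\tr_{\om_K}\om\}$ and invoking the elementary pointwise inequalities $\om_K\leq(\tr_\om\om_K)\,\om$ and $\om\leq(\tr_{\om_K}\om)\,\om_K$ yields the claim.

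I do not anticipate any genuine obstacle: the only substantive point is that $\om_K$ is smooth and strictly positive over all of $X$, which is exactly what Kodaira's lemma (already invoked in the text) provides, after which everything is linear algebra together with the compactness of $X$. The sole cosmetic caveat is the factor $t$ in the upper bound, which forces the harmless normalisation $t\leq 1$.
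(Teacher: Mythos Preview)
Your proof is correct and proceeds exactly as one would expect; the paper in fact states this lemma without proof, treating it as immediate from the definitions. Your observation that the upper bound $\tilde\om_t\leq\om_K+\om$ tacitly requires $0<t\leq 1$ is accurate and harmless, since throughout the paper $t$ is a small perturbation parameter tending to $0$.
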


With the help of these three K\"ahler metrics, we are able to measure the $(1,1)$-form $\theta$.
As Lemma 5.6 in \cite{arXiv:1803.09506}, we define the bound of $\theta$, which is independent of $t$, by using $\tilde\om_t$ as the background metric.
\begin{defn}\label{L infty estimates theta defn}
We write
\begin{align}\label{L infty estimates theta}
C_l\cdot \tilde\om_t\leq \theta\leq  C_u\cdot\tilde\om_t.
\end{align}
where
$
C_l:=\min\{0,\inf_{(X,\tilde\om_t)}\theta\},\quad C_u:=\max\{0,\sup_{(X,\tilde\om_t)}\theta\}.
$
\end{defn}

\begin{lem}
The given $(1,1)$-form $\theta$ has the lower bound
\begin{align}\label{Cl}
\theta\geq C_l \om_{\theta_{t,\eps}}-i\p\bar\p\phi_l,\quad \phi_l:=C_l(\vphi_{\theta_{t,\eps}}-\phi_E)
\end{align}
and the upper bound
\begin{align}\label{Cu}
\theta\leq C_u \om_{\theta_{t,\eps}}-i\p\bar\p\phi_u,\quad \phi_u:=C_u(\vphi_{\theta_{t,\eps}}-\phi_E).
\end{align}
\end{lem}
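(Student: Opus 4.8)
The plan is to derive both inequalities \eqref{Cl} and \eqref{Cu} from the measurement \eqref{L infty estimates theta} together with the defining Monge--Amp\`ere equation for the approximate reference metric $\om_{\theta_{t,\eps}}=\om_{K}+i\p\bar\p(\vphi_{\theta_{t,\eps}}-\phi_E)$, rewritten so that $\tilde\om_t$ appears as a Hessian of an explicit potential. First I would recall that, by \lemref{metrics equivalence} and the definitions, one has the cohomological identity $\tilde\om_t=\om_K+t\om$ and $\om_{\theta_{t,\eps}}=\om_K+i\p\bar\p\vphi_{\theta_{t,\eps}}-i\p\bar\p\phi_E$; subtracting these gives
\begin{align*}
\tilde\om_t-\om_{\theta_{t,\eps}}=t\om+i\p\bar\p\phi_E-i\p\bar\p\vphi_{\theta_{t,\eps}}=-i\p\bar\p(\vphi_{\theta_{t,\eps}}-\phi_E)+t\om.
\end{align*}
Hence $\tilde\om_t=\om_{\theta_{t,\eps}}-i\p\bar\p(\vphi_{\theta_{t,\eps}}-\phi_E)+t\om$, and since $t\om>0$ we obtain the clean comparison $\tilde\om_t\ge \om_{\theta_{t,\eps}}-i\p\bar\p(\vphi_{\theta_{t,\eps}}-\phi_E)$ as well as $\tilde\om_t \le \om_{\theta_{t,\eps}}-i\p\bar\p(\vphi_{\theta_{t,\eps}}-\phi_E)+\om$; the $t$-free versions I need follow because the final estimate only demands the sign of the lower bound and because $C_l\le 0\le C_u$ so multiplying by these constants controls the direction of the inequality.

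Next I would feed these into \eqref{L infty estimates theta}. For the lower bound: since $C_l=\min\{0,\inf\theta\}\le 0$, multiplying $\tilde\om_t\ge \om_{\theta_{t,\eps}}-i\p\bar\p(\vphi_{\theta_{t,\eps}}-\phi_E)$ by the nonpositive constant $C_l$ reverses the inequality, giving $C_l\tilde\om_t \le C_l\om_{\theta_{t,\eps}}-i\p\bar\p\big(C_l(\vphi_{\theta_{t,\eps}}-\phi_E)\big)$; chaining with $\theta\ge C_l\tilde\om_t$ yields exactly $\theta\ge C_l\om_{\theta_{t,\eps}}-i\p\bar\p\phi_l$ with $\phi_l:=C_l(\vphi_{\theta_{t,\eps}}-\phi_E)$. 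For the upper bound: since $C_u=\max\{0,\sup\theta\}\ge 0$, multiplying the same comparison by $C_u$ preserves it, so $C_u\tilde\om_t\ge C_u\om_{\theta_{t,\eps}}-i\p\bar\p\big(C_u(\vphi_{\theta_{t,\eps}}-\phi_E)\big)$, and chaining with $\theta\le C_u\tilde\om_t$ gives $\theta\le C_u\om_{\theta_{t,\eps}}-i\p\bar\p\phi_u$ with $\phi_u:=C_u(\vphi_{\theta_{t,\eps}}-\phi_E)$. This is a purely formal manipulation once the Hessian identity for $\tilde\om_t$ is in hand.

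The only subtle point — and the step I expect to require the most care — is the juggling of signs of $C_l$ and $C_u$ when multiplying currents by constants, since multiplying a positivity of currents by a negative scalar reverses it; one must be careful that $C_l\le 0$ is used precisely where the inequality needs reversing and $C_u\ge 0$ where it must be preserved. A secondary point is to make sure the perturbation term $t\om$ (respectively $t\om+\om_K-\om_{\theta_{t,\eps}}$) enters with the correct sign so that dropping it is legitimate in the stated direction; here the fact that we only need inequalities (not equalities) and that $t\om>0$ makes this harmless. No regularity issue arises beyond what is already implicit in the definition of $\om_{\theta_{t,\eps}}$, since the identities are at the level of currents and the potentials $\vphi_{\theta_{t,\eps}},\phi_E$ are exactly the ones appearing in the stated conclusion.
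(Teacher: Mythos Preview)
Your computation contains an algebraic slip that then forces an unnecessary (and in fact invalid) sign argument. The formula you start from, $\om_{\theta_{t,\eps}}=\om_{K}+i\p\bar\p(\vphi_{\theta_{t,\eps}}-\phi_E)$, is missing the $t\om$ term: by definition $\om_{\theta_{t,\eps}}=\om_t+i\p\bar\p\vphi_{\theta_{t,\eps}}$ and $\tilde\om_t=\om_t+i\p\bar\p\phi_E=\om_K+t\om$, so in fact
\[
\tilde\om_t=\om_{\theta_{t,\eps}}-i\p\bar\p(\vphi_{\theta_{t,\eps}}-\phi_E)
\]
is an \emph{equality}, with no leftover $t\om$. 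Multiplying this identity by $C_l$ (resp.\ $C_u$) and substituting into $\theta\ge C_l\tilde\om_t$ (resp.\ $\theta\le C_u\tilde\om_t$) gives \eqref{Cl} and \eqref{Cu} immediately; the signs of $C_l,C_u$ play no role whatsoever. This is why the paper states the lemma without proof.

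Your sign-juggling step is not just superfluous but actually fails as written. Granting your (erroneous) inequality $\tilde\om_t\ge \om_{\theta_{t,\eps}}-i\p\bar\p(\vphi_{\theta_{t,\eps}}-\phi_E)$ and multiplying by $C_l\le 0$, you get $C_l\tilde\om_t\le C_l\om_{\theta_{t,\eps}}-i\p\bar\p\phi_l$; combined with $\theta\ge C_l\tilde\om_t$ this does \emph{not} yield $\theta\ge C_l\om_{\theta_{t,\eps}}-i\p\bar\p\phi_l$, since $A\ge B$ and $B\le C$ say nothing about $A$ versus $C$. The same chaining failure occurs for the upper bound. Fixing the initial identity to an equality removes the issue entirely.
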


\begin{rem}
In particular, if the big and semi-positive class $\Om$ is proposional to $C_1(X,D)$, i.e. $\Om=\lambda C_1(X,D),  \lambda=-1,1,$ then we have
\begin{align*}
C_l=0, \text{ when  } \lambda=1, \text{ and } C_u=0, \text{ when }\lambda=-1.
\end{align*}
Actually, it even holds $\theta=\lambda \om_{sr}$. 
\end{rem}
\subsection{Reference metrics}\label{Reference metrics}
\begin{defn}[Reference metric]\label{Reference metric singular}
The reference metric $\om_\theta$ is defined to be the solution of the following singular Monge-Amp\`ere equation
\begin{align}\label{lift reference metric log Fano app limit}
\om_\theta^n:=(\om_{sr}+i\p\bar\p\vphi_{\theta})^n=|s|_h^{2\b-2}e^{h_{\theta}}\om^n
\end{align}
where $h_\theta$ is defined in \eqref{h0}.
\end{defn}
\subsubsection{$t$-purterbation}
Replacing $\om_{sr}$ by the K\"ahler metric $\om_t$, we introduce the following perturbed metric. 
\begin{defn}The $t$-purterbated reference metric $\om_{\theta_t}\in \Om_t$ is defined to be a solution to the following degenerate Monge-Amp\`ere equation
\begin{align}\label{lift reference metric log Fano app both 2}
\om_{\theta_t}^n:=(\om_{t}+i\p\bar\p\vphi_{\theta_t})^n=|s|_h^{2\b-2} e^{h_\theta+c_t}\om^n,
\end{align}
under the condition 
\begin{align*}
 Vol(\Om_t)=\int_X|s|_h^{2\b-2} e^{h_\theta+c_{t}}\om^n.
\end{align*}
\end{defn}

Using the Poincar\'e-Lelong formula \eqref{PL} and the Ricci curvature of $\om$ from \eqref{h0}, we compute the Ricci curvature of $\om_{\theta_{t}}$.
\begin{lem}\label{Ric om theta t}
The Ricci curvature equation for \eqref{lift reference metric log Fano app both 2} reads
\begin{align*}
Ric(\om_{\theta_{t}})=\theta+2\pi(1-\beta)[D].
\end{align*}
\end{lem}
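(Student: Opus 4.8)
The plan is to take $-i\p\bar\p\log$ of the defining equation \eqref{lift reference metric log Fano app both 2}, reading every identity in the sense of $(1,1)$-currents on $X$. First I would recall the standard fact that whenever a K\"ahler metric $\om_g$ has volume form $\om_g^n=e^{f}\om^n$ relative to the fixed background K\"ahler metric $\om$, its Ricci form satisfies $Ric(\om_g)=Ric(\om)-i\p\bar\p f$. Since the right-hand side of \eqref{lift reference metric log Fano app both 2} is $e^{f}\om^n$ with $f=(\b-1)\log|s|_h^2+h_\theta+c_t$ and $c_t$ is a constant (hence annihilated by $i\p\bar\p$), this immediately gives
\[
Ric(\om_{\theta_t})=Ric(\om)-(\b-1)\,i\p\bar\p\log|s|_h^2-i\p\bar\p h_\theta .
\]

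Next I would substitute the cohomological identity $Ric(\om)=\theta+(1-\b)\Theta_D+i\p\bar\p h_\theta$ recorded in \eqref{h0}. The two occurrences of $i\p\bar\p h_\theta$ cancel, leaving
\[
Ric(\om_{\theta_t})=\theta+(1-\b)\bigl(\Theta_D+i\p\bar\p\log|s|_h^2\bigr).
\]
Finally, the Poincar\'e--Lelong equation \eqref{PL} states precisely that $\Theta_D+i\p\bar\p\log|s|_h^2=2\pi[D]$; substituting this yields the asserted formula $Ric(\om_{\theta_t})=\theta+2\pi(1-\b)[D]$. This is entirely parallel to the computations already recorded in Section~2 for the log K\"ahler--Einstein metric and for the reference metric $\om_\theta$ (the case $t=0$, c.f. \eqref{Rictheta ricci}); the only extra bookkeeping here is the harmless constant $c_t$ coming from the normalisation.

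The computation itself is routine, and there is no serious obstacle. The one point requiring a little care is that $\log|s|_h^2$ is not smooth along $D$, so the manipulations above must be understood as identities of currents rather than of smooth forms: the Poincar\'e--Lelong formula is exactly the device that accounts for the singular $[D]$-contribution, while over the smooth locus $M$ the argument reduces to the elementary Ricci-curvature identity for honest K\"ahler metrics.
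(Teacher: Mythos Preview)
Your proof is correct and follows exactly the approach indicated by the paper (which merely cites the Poincar\'e--Lelong formula \eqref{PL} and the identity \eqref{h0} without spelling out the details); in fact it is verbatim the same computation carried out in the paper for the $\eps$-approximation in Lemma~\ref{Ric om theta t eps}, with $\log S_\eps$ replaced by $\log|s|_h^2$.
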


\subsubsection{$(t,\eps)$-approximation}
Similar to Section \ref{Approximation}, we further approximate \eqref{lift reference metric log Fano app both 2} by a family of smooth equation for $\om_{\theta_{t,\eps}}\in \Om_t$.
\begin{defn}We define the $(t,\eps)$-approximate reference metric $$\om_{\theta_{t,\eps}}:=\om_{t}+i\p\bar\p\vphi_{\theta_{t,\eps}}$$ satisfy the equation
\begin{align}\label{lift reference metric log Fano app both 3}
\om_{\theta_{t,\eps}}^n=S_\eps^{\b-1} e^{h_{\theta}+c_{t,\eps}}\om^n.
\end{align}
The constant $c_{t,\eps}$ is determined by the normalised volume condition
\begin{align*}
Vol(\Om_t)=\int_XS_\eps^{\beta-1}e^{h_{\theta}+c_{t,\eps}}\om^n.
\end{align*}
\end{defn}
The volume is uniformly bounded independent of $t, \eps$. In the following, when we say a constant or an estimate is uniform, it means it is independent of $t$ or $\eps$.

\begin{lem}\label{Ric om theta t eps}
The Ricci curvature equation for \eqref{lift reference metric log Fano app both 3} reads
\begin{align*}
Ric(\om_{\theta_{t,\eps}})=\theta+(1-\beta)\{2\pi[D]-i\p\bar\p(\log|s|_h^2-\log S_\eps)\}.
\end{align*}
\end{lem}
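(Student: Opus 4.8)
The final statement to be proven is \lemref{Ric om theta t eps}, which computes the Ricci curvature of the $(t,\eps)$-approximate reference metric $\om_{\theta_{t,\eps}}$ defined by the Monge--Amp\`ere equation \eqref{lift reference metric log Fano app both 3}, namely $\om_{\theta_{t,\eps}}^n = S_\eps^{\beta-1} e^{h_\theta + c_{t,\eps}} \om^n$.

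\begin{proof}[Proof plan]
The plan is to apply the standard formula relating the Ricci forms of two K\"ahler metrics whose volume forms differ by a positive weight. Taking $i\p\bar\p\log$ of both sides of \eqref{lift reference metric log Fano app both 3} and using that $Ric(\eta) = -i\p\bar\p\log(\eta^n / (\text{local Lebesgue}))$ for any K\"ahler metric $\eta$, I would first write
\begin{align*}
Ric(\om_{\theta_{t,\eps}}) = Ric(\om) - i\p\bar\p\log\bigl(S_\eps^{\beta-1}\bigr) - i\p\bar\p h_\theta,
\end{align*}
since the constant $c_{t,\eps}$ drops out under $i\p\bar\p$. Then I would substitute the defining relation \eqref{h0} for $\theta$, i.e. $Ric(\om) = \theta + (1-\beta)\Theta_D + i\p\bar\p h_\theta$, so that the $i\p\bar\p h_\theta$ terms cancel, leaving
\begin{align*}
Ric(\om_{\theta_{t,\eps}}) = \theta + (1-\beta)\Theta_D - (\beta-1) i\p\bar\p\log S_\eps = \theta + (1-\beta)\bigl(\Theta_D + i\p\bar\p\log S_\eps\bigr).
\end{align*}

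Finally I would rewrite $\Theta_D + i\p\bar\p\log S_\eps$ using the Poincar\'e--Lelong equation \eqref{PL}, which gives $\Theta_D = 2\pi[D] - i\p\bar\p\log|s|_h^2$ (as currents). Substituting this in yields
\begin{align*}
Ric(\om_{\theta_{t,\eps}}) = \theta + (1-\beta)\Bigl(2\pi[D] - i\p\bar\p\log|s|_h^2 + i\p\bar\p\log S_\eps\Bigr) = \theta + (1-\beta)\bigl\{2\pi[D] - i\p\bar\p(\log|s|_h^2 - \log S_\eps)\bigr\},
\end{align*}
which is exactly the asserted identity. The computation parallels the one for \lemref{Ric om theta t} and for \eqref{Ricomthetaeps}, the only new point being that here the weight is the regularized $S_\eps^{\beta-1}$ rather than $|s|_h^{2\beta-2}$, so the $i\p\bar\p\log$ of the weight does not fully cancel the distributional $[D]$ contribution and one is left with the smooth correction term $i\p\bar\p(\log|s|_h^2 - \log S_\eps)$.

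The only mild subtlety — and the closest thing to an obstacle — is bookkeeping the equality in the sense of currents versus as smooth forms on $M = X \setminus D$: on $M$ all terms are smooth and the identity is a pointwise equality of $(1,1)$-forms, while the $2\pi[D]$ term only appears when interpreting the Poincar\'e--Lelong contribution globally. Since $\om_{\theta_{t,\eps}}$ is a genuine smooth K\"ahler metric on all of $X$ (the weight $S_\eps^{\beta-1}$ is smooth and positive everywhere), the formula is cleanest read on $X$ with the current $[D]$ built in via \eqref{PL}, exactly as stated; no regularity issue arises because $\eps > 0$ keeps everything smooth.
\end{proof}
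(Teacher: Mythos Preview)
Your proof is correct and follows essentially the same approach as the paper: take $-i\p\bar\p\log$ of the volume equation \eqref{lift reference metric log Fano app both 3}, substitute \eqref{h0} to eliminate $Ric(\om)$ and $i\p\bar\p h_\theta$, and then apply the Poincar\'e--Lelong identity \eqref{PL} to rewrite $\Theta_D$ in terms of $2\pi[D]$ and $i\p\bar\p\log|s|_h^2$. Your additional remark on the current-vs-smooth interpretation is accurate but not needed for the formal identity.
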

\begin{proof}
Taking $-i\p\bar\p\log$ on \eqref{lift reference metric log Fano app both 3}, we get
\begin{align*}
Ric(\om_{\theta_{t,\eps}})=-(\beta-1)i\p\bar\p\log S_\eps-i\p\bar\p h_\theta+Ric(\om).
\end{align*}
Then we have by \eqref{h0} that
\begin{align*}
Ric(\om_{\theta_{t,\eps}})=-(\beta-1)i\p\bar\p\log S_\eps+\theta+(1-\beta)\Theta_D.
\end{align*}
The conclusion thus follows from \eqref{PL}.
\end{proof}
\subsubsection{Estimation of the reference metric}

We write 
\begin{align*}
 \tilde\vphi_{\theta_{t,\eps}}:=\vphi_{\theta_{t,\eps}}-\phi_E.
\end{align*}
Accordingly, we see that  
\begin{align*}
\tilde\om_t=\om_t+i\p\bar\p\phi_E,\quad \om_{\theta_{t,\eps}}=\tilde \om_t+i\p\bar\p\tilde\vphi_{\theta_{t,\eps}}.
\end{align*}
By substitution into \eqref{lift reference metric log Fano app both 3}, we rewrite the $(t,\eps)$-approximation \eqref{lift reference metric log Fano app both 3} of the reference metric as
\begin{lem}
\begin{align}\label{lift reference metric log Fano app both 2 Phi}
\om_{\theta_{t,\eps}}^n=(\tilde \om_t+i\p\bar\p \tilde\vphi_{\theta_{t,\eps}})^n=e^{-f_{t,\eps}} \tilde \om_t^n,
\end{align}
where
\begin{align}\label{Singular cscK metrics: f}
-f_{t,\eps}:=(\beta-1)\log S_\eps+h_\theta+c_{t,\eps}+\log\frac{\om^n}{\tilde\om_t^n}.
\end{align}
\end{lem}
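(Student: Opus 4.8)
The plan is to obtain \eqref{lift reference metric log Fano app both 2 Phi} as a direct rewriting of the defining equation \eqref{lift reference metric log Fano app both 3} of the $(t,\eps)$-approximate reference metric, using only the relations $\tilde\om_t=\om_t+i\p\bar\p\phi_E$ and $\tilde\vphi_{\theta_{t,\eps}}=\vphi_{\theta_{t,\eps}}-\phi_E$ recorded just above the statement.

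First I would check that the left-hand equality in \eqref{lift reference metric log Fano app both 2 Phi} is tautological: adding the two displayed relations gives
\begin{align*}
\tilde\om_t+i\p\bar\p\tilde\vphi_{\theta_{t,\eps}}=\om_t+i\p\bar\p\phi_E+i\p\bar\p(\vphi_{\theta_{t,\eps}}-\phi_E)=\om_t+i\p\bar\p\vphi_{\theta_{t,\eps}}=\om_{\theta_{t,\eps}},
\end{align*}
so the two $(1,1)$-forms agree and hence so do their $n$-th powers.

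For the right-hand equality I would start from \eqref{lift reference metric log Fano app both 3}, namely $\om_{\theta_{t,\eps}}^n=S_\eps^{\beta-1}e^{h_\theta+c_{t,\eps}}\om^n$, and simply insert the positive smooth density $\om^n/\tilde\om_t^n$:
\begin{align*}
\om_{\theta_{t,\eps}}^n=S_\eps^{\beta-1}e^{h_\theta+c_{t,\eps}}\,\frac{\om^n}{\tilde\om_t^n}\,\tilde\om_t^n=\exp\Bigl((\beta-1)\log S_\eps+h_\theta+c_{t,\eps}+\log\tfrac{\om^n}{\tilde\om_t^n}\Bigr)\tilde\om_t^n,
\end{align*}
which is exactly $e^{-f_{t,\eps}}\tilde\om_t^n$ with $-f_{t,\eps}$ as in \eqref{Singular cscK metrics: f}. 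Here I am using that $\tilde\om_t$ is a genuine K\"ahler metric equivalent to $\om$ by \lemref{metrics equivalence}, so $\om^n/\tilde\om_t^n$ is a strictly positive smooth function and $\log(\om^n/\tilde\om_t^n)$, hence $f_{t,\eps}$, is a well-defined smooth function on $X$. There is no real obstacle in this lemma; it is purely bookkeeping, and the only point worth emphasizing is precisely this smoothness and positivity of $\om^n/\tilde\om_t^n$, which is what makes perturbing to the K\"ahler metric $\tilde\om_t$ (rather than to the merely semi-positive $\om_{sr}$) useful for the estimates that follow.
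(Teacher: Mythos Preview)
Your proof is correct and is precisely the substitution the paper has in mind; the paper does not even write out a proof but simply says ``by substitution into \eqref{lift reference metric log Fano app both 3}'', which is exactly what you carry out in detail.
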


\begin{lem}\label{nef tilde f}
There exists a uniform constant $C$ such that
\begin{align*}
 &f_{t,\eps}\geq C, \quad e^{-  f_{t,\eps}}\leq C,\quad 
 |\p f|^2\leq C[1+(\beta-1)^2S^{-1}_\eps]\\
 & -C[(\beta-1)S_\eps^{-1}+1]\om\leq i\p\bar\p  f_{t,\eps}\leq C \om.
\end{align*}
\end{lem}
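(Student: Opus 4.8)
The plan is to read off each of the four estimates directly from the definition \eqref{Singular cscK metrics: f} of $-f_{t,\eps}$, treating the three summands $(\beta-1)\log S_\eps$, $h_\theta+c_{t,\eps}$, and $\log\frac{\om^n}{\tilde\om_t^n}$ separately and invoking the uniformity of all $t,\eps$-dependent quantities established in Section \ref{Reference metrics}. First I would record that $\beta>1$ and $S_\eps=|s|^2_h+\eps$ is bounded above (after normalising $|s|_h\leq 1$, say, or using compactness of $X$), so $\log S_\eps\leq C$ and hence $(\beta-1)\log S_\eps$ is bounded above; since $h_\theta$ is a fixed smooth function, $c_{t,\eps}$ is uniformly bounded by the normalised volume condition $\mathrm{Vol}(\Om_t)=\int_X S_\eps^{\beta-1}e^{h_\theta+c_{t,\eps}}\om^n$ together with $\mathrm{Vol}(\Om_t)\to\mathrm{Vol}(\Om)$, and $\log\frac{\om^n}{\tilde\om_t^n}$ is uniformly bounded by \lemref{metrics equivalence} which gives $\om_K\leq\tilde\om_t\leq\om_K+\om$ and $C_K^{-1}\om\leq\om_K\leq C_K\om$. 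Adding these gives $-f_{t,\eps}\leq C$, i.e. $f_{t,\eps}\geq C$ and $e^{-f_{t,\eps}}\leq C$ (absorbing signs and adjusting $C$).

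Next, for the gradient bound I would differentiate \eqref{Singular cscK metrics: f}: $\p f_{t,\eps}=-(\beta-1)\frac{\p S_\eps}{S_\eps}-\p h_\theta-\p\log\frac{\om^n}{\tilde\om_t^n}$, measured in the fixed metric $\om$. The last two terms are uniformly bounded (the second again via \lemref{metrics equivalence}, noting $\p\log\frac{\om^n}{\tilde\om_t^n}$ depends smoothly on the metric data with $t\in[0,1]$). For the first term I would use the standard fact $|\p S_\eps|_\om=|\p|s|^2_h|_\om\leq C S_\eps^{1/2}$ (this is exactly the inequality $|\p S_\eps|\leq C_{2.4}|S_\eps|^{1/2}$ used repeatedly later), so $\frac{|\p S_\eps|^2}{S_\eps^2}\leq C S_\eps^{-1}$, giving $|\p f_{t,\eps}|^2\leq C[1+(\beta-1)^2 S_\eps^{-1}]$.

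Finally, for the complex Hessian bound I would take $i\p\bar\p$ of \eqref{Singular cscK metrics: f}: $i\p\bar\p(-f_{t,\eps})=(\beta-1)i\p\bar\p\log S_\eps+i\p\bar\p h_\theta+i\p\bar\p\log\frac{\om^n}{\tilde\om_t^n}$. The middle term is bounded in both directions by a fixed multiple of $\om$; the last term is uniformly bounded in both directions by \lemref{metrics equivalence} and smoothness in $t$ (noting $i\p\bar\p\log\tilde\om_t^n=-\mathrm{Ric}(\tilde\om_t)+\mathrm{Ric}(\om)$ stays in a compact family). The term $(\beta-1)i\p\bar\p\log S_\eps$ is controlled by \lemref{h eps}, which gives $CS_\eps^{-1}\om\geq i\p\bar\p\log S_\eps\geq -C\om$; since $\beta>1$, multiplying by $\beta-1$ yields an upper bound $C(\beta-1)S_\eps^{-1}\om\leq C S_\eps^{-1}\om$ and a lower bound $-C\om$. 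Combining signs: $i\p\bar\p(-f_{t,\eps})$ has upper bound $C[(\beta-1)S_\eps^{-1}+1]\om$ and lower bound $-C\om$, which flips to $i\p\bar\p f_{t,\eps}\leq C\om$ and $i\p\bar\p f_{t,\eps}\geq -C[(\beta-1)S_\eps^{-1}+1]\om$, as asserted.

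The only mildly delicate point is the uniformity in $t$ of the term $\log\frac{\om^n}{\tilde\om_t^n}$ and its derivatives; this is where I expect the main (though still routine) obstacle to lie, and it is handled entirely by \lemref{metrics equivalence} together with the compactness of the parameter interval $t\in[0,1]$. All other steps are immediate from the definitions and from \lemref{h eps}.
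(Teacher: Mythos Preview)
Your proposal is correct and follows essentially the same route as the paper: decompose $-f_{t,\eps}$ according to \eqref{Singular cscK metrics: f}, use $\beta\geq 1$ and boundedness of $S_\eps$ for the zeroth-order bound, the inequality $|\p S_\eps|_\om\leq C S_\eps^{1/2}$ for the gradient, and \lemref{h eps} for the Hessian. Your additional remarks on the uniformity in $t$ of $c_{t,\eps}$ and $\log\frac{\om^n}{\tilde\om_t^n}$ via \lemref{metrics equivalence} make explicit what the paper leaves implicit, but do not change the argument.
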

\begin{proof}
Since the expression of $e^{-  f_{t,\eps}}$ is
\begin{align*}
e^{-  f_{t,\eps}}=S_\eps^{\beta-1} e^{h_\theta+c_{t,\eps}}\frac{\om^n}{\tilde\om_t^n},
\end{align*}
we obtain its upper bound from $\beta\geq 1$.
Moreover, its derivatives are estimated by 
\begin{align*}
|\p f|^2&=|\p [(\beta-1)\log S_\eps+h_\theta+c_{t,\eps}+\log\frac{\om^n}{\tilde\om_t^n}]|^2\\
&\leq C[(\beta-1)^2|\p |s|_h^2|^2S_\eps^{-2}+1]
\leq C[(\beta-1)^2S_\eps^{-1}+1].
\end{align*}
and
\begin{align*}
i\p\bar\p f_{t,\eps}=-i\p\bar\p [(\beta-1)\log S_\eps+h_\theta+c_{t,\eps}+\log\frac{\om^n}{\tilde\om_t^n}].
\end{align*}
Thus this lemma is a consequence of \lemref{h eps}.
\end{proof}

\begin{lem}\label{big nef approximate reference metric bound}Suppose that $\varphi_{\theta_{t,\eps}}$ is a solution to the approximate equation \eqref{lift reference metric log Fano app both 3}.
Then there exists a uniform constant $C$ independent of $t,\eps$ such that
\begin{align*}
\|\varphi_{\theta_{t,\eps}}\|_\infty\leq C, \quad 
C^{-1} S_\eps^{\beta-1} |s_E|^{a(n-1)}_{h_E}\cdot\om
\leq \om_{\theta_{t,\eps}}
\leq C |s_E|^{-a}_{h_E}\cdot\om.
\end{align*}
Moreover, the sequence $\varphi_{\theta_{t,\eps}}$ $L^1$-converges to $\vphi_\theta$, which is the unique solution to the equation \eqref{lift reference metric log Fano app limit}.
The solution $\vphi_\theta\in C^0(X)\cap C^{\infty}(X\setminus (E\cup D))$.
\end{lem}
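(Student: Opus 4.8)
\textbf{Proof proposal for Lemma \ref{big nef approximate reference metric bound}.}
The plan is to treat \eqref{lift reference metric log Fano app both 3} as a family of non-degenerate complex Monge-Amp\`ere equations on the fixed K\"ahler manifold $(X,\om)$ and to extract estimates that are uniform in $t$ and $\eps$, then pass to the limit. First I would establish the $L^\infty$-bound on $\vphi_{\theta_{t,\eps}}$. Rewriting the equation in the form \eqref{lift reference metric log Fano app both 2 Phi} relative to the comparison metric $\tilde\om_t$, namely $(\tilde\om_t+i\p\bar\p\tilde\vphi_{\theta_{t,\eps}})^n=e^{-f_{t,\eps}}\tilde\om_t^n$, \lemref{nef tilde f} gives $f_{t,\eps}\geq C$, hence $e^{-f_{t,\eps}}\leq C$; since $S_\eps^{\beta-1}\leq C$ as well, the right-hand side of \eqref{lift reference metric log Fano app both 3} is bounded above by $C\,\om^n$ with $C$ uniform. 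The uniform $L^\infty$-bound then follows from Ko\l odziej's $L^\infty$-estimate (or its $t$-uniform versions in the big/nef setting, as in \cite{MR2746347,MR2647006,MR2505296}), using that the reference volume $\mathrm{Vol}(\Om_t)$ is uniformly bounded and bounded away from $0$, together with the normalisation, so that the $L^p(\om^n)$-norm of the density is under uniform control. I would invoke the corresponding statement from \cite{arXiv:1803.09506} (this is the analogue of the estimate there for the reference metric).

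Next I would prove the two-sided metric bound on $\om_{\theta_{t,\eps}}$. For the lower bound $\om_{\theta_{t,\eps}}\geq C^{-1}S_\eps^{\beta-1}|s_E|_{h_E}^{a(n-1)}\om$, I would combine a Laplacian (second-order) estimate with the $L^\infty$-bound just obtained. Applying Yau's second-order estimate to \eqref{lift reference metric log Fano app both 2 Phi} with background $\tilde\om_t$—using \lemref{metrics equivalence} to replace $\tilde\om_t$ by $\om$ up to uniform constants and the bounds on $i\p\bar\p f_{t,\eps}$ from \lemref{nef tilde f} together with the bisectional curvature bound of $\om$—one gets $\tr_\om\om_{\theta_{t,\eps}}\leq C e^{A(\phi_E-\tilde\vphi_{\theta_{t,\eps}})}(1+\ldots)$; a routine manipulation and the arithmetic-geometric mean inequality applied to $\om^n_{\theta_{t,\eps}}=S_\eps^{\beta-1}e^{h_\theta+c_{t,\eps}}\om^n$ then convert this upper bound on $\tr_\om\om_{\theta_{t,\eps}}$ into the asserted lower bound for $\om_{\theta_{t,\eps}}$ (this is where the weights $S_\eps^{\beta-1}$ and $|s_E|_{h_E}^{a(n-1)}$, coming from $\phi_E=a_0\log h_E$ on the effective divisor $E$, enter). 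The upper bound $\om_{\theta_{t,\eps}}\leq C|s_E|_{h_E}^{-a}\om$ comes directly from the Laplacian estimate: the term $e^{A(\phi_E-\tilde\vphi_{\theta_{t,\eps}})}=e^{A a_0\log h_E}e^{-A\tilde\vphi_{\theta_{t,\eps}}}$ contributes the factor $|s_E|_{h_E}^{-a}$ once the $L^\infty$-bound on $\tilde\vphi_{\theta_{t,\eps}}$ is used. (The precise exponent $a$ is adjusted by absorbing the bisectional-curvature constant into $a_0$, as is standard for big classes.)

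Finally I would address the convergence. The uniform $L^\infty$-bound gives, by compactness of bounded $\om$-psh functions in $L^1$, a subsequential $L^1$-limit $\vphi_\theta$; the interior metric bounds give local $C^{1,\alpha}$ and (via Evans-Krylov and bootstrapping, the density being smooth off $E\cup D$) local $C^\infty$ estimates on $X\setminus(E\cup D)$, so the convergence is smooth there and $\vphi_\theta$ solves \eqref{lift reference metric log Fano app limit} on $X\setminus(E\cup D)$. Uniqueness of the bounded $\om_{sr}$-psh solution follows from the comparison principle for complex Monge-Amp\`ere equations with bounded potentials in big classes, c.f. \cite{MR2746347}; uniqueness forces the whole family (not just a subsequence) to converge, and since $\vphi_\theta$ is the decreasing/uniform limit of continuous functions with uniform modulus of continuity off the divisors, $\vphi_\theta\in C^0(X)\cap C^\infty(X\setminus(E\cup D))$. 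I expect the main obstacle to be making the second-order estimate genuinely uniform in both $t$ (degeneration of positivity along $E$, handled by Kodaira's lemma and the weight $|s_E|_{h_E}$) and $\eps$ (degeneration of the density along $D$, handled by \lemref{h eps} controlling $i\p\bar\p\log S_\eps$ from below by $-C\om$ uniformly)—in particular, keeping track of exactly which powers of $|s|_h^2$ and $|s_E|_{h_E}$ survive, which is precisely what \lemref{nef tilde f} and \lemref{h eps} are designed to supply.
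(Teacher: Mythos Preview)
Your proposal is correct and follows essentially the same approach as the paper: invoke the uniform $L^p$-density bound to apply the Eyssidieux--Guedj--Zeriahi/Ko\l odziej $L^\infty$-estimate, run Yau's second-order inequality with weight $e^{-C_1\tilde\vphi_{\theta_{t,\eps}}}$ relative to $\tilde\om_t$ and apply the maximum principle (the paper does exactly this, using \lemref{nef tilde f} for $\sup\tilde F_{\theta_{t,\eps}}$ and $\inf\tri\tilde F_{\theta_{t,\eps}}$), then convert the Laplacian bound into the two-sided metric bound via the fundamental inequality (the paper isolates this step as the separate \lemref{Laplacian to metric bound}). The only cosmetic difference is that the paper works with the bisectional curvature of $\tilde\om_t$ directly rather than passing to $\om$ through \lemref{metrics equivalence}, but since $\tilde\om_t=\om_K+t\om$ varies in a compact smooth family this is immaterial.
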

\begin{proof}
We outline the proof for readers' convenience.
The $L^\infty$ estimate is obtained, by applying Theorem 2.1 and Proposition 3.1 in \cite{MR2505296}, since $$(|s|_h^2+\eps)^{\b-1} e^{h_{\theta}+c_{t,\eps}}$$ is uniformly in $L^p(\om^n_{sr})$ for $p>1$. 

For the Laplacian estimate, we denote
\begin{align*}
v=\tr_{\tilde \om_t}\om_{\theta_{t,\eps}},\quad w=e^{-C_1 \tilde\vphi_{\theta_{t,\eps}}} v, 
\end{align*}
and obtain from \eqref{Laplacian estimate: laplacian w} that
\begin{align}\label{Yau computation C1 semi}
e^{C_1 \tilde\vphi_{\theta_{t,\eps}}}\tri_\vphi w
		&\geq v^{1+\frac{1}{n-1}} e^{-\frac{\tilde  F_{\theta_{t,\eps}}}{n-1}}+\tri \tilde  F_{\theta_{t,\eps}}-C_1 n v.
\end{align}
The constant $C_1$ is taken to be $C_{1.1}+1$ and $C_{1.1}$ is the lower bound the bisectional curvature of $\tilde \om_t$. 

We write $\tilde F_{\theta_{t,\eps}}=-f_{t,\eps}$ and use \lemref{nef tilde f} to show that there exists a uniform constant $C$, independent of $t$ and $\eps$ such that
\begin{align*}
\tilde F_{\theta_{t,\eps}}\leq C,\quad  \tri \tilde F_{\theta_{t,\eps}}\geq -C.
\end{align*} 

Then the argument of maximum principle applies. At the maximum point $p$ of $w$, $v(p)$ is bounded above from \eqref{Yau computation C1 semi}. While, at any pint $x\in X$, $w(x)\leq w(p)$, which means
\begin{align*}
v(x)\leq e^{C_1( \tilde\vphi_{\theta_{t,\eps}}(x)- \tilde\vphi_{\theta_{t,\eps}}(p))} v(p)
\leq e^{C_1(\vphi_{\theta_{t,\eps}}(x)- \tilde\vphi_{\theta_{t,\eps}}(p))} |s_E|^{-C_1 a_0}_{h_E}(x).
\end{align*}
Moreover, $\tri_{\tilde \om_t}\phi_E$ is uniformly bounded independent of $t,\eps$.
inserting them into
\begin{align*}
\tri_{\tilde \om_t}\vphi_{\theta_{t,\eps}}=\tr_{\tilde \om_t}\om_{\theta_{t,\eps}}-n+\tri_{\tilde \om_t}\phi_E =v-n+\tri_{\tilde \om_t}\phi_E,
\end{align*}
we obtain the Laplacian estimate 
\begin{align*}
\tri_{\tilde \om_t}\vphi_{\theta_{t,\eps}} \leq C |s_E|^{-a}_{h_E}
\end{align*}
by taking $a=C_1 a_0.$ The metric bound is thus obtained from the following \lemref{Laplacian to metric bound}, namely
\begin{align*}
C^{-1} |s_E|^{a(n-1)}_{h_E}\tilde \om_t^n\leq\om_{\theta_{t,\eps}} \leq C|s_E|^{-a}_{h_E} \tilde \om_t^n
\end{align*}
together with the equivalence of the reference metrics \lemref{metrics equivalence}.
\end{proof}
\begin{lem}\label{Laplacian to metric bound}
Assume we have the Laplacian estimate 
\begin{align*}
\tri_{\tilde \om_t}\vphi_{\theta_{t,\eps}} \leq A.
\end{align*}
Then the metric bound holds
\begin{align*}
 C S_\eps^{\beta-1} (n+A)^{-(n-1)}\tilde \om_t \leq \om_{\theta_{t,\eps}}\leq (n+A) \tilde \om_t.
\end{align*}
\end{lem}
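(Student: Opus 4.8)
The plan is to run the classical linear-algebra argument that converts an upper bound on $\tr_{\tilde\om_t}\om_{\theta_{t,\eps}}$ together with a lower bound on the volume ratio $\om_{\theta_{t,\eps}}^n/\tilde\om_t^n$ into a two-sided metric comparison. Fix an arbitrary point of $X$ and diagonalise $\om_{\theta_{t,\eps}}$ with respect to $\tilde\om_t$; write $\mu_1,\dots,\mu_n>0$ for the eigenvalues. The two inequalities will then follow by bounding each $\mu_j$ from above by the trace $\sum_i\mu_i$, and from below by the quotient $\bigl(\prod_i\mu_i\bigr)\big/\bigl(\prod_{i\ne j}\mu_i\bigr)$.

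First I would establish the upper bound. Since $\om_{\theta_{t,\eps}}=\tilde\om_t+i\p\bar\p\tilde\vphi_{\theta_{t,\eps}}$, we have $\sum_i\mu_i=\tr_{\tilde\om_t}\om_{\theta_{t,\eps}}=n+\tri_{\tilde\om_t}\tilde\vphi_{\theta_{t,\eps}}$, and the hypothesis $\tri_{\tilde\om_t}\vphi_{\theta_{t,\eps}}\le A$ together with the uniform bound on $\tri_{\tilde\om_t}\phi_E$ gives $\sum_i\mu_i\le n+A$ after absorbing the latter term harmlessly into $A$. As all $\mu_i$ are positive, this forces $\mu_j\le n+A$ for every $j$, which is exactly $\om_{\theta_{t,\eps}}\le(n+A)\tilde\om_t$.

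Next, for the lower bound I would invoke the Monge--Amp\`ere equation \eqref{lift reference metric log Fano app both 2 Phi}, which reads $\prod_i\mu_i=\om_{\theta_{t,\eps}}^n/\tilde\om_t^n=e^{-f_{t,\eps}}$, and by the expression \eqref{Singular cscK metrics: f} this equals $S_\eps^{\beta-1}e^{h_\theta+c_{t,\eps}}\,\om^n/\tilde\om_t^n$. Since $h_\theta$ is smooth on the compact manifold $X$, $c_{t,\eps}$ is uniformly bounded by its volume normalisation, and $\om$ and $\tilde\om_t$ are uniformly equivalent by \lemref{metrics equivalence}, there is a uniform constant $c>0$ with $\prod_i\mu_i\ge c\,S_\eps^{\beta-1}$. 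Combining this with $\mu_i\le n+A$ for $i\ne j$ yields $\mu_j=\bigl(\prod_i\mu_i\bigr)\big/\bigl(\prod_{i\ne j}\mu_i\bigr)\ge c\,S_\eps^{\beta-1}(n+A)^{-(n-1)}$, i.e. $\om_{\theta_{t,\eps}}\ge c\,S_\eps^{\beta-1}(n+A)^{-(n-1)}\tilde\om_t$, which is the asserted left-hand inequality.

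There is no genuine obstacle here: the argument is elementary. The only point demanding a little care is checking that the constant $c$ in $\prod_i\mu_i\ge c\,S_\eps^{\beta-1}$ is truly independent of $t$ and $\eps$, which is guaranteed by \lemref{nef tilde f}, \lemref{metrics equivalence}, and the uniform volume condition fixing $c_{t,\eps}$.
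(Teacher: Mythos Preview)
Your argument is correct and is essentially the same as the paper's: the paper obtains the upper bound from $\tr_{\tilde\om_t}\om_{\theta_{t,\eps}}\le n+A$ and the lower bound from the ``fundamental inequality'' $\tr_{\om_{\theta_{t,\eps}}}\tilde\om_t\le(\om_{\theta_{t,\eps}}^n/\tilde\om_t^n)^{-1}(\tr_{\tilde\om_t}\om_{\theta_{t,\eps}})^{n-1}$, which is precisely the eigenvalue computation you spell out. Your remark that a $\tri_{\tilde\om_t}\phi_E$ term must be absorbed into $A$ is in fact more careful than the paper, which silently writes $\tr_{\tilde\om_t}\om_{\theta_{t,\eps}}=n+\tri_{\tilde\om_t}\vphi_{\theta_{t,\eps}}$.
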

\begin{proof}
The upper bound of $ \om_{\theta_{t,\eps}}$ is obtained from
\begin{align*}
\tr_{\tilde \om_t}\om_{\theta_{t,\eps}}=n+\tri_{\tilde \om_t}\vphi_{\theta_{t,\eps}}\leq n+A.
\end{align*}
The lower bound follows directly from the fundamental inequality and the equation \eqref{lift reference metric log Fano app both 3} of the volume ratio,
\begin{align*}
\tr_{\om_{\theta_{t,\eps}}}{\tilde \om_t}\leq(\frac{\om_{\theta_{t,\eps}}^n}{\tilde \om_t^n})^{-1}(\tr_{\tilde \om_t}\om_{\theta_{t,\eps}})^{n-1} \leq CS_\eps^{1-\beta} (n+A)^{n-1}.
\end{align*}
\end{proof}

\subsection{Singular scalar curvature equation}
\begin{defn}\label{Singular cscK eps defn}
Let $\Om$ be a big and semi-positive class and $\om_{sr}$ is a smooth representative in $\Om$. The singular scalar curvature equation in $\Om$ is defined to be
\begin{equation}\label{Singular cscK eps}
\om_\vphi^n=(\om_{sr}+i\p\bar\p\vphi)^n=e^F \om_{\theta}^n,\quad
\tri_{\vphi} F=\tr_{\vphi}\theta-R.
\end{equation}
The reference metric $\om_\theta$ is introduced in \eqref{lift reference metric log Fano app limit} and $R$ is a real-valued function. 
In particular, when considering the singular cscK equation, we have
\begin{align*}
R=\ul S_\beta=\frac{nC_1(X,D)\Om^{n-1}}{\Om^{n}}.
\end{align*}
\end{defn}
Inserting the expression of $\om_\theta$, i.e. $\om_\theta^n=e^{-f}\om^n$, into \eqref{Singular cscK eps}, we get
\begin{lem}
\begin{equation*}
(\om_{sr}+i\p\bar\p\vphi)^n=e^{\tilde F} \om^n,\quad
\tri_{\vphi} \tilde F=\tr_{\vphi}(\theta-i\p\bar\p f)-R,
\end{equation*}
where 
$\tilde F=F-f,\quad f=-(\beta-1)\log|s|_h^2-h_\theta.$
\end{lem}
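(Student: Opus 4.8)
The proof is a direct substitution, so I will only indicate the bookkeeping. First I would use the defining equation \eqref{lift reference metric log Fano app limit} of the reference metric, $\om_\theta^n=|s|_h^{2\beta-2}e^{h_{\theta}}\om^n$; setting $f:=-(\beta-1)\log|s|_h^2-h_\theta$, this reads precisely $\om_\theta^n=e^{-f}\om^n$. Plugging this into the volume form relation in \eqref{Singular cscK eps}, $\om_\vphi^n=e^{F}\om_\theta^n$, immediately gives
\begin{align*}
(\om_{sr}+i\p\bar\p\vphi)^n=\om_\vphi^n=e^{F}e^{-f}\om^n=e^{\tilde F}\om^n,\qquad \tilde F:=F-f,
\end{align*}
which is the first asserted equation, together with the stated expressions for $\tilde F$ and $f$.

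For the second equation I would substitute $F=\tilde F+f$ into the Laplacian relation $\tri_\vphi F=\tr_\vphi\theta-R$ of \eqref{Singular cscK eps}, use the linearity of $\tri_\vphi$, and invoke the elementary identity $\tri_\vphi g=\tr_\vphi(i\p\bar\p g)$ valid for any smooth function $g$. This yields
\begin{align*}
\tri_\vphi\tilde F=\tri_\vphi F-\tri_\vphi f=\tr_\vphi\theta-\tr_\vphi(i\p\bar\p f)-R=\tr_\vphi(\theta-i\p\bar\p f)-R,
\end{align*}
as claimed. (This is the exact analogue of \eqref{Degenerate cscK 1} in the big and semi-positive setting, with $\om$ replaced by $\om_{sr}$ as the background metric.)

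There is no real obstacle here: the manipulations are purely algebraic. The only point worth keeping in mind is that $f$ carries the logarithmic singularity $-(\beta-1)\log|s|_h^2$ along $D$, so all identities above are read off on the locus where $\vphi$, $F$, $\tilde F$ and $\om_\vphi$ are smooth and the singular scalar curvature equation \eqref{Singular cscK eps} is imposed in the classical sense; on that locus every term that appears is a genuine smooth function and the computation is legitimate.
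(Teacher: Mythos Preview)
Your proof is correct and follows exactly the approach the paper takes: the paper simply says ``Inserting the expression of $\om_\theta$, i.e. $\om_\theta^n=e^{-f}\om^n$, into \eqref{Singular cscK eps}, we get'' and states the lemma without further argument. Your write-up is a faithful expansion of that one-line substitution, and your closing remark about the locus where the computation is valid is a sensible caveat.
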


\subsection{Approximate singular scalar curvature equation}

We define an analogue of the $t$-perturbation and the $(t,\eps)$-approximation of the singular scalar curvature equation.
In general, we consider the perturbed equations.
\begin{defn}\label{Singular cscK t eps defn}The perturbed singular scalar curvature equation is defined as
\begin{equation*}
\om_{\vphi_t}^n=(\om_{t}+i\p\bar\p\vphi_t)^n=e^{F_t} \om_{\theta_t}^n,\quad
\tri_{\vphi_t} F_t=\tr_{\vphi_t}\theta-R.
\end{equation*}
While, the approximate singular scalar curvature equation is set to be
\begin{equation}\label{Singular cscK t eps}
\om_{\vphi_{t,\eps}}^n=(\om_t+i\p\bar\p\vphi_{t,\eps})^n=e^{F_{t,\eps}} \om_{\theta_{t,\eps}}^n,\quad
\tri_{\vphi_{t,\eps}} F_{t,\eps}=\tr_{\vphi_{t,\eps}}\theta-R.
\end{equation}
\end{defn}
\begin{rem}
The function $R$ is also needed to be perturbed. Since their perturbation are bounded and does not effect the estimates we will derive, we still use the same notation in the following sections for convince.
\end{rem}

\begin{defn}
We approximate the singular cscK equation by
\begin{equation*}
\om_{\vphi_t}^n=(\om_{t}+i\p\bar\p\vphi_t)^n=e^{F_t} \om_{\theta_t}^n,\quad
\tri_{\vphi_t} F_t=\tr_{\vphi_t}\theta-R_{t}
\end{equation*}
with the constant to be
\begin{equation}\label{approximate average scalar}
R_{t}=\frac{nC_1(X,D)(\Om+t[\om])^{n-1}}{(\Om+t[\om])^{n}}.
\end{equation}
\end{defn}

By the formulas of $Ric( \om_{\theta_t})$ from \lemref{Ric om theta t} and $Ric(\om_{\theta_{t,\eps}})$ by \lemref{Ric om theta t eps}, the scalar curvature of both approximations are given as below.
\begin{lem}On the regular part $M=X\setminus D$, we have
\begin{align*}
S(\om_{\vphi_{t}})=R,\quad
S(\om_{\vphi_{t,\eps}})=R+(1-\beta)\tri_{\vphi_{t,\eps}}(\log S_\eps-\log|s|_h^2).
\end{align*}
\end{lem}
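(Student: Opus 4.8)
The plan is to carry out the whole computation on the regular locus $M=X\setminus D$, where all of $\om_{\theta_t},\om_{\vphi_t},\om_{\theta_{t,\eps}},\om_{\vphi_{t,\eps}}$ are smooth K\"ahler metrics, using only two standard facts: that the scalar curvature of a K\"ahler metric $\om_\psi$ is the $\om_\psi$-trace of its Ricci form, $S(\om_\psi)=\tr_\psi Ric(\om_\psi)$, and that two smooth positive volume forms have Ricci forms differing by $-i\p\bar\p$ of the logarithm of their ratio. The latter, applied to the first equation of \eqref{Singular cscK t eps}, gives at once $Ric(\om_{\vphi_{t,\eps}})=Ric(\om_{\theta_{t,\eps}})-i\p\bar\p F_{t,\eps}$ on $X$ (both volume forms are smooth and positive since $S_\eps\geq\eps>0$), and likewise $Ric(\om_{\vphi_t})=Ric(\om_{\theta_t})-i\p\bar\p F_t$ on $M$. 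This is the same manoeuvre already used in the proof of \lemref{Ric om theta t eps}.

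For the $(t,\eps)$-approximation I would then restrict \lemref{Ric om theta t eps} to $M$, where the integration current $2\pi[D]$ disappears and the statement becomes the smooth identity $Ric(\om_{\theta_{t,\eps}})=\theta+(1-\beta)i\p\bar\p(\log S_\eps-\log|s|_h^2)$. Combining this with the relation for $Ric(\om_{\vphi_{t,\eps}})$ above and taking $\tr_{\vphi_{t,\eps}}$ yields
\[
S(\om_{\vphi_{t,\eps}})=\tr_{\vphi_{t,\eps}}\theta+(1-\beta)\tri_{\vphi_{t,\eps}}(\log S_\eps-\log|s|_h^2)-\tri_{\vphi_{t,\eps}}F_{t,\eps}.
\]
Substituting the second equation of \eqref{Singular cscK t eps}, $\tri_{\vphi_{t,\eps}}F_{t,\eps}=\tr_{\vphi_{t,\eps}}\theta-R$, the two $\tr_{\vphi_{t,\eps}}\theta$ terms cancel and the asserted formula $S(\om_{\vphi_{t,\eps}})=R+(1-\beta)\tri_{\vphi_{t,\eps}}(\log S_\eps-\log|s|_h^2)$ drops out. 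The $t$-perturbed statement is the same computation with \lemref{Ric om theta t}: on $M$ it reads $Ric(\om_{\theta_t})=\theta$, so $S(\om_{\vphi_t})=\tr_{\vphi_t}\theta-\tri_{\vphi_t}F_t=R$ after inserting $\tri_{\vphi_t}F_t=\tr_{\vphi_t}\theta-R$.

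I do not expect a genuine obstacle; the argument is a short, essentially formal computation. The only points that need care are bookkeeping: that the distributional term $2\pi[D]$ in \lemref{Ric om theta t} and \lemref{Ric om theta t eps} vanishes once restricted to $M$, which is exactly why the identities are asserted on $M$ rather than on $X$; that $\om_{\vphi_{t,\eps}}$, and $\om_{\vphi_t}$ over $M$, really are smooth K\"ahler metrics there, so that $S=\tr Ric$ and the volume-ratio formula for Ricci apply verbatim; and that one keeps the normalisation of $\tri_\psi$ consistent with the way the scalar curvature equation was written, namely $\tri_\psi(\cdot)=\tr_\psi i\p\bar\p(\cdot)$, so that no spurious constant appears when passing between the PDE system and the geometric scalar curvature.
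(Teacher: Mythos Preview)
Your argument is correct and is exactly the approach taken in the paper: one writes $Ric(\om_{\vphi_{t,\eps}})=Ric(\om_{\theta_{t,\eps}})-i\p\bar\p F_{t,\eps}$ (and similarly for the $t$-perturbation), inserts the Ricci formulas from \lemref{Ric om theta t} and \lemref{Ric om theta t eps} restricted to $M$, takes the $\om_{\vphi}$-trace, and cancels $\tr_\vphi\theta$ against the second equation in \eqref{Singular cscK t eps}. The only difference is presentational; the paper compresses these steps into two displayed identities.
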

\begin{proof}
They are direct obtained by inserting the Ricci curvature equations $Ric(\om_{\theta_{t}})=\theta+2\pi(1-\beta)[D]
$ and
\begin{align*}
Ric(\om_{\theta_{t,\eps}})=\theta+(1-\beta)\{2\pi[D]-i\p\bar\p(\log|s|_h^2-\log S_\eps)\}.
\end{align*}
into the scalar curvature equations
\begin{align*}
S(\om_{\vphi_{t}})=-\tri_{\vphi_t} F_t+Ric(\om_{\theta_t}),\quad
S(\om_{\vphi_{t,\eps}})=-\tri_{\vphi_{t,\eps}}F_{t,\eps}+Ric(\om_{\theta_{t,\eps}}).
\end{align*}
\end{proof}



We will derive a priori estimates for the smooth solutions to the approximate equation \eqref{Singular cscK t eps}. We set
\begin{align*}
&  \tilde\vphi_{{t,\eps}}:=\vphi_{{t,\eps}}-\phi_E, \quad\tilde F_{t,\eps}:=F_{t,\eps}-f_{t,\eps}.\end{align*}
Then it follows $\om_{\vphi_{t,\eps}}=\om_t+i\p\bar\p \vphi_{t,\eps}=\tilde\om_t+i\p\bar\p\tilde\vphi_{t,\eps}.
$ Hence, \eqref{Singular cscK t eps} is rewritten for the couple $(\Phi_{t,\eps}, \tilde F_{t,\eps})$ as following equations regarding to the smooth K\"ahler metric $\tilde\om_t$.
\begin{lem}
\begin{equation}\label{Singular cscK t eps tilde}
(\tilde\om_t+i\p\bar\p \tilde\vphi_{t,\eps})^n=e^{\tilde F_{t,\eps}} \tilde\om_t^n,\quad
\tri_{\vphi_{t,\eps}} \tilde F{_{t,\eps}}=\tr_{\vphi_{t,\eps}}(\theta-i\p\bar\p f_{t,\eps})-R.
\end{equation}
\end{lem}

According to \lemref{nef tilde f}, we see that $f_{t,\eps}$ have uniform lower bound and $i\p\bar\p f_{t,\eps}$ has uniform upper bound, when $\beta>1$. It is different from the the cone metric $0<\beta<1$ or the smooth metric $\beta=1$, whose $f_{t,\eps}$ have uniform upper bound and $i\p\bar\p f_{t,\eps}$ have uniform lower bound.

The a priori estimates in \cite{MR4301557}  were extended to the cone metrics in the big and semi-positive class in \cite{arXiv:1803.09506}. In the following sections, including Section \ref{Linfty estimate}, Section \ref{Gradient estimate of vphi} and Section \ref{W2p estimate}, we obtain estimation for the approximate singular scalar curvature equation \eqref{Singular cscK t eps} with $\beta>1$. 

\begin{defn}[Almost admissible solution for singular equations]\label{a priori estimates approximation singular}
We say $\vphi_\eps$ is an \text{almost admissible solution} to the approximate singular scalar curvature equation \eqref{Singular cscK t eps}, if there are uniform constants independent of $t,\eps$ such that the following estimates hold
\begin{itemize}
\item $L^\infty$-estimates in \thmref{L infty estimates Singular equation}: 
\begin{align*}
&\|\vphi_{t,\eps}\|_\infty\leq C,\quad \|e^{F_{t,\eps}}\|_{p;\tilde\om_t^n},\quad \|e^{\tilde F_{t,\eps}}\|_{p;\tilde\om_t^n} \leq C(p),\quad p\geq 1 ;\\
&\sup_X(F_{t,\eps}-\sigma_s\phi_E),\quad-\inf_{X}[F_{t,\eps}-\sigma_i\phi_E]\leq C;\\
& \sigma_s:=C_l-\tau,\quad \sigma_i:=C_u+\tau,\quad \forall\tau>0. 
\end{align*}
\item gradient estimate of $\vphi$ in \thmref{gradient estimate}: 
\begin{align*}
|s_E|^{2a_0 \sigma^1_E } S_\eps^{\sigma^1_D}|\p\tilde\vphi_{t,\eps}|^2_{\tilde\om_t}\leq C, \quad \sigma^1_D\geq 1
\end{align*}
where the singular exponent $\sigma^1_E$ satisfies \eqref{Gradient estimate: sigmaE} and \eqref{Gradient estimate: sigmaE 2};

\item $W^{2,p}$-estimate in \thmref{w2pestimates degenerate Singular equation}: 
\begin{align*}
\int_X   (\tr_{\tilde\om_t}\om_\vphi)^{p}   |s_E|^{\sigma^2_E}_{h_E} S_\eps^{\sigma^2_D}  \tilde\om_t^n\leq  C(p),\quad\forall p\geq 1
\end{align*}
where $\sigma^2_D>(\beta-1)\frac{n-2-2np^{-1}}{n-1+p^{-1}}$ and $\sigma^2_E$ is given in \eqref{w2pestimates sigma E}.
\end{itemize}

\end{defn}

\begin{thm}\label{almost admissible singular}
The family of solutions to the approximate singular scalar curvature equation \eqref{Singular cscK t eps} with bounded entropy is almost admissible. 
\end{thm}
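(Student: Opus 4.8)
\textbf{Proof strategy for \thmref{almost admissible singular}.}

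The plan is to establish each of the three families of estimates (the $L^\infty$-estimates for $\vphi_{t,\eps}$ and $F_{t,\eps}$, the weighted gradient estimate for $\vphi_{t,\eps}$, and the weighted $W^{2,p}$-estimate for $\tr_{\tilde\om_t}\om_{\vphi_{t,\eps}}$) independently of the perturbation parameters $t$ and $\eps$, working throughout with the smooth K\"ahler background metric $\tilde\om_t$ and the reformulated system \eqref{Singular cscK t eps tilde}. The guiding principle is to adapt the integration method of Chen--Cheng \cite{MR4301557,MR4301558}, as already extended to the cone range $0<\beta\le 1$ in \cite{arXiv:1803.09506}, to the new sign configuration recorded after \lemref{nef tilde f}: when $\beta>1$ the weight $f_{t,\eps}$ has a uniform \emph{lower} bound and $i\p\bar\p f_{t,\eps}$ a uniform \emph{upper} bound, while the curvature term $i\p\bar\p\mathfrak h_\eps=(\beta-1)i\p\bar\p\log S_\eps$ only enjoys the one-sided bound $\geq -\tfrac{|s|_h^2}{S_\eps}\Theta_D$ from \lemref{h eps}. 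Consequently one cannot absorb the bad terms pointwise; instead every estimate must carry compensating powers of $S_\eps$ (and of $|s_E|_{h_E}$, to handle the Kodaira correction $\phi_E$), and the whole scheme is driven by tracking the sub-critical exponents $\sigma^1_D,\sigma^1_E$ and $\sigma^2_D,\sigma^2_E$ so that all weighted integrals that arise remain finite with $\eps$-uniform bounds.

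First I would prove the $L^\infty$-estimates. For $\|\vphi_{t,\eps}\|_\infty$ one invokes the $L^p$-integrability of the right-hand side $S_\eps^{\beta-1}e^{h_\theta+c_{t,\eps}}$ (uniform in $\eps$ since $\beta-1>0$ makes the weight bounded) together with the Ko\l odziej / Eyssidieux--Guedj--Zeriahi type bound, exactly as in \lemref{big nef approximate reference metric bound}; the bounded-entropy hypothesis then feeds into the Chen--Cheng auxiliary-function argument (the $\alpha$-invariant / Moser iteration on the coupled equation) to control $F_{t,\eps}$ and $\tilde F_{t,\eps}$ in the weighted $L^p$ and one-sided sup norms, where the shifts by $\sigma_s\phi_E$ and $\sigma_i\phi_E$ absorb the only places the class fails to be K\"ahler (via the lower/upper bounds \eqref{Cl},\eqref{Cu} for $\theta$). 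Next, the gradient estimate: apply the Bochner / integration-by-parts identity to $|\p\tilde\vphi_{t,\eps}|^2_{\tilde\om_t}$ multiplied by a weight $|s_E|^{2a_0\sigma^1_E}S_\eps^{\sigma^1_D}$ and a suitable test power, using the $L^\infty$-control already obtained and the one-sided Hessian bounds of $f_{t,\eps}$; the exponent constraint $\sigma^1_D\ge 1$ (and the $\Om$-side constraint \eqref{Gradient estimate: sigmaE}, \eqref{Gradient estimate: sigmaE 2}) is precisely what is needed to dominate the term $(\beta-1)^2 S_\eps^{-1}$ from $|\p f_{t,\eps}|^2$ in \lemref{nef tilde f}. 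Finally the $W^{2,p}$-estimate is obtained by the weighted Yau/Aubin Laplacian computation \eqref{Yau computation} combined with the Sobolev inequality and a Moser iteration, inserting weights $|s_E|^{\sigma^2_E}_{h_E}S_\eps^{\sigma^2_D}$ with the stated sub-critical thresholds so that the loss of weight at each integration-by-parts step (cf. Remarks \ref{4th term}, \ref{Laplacian estimate: lose weight}) stays within the admissible window; here no bound on $\p F$ is used (contrast Remark \ref{rough W2p estimate}).

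I expect the main obstacle to be the $W^{2,p}$-estimate, specifically the fourth-order term $\tri F$ (equivalently $\tri\tilde F$) appearing in \eqref{Yau computation} once one integrates against a power of $\tr_{\tilde\om_t}\om_{\vphi_{t,\eps}}$ weighted by $S_\eps^{\sigma}$: integrating by parts transfers a derivative onto $S_\eps^{\sigma}$, producing $|\p S_\eps^{\sigma}|\lesssim S_\eps^{\sigma-1/2}$ and hence a net loss of $1/2$ in the weight exponent, which must be recovered through the combination of the Sobolev inequality (which upgrades $S_\eps^{k}$ to $S_\eps^{k\chi}$, $\chi=\tfrac{n}{n-1}$) and the inverse weighted inequalities built on the fundamental bound $\tr_{\vphi}\om\ge v^{1/(n-1)}e^{-\tilde F/(n-1)}$, precisely the bookkeeping carried out for the degenerate equation in \thmref{cscK Laplacian estimate}. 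Making this bookkeeping compatible \emph{simultaneously} with the Kodaira weight $|s_E|_{h_E}$ (whose gradient behaves analogously but whose exponents are governed by the $\Om$-geometry rather than the cone angle) is the delicate point, and the threshold $\sigma^2_D>(\beta-1)\tfrac{n-2-2np^{-1}}{n-1+p^{-1}}$ in the statement is exactly what emerges from requiring all the resulting exponents to be sub-critical; once these are in place the iteration closes and \thmref{intro almost admissible} follows, with \thmref{almost admissible singular} (and hence the degenerate case of \thmref{intro almost admissible} via $t\to 0$, $\eps\to 0$) a direct consequence.
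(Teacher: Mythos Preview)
Your overall strategy is correct and matches the paper's architecture: the theorem is indeed proved by assembling the three estimates separately (Sections \ref{Linfty estimate}, \ref{Gradient estimate of vphi}, \ref{W2p estimate}), and your description of the $L^\infty$- and gradient-estimates is essentially what the paper does.

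The one genuine methodological difference is in the $W^{2,p}$-estimate. You propose a Sobolev/Moser iteration with weight bookkeeping parallel to the Laplacian estimate of Section \ref{A priori estimates for approximate degenerate cscK equations}; the paper instead handles Section \ref{W2p estimate} by a more direct and elementary device. In the auxiliary function $H=b_0F+b_1\psi$ one takes $b_0=p$, so that after integrating $u^{p-1}\tri_\vphi u$ against $\om_\vphi^n$ the dangerous term $\int u^{p-1}e^{-H}\tilde\tri\tilde F\,\om_\vphi^n$ is rewritten as $\tfrac{1}{b_0-1}\int u^{p-1}e^{\tilde F-H}\tilde\tri[(\tilde F-H)+(H-b_0\tilde F)]\,\tilde\om_t^n$; the first piece integrates by parts to a gradient term absorbed by $LHS_1$ (the coefficient $\tfrac{(p-1)^2}{4(b_0-1)}=\tfrac{p-1}{4}$ being exactly why one chooses $b_0=p$), while the second piece involves only $\tilde\tri f$ and $\tilde\tri\psi=v-n$, controlled by the one-sided bound $i\p\bar\p f\le C\om$ of \lemref{nef tilde f}. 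A single H\"older/Young step then closes the estimate---no Sobolev inequality or iteration is needed. Your approach would likely also succeed but is more laborious, and you slightly overestimate where the difficulty lies: the delicate weight bookkeeping of Remarks \ref{4th term} and \ref{Laplacian estimate: lose weight} belongs to the \emph{Laplacian} estimate (\thmref{cscK Laplacian estimate}), which is not part of almost-admissibility and requires the additional hypothesis $\|\p F_\eps\|_{\vphi_\eps}<\infty$.
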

Suppose $\Om$ is K\"ahler, the singular equation is reduced to the degenerate equation \eqref{Degenerate cscK}.
\begin{thm}\label{almost admissible degenerate}
Assume $\{\vphi_{\eps}\}$ is a family of solutions to the approximate degenerate scalar curvature equation \eqref{Degenerate cscK approximation} with bounded entropy. Then $\vphi_{\eps}$ is almost admissible, Definition \ref{a priori estimates approximation}.
\end{thm}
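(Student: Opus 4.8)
The plan is to obtain this as the K\"ahler specialisation of \thmref{almost admissible singular}. First I would take $\om_{sr}=\om$ as the smooth representative of the (now K\"ahler) class $\Om$; then Kodaira's Lemma in \secref{Perturbed Kahler metrics} is unnecessary, so I set $\om_K=\om_{sr}=\om$ directly, $\phi_E\equiv 0$, and every $|s_E|_{h_E}$-weight is absent (i.e.\ $\equiv 1$), and I put $t=0$ throughout since there is no loss of positivity to approximate. With these choices $\tilde\om_t=\om_t=\om$, the reference metric of Definition \ref{Reference metric singular} reduces to that of Definition \ref{Reference metric}, its approximation \eqref{lift reference metric log Fano app both 3} becomes \eqref{Rictheta approximation}, and the approximate singular scalar curvature equation \eqref{Singular cscK t eps} becomes, verbatim, the approximate degenerate equation \eqref{Degenerate cscK approximation}; the entropy $\tfrac1V\int_X F_\eps\,\om_{\vphi_\eps}^n$ is the same functional in both settings, so the hypothesis transfers unchanged.

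Next I would translate the almost admissibility of Definition \ref{a priori estimates approximation singular}, delivered by \thmref{almost admissible singular}, into that of Definition \ref{a priori estimates approximation}. Because $\phi_E\equiv 0$ and all $|s_E|_{h_E}$-factors are $\equiv 1$ while $\tilde\om_t=\om$: \thmref{L infty estimates Singular equation} yields $\|\vphi_\eps\|_\infty\le C$, and combining the one-sided bounds on $F_\eps-\sigma_s\phi_E$ and $F_\eps-\sigma_i\phi_E$ with the $L^p$-bounds on $e^{\pm F_\eps}$ gives $\|F_\eps\|_\infty\le C$, i.e.\ \eqref{almost admissible C0}; \thmref{gradient estimate} yields $S_\eps^{\sigma^1_D}\|\p\vphi_\eps\|^2_{L^\infty(\om)}\le C$ and, with $E$ trivial, the exponent can be taken in the sharper range $1>\sigma^1_D>\max\{1-\tfrac{2\beta}{n+2},0\}$ of \eqref{almost admissible C1 sigmaD}; and \thmref{w2pestimates degenerate Singular equation} yields, for every $p\ge 1$, $\int_X(\tr_\om\om_{\vphi_\eps})^p S_\eps^{\sigma^2_D}\om^n\le C(p)$ with $\sigma^2_D=(\beta-1)\tfrac{n-2}{n-1+p^{-1}}$, i.e.\ \eqref{almost admissible w2p sigmaD}. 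Since all constants in \thmref{almost admissible singular} are uniform in $(t,\eps)$, they are a fortiori uniform in $\eps$, and $\vphi_\eps$ is almost admissible in the sense of Definition \ref{a priori estimates approximation}.

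The one non-formal point — and the step I expect to be the main obstacle — is the matching of the weighted gradient exponent: Definition \ref{a priori estimates approximation singular} only records $\sigma^1_D\ge 1$, whereas Definition \ref{a priori estimates approximation} requires $\sigma^1_D<1$, so one must verify that in the K\"ahler case \thmref{gradient estimate} genuinely produces the improved exponent. This is precisely where the absence of the auxiliary divisor $E$ is used: the power loss that forces $\sigma^1_D\ge 1$ in a general big class arises from carrying the $|s_E|_{h_E}$-weights through the integration argument, and it vanishes when $E=\emptyset$. The remaining checks — that the $W^{2,p}$-exponent $\sigma^2_D$ and the $L^\infty$-bounds specialise exactly, and that uniformity in $\eps$ is inherited from $(t,\eps)$-uniformity — are routine bookkeeping.
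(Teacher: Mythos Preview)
Your proposal is correct and is precisely the paper's approach: the paper states \thmref{almost admissible degenerate} immediately after \thmref{almost admissible singular} as the K\"ahler specialisation (``Suppose $\Om$ is K\"ahler, the singular equation is reduced to the degenerate equation''), and the three estimates are established in \thmref{L infty estimates degenerate equation}, \thmref{gradient estimate} part (2), and \thmref{w2pestimates degenerate equation}, each obtained from its singular counterpart by the very substitutions $\phi_E\equiv 0$, $t=0$, $\tilde\om_t=\om$ that you describe. Your identification of the gradient exponent as the one genuinely non-formal step is also on target: the improvement from $\sigma^1_D\ge 1$ to $\sigma^1_D>\max\{1-\tfrac{2\beta}{n+2},0\}$ is exactly the content of \thmref{gradient estimate sigma 2}, which uses the weighted integration method (not just the maximum principle of \thmref{gradient estimate sigma 1}) and requires the K\"ahler hypothesis; one minor correction is that with $\phi_E\equiv 0$ the two one-sided bounds on $F_\eps$ already give $\|F_\eps\|_\infty\le C$ directly, so the $L^p$-bounds on $e^{F_\eps}$ are not needed for this conclusion.
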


\subsection{Singular metrics with prescribed scalar curvature}

In this section, we will prove an existence theorem for singular metrics with prescribed scalar curvature. The idea is to construct solution to the singular equation by taking limit of the solutions to the approximate equation. The proof is an adaption of Theorem 4.40 in our previous article \cite{arXiv:1803.09506}.

We recall the definition of singular canonical metric satisfying particular scalar curvature, see \cite[Definition 4.39]{arXiv:1803.09506}, which is motivated from the study of canonical metrics on connected normal complex projective variety.
\begin{defn}[Singular metrics with prescribed scalar curvature]\label{Singular metric with prescribed scalar curvature}
In a big cohomology class $[\om_{sr}]$, we say $\om:=\om_{sr}+i\p\bar\p\vphi$ is a singular metric with prescribed scalar curvature, if $\vphi\in PSH(\om_{sr})$ is a $L^1$-limit of a sequence of K\"ahler potentials solving the approximate scalar curvature equation \eqref{Singular cscK t eps}.

In addition, we say the singular metric $\om_\vphi$ is bounded, if $\vphi$ is $\mathcal E^1(\om_{sr})\cap L^\infty$.

When $\Om$ is K\"ahler, we name it the degenerate metric instead.
\end{defn}

\subsection{Estimation of approximate solutions}
We need to consider the existence of the solution to the approximate singular scalar curvature equation \eqref{Singular cscK t eps}.
Restricted to the case for log KE metrics, we have seen the smooth approximation in Proposition \ref{smooth approximation KE}. For the cscK cone metrics, $0<\beta\leq 1$, the smooth approximation is shown in  \cite[Proposition 4.37]{arXiv:1803.09506}. We collect the results as below. We say a real $(1,1)$-cohomology class $\Om$ is
\textit{nef}, if the cohomology class $\Om_t:=\Om+t[\om]$ is K\"ahler for all $t>0$.
\begin{prop}Let $\Om$ be a big and nef class on a K\"ahler manifold $(X,\om)$, whose automorphism group $Aut(M)$ is trivial. Suppose that $\Om$ satisfies the cohomology condition 
\begin{align*}
\left\{
\begin{array}{lcl}
	&&0\leq \eta<\frac{n+1}{n}\alpha_\beta,\quad
	 C_1(X,D)<\eta\Omega,\\
	&& (-n\frac{C_1(X,D)\cdot\Omega^{n-1}}{\Omega^{n}}+\eta)\Om+(n-1)C_1(X,D)>0.
\end{array}
\right.
\end{align*}

Then $\Om$ has the cscK approximation property, which precisely asserts that, for small positive $t$, we have
	\begin{enumerate}
		\item
	the log $K$-energy is $J$-proper in $\Om_t$;
	\item there exists a cscK cone metric $\om_{t}$ in $\Om_t$;
	\item the cscK cone metric $\om_{t}$ a smooth approximation $\om_{t,\eps}$ in $\Om_{t}$.
	\end{enumerate}
\end{prop}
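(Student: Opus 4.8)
The plan is to work throughout inside the class $\Om_t=\Om+t[\om]$, which is genuinely K\"ahler for every small $t>0$ because $\Om$ is nef, and to deduce the three assertions from (i) a coercivity estimate for the log $K$-energy in $\Om_t$ coming from a log $\alpha$-invariant bound, (ii) the existence theory for cscK cone metrics in a fixed K\"ahler class, and (iii) the $(t,\eps)$-approximation scheme of \secref{Singular cscK metrics}. The first step is to observe that the three cohomology hypotheses are \emph{open}: the inequalities $0\le\eta<\tfrac{n+1}{n}\alpha_\beta$, $C_1(X,D)<\eta\,\Om$ and $(-n\,\underline S_\beta+\eta)\Om+(n-1)C_1(X,D)>0$ involve only cohomological pairings that vary continuously with $t$, and the approximate average $R_t$ of \eqref{approximate average scalar} tends to $\underline S_\beta$ as $t\to 0$. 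Hence, for $t$ small, all three persist with constants uniform in $t$ when $\Om$ is replaced by $\Om_t$ and $\underline S_\beta$ by $R_t$, leaving a fixed positive gap in the strict inequalities which will quantify the coercivity constant below.

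For assertion (1) I would start from the rewriting of $\nu_\beta$ in terms of the Mabuchi energy $\nu_1$ and the $J$-functionals in \corref{K and log K}, bound the non-entropy part of $\nu_\beta$ from below by $\eta\,D_\om(\vphi)+(n-1)\,j_{C_1(X,D)}(\vphi)-C$ using $C_1(X,D)<\eta\,\Om$ together with the third cohomology condition, and then apply the log Moser--Trudinger inequality attached to the log $\alpha$-invariant $\alpha_\beta$ of $\Om_t$ to absorb the exponential term $-\tfrac{1}{\lambda}\log\int_X e^{h_\om-\lambda\vphi}|s|_h^{2\beta-2}\om^n$ into the fraction $\tfrac{n}{n+1}\tfrac{\eta}{\alpha_\beta}<1$ of the entropy $E_1(\vphi)$. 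The input $\eta<\tfrac{n+1}{n}\alpha_\beta$ is exactly what makes this fraction sub-critical, so the remaining entropy together with the $J$-terms yields a coercivity estimate $\nu_\beta(\vphi)\ge\delta\,J(\vphi)-C$ on $\mathcal H_{\Om_t}$, i.e.\ $J$-properness.

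Given $J$-properness of $\nu_\beta$ in the K\"ahler class $\Om_t$ and triviality of $\Aut(M)$, assertion (2) follows from the cone version of Chen--Cheng's existence theorem \cite{MR4301557} for the scalar curvature equation with cone angle $\beta\in(0,1]$, as developed in \cite{MR4020314,arXiv:1803.09506}: properness of the log $K$-energy implies the existence of a cscK cone metric $\om_t\in\Om_t$. For assertion (3) one runs the $(t,\eps)$-approximation: solve \eqref{Singular cscK t eps} with $R=R_t$, note that $J$-properness transfers to the approximate log $K$-energy $\nu^\eps_\beta$ via the comparison of \lemref{uniform energy bound}, extract the $\eps$-uniform a priori estimates of \thmref{almost admissible singular}, and let $\eps\to 0$ with $t$ fixed to obtain $\om_{t,\eps}\to\om_t$; this is the argument of \cite[Proposition 4.37]{arXiv:1803.09506}.

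The main obstacle is the coercivity step: the $\alpha_\beta$-absorption must leave a strictly positive multiple of the $J$-functional, and this is precisely where the three cohomology inequalities enter quantitatively. The factor $\tfrac{n+1}{n}$ in $\eta<\tfrac{n+1}{n}\alpha_\beta$ is the slack generated by the $D_\om$-normalisation in the definition of $J$, while the third inequality keeps the coefficient of $j_{C_1(X,D)}$ on the positive side of zero; verifying that this bookkeeping is genuinely uniform in $t$ as $t\to 0$, and that the cone-metric existence theory applies verbatim in each $\Om_t$, is the crux. Everything else is routine manipulation of the functionals of \secref{cscK} and the estimates of \secref{Singular cscK metrics}.
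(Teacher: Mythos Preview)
The paper does not prove this proposition; it is quoted from \cite[Proposition~4.37]{arXiv:1803.09506} as background for the cone case $0<\beta\le 1$. Your overall outline---openness of the cohomology inequalities under the $t$-perturbation, Tian-type $\alpha$-invariant coercivity of $\nu_\beta$ in each K\"ahler class $\Om_t$, existence of a cscK cone metric via the cone analogue of Chen--Cheng, then smooth $\eps$-approximation---is the correct architecture. Two steps in your sketch need repair.

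First, in the coercivity argument you propose to ``absorb the exponential term $-\tfrac{1}{\lambda}\log\int_X e^{h_\om-\lambda\vphi}|s|_h^{2\beta-2}\om^n$''. That term belongs to the log \emph{Ding} functional $F_\beta$ of \secref{Critical points of the log Ding functional}, not to the log $K$-energy $\nu_\beta$; compare Definition~\ref{logKenergy} and \corref{K and log K}. The $\alpha$-invariant enters $\nu_\beta$ through the entropy $E_\beta$ (equivalently $E_1$ together with the $\mathfrak h$-twist) via Jensen's inequality, which yields $E_\beta(\vphi)\ge -\alpha_\beta\cdot\tfrac{1}{V}\int_X\vphi\,\om_\vphi^n-C$. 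Combining this with the $J_{-\theta}$ term and the two strict cohomology inequalities produces $\nu_\beta\ge\delta J-C$; the factor $\tfrac{n+1}{n}$ arises from comparing $\tfrac{1}{V}\int_X\vphi\,\om_\vphi^n$ with $D_\om(\vphi)$, not from any Moser--Trudinger step for a Ding exponential.

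Second, for assertion~(3) you invoke \lemref{uniform energy bound} to transfer properness to $\nu^\eps_\beta$. That lemma is formulated for $\beta>1$ and, more seriously, already assumes the existence of the approximate solution with bounded entropy---the very object you want to construct. In the cone range $0<\beta\le 1$ the comparison is elementary and holds \emph{before} one knows any critical point exists: since $\log S_\eps\ge\log|s|^2_h$ and $1-\beta\ge 0$, the difference $\nu^\eps_\beta-\nu_\beta=\tfrac{1-\beta}{V}\int_X(\log S_\eps-\log|s|^2_h)(\om^n-\om_\vphi^n)+\text{const}$ is bounded below on all of $\mathcal H_{\Om_t}$, giving $\nu^\eps_\beta\ge\nu_\beta-C$ pointwise. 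Properness of $\nu_\beta$ then forces properness of $\nu^\eps_\beta$, and \cite{MR4301558} supplies the smooth minimiser $\om_{t,\eps}$. This is exactly the mechanism the paper spells out in the paragraph following the question after the proposition.
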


\begin{ques}
We may ask the question whether the approximate degenerate cscK equation \eqref{Degenerate cscK approximation} has a smooth solution, if the log $K$-energy \eqref{log K energy} is proper. 
\end{ques}
It is different from the case $0<\beta\leq 1$, when the approximate log $K$-energy $\nu^\eps_\beta$ dominates the log $K$-energy $\nu_\beta$. So the properness of $\nu^\eps_\beta$ follows from the properness of $\nu_\beta$ and then implies existence of the approximate solutions by \cite{MR4301558}.


\subsection{Convergence and regularity}\label{Convergence and regularity}

Now we further consider the convergence problem of the family $\{\vphi_{t,\eps}\}$ is consisting of smooth approximation solutions for \eqref{Singular cscK t eps} and regularity of the convergent limit.
We normalise the family to satisfy $\sup_X\vphi_{t,\eps}=0$. According to the Hartogs's lemma, a sequence of $\om_{sr}$-psh functions has $L^1$-weak convergent subsequence.
\begin{lem}
The family $\{\vphi_{t,\eps}\}$ converges to $\vphi$ in $L^1$, as $t,\eps\rightarrow 0$. Moreover, $\vphi$ is an $\om_{sr}$-psh function with $\sup_X\vphi=0$.
\end{lem}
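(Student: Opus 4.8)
The plan is to run the standard compactness argument for quasi-plurisubharmonic functions, taking care that the forms $\om_t=\om_{sr}+t\om$ vary with $t$ but are all dominated by the single K\"ahler form $\om_{sr}+\om$ once $t\le 1$. First I would observe that each $\vphi_{t,\eps}$ is $\om_t$-psh, since $\om_{\vphi_{t,\eps}}=\om_t+i\p\bar\p\vphi_{t,\eps}$ is by construction a smooth K\"ahler metric; hence $\om_{sr}+\om+i\p\bar\p\vphi_{t,\eps}\ge(1-t)\om\ge 0$ for $t\le 1$, so the whole family lies in $PSH(X,\om_{sr}+\om)$. Combined with the normalisation $\sup_X\vphi_{t,\eps}=0$ (and, if one wishes, the uniform bound $\|\vphi_{t,\eps}\|_\infty\le C$ of \thmref{almost admissible singular}), this exhibits $\{\vphi_{t,\eps}\}$ as a bounded subset of $PSH(X,\om_{sr}+\om)$, so by the compactness theorem for quasi-psh functions every sequence $(t_j,\eps_j)\to 0$ has a subsequence along which $\vphi_{t_j,\eps_j}\to\vphi$ in $L^1(X,\om^n)$, the limit $\vphi$ (identified with its upper semicontinuous representative) belonging to $PSH(X,\om_{sr}+\om)$.

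Next I would upgrade $\vphi$ to an $\om_{sr}$-psh function. Since $L^1$-convergence gives $i\p\bar\p\vphi_{t_j,\eps_j}\to i\p\bar\p\vphi$ in the sense of currents, pairing the inequality $\om_{t_j}+i\p\bar\p\vphi_{t_j,\eps_j}\ge 0$ with an arbitrary smooth positive $(n-1,n-1)$-form and letting $j\to\infty$ — so that $\om_{t_j}\to\om_{sr}$ in the smooth topology and the $t_j\om$ term drops out — yields $\om_{sr}+i\p\bar\p\vphi\ge 0$; hence $\vphi\in PSH(X,\om_{sr})$. For the normalisation, the bound $\vphi_{t_j,\eps_j}\le 0$ on $X$ gives $\vphi\le 0$ almost everywhere, hence $\vphi\le 0$ everywhere by upper semicontinuity, while Hartogs' lemma applied to $\vphi_{t_j,\eps_j}\to\vphi$ gives $\sup_X\vphi\ge\limsup_j\sup_X\vphi_{t_j,\eps_j}=0$; the two inequalities together force $\sup_X\vphi=0$.

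The argument is essentially routine, and needs no estimate beyond those already established. The only points deserving a little attention are that the approximants are a priori only $\om_t$-psh, so they must first be collected inside one fixed quasi-psh class before compactness is invoked, and that $\om_{sr}$-plurisubharmonicity of the limit is recovered only in the passage $t\to 0$; the normalisation of the limit then comes for free from Hartogs' lemma.
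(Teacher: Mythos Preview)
Your proof is correct and follows the same compactness-via-Hartogs route the paper takes; the paper in fact dispatches this lemma in a single sentence invoking Hartogs' lemma, whereas you have supplied the details (and in particular correctly noted that the $\vphi_{t,\eps}$ are only $\om_t$-psh, so must first be placed in the fixed class $PSH(X,\om_{sr}+\om)$ before compactness applies, and that $\om_{sr}$-plurisubharmonicity of the limit is recovered only as $t\to 0$).
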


\begin{lem}
Assumed the uniform bound of $\|\vphi_{t,\eps}\|_\infty$ and $\|e^{F_{t,\eps}}\|_{p;\tilde\om_t^n}$,  the limit $\vphi$ belongs to $\mathcal E^1(X,\om_{sr})\cap L^\infty(X)$.
\end{lem}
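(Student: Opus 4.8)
The plan is to upgrade the previous lemma, which only provides $\vphi\in PSH(\om_{sr})$ with $\sup_X\vphi=0$, in two stages: first boundedness of $\vphi$, then finite energy. For boundedness, I would pass the uniform $L^\infty$-bound of \thmref{L infty estimates Singular equation} to the limit: after the normalisation $\sup_X\vphi_{t,\eps}=0$ together with $\|\vphi_{t,\eps}\|_\infty\le C$ uniformly in $t,\eps$ we have $-C\le\vphi_{t,\eps}\le 0$, and the $L^1$-convergence $\vphi_{t,\eps}\to\vphi$ forces $-C\le\vphi\le 0$ almost everywhere; since $\vphi$ is $\om_{sr}$-psh it equals its upper semicontinuous regularisation, so the inequality holds everywhere and $\vphi\in PSH(X,\om_{sr})\cap L^\infty(X)$. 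One subtlety is that the $\vphi_{t,\eps}$ are $\om_t$-psh rather than $\om_{sr}$-psh; exactly as in the preceding lemma this is dealt with by viewing all of them as $[\om_{sr}+\om]$-psh for the purpose of $L^1$-compactness and then invoking that lemma to land in $PSH(X,\om_{sr})$.

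For the finite energy statement I would invoke the theory of non-pluripolar products in big cohomology classes (see e.g.\ \cite{MR2746347,MR2505296}). Since $\Om$ is big and semi-positive, the smooth representative $\om_{sr}$ is semi-positive, so $0\in PSH(X,\om_{sr})$ and the envelope $V_\Om:=\sup\{u\in PSH(X,\om_{sr}):u\le 0\}$ is bounded. Consequently every bounded $\om_{sr}$-psh function, in particular our $\vphi$, has minimal singularities, so its non-pluripolar Monge--Amp\`ere measure has full mass $\int_X\langle\om_\vphi^n\rangle=\Vol(\Om)$, i.e.\ $\vphi\in\mathcal E(X,\om_{sr})$; and since $\vphi$ is bounded, $\int_X|\vphi|\,\langle\om_\vphi^n\rangle\le\|\vphi\|_\infty\,\Vol(\Om)<\infty$, which is precisely the defining condition $\vphi\in\mathcal E^1(X,\om_{sr})$.

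The hypothesis $\|e^{F_{t,\eps}}\|_{p;\tilde\om_t^n}\le C$ enters indirectly, through the $L^\infty$-estimate: rewriting the approximate equation via the reference metric gives $\om_{\vphi_{t,\eps}}^n=e^{\tilde F_{t,\eps}}\tilde\om_t^n$ with $e^{\tilde F_{t,\eps}}=e^{F_{t,\eps}}S_\eps^{\beta-1}e^{h_\theta+c_{t,\eps}}$ uniformly in $L^p$ for $\beta\ge 1$, and a Ko\l odziej-type argument then yields the uniform sup-bound used above; alternatively one could extract uniform energy bounds $\sup_{t,\eps}\bigl(-E_\Om(\vphi_{t,\eps})\bigr)<\infty$ directly from this $L^p$ density bound and pass to the limit via upper semicontinuity of the Monge--Amp\`ere energy along $L^1$-convergent sequences. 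The point needing the most care is uniformity in the two perturbation parameters: since the full-mass property is stated for a fixed big class, one must ensure it survives the passage from the perturbed classes $\Om_t$ to $\Om$ — that $\Vol(\Om_t)\to\Vol(\Om)$ and that no non-pluripolar mass leaks in the limit — and it is exactly the uniform $L^\infty$ (or energy) control that rules this out.
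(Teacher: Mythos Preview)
Your argument is correct. The paper itself does not give a proof here --- it simply refers to \cite[Proposition~4.41]{arXiv:1803.09506} --- so there is no detailed comparison to make; but your self-contained route is exactly the right one and is in fact cleaner than going through energy estimates.

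Two small remarks. First, in your primary argument the hypothesis on $\|e^{F_{t,\eps}}\|_{p;\tilde\om_t^n}$ is never used: once $\|\vphi_{t,\eps}\|_\infty\le C$ is assumed, the $L^1$-limit $\vphi$ is bounded, and since $\om_{sr}\ge 0$ forces $V_\Om=0$, boundedness alone gives minimal singularities, hence full non-pluripolar mass, hence $\vphi\in\mathcal E^1(X,\om_{sr})$. Your paragraph explaining how the $L^p$ bound ``enters indirectly through the $L^\infty$-estimate'' is therefore beside the point here (both are \emph{hypotheses} of the lemma); the $L^p$ bound is presumably listed because it is needed elsewhere in the convergence argument (e.g.\ for the limiting Monge--Amp\`ere measure), not for this particular conclusion. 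Second, your final paragraph about mass leakage and uniformity in $t,\eps$ is unnecessary for the primary argument: you are reasoning directly about the fixed limit $\vphi$ in the fixed class $\Om$, not passing any mass or energy from the approximants, so nothing needs to be tracked across the family.
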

\begin{proof}
The proof is included in the proof of \cite[Proposition 4.41]{arXiv:1803.09506}.
\end{proof}

In conclusion, we apply the gradient and $W^{2,p}$ estimates in \thmref{almost admissible singular} to obtain
\begin{thm}\label{existence singular}
Assume that the family $\{\vphi_{t,\eps}\}$ is the almost admissible solution to the approximate singular scalar curvature equation \eqref{Singular cscK t eps}. Then there exists a bounded singular metric $\om_\vphi$ with prescribed scalar curvature, moreover, $\om_\vphi$ has the gradient estimate and the $W^{2,p}$-estimate, as stated in \thmref{almost admissible singular}.
\end{thm}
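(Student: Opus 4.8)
The plan is to obtain $\om_\vphi$ as a limit of the almost admissible approximate solutions $\vphi_{t,\eps}$ and to verify that each of the three weighted estimates survives the limiting process. First I would normalise $\sup_X\vphi_{t,\eps}=0$; by the uniform bound $\|\vphi_{t,\eps}\|_\infty\le C$ and Hartogs' lemma a subsequence converges in $L^1$ to an $\om_{sr}$-psh function $\vphi$ with $\sup_X\vphi=0$ (the first lemma above), and by the uniform bounds on $\|\vphi_{t,\eps}\|_\infty$ and $\|e^{F_{t,\eps}}\|_{p;\tilde\om_t^n}$ one has $\vphi\in\mathcal E^1(X,\om_{sr})\cap L^\infty(X)$ (the second lemma above). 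Since by hypothesis $\vphi$ is an $L^1$-limit of K\"ahler potentials solving \eqref{Singular cscK t eps}, Definition \ref{Singular metric with prescribed scalar curvature} immediately gives that $\om_\vphi$ is a bounded singular metric with prescribed scalar curvature, which is the existence assertion.

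Next I would upgrade the convergence on the regular locus $X\setminus(D\cup E)$. Fix a compact set $K\Subset X\setminus(D\cup E)$. There the weights $|s|^2_h$, $|s_E|^2_{h_E}$ and $S_\eps=|s|^2_h+\eps$ are bounded above and below by constants independent of $t,\eps$; the metrics $\tilde\om_t$ are uniformly equivalent to $\om_K$ by \lemref{metrics equivalence} and converge smoothly to $\om_K$ as $t\to0$; and $\om_{\theta_{t,\eps}}\to\om_\theta$ smoothly on $K$, by the uniform bounds of \lemref{big nef approximate reference metric bound} and interior estimates for the Monge-Amp\`ere equation. Hence on $K$ the pair $(\vphi_{t,\eps},F_{t,\eps})$ solves a uniformly non-degenerate fourth-order scalar curvature system with uniformly controlled data: the $L^\infty$-bounds on $\vphi_{t,\eps}$ and on $F_{t,\eps}$ from the almost admissible estimates (the latter because $\phi_E$ is bounded on $K$), the gradient bound, and the $W^{2,p}$-bound on $\tr\om_{\vphi_{t,\eps}}$ for every $p$. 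Feeding these into the interior higher-order estimates for the cscK equation of Chen-Cheng \cite{MR4301558}, in the form adapted to the singular setting in \cite{arXiv:1803.09506}, produces uniform $C^k$ bounds on $K$ for every $k$; therefore, after passing to a further subsequence, $\vphi_{t,\eps}\to\vphi$ in $C^\infty$ on compact subsets of $X\setminus(D\cup E)$ and the limiting scalar curvature equation holds there.

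With this convergence the two remaining estimates transfer by letting $\eps\to0$ on each compact $K$ and exhausting $X\setminus(D\cup E)$, which has full measure. For the gradient estimate, the bound $|s_E|^{2a_0\sigma^1_E}S_\eps^{\sigma^1_D}|\p\tilde\vphi_{t,\eps}|^2_{\tilde\om_t}\le C$ passes under smooth convergence on $K$ to $|s_E|^{2a_0\sigma^1_E}(|s|^2_h)^{\sigma^1_D}|\p\tilde\vphi|^2_{\om_K}\le C$, which is the claimed estimate on $X$ up to the null set $D\cup E$. For the $W^{2,p}$-estimate, $\tr_{\tilde\om_t}\om_{\vphi_{t,\eps}}\to\tr_{\om_K}\om_\vphi$ uniformly on $K$, so $\int_K(\tr_{\om_K}\om_\vphi)^p|s_E|^{\sigma^2_E}_{h_E}(|s|^2_h)^{\sigma^2_D}\om_K^n\le\liminf C(p)$, and monotone convergence as $K\uparrow X\setminus(D\cup E)$ yields the weighted $W^{2,p}$-bound for $\vphi$ for all $p\ge1$; alternatively one extracts a weak limit of $\vphi_{t,\eps}$ in $W^{2,p}$ on compact subsets of $X\setminus(D\cup E)$ and uses weak lower semicontinuity of the weighted $L^p$-norm. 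Throughout, $\om_K$ and $\tilde\om_t$ may be interchanged by \lemref{metrics equivalence}, and since $\om_\vphi=\om_K+i\p\bar\p\tilde\vphi$ these are exactly the estimates of \thmref{almost admissible singular} read in the limit $t,\eps\to0$.

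The main obstacle is the second step, namely the uniform interior $C^\infty$ bounds on compact subsets of $X\setminus(D\cup E)$ for the approximate solutions: this requires localising Chen-Cheng's a priori estimates for the fourth-order scalar curvature equation and checking that the bootstrap---an Evans-Krylov-type estimate followed by Schauder iteration---is insensitive to the conical weights once one works away from $D\cup E$, using that there the background geometry and the reference metric $\om_{\theta_{t,\eps}}$ are uniformly controlled. Everything else---$L^1$-compactness of normalised $\om_{sr}$-psh functions, the membership $\vphi\in\mathcal E^1\cap L^\infty$, and the passage of the pointwise and weighted-integral bounds through Fatou and lower semicontinuity---is routine, the only bookkeeping being the replacement of $S_\eps^{\sigma_D}$ by $(|s|^2_h)^{\sigma_D}$ and of $\tilde\om_t$ by $\om_K$ in the limit.
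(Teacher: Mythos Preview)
Your proposal is correct and follows the paper's approach: the two preparatory lemmas (Hartogs for $L^1$-convergence, then $\mathcal E^1\cap L^\infty$ membership from the $L^\infty$ and volume-ratio bounds) give the bounded singular metric, and the almost admissible estimates are then carried to the limit. The paper's own argument for this theorem is in fact only the one-sentence summary preceding the statement; you supply more detail on passing the estimates via interior $C^\infty$ convergence on compacta of $X\setminus(D\cup E)$, which is exactly the mechanism the paper invokes in the proof of the next theorem (Theorem \ref{existence degenerate}) but does not spell out here---note too that Theorem \ref{existence singular} does not assert that the limiting equation holds on the regular locus, so your second paragraph proves slightly more than required.
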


The convergence behaviour is improved for the degenerate metrics.

\begin{thm}\label{existence degenerate}
Assume that $\Om$ is K\"ahler and the family $\{\vphi_{\eps}\}$, consisting of the approximate degenerate scalar curvature solutions \eqref{Degenerate cscK approximation}, has bounded entropy. Then there exists degenerate metric $\om_\vphi$ with prescribed scalar curvature, which is bounded, and has the gradient estimate and the $W^{2,p}$-estimate as stated in \thmref{almost admissible degenerate}. Moreover, $\om_\vphi$ is smooth and satisfies the degenerate scalar curvature equation \eqref{Degenerate cscK defn}  outside $D$.

Furthermore, if $\{\vphi_{\eps}\}$ has bounded gradients of the volume ratios $\|\p F_\eps\|_{\vphi_\eps}$, then the degenerate metric $\om_\vphi$ has Laplacian estimate, namely it is admissible when $\beta>\frac{n+1}{2}$, and $\gamma$-admissible for any $\gamma>0$ when $1<\beta< \frac{n+1}{2}$. 
\end{thm}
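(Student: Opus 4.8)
\textbf{Proof proposal for Theorem~\ref{existence degenerate}.}
The plan is to assemble the theorem from the three pillars already established: the almost admissible estimates (Theorem~\ref{almost admissible degenerate}), the $L^1$-convergence of the normalised family $\{\vphi_\eps\}$, and the Laplacian estimate (Theorem~\ref{cscK Laplacian estimate}). First I would specialise the big and semi-positive setup to the case where $\Om$ is K\"ahler, so that $\om_{sr}$ is already a K\"ahler metric and one may take $\phi_E=0$, $\om_K=\om_{sr}$, and the Kodaira-type correction disappears; then the approximate singular scalar curvature equation \eqref{Singular cscK t eps} reduces to the approximate degenerate equation \eqref{Degenerate cscK approximation}, and the $(t,\eps)$-family collapses to the $\eps$-family. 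By Theorem~\ref{almost admissible degenerate}, the bounded-entropy hypothesis gives uniform $L^\infty$ bounds on $\vphi_\eps$ and $F_\eps$, the weighted gradient estimate, and the weighted $W^{2,p}$ estimate, all independent of $\eps$. Applying the Hartogs lemma and the two lemmas just preceding the theorem, a subsequence of the normalised $\vphi_\eps$ converges in $L^1$ to some $\vphi\in\mathcal E^1(X,\om_{sr})\cap L^\infty(X)$; this is the candidate degenerate metric with prescribed scalar curvature in the sense of Definition~\ref{Singular metric with prescribed scalar curvature}.

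Next I would upgrade the convergence away from $D$. On any compact $K\Subset X\setminus D$ the weight $|s|_h^{2\beta-2}$ is bounded above and below, so the reference-metric equation \eqref{Rictheta approximation} and the $W^{2,p}$ estimate give uniform $C^{1,\alpha}$ and then, via the $F_\eps$-equation in \eqref{Degenerate cscK approximation} together with elliptic bootstrapping (complex Monge-Amp\`ere plus the linear second equation $\tri_{\vphi_\eps}F_\eps=\tr_{\vphi_\eps}\theta-R$), uniform $C^\infty$ bounds on $K$; hence $\vphi_\eps\to\vphi$ in $C^\infty_{loc}(X\setminus D)$ along a further subsequence, and the limit solves the degenerate scalar curvature equation \eqref{Degenerate cscK} on $M=X\setminus D$. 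I would also record that the volume form $\om^n_\vphi$ equals $|s|_h^{2\beta-2}e^{h_\om}\om^n$ up to the bounded factor $e^F$, i.e. it has the prescribed degenerate type along $D$, by passing to the limit in the first equation of \eqref{Degenerate cscK approximation} and using $\mathfrak h_\eps\to (\beta-1)\log|s|_h^2$ together with $c_\eps\to$ a finite constant.

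For the final assertion I would invoke Theorem~\ref{cscK Laplacian estimate}: the extra hypothesis $\|\p F_\eps\|_{\vphi_\eps}\leq C$ is exactly what that theorem needs on top of almost admissibility, so it yields $\tr_\om\om_{\vphi_\eps}\leq C\,S_\eps^{-\sigma_D}$ with $\sigma_D=0$ when $\beta>\frac{n+1}{2}$ and $\sigma_D=\gamma$ arbitrarily small when $1<\beta\leq\frac{n+1}{2}$. Letting $\eps\to 0$ and using $S_\eps\to|s|_h^2$ monotonically gives the pointwise bound $\tr_\om\om_\vphi\leq C\,|s|_h^{-2\gamma}$ on $M$, which by lower semicontinuity of the trace along the convergent subsequence extends to all of $X$; this is precisely admissibility (resp.\ $\gamma$-admissibility) in the sense of Definition~\ref{admissible}. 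The main obstacle I anticipate is not any single estimate but the bookkeeping of the two limits: one must ensure the $L^1$-limit $\vphi$ and the $C^\infty_{loc}$-limit on $X\setminus D$ agree (so that the globally defined $\mathcal E^1$ potential really is the smooth solution off $D$), and one must check that the weighted bounds pass to the limit without deterioration of the exponents $\sigma_D^1,\sigma_D^2,\gamma$ — in particular that the constant $C$ in the Laplacian estimate, which depends on $\sup_X S_\eps$, $\sup_X S_\eps^{-1/2}|\p S_\eps|_\om$, and the Sobolev and curvature constants of the fixed metric $\om$, stays uniform as $\eps\to 0$, which it does since all of these are controlled by $|s|_h^2$ and $\om$ alone.
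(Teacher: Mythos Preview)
Your overall architecture matches the paper's: reduce to the K\"ahler case, invoke the almost-admissible estimates, take an $L^1$ limit via Hartogs to get a bounded $\mathcal E^1$ potential, upgrade to smooth convergence on compacta of $X\setminus D$, and finally feed the extra hypothesis $\|\p F_\eps\|_{\vphi_\eps}\leq C$ into Theorem~\ref{cscK Laplacian estimate} for the global (weighted) Laplacian bound. The first and last steps are essentially what the paper does.

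The gap is in your local regularity step. You propose that on $K\Subset X\setminus D$ the weighted $W^{2,p}$ estimate gives $C^{1,\alpha}$ and then ``elliptic bootstrapping (complex Monge--Amp\`ere plus the linear second equation)'' yields $C^\infty$. This does not close as written: to run Evans--Krylov on the Monge--Amp\`ere equation you need a \emph{pointwise} Laplacian bound (uniform ellipticity of $\om_{\vphi_\eps}$), and to get Schauder estimates on $F_\eps$ from $\tri_{\vphi_\eps}F_\eps=\tr_{\vphi_\eps}\theta-R$ you need the coefficients $g_{\vphi_\eps}^{i\bar j}$ to be H\"older, which again presupposes a Laplacian bound. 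The $W^{2,p}$ estimate alone gives $\tr_\om\om_{\vphi_\eps}\in L^p$, not $L^\infty$, so the bootstrap is circular. Note also that this smoothness assertion is part of the \emph{first} conclusion of the theorem, where you do \emph{not} have $\|\p F_\eps\|_{\vphi_\eps}$ bounded and therefore cannot invoke Theorem~\ref{cscK Laplacian estimate}.

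The paper resolves this by citing an external ingredient you did not use: the \emph{local} Laplacian estimate from Section~6 of the arXiv version of Chen--Cheng \cite{MR4301557}, which gives a pointwise bound on $\tr_\om\om_{\vphi_\eps}$ on compact subsets of $X\setminus D$ depending only on the almost-admissible data (no gradient bound on $F_\eps$ required). With that in hand, Evans--Krylov and Schauder bootstrap go through exactly as you outlined. So your proof is repairable by inserting this citation at the local-regularity step; without it, the passage from $W^{2,p}$ to $C^\infty_{\mathrm{loc}}$ is not justified.
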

\begin{proof}
The first part of the theorem is a direct corollary of \thmref{existence singular}. The smooth convergence outside $D$ is obtained, by using the local Laplacian estimate in Section 6 of the arXiv version of \cite{MR4301557}. By applying the Evans-Krylov estimates and the bootstrap method, the solution is smooth on the regular part $X\setminus D$ and satisfies the equation there. 
The global Laplacian estimates for degenerate metrics are obtained from \thmref{cscK Laplacian estimate}. 
\end{proof}

In Section \ref{A priori estimates for approximate degenerate cscK equations}, we develop an integration method with weights for the general degenerate scalar curvature equation \eqref{Degenerate cscK approximation}. 

In particular, we utilise our theorems to the degenerate K\"ahler-Einstein metrics. 
We see that the integration method we develop here provide an alternative method to obtain the Laplacian estimate for the approximate degenerate K\"ahler-Einstein equation \eqref{critical pt Ding approximation}, as shown in Section \ref{Log Kahler Einstein metric}. While, Yau's proof of the Laplacian estimate applies the maximum principle.
\begin{cor}\label{existence log KE}
When $\lambda\leq 0$ and $\beta> 1$, there exists a family of smooth approximate K\"ahler-Einstein metrics \eqref{critical pt Ding approximation}, which converges to an admissible degenerate K\"ahler-Einstein metric.
\end{cor}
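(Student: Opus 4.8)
The plan is to derive this statement as a direct specialization of the general theory developed for the degenerate scalar curvature equation, applied to the degenerate K\"ahler-Einstein case $\theta=\lambda\om$, $R=\underline S_\beta=\lambda n$. First I would invoke Proposition~\ref{smooth approximation KE}: when $\lambda<0$ and $\beta>1$, one has $F^\eps_\beta\geq F_\beta$ and there exists a degenerate KE metric admitting a smooth approximation $\vphi_\eps$ solving \eqref{critical pt Ding approximation}. This produces the family of smooth approximate K\"ahler-Einstein metrics whose existence is asserted. The borderline case $\lambda=0$ is handled separately: there \eqref{critical pt Ding intro} becomes a genuine degenerate complex Monge-Amp\`ere equation with right-hand side $|s|_h^{2\b-2}e^{h_\om+c}\om^n$, and the approximate solutions exist by the standard Aubin-Yau/Ko\l odziej theory, since $S_\eps^{\beta-1}e^{h_\om+c_\eps}\in L^p(\om^n)$ uniformly for some $p>1$ once $\beta>1$.

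Second, I would establish the uniform estimates. The $L^\infty$-bound on $\vphi_\eps$ follows from \lemref{big nef approximate reference metric bound}-type reasoning (Theorem~2.1 and Proposition~3.1 of \cite{MR2505296}), exactly as in the reference metric case, because the right-hand side $S_\eps^{\beta-1}e^{h_\om-\lambda\vphi_\eps+c_\eps}$ is uniformly $L^p$ and $-\lambda\vphi_\eps$ enters with a favorable sign when $\lambda\le 0$. Next, because $F_\eps=-\lambda\vphi_\eps$ is itself controlled by the $L^\infty$-estimate of $\vphi_\eps$, the entropy $E^\beta_\eps=\frac1V\int_X F_\eps\,\om_{\vphi_\eps}^n$ is automatically bounded, and moreover the gradient $\|\p F_\eps\|_{\vphi_\eps}=|\lambda|\,\|\p\vphi_\eps\|_{\vphi_\eps}$ is controlled once we have the Laplacian estimate (since $\|\p\vphi_\eps\|_{\vphi_\eps}^2\le n$ trivially, as $\om_{\vphi_\eps}\ge$ nothing—actually one uses $|\p\vphi_\eps|^2_{\om_{\vphi_\eps}}\cdot\tr_{\vphi_\eps}\om$ bounds together with the metric estimate). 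This means the hypotheses of \thmref{cscK Laplacian estimate} (equivalently \thmref{admissible estimate}) are met without any extra assumption on $\p F_\eps$.

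Third, I would apply \thmref{KE Laplacian} directly: it gives $\tr_\om\om_{\vphi_\eps}\le C$ uniformly, because the key simplification \eqref{KE tri F}, namely $\tri F_\eps=-\lambda(\tr_\om\om_{\vphi_\eps}-n)$, turns the troublesome fourth-order term $\tri F$ into a second-order term that is absorbed on the left via the sign of $\lambda\le 0$—this is precisely why the degenerate exponent can be taken $\gamma=0$ for all $\beta\ge 1$, not just $\beta>\frac{n+1}{2}$. Then I would pass to the limit: the uniform $L^\infty$ and Laplacian bounds, combined with the local higher-order estimates of \cite{MR4301557} (Evans-Krylov plus bootstrap) on the regular part $X\setminus(D\cup E)$, show the $L^1$-limit $\vphi$ is an admissible solution of \eqref{critical pt Ding}, smooth outside $D$, with $\tr_\om\om_\vphi\le C$ globally. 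Finally I would record that this recovers Yau's theorem \cite{MR480350}, via the integration method rather than the maximum principle.

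The main obstacle I anticipate is verifying that the gradient hypothesis $\|\p F_\eps\|_{L^\infty(\om_{\vphi_\eps})}\le C$ needed to invoke \thmref{cscK Laplacian estimate} is genuinely unconditional here. One must check that $\|\p\vphi_\eps\|_{L^\infty(\om_{\vphi_\eps})}$ is bounded independently of $\eps$; this is not immediate from the $C^0$-bound alone and in the cscK setting normally requires the gradient estimate \thmref{gradient estimate}. In the KE case, however, one can either cite \thmref{gradient estimate} (whose hypotheses are the almost-admissible ones already established) or exploit the structure $\om_{\vphi_\eps}^n=e^{F_\eps}\om_{\theta_\eps}^n$ with $F_\eps=-\lambda\vphi_\eps$ to get a self-improving bound. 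So the real work is bookkeeping: confirming that every constant in \thmref{KE Laplacian} (depending on $\|\vphi_\eps\|_\infty$, $\|h_\om\|_\infty$, $\inf_X\tri h_\om$, $\inf_{i\neq j}R_{i\bar i j\bar j}(\om)$, $C_S(\om)$, $\Theta_D$, $\lambda$, $\beta$, $c_\eps$, $n$) is uniform in $\eps$, which it is because $c_\eps$ is controlled by the volume normalization and all the geometric quantities are fixed.
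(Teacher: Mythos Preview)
Your approach is essentially the same as the paper's: invoke Proposition~\ref{smooth approximation KE} for existence of the smooth approximation, then apply \thmref{KE Laplacian} for the uniform Laplacian estimate, and pass to the limit via the machinery of \thmref{existence degenerate}. This is correct.

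Your anticipated obstacle, however, is a non-issue and reflects a conflation of \thmref{cscK Laplacian estimate} with \thmref{KE Laplacian}. The latter does \emph{not} assume any bound on $\|\p F_\eps\|_{\om_{\vphi_\eps}}$; its stated dependence is only on $\|\vphi_\eps\|_\infty$, $\|h_\om\|_\infty$, $\inf_X\tri h_\om$, the bisectional curvature bound, $C_S(\om)$, $\Theta_D$, and $\lambda,\beta,c_\eps,n$. The reason is exactly the one you identify: the substitution $\tri F_\eps=-\lambda(\tr_\om\om_{\vphi_\eps}-n)$ handles the nonlinear term $\mathcal N$ in \eqref{Laplacian estimate: integration cor} directly, bypassing the integration-by-parts step of Section~\ref{Step 2} where $\p F$ would otherwise enter. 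So once you have $\|\vphi_\eps\|_\infty\le C$ (which you obtain correctly from the $L^p$ right-hand side and the sign $\lambda\le 0$), \thmref{KE Laplacian} applies immediately and yields $\gamma=0$ for every $\beta\ge 1$. There is nothing circular or self-improving to arrange; you may simply delete the obstacle paragraph.
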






\section{$L^\infty$-estimate}\label{Linfty estimate}
This section is devoted to obtain the $L^\infty$-estimate for singular scalar curvature equation \eqref{Singular cscK t eps} for both $\vphi$ and $F$. 


\begin{thm}[$L^\infty$-estimate for singular equation]\label{L infty estimates Singular equation}
Assume $\vphi_{t,\eps}$ is a solution of the approximate singular scalar curvature equation \eqref{Singular cscK t eps}. Then we have the following estimates. 
\begin{enumerate}
\item For any $p\geq 1$, there exists a constant $A_1$ such that
	\begin{align*}
	  \|\vphi_{t,\eps}\|_\infty,\quad \sup_X[F_{t,\eps}-\sigma_s \phi_E],\quad \|e^{F_{t,\eps}}\|_{p;\tilde\om_t^n},\quad \|e^{\tilde F_{t,\eps}}\|_{p;\tilde\om_t^n}\leq A_1.
	\end{align*}
The constant $\sigma_s=C_l-\tau$ and $A_1$ depends on the entropy
$
	E^\beta_{t,\eps}=\frac{1}{V}\int_X F_{t,\eps}\om_{\vphi_{t,\eps}}^n,
$
the alpha invariant $\alpha(\Om_1)$, $\|e^{C_l\phi_E}\|_{L^{p_0}(\tilde \om_t^n)}$ for some $p_0\geq 1$ and  
\begin{align*}
\inf_X f_{t,\eps},\quad C_l=\inf_{(X,\tilde\om_t)}\theta,\quad\sup_X R,\quad n,\quad p.
\end{align*}

 \item There also holds the lower bound the volume ratio
	\begin{align*}
	\inf_{X}[F_{t,\eps}-\sigma_i\phi_E]\geq A_2, \quad \forall\tau>0.
	\end{align*}
	The constant $\sigma_i=C_u+\tau$ and $A_2$ depends on $\sup_XF_{t,\eps}$ and
	\begin{align*}
\inf_X f_{t,\eps}, \quad C_u=\sup_{(X,\tilde\om_{t})}\theta,\quad \inf_X R,\quad n.
	\end{align*}
\end{enumerate}
\end{thm}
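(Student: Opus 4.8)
The plan is to adapt the Chen--Cheng a priori estimate scheme \cite{MR4301557} for the scalar curvature equation to the big and semi-positive setting, working throughout with the smooth K\"ahler background $\tilde\om_t$ and the rewritten system \eqref{Singular cscK t eps tilde}. The reason for this choice of background is that, by \lemref{metrics equivalence}, the metrics $\om$, $\om_K$ and $\tilde\om_t$ are uniformly equivalent, so $\tilde\om_t$ has a Sobolev constant and a lower bisectional curvature bound independent of $t$ and $\eps$; consequently every constant produced below is uniform in $t,\eps$. As a preliminary I would record from \lemref{nef tilde f} that $f_{t,\eps}\ge C$ and that $\int_X f_{t,\eps}\,\om_{\vphi_{t,\eps}}^n$ is bounded by the volume normalisation, so that the entropy hypothesis on $E^\beta_{t,\eps}=\frac1V\int_X F_{t,\eps}\,\om_{\vphi_{t,\eps}}^n$ translates into a bound on $\int_X \tilde F_{t,\eps}\,e^{\tilde F_{t,\eps}}\,\tilde\om_t^n$, where the volume ratio is $e^{\tilde F_{t,\eps}}=\om_{\vphi_{t,\eps}}^n/\tilde\om_t^n$.

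The first substantive step is the $L^p$ bound for the volume ratio, $\|e^{\tilde F_{t,\eps}}\|_{L^p(\tilde\om_t^n)}\le C(p)$ for every $p\ge 1$, starting only from the entropy bound. Following Chen--Cheng \cite{MR4301557}, I would use both equations of \eqref{Singular cscK t eps tilde} simultaneously: the Monge--Amp\`ere equation to pass between $\om_{\vphi_{t,\eps}}$ and $\tilde\om_t$, and the linear equation $\tri_{\vphi_{t,\eps}}\tilde F_{t,\eps}=\tr_{\vphi_{t,\eps}}(\theta-i\p\bar\p f_{t,\eps})-R$ to estimate the Dirichlet energy of suitable powers of $e^{\tilde F_{t,\eps}}$. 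Here the lower bound $\theta\ge C_l\tilde\om_t$ from \eqref{L infty estimates theta} and the upper bound $i\p\bar\p f_{t,\eps}\le C\om$ from \lemref{nef tilde f} provide the needed one-sided controls, the fundamental inequality $\tr_{\vphi_{t,\eps}}\tilde\om_t\ge n\,e^{-\tilde F_{t,\eps}/n}$ supplies positivity, and a Moser-type iteration on the exponent, fed by the uniform Sobolev inequality of $\tilde\om_t$, closes the argument. The companion bound for $e^{F_{t,\eps}}=e^{\tilde F_{t,\eps}}S_\eps^{1-\beta}e^{-h_\theta-c_{t,\eps}}\tilde\om_t^n/\om^n$ follows since $\beta>1$ makes $S_\eps^{(1-\beta)p}$ a weight that can be absorbed into the same iteration. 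With $e^{\tilde F_{t,\eps}}\in L^p$ available, the bound $\|\vphi_{t,\eps}\|_\infty\le C$ follows from the uniform $L^\infty$ estimate for degenerate complex Monge--Amp\`ere equations applied to $(\tilde\om_t+i\p\bar\p\tilde\vphi_{t,\eps})^n=e^{\tilde F_{t,\eps}}\tilde\om_t^n$, the alpha invariant $\alpha(\Om_1)$ controlling $\int_X e^{-\alpha\vphi_{t,\eps}}$ and the factor $\|e^{C_l\phi_E}\|_{L^{p_0}}$ appearing when one compares $\om_t$ with $\tilde\om_t$.

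Next I would establish the pointwise bounds on $F_{t,\eps}$ modulo the Kodaira divisor. Using $i\p\bar\p\phi_E=\tilde\om_t-\om_t$ and the linear equation one obtains, for any constant $\sigma$, the identity $\tri_{\vphi_{t,\eps}}(F_{t,\eps}-\sigma\phi_E)=\tr_{\vphi_{t,\eps}}(\theta-\sigma\, i\p\bar\p\phi_E)-R$. For the upper bound I would take $\sigma=\sigma_s=C_l-\tau$, so that \eqref{Cl} makes $\theta-\sigma_s\, i\p\bar\p\phi_E$ dominate $\tau\tilde\om_t$ up to $i\p\bar\p$ of a bounded function; then the maximum principle applied to $F_{t,\eps}-\sigma_s\phi_E$ (with a small multiple of $\tilde\vphi_{t,\eps}$ added), together with $\tr_{\vphi_{t,\eps}}\tilde\om_t\ge n\,e^{-\tilde F_{t,\eps}/n}$, forces the upper bound, the $\phi_E$-shift absorbing $f_{t,\eps}$ and producing the stated dependence of $A_1$ on $\inf_X f_{t,\eps}$, $C_l$, $\sup_X R$ and $n$. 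For the lower bound one runs the analogous argument on $-F_{t,\eps}$ with $\sigma=\sigma_i=C_u+\tau$ and the upper bound $\theta\le C_u\tilde\om_t$ from \eqref{Cu}; the new point for $\beta>1$ is that $i\p\bar\p f_{t,\eps}$ only has the singular lower bound $-C[(\beta-1)S_\eps^{-1}+1]\om$ of \lemref{nef tilde f}, but the offending term appears multiplied by $\tr_{\vphi_{t,\eps}}\om$, which the maximum principle already controls, so it is absorbed, yielding $A_2$ in terms of $\sup_X F_{t,\eps}$, $\inf_X f_{t,\eps}$, $C_u$, $\inf_X R$ and $n$.

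The hard part will be the $L^p$ bound for the volume ratio in the second paragraph: turning a bare entropy bound into uniform $L^p$ control requires handling the fourth-order term $\tri_{\vphi_{t,\eps}}\tilde F_{t,\eps}$ by integration by parts rather than by the maximum principle, and the weight $S_\eps^{\beta-1}$ with $\beta>1$ reverses the signs of several error terms relative to the cone case $0<\beta<1$ of \cite{arXiv:1803.09506}, so that now $f_{t,\eps}$ is bounded below and $i\p\bar\p f_{t,\eps}$ above (rather than the other way around); controlling these reversed terms uniformly in both $t$ and $\eps$ is the crux, and the singular contribution $(\beta-1)S_\eps^{-1}$ in the lower-bound step is a secondary difficulty of the same nature.
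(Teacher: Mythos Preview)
Your overall plan---extend Chen--Cheng to the singular setting using the background $\tilde\om_t$---matches the paper's, but the concrete mechanism you describe is not the one that works, and the gap is genuine.

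The paper does \emph{not} obtain the $L^p$ bound on $e^{\tilde F}$ by a Moser-type iteration, nor the upper bound of $F$ by the ordinary maximum principle. Both are obtained simultaneously by the \emph{Aleksandrov (ABP) maximum principle} applied to a function of the shape $v=b_4(b_0F-H+b_1\vphi+b_2\vphi_a)$, where $\vphi_a$ is an auxiliary potential solving the Monge--Amp\`ere equation $\om_{\vphi_a}^n=E^{-1}e^F\sqrt{F^2+1}\,\om_\theta^n$. The ABP estimate bounds $\sup_X v$ by an integral $I$ (equation \eqref{I} in the paper); the role of $\vphi_a$ is precisely that on the set $\{A_R\le 0\}$ one has $F\le C(E^\beta_{t,\eps})$ via the inequality $b_2 n(\om_\vphi^n/\om_{\vphi_a}^n)^{-1/n}\le b_0R-b_1n$, which turns the entropy bound into a pointwise bound inside $I$. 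The $L^p$ bound on $e^{F-H}$ then comes from $e^{b_0F-H}\le e^{b_4^{-1}\sup v}e^{-b_2\vphi_a}$ together with the $\alpha$-invariant, and $\|\vphi\|_\infty$ follows afterwards; see Propositions \ref{Linfty estimate v prop} and \ref{Estimation of I prop}.

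Your maximum-principle argument for $\sup_X(F-\sigma_s\phi_E)$ cannot close as written: at a maximum of $F-\sigma_s\phi_E+b_1\tilde\vphi$ the differential inequality yields an \emph{upper} bound on $\tr_{\vphi}\tilde\om_t$, and the AM--GM inequality $\tr_{\vphi}\tilde\om_t\ge n\,e^{-\tilde F/n}$ then gives a \emph{lower} bound on $\tilde F$, which is the wrong direction. (There is also the issue that $\tilde\vphi=\vphi-\phi_E\to+\infty$ along $E$, so the maximum need not be attained.) This is exactly why Chen--Cheng introduced the ABP route; the same obstruction appears here. Likewise, ``Moser iteration on the exponent'' starting from a bare entropy bound does not produce $L^p$ control of $e^{\tilde F}$ without the auxiliary comparison function---the entropy only gives $L\log L$, and the passage to $L^p$ in this coupled fourth-order setting is precisely what the $\vphi_a$ construction is designed for. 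Your treatment of the lower bound of $F$ is closer in spirit (and indeed the paper's argument for part~(2) is simpler, since $\|\vphi\|_\infty$ is already available), but note that the equation used is $\tri_\vphi F=\tr_\vphi\theta-R$, which does not involve $f$ at all, so the singular term $(\beta-1)S_\eps^{-1}$ you worry about never enters that step.
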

Furthermore, since $\tilde F_{t,\eps}$ is dominated by $F+(\beta-1)\log S_\eps$ up to a smooth function, the $L^\infty$ estimate of $F_{t,\eps}$ gives the volume ratio bound.
\begin{cor}[Volume ratio]
\begin{align}\label{Gradient estimate: volume ratio bound}
C^{-1} e^{\sigma_i\phi_E}  S_\eps^{\beta-1}\tilde\om^n_t \leq \om^n_{\vphi_{t,\eps}}=e^{\tilde F_{t,\eps}}\tilde\om^n_t\leq C e^{\sigma_s\phi_E} S_\eps^{\beta-1}\tilde\om^n_t.
\end{align} 
\end{cor}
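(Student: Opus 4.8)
The plan is to read off \eqref{Gradient estimate: volume ratio bound} directly from the pointwise one-sided bounds on $F_{t,\eps}$ established in \thmref{L infty estimates Singular equation}, combined with the explicit shape of $f_{t,\eps}$. First I would recall from \eqref{Singular cscK t eps tilde} that $\om_{\vphi_{t,\eps}}^n=e^{\tilde F_{t,\eps}}\tilde\om_t^n$ with $\tilde F_{t,\eps}=F_{t,\eps}-f_{t,\eps}$, and from \eqref{Singular cscK metrics: f} that $-f_{t,\eps}=(\beta-1)\log S_\eps+h_\theta+c_{t,\eps}+\log(\om^n/\tilde\om_t^n)$. Substituting gives
\[
\om_{\vphi_{t,\eps}}^n=e^{F_{t,\eps}}\,S_\eps^{\beta-1}\,e^{h_\theta+c_{t,\eps}}\,\om^n,
\]
so that, after absorbing the smooth factor $e^{h_\theta}$, the uniformly bounded constant $c_{t,\eps}$ (fixed by its normalisation), and the $t$-uniform equivalence of $\om$ and $\tilde\om_t$ from \lemref{metrics equivalence}, the claim reduces to the two pointwise inequalities $e^{F_{t,\eps}}\le C\,e^{\sigma_s\phi_E}$ and $e^{F_{t,\eps}}\ge C^{-1}e^{\sigma_i\phi_E}$ on $X$.

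These are precisely what \thmref{L infty estimates Singular equation} supplies: part (1) gives $\sup_X(F_{t,\eps}-\sigma_s\phi_E)\le A_1$, hence $e^{F_{t,\eps}}\le e^{A_1}e^{\sigma_s\phi_E}$; part (2) gives $\inf_X(F_{t,\eps}-\sigma_i\phi_E)\ge A_2$, hence $e^{F_{t,\eps}}\ge e^{A_2}e^{\sigma_i\phi_E}$. Combining these with the preceding reduction yields \eqref{Gradient estimate: volume ratio bound} with a constant $C$ depending only on $A_1,A_2$, on $\|h_\theta\|_\infty$, on the uniform bound for $c_{t,\eps}$, and on the comparison constants of \lemref{metrics equivalence}; in particular $C$ is independent of both $t$ and $\eps$.

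I do not expect a genuine obstacle in this corollary: all of the analytic substance is concentrated in \thmref{L infty estimates Singular equation}, whose proof --- via the entropy bound, the $\alpha$-invariant of $\Om_1$, and a weighted Moser iteration --- is the hard step and is carried out in Section \ref{Linfty estimate}. The only point requiring a little care here is that the comparison in \eqref{Gradient estimate: volume ratio bound} is phrased against $S_\eps^{\beta-1}\tilde\om_t^n$ rather than $S_\eps^{\beta-1}\om^n$; this is harmless thanks to the $t$-uniform metric equivalence, but it is worth flagging so that the final constant is manifestly uniform in the degeneration parameters $t$ and $\eps$.
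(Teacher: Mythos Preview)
Your proposal is correct and follows exactly the paper's approach: the paper simply notes that since $\tilde F_{t,\eps}$ equals $F_{t,\eps}+(\beta-1)\log S_\eps$ up to a smooth uniformly bounded function, the pointwise bounds on $F_{t,\eps}-\sigma_s\phi_E$ and $F_{t,\eps}-\sigma_i\phi_E$ from \thmref{L infty estimates Singular equation} yield the volume ratio bound directly. Your write-up is in fact more detailed than the paper's one-line justification, and your care about the $t$-uniform comparison between $\om^n$ and $\tilde\om_t^n$ via \lemref{metrics equivalence} is appropriate.
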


Before we start the proof, we state the corresponding estimate for the degenerate equation.
If we further assume that $\Om=[\om_K]$ is K\"ahler, then $\sigma_E=0$.
\begin{thm}[$L^\infty$-estimate for degenerate equation]\label{L infty estimates degenerate equation}
Assume $\vphi_{\eps}$ is a solution of the approximate degenerate scalar curvature equation \eqref{Degenerate cscK approximation}. 
Then for any $p\geq 1$, there exists a constant $A_0$ such that
	\begin{align*}
	  \|\vphi_{\eps}\|_\infty,\quad \|F_{\eps}\|_\infty,\quad \|e^{F_{\eps}}\|_{p;\om^n},\quad \|e^{\tilde F_{\eps}}\|_{p;\om^n}\leq A_0.
	\end{align*}

\end{thm}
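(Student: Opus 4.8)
The plan is to obtain \thmref{L infty estimates degenerate equation} as the K\"ahler specialization of the singular estimate \thmref{L infty estimates Singular equation}. When $\Om=[\om]$ is K\"ahler one takes $\om_{sr}=\om$ as the smooth representative; Kodaira's lemma is then unnecessary, so $a_0=0$, $\phi_E\equiv 0$, $\om_K=\om$, and no $t$-perturbation is needed (one sets $t=0$). Under this identification the approximate singular equation \eqref{Singular cscK t eps} is precisely the approximate degenerate equation \eqref{Degenerate cscK approximation}, the divisor weight $|s_E|_{h_E}$ disappears ($\sigma_E=0$), and the quantities $\sup_X[F_{t,\eps}-\sigma_s\phi_E]$ and $\inf_X[F_{t,\eps}-\sigma_i\phi_E]$ of \thmref{L infty estimates Singular equation} reduce to $\sup_X F_\eps$ and $\inf_X F_\eps$. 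Hence that theorem yields simultaneously $\|\vphi_\eps\|_\infty\leq C$, $\sup_X F_\eps\leq C$ and $\inf_X F_\eps\geq C$, i.e. $\|F_\eps\|_\infty\leq C$. The $L^p(\om^n)$ bounds on $e^{F_\eps}$ and $e^{\tilde F_\eps}$ are then immediate once $\|F_\eps\|_\infty\leq C$, since $e^{\tilde F_\eps}=e^{F_\eps+h_\theta+c_\eps}S_\eps^{\beta-1}$ with $\beta>1$ and $0<S_\eps\leq C$.

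The substance therefore lies in \thmref{L infty estimates Singular equation}, which I would prove by adapting Chen--Cheng's a priori estimates for the cscK equation \cite{MR4301558} to this weighted, degenerate setting. For the potential bound the input is the uniform entropy hypothesis $E^\beta_\eps=\frac1V\int_X F_\eps\,\om_{\vphi_\eps}^n\leq C$. Writing the Monge--Amp\`ere part as $\om_{\vphi_\eps}^n=e^{F_\eps}\,S_\eps^{\beta-1}e^{h_\theta+c_\eps}\om^n$, I would first observe that the reference density $S_\eps^{\beta-1}e^{h_\theta+c_\eps}$ is uniformly bounded in $L^p(\om^n)$ for every $p$ --- this is where $\beta>1$ enters, making the weight bounded rather than singular --- so that the pluripotential input (Ko\l odziej-type $L^\infty$ estimates \cite{MR2505296}, together with the Chen--Cheng $L^\infty$ scheme: an auxiliary complex Monge--Amp\`ere equation, the positivity of the $\alpha$-invariant $\alpha(\Om)$, and the entropy bound) applies uniformly in $\eps$ and gives $\|\vphi_\eps\|_\infty\leq C$.

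For the volume-ratio bounds on $F_\eps$ I would couple the linear equation $\tri_{\vphi_\eps}F_\eps=\tr_{\vphi_\eps}\theta-R$ to the Monge--Amp\`ere equation. Since $C_l\om\leq\theta\leq C_u\om$ one has $C_l\tr_{\vphi_\eps}\om-R\leq\tri_{\vphi_\eps}F_\eps\leq C_u\tr_{\vphi_\eps}\om-R$, while by the arithmetic--geometric mean inequality $\tr_{\vphi_\eps}\om\geq n\,(\om^n/\om_{\vphi_\eps}^n)^{1/n}=n\,e^{-\tilde F_\eps/n}\geq c\,e^{-F_\eps/n}$ (again using $S_\eps\leq C$, $\beta>1$). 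Feeding these two inequalities into a Moser iteration on $e^{\pm F_\eps/n}$, using the $L^\infty$ bound of $\vphi_\eps$ and carrying the entropy through the iteration, produces $\sup_X F_\eps\leq C$ and $\inf_X F_\eps\geq C$. The main obstacle is the upper bound $\sup_X F_\eps$ --- Chen--Cheng's genuinely new estimate --- where the two equations must be combined and the entropy controlled along the iteration uniformly in $\eps$; the degenerate feature $\beta>1$ is in fact favourable here, the sign conventions recorded in \lemref{Laplacian estimate: F} and the remark after it being exactly the ones that make the argument close, in contrast with the $0<\beta<1$ cone case.
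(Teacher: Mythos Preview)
Your reduction in the first paragraph is exactly how the paper obtains \thmref{L infty estimates degenerate equation}: it is stated immediately after \thmref{L infty estimates Singular equation} as the K\"ahler specialization, where $\phi_E\equiv 0$, so the weighted bounds $\sup_X(F-\sigma_s\phi_E)$ and $\inf_X(F-\sigma_i\phi_E)$ collapse to a two-sided $L^\infty$ bound on $F_\eps$, and the $L^p$ bounds on $e^{F_\eps}$, $e^{\tilde F_\eps}$ follow trivially.

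Your sketch of \thmref{L infty estimates Singular equation} itself, however, departs from the paper in technique. The paper does not run a Moser iteration on $e^{\pm F/n}$; it follows Chen--Cheng's original scheme via the \emph{Aleksandrov maximum principle}. One sets $w=b_0F+b_1\vphi+b_2\vphi_a$, where $\vphi_a$ solves an auxiliary Monge--Amp\`ere equation $\om_{\vphi_a}^n=E^{-1}e^F\sqrt{F^2+1}\,\om_\theta^n$, derives $\tri_\vphi(e^{v}\eta)\geq b_4A_R e^v\eta$ for a local cutoff $\eta$, and applies ABP to bound $\sup_X v$ by an integral $I$ (Proposition~\ref{Linfty estimate v prop}). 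The entropy enters by bounding $F$ on $\{A_R\leq 0\}$ through the auxiliary equation, and the $\alpha$-invariant controls the remaining integral (Proposition~\ref{Estimation of I prop}); $\sup_X F$ and $\inf_X F$ come from two separate applications with $b_0=\pm 1$ and different weights $H$ (Propositions~\ref{L infty estimates f sup vphi prop} and~\ref{L infty estimates f inf prop}). Your observation $\tr_{\vphi_\eps}\om\geq n e^{-\tilde F_\eps/n}$ is in the right direction, but a direct Moser iteration on it does not obviously close without the auxiliary $\vphi_a$ and the ABP step.

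One small correction: the remark after \lemref{Laplacian estimate: F} says the inequalities for $\tilde F_\eps$ \emph{reverse} when $\beta>1$ compared to the cone case, which is a complication at the Laplacian stage, not a help. What is genuinely favourable for the $L^\infty$-estimate when $\beta>1$ is rather that $S_\eps^{\beta-1}\leq C$, so $e^{-f}$ is bounded (\lemref{nef tilde f}); this you do identify correctly.
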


Now we start the proof of \thmref{L infty estimates Singular equation}.
To proceed further, we omit the indexes $(t,\eps)$ of \eqref{Singular cscK t eps} for convenience, that is
\begin{equation}\label{Singular cscK t eps 3 short}
\om^n_\vphi=(\om_t+i\p\bar\p\vphi)^n=e^{F} \om_{\theta}^n,\quad
\tri_{\vphi} F=\tr_{\vphi}\theta-R.
\end{equation}


\subsection{General estimation}
In this section, we extend Chen-Cheng \cite{MR4301557} to the singular setting. We will first summarise a machinery to obtain the $L^\infty$-estimates in Proposition \ref{Linfty estimate v prop} and Proposition \ref{Estimation of I prop}. Then we apply this robotic method to conclude $L^\infty$-estimates under various conditions on $\theta$ in Section \ref{Applcations}.

We let $\vphi_a$ be an auxiliary function defined later in \eqref{vphi a} and set 
\begin{align}\label{Linfty estimate u}
w:=b_0F+b_1\vphi+b_2\vphi_a.
\end{align}
The singular scalar curvature equation \eqref{Singular cscK t eps 3 short} gives the identity.
\begin{lem}\label{Linfty estimate tri w}
We write $A_R:=-b_0R+b_1n+b_2\tr_\vphi\om_{\vphi_a}$. Then
\begin{align*}
\tri_\vphi w&=b_0(\tr_{\vphi}\theta-R)+b_1(n-\tr_\vphi\om_t)+b_2(\tr_\vphi\om_{\vphi_a}-\tr_\vphi\om_t)\\
&=b_0\tr_{\vphi}\theta-(b_1+b_2)\tr_\vphi\om_t+A_R.
\end{align*}
\end{lem}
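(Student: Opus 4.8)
The plan is to obtain this identity by applying the Laplacian $\tri_\vphi$ term by term to $w=b_0F+b_1\vphi+b_2\vphi_a$ and then regrouping. The computation is elementary: it uses only the linearity of $\tri_\vphi$, the second equation of \eqref{Singular cscK t eps 3 short}, and the fact that each of $\vphi$ and $\vphi_a$ is a K\"ahler potential relative to $\om_t$.

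First I would dispose of the $F$-contribution: the second equation in \eqref{Singular cscK t eps 3 short} \emph{is} the statement $\tri_\vphi F=\tr_\vphi\theta-R$, so $b_0\tri_\vphi F = b_0(\tr_\vphi\theta-R)$ with nothing to prove. Next I would treat the two potential terms. Since $\om_\vphi=\om_t+i\p\bar\p\vphi$ by definition, we have $i\p\bar\p\vphi=\om_\vphi-\om_t$, hence $\tri_\vphi\vphi=\tr_\vphi(\om_\vphi-\om_t)=\tr_\vphi\om_\vphi-\tr_\vphi\om_t=n-\tr_\vphi\om_t$, where I use the normalisation $\tr_\vphi\om_\vphi=n$. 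In exactly the same way, with $\om_{\vphi_a}$ denoting the form whose potential relative to $\om_t$ is the auxiliary function $\vphi_a$ (so that $i\p\bar\p\vphi_a=\om_{\vphi_a}-\om_t$), one gets $\tri_\vphi\vphi_a=\tr_\vphi(\om_{\vphi_a}-\om_t)=\tr_\vphi\om_{\vphi_a}-\tr_\vphi\om_t$. Adding these three pieces with coefficients $b_0,b_1,b_2$ yields the first displayed line.

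For the second line I would simply collect terms: the $\tr_\vphi\om_t$ coefficients combine to $-(b_1+b_2)\tr_\vphi\om_t$, the $\tr_\vphi\theta$ term is $b_0\tr_\vphi\theta$, and the leftover $-b_0R+b_1n+b_2\tr_\vphi\om_{\vphi_a}$ is, by definition, exactly $A_R$. This gives
\[
\tri_\vphi w=b_0\tr_\vphi\theta-(b_1+b_2)\tr_\vphi\om_t+A_R,
\]
as claimed. I do not expect any genuine obstacle here; the only things requiring attention are the sign conventions (in particular that $\tri_\vphi\vphi=n-\tr_\vphi\om_t$ rather than the reverse) and fixing the convention that $\om_{\vphi_a}$ is the $\om_t$-deformation by $\vphi_a$, so that the identity $i\p\bar\p\vphi_a=\om_{\vphi_a}-\om_t$ holds. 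This lemma is purely bookkeeping that sets up the $L^\infty$-machinery of \thmref{L infty estimates Singular equation}, where the substantive work (choosing $b_0,b_1,b_2$ and $\vphi_a$, and running the iteration) takes place.
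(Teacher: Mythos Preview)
Your proof is correct and is precisely the direct computation the paper has in mind; indeed the paper states this lemma without proof, since each term is handled exactly as you describe (using $\tri_\vphi F=\tr_\vphi\theta-R$ from \eqref{Singular cscK t eps 3 short} and the identities $\tri_\vphi\vphi=n-\tr_\vphi\om_t$, $\tri_\vphi\vphi_a=\tr_\vphi\om_{\vphi_a}-\tr_\vphi\om_t$ for the $\om_t$-psh potentials $\vphi,\vphi_a$). There is nothing to add.
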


\begin{rem}\label{Linfty estimate b1}
There are two ways to deal with $\om_t$, one is $\om_t=\tilde\om_t-i\p\bar\p\phi_E$, and the other one is $\om_t=\om_{sr}+t\om$. Thus, we have
\begin{align*}
\tri_\vphi w&=b_0\tr_{\vphi}\theta-(b_1+b_2)\tr_\vphi\tilde\om_t+(b_1+b_2)\tri_\vphi\phi_E+A_R\\
&=b_0\tr_{\vphi}\theta-(b_1+b_2)\tr_\vphi(\om_{sr}+t\om)+A_R.
\end{align*}
The constant $b_1$ will be chosen to be negative such that $-(b_1+b_2)$ is positive. Accordingly, we see that
$\tri_\vphi w\geq b_0\tr_{\vphi}\theta-t(b_1+b_2)\tr_\vphi\om+A_R.$
\end{rem}

We introduce a weight function $H$ and add it to $u$
\begin{align*}
u:=w-H 
\end{align*} and utilise various conditions on the $(1,1)$-form $\theta$, aiming to obtain a differential inequality from the identity in \lemref{Linfty estimate tri u}
\begin{align}\label{Linfty estimate tri u}
\tri_\vphi u\geq A_\theta\tr_\vphi\tilde\om_t+A_R.
\end{align} Here, we choose $b_1$ such that $A_\theta>0$.
Then the maximum principle is applied near the maximum of $u$ to conclude the estimate of $u$. 

Given a point $z\in X$ and a ball $B_d(z)\subset X$, we let $\eta$ be the local cutoff function $B_d(z)$ regarding to the metric $\tilde\om_t$ such that $\eta(z)=1$ and $\eta=1-b_3$ outside the half ball $B_{\frac{d}{2}}(z)$.
Then we have the standard estimate of the local cutoff function as 
\begin{align}\label{cutoff}
\tri_\vphi\log\eta\geq -A_{b_3} \tr_\vphi\tilde\om_t, \quad A_{b_3}:=[\frac{2b_3}{d(1-b_3)}]^2+\frac{4b_3}{d^2(1-b_3)}.
\end{align}

\begin{prop}\label{Linfty estimate v prop}
Assume that $u$ satisfy \eqref{Linfty estimate tri u}.
We define 
\begin{align}\label{Linfty estimate v}
v:=b_4  u=b_4 (b_0 F-H+b_1\vphi+b_2\vphi_a)
\end{align} and set $\tilde d=8d^{-2}$ and $0<b_3<<1$ satisfy
\begin{align}\label{Linfty estimate Atheta b3}
b_3= \frac{A_\theta}{\tilde d \cdot b_4^{-1}+A_\theta}\text{ such that }
A_{b_3}\leq \tilde d \frac{b_3}{1-b_3}=  b_4 A_\theta.
\end{align}  Then the following estimates hold.
\begin{enumerate}[(i)]
\item\label{Linfty estimate v prop 1} 
The upper bound of $v$ is $b_4^{-1}\sup_X v\leq C(n)d I^{\frac{1}{2n}}$ where
\begin{align}\label{I}
I:=b_3^{-2n} \int_{A_R\leq 0}  (A_R)_-^{2n} e^{2n v+2(F-f)}\tilde \om_t^n .
\end{align}

\item\label{Linfty estimate v prop 3} The upper bound of $b_0F-H$ is
\begin{align}\label{Linfty estimate F H}
b_0F-H\leq C(n)d  I^{\frac{1}{2n}}-b_2\vphi_a.
\end{align}

\item\label{Linfty estimate v prop 4}
Assume the exponent $p\geq 1$ and the constant $b_2$ satisfies
\begin{align}\label{Linfty estimate b2}
  b_2 p\leq \alpha(\Om_1).
\end{align}
Then it holds
\begin{align}\label{integral F auxiliary}
\|e^{b_0F-H}\|^p_{L^p(\om^n)}
\leq e^{p \cdot b_4^{-1}\sup_X v}\int_X e^{-\alpha\vphi_a}\om_1^n.
\end{align}

\item \label{Linfty estimate eF}
When $b_0=1$,
assume $e^H\in L^{p_H^+}(\tilde\om_t^n)$ for some $p_H^+\geq 1$. Then for any $\tilde p\leq \frac{pp_H^+}{p+p_H^+}$, we have $\|e^{\tilde F}\|_{L^{\tilde p}(\tilde\om_t^n)}\leq 
e^{-\inf_X f}\|e^{F}\|_{L^{\tilde p}(\tilde\om_t^n)}$ and
\begin{align*}
\|e^{F}\|_{L^{\tilde p}(\tilde\om_t^n)}\leq 
\|e^{F-H}\|_{L^p(\tilde\om_t^n)}\|e^{H}\|_{L^{p_H^+}(\tilde\om_t^n)},
\end{align*}
which implies $\|\vphi\|_\infty,\|\vphi_a\|_\infty\leq C$. Consequently, \eqref{Linfty estimate F H} becomes
\begin{align*}
F\leq H+C(n)d  I^{\frac{1}{2n}}-b_2\|\vphi_a\|_\infty.
\end{align*}

\item\label{Linfty estimate v prop 2}
When $b_0=1$, we assume $e^{-H}\in L^{p_H}(\tilde \om_t^n)
$ and $b_4$ satisfies 
\begin{align}\label{Linfty estimate b4}
	0<b_4\leq \min\{(-4nb_1)^{-1}\alpha(\Om_1),(4n)^{-1}p_H\}.
\end{align}
Then the estimate of $I$ is given in Proposition \ref{Estimation of I prop}.
\end{enumerate}
\end{prop}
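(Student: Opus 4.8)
The plan is to establish the five items in the order stated, the substance being item~(i), which is a complex Alexandrov--Bakelman--Pucci (ABP) estimate in the spirit of Chen--Cheng. First I would use compactness of $X$ to fix a uniform radius $d>0$ so that every point carries a coordinate chart on which $\tilde\om_t$ (hence $\om_\vphi$) is comparable to the Euclidean metric up to constants uniform in $t,\eps$, and take $z$ to be a point where $v$ attains its maximum; if $\sup_X v\le 0$ item~(i) is trivial, so assume $\sup_X v>0$. Multiplying the hypothesis \eqref{Linfty estimate tri u} by $b_4$ and adding $\log\eta$, the cutoff bound \eqref{cutoff} together with the calibration $A_{b_3}\le b_4 A_\theta$ in \eqref{Linfty estimate Atheta b3} gives $\tri_\vphi(v+\log\eta)\ge b_4 A_R\ge -b_4(A_R)_-$. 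The key device is to exponentiate: with $\hat V:=\eta e^{v}=e^{v+\log\eta}$ one has $\tri_\vphi\hat V\ge \hat V\,\tri_\vphi(v+\log\eta)\ge -b_4\hat V(A_R)_-$ since $|\p(v+\log\eta)|^2_\vphi\ge 0$. I would then apply the linear ABP maximum principle to $\hat V$ on $B_{d/2}(z)$ in the determinant form: for the real second order operator $\tri_\vphi$ the relevant weight $(\det a^{IJ})^{-1}$ is comparable to $(\det g_\vphi)^2$, and by \eqref{Singular cscK t eps tilde}, $\om_\vphi^n=e^{F-f}\tilde\om_t^n$, so $(\det g_\vphi)^2\tilde\om_t^n\sim e^{2(F-f)}\tilde\om_t^n$. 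Enlarging the contact set to $\{A_R\le 0\}$ and using $e^{2n(v+\log\eta)}\le e^{2nv}$, the ABP estimate reads $\sup_{B_{d/2}}\hat V-\sup_{\partial B_{d/2}}\hat V\le C(n)\,d\,b_4\big(\int_{\{A_R\le0\}}(A_R)_-^{2n}e^{2nv+2(F-f)}\tilde\om_t^n\big)^{1/2n}=C(n)\,d\,b_4\,b_3\,I^{1/2n}$, while $\hat V(z)=e^{\sup_X v}$ and $\sup_{\partial B_{d/2}}\hat V\le(1-b_3)e^{\sup_X v}$ force $b_3 e^{\sup_X v}\le C(n)\,d\,b_4\,b_3\,I^{1/2n}$. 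Cancelling $b_3$ and using $\sup_X v\le e^{\sup_X v}$ yields $b_4^{-1}\sup_X v\le C(n)\,d\,I^{1/2n}$, which is~(i).

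Items~(ii) and~(iii) are then formal. Since $\sup_X u=b_4^{-1}\sup_X v$, item~(i) gives $\sup_X u\le C(n)\,d\,I^{1/2n}$, so pointwise $b_0F-H\le\sup_X u-b_1\vphi-b_2\vphi_a$; discarding $b_1\vphi$ by the sign convention $b_1<0$ of Remark~\ref{Linfty estimate b1} together with the normalisation $\sup_X\vphi\le 0$ gives~(ii). For~(iii), exponentiating $b_0F-H\le b_4^{-1}\sup_X v-b_2\vphi_a$, raising to the $p$-th power and integrating against $\om^n\le\om_1^n$, the hypothesis $b_2 p\le\alpha(\Om_1)$ and the fact that (after normalisation) $\vphi_a$ is an $\om_1$-plurisubharmonic function lets the $\alpha$-invariant of $\Om_1$ bound $\int_X e^{-pb_2\vphi_a}\om^n\le\int_X e^{-\alpha\vphi_a}\om_1^n<\infty$.

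For item~(iv) (with $b_0=1$), H\"older with $\tfrac1{\tilde p}=\tfrac1p+\tfrac1{p_H^+}$ applied to $e^{F}=e^{F-H}e^{H}$ gives the stated product bound, the first factor finite by~(iii) and the equivalence of $\om^n$ with $\tilde\om_t^n$ (\lemref{metrics equivalence}), the second by hypothesis; then $\tilde F=F-f$ with $f$ bounded below (\lemref{nef tilde f}) yields $\|e^{\tilde F}\|_{L^{\tilde p}(\tilde\om_t^n)}\le e^{-\inf_X f}\|e^{F}\|_{L^{\tilde p}(\tilde\om_t^n)}$. An $L^{\tilde p}$-bound with $\tilde p>1$ on the volume ratio $\om_\vphi^n/\tilde\om_t^n=e^{\tilde F}$ feeds the complex Monge--Amp\`ere $L^\infty$-estimate (as invoked through \cite{MR2505296}) to give $\|\vphi\|_\infty\le C$, and $\|\vphi_a\|_\infty\le C$ from the defining equation \eqref{vphi a}; substituting into~(ii) produces the pointwise bound on $F$. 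For item~(v), the two constraints on $b_4$ in \eqref{Linfty estimate b4} are exactly what render the exponents in the integrand of $I$ subcritical once $b_0=1$: the weight $e^{2nv}$ contributes a factor $e^{-2nb_4 H}=(e^{-H})^{2nb_4}$, integrable by the $L^{p_H}$-hypothesis once $4nb_4\le p_H$ (leaving room for a H\"older split against $e^{2(F-f)}$), and a factor $e^{2nb_4 b_1\vphi}=e^{-2n|b_1|b_4\vphi}$, integrable by $\alpha(\Om_1)$ once $-2nb_1 b_4\le\alpha(\Om_1)$, again with a factor $2$ for a H\"older split; with these in force the actual bound for $I$ is carried out in Proposition~\ref{Estimation of I prop}.

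The crux, and the reason $b_3,b_4$ and the auxiliary potential $\vphi_a$ must be chosen so delicately, is the bootstrap built into item~(i): $I$ contains the weight $e^{2nv}$, and $v$ contains $F$, which is bounded only a posteriori. It is precisely the exponential substitution $\hat V=\eta e^{v}$ that turns the ABP estimate into a bound on $e^{\sup_X v}$ rather than on $\sup_X v$, which is what eventually allows the $e^{2nv}$-weight in $I$ to be reabsorbed --- but only provided $b_4$ is small enough that $\alpha(\Om_1)$ and the $L^{p_H}$-integrability of $e^{-H}$ dominate that weight; closing this loop (in Proposition~\ref{Estimation of I prop}) is the genuinely hard part. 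A secondary technical point is that ABP must be used in the determinant form, with no a priori control of the ellipticity of $\tri_\vphi$, which degenerates as $t,\eps\to 0$: the only uniform input is the volume ratio $\det g_\vphi\sim e^{F-f}$, which is exactly the weight $e^{2(F-f)}$ appearing in $I$.
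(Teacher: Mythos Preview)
Your proposal is correct and follows essentially the same approach as the paper's own proof: exponentiate to $\hat V=\eta e^{v}$, combine \eqref{Linfty estimate tri u} with the cutoff bound \eqref{cutoff} via the calibration \eqref{Linfty estimate Atheta b3}, and apply the Aleksandrov maximum principle in determinant form with the weight $(\det g_\vphi)^2\sim e^{2(F-f)}$; items~(ii)--(iv) then follow formally from the sign normalisations $\vphi,\vphi_a\le 0$, H\"older, the $\alpha$-invariant, and the Ko{\l}odziej-type $L^\infty$ estimate, exactly as in the paper. Your write-up is in fact more detailed than the paper's proof, which dispatches item~(i) in a single sentence and item~(v) by a forward reference; your explanation of why the two constraints in \eqref{Linfty estimate b4} are precisely what make the $e^{-2nb_4 H}$ and $e^{2nb_4 b_1\vphi}$ factors in $I$ integrable (with room for a H\"older split) is a helpful gloss on Proposition~\ref{Estimation of I prop}.
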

\begin{proof}
Now we combine the inequality for $u$ and the cutoff function $\eta$. 
Inserting \eqref{Linfty estimate tri u} and \eqref{cutoff} to $\tri_\vphi (v+\log\eta)$, we get 
\begin{align*}
&\tri_\vphi (v+\log\eta)\geq (b_4 A_\theta-A_{b_3})\tr_\vphi\tilde\om_t+b_4 A_R \geq b_4 A_R.
\end{align*}
Furthermore, it implies 
\begin{align*}
\tri_\vphi (e^{v}\eta)\geq b_4 A_R e^{v}\eta.
\end{align*}
Therefore, applying the Aleksandrov maximum principle yo this differential inequality, we obtain the estimate of $v$ in \eqref{Linfty estimate v prop 1}.

Before we obtain the estimate of $I$ in \eqref{Linfty estimate v prop 2}, which will be given in Proposition \ref{Estimation of I prop}, we derive the upper bound of $b_0F-H$ and an $L^p$ bound of $e^{b_0F-H}$.

Since $\vphi, \vphi_a$ are $\om_t$-psh functions, we could modify a constant such that $\vphi,\vphi_a\leq 0$. Hence, $b_1\vphi\geq 0$ and the formula \eqref{Linfty estimate v} of $v$ gives 
\begin{align*}
b_0F-H\leq b_4^{-1}\sup_Xv-b_2\vphi_a.
\end{align*}
Thus the upper bound \eqref{Linfty estimate F H} of $b_0F-H$ in \eqref{Linfty estimate v prop 3} is obtained by inserting \eqref{I} to the inequality above.

Note that $\om_1=\om_{sr}+\om$ and $\omega_{sr}\geq0$, we have $\om\leq \om_1$ and $\vphi_a$ is also psh with respect to $\om_1=\om_{sr}+\om$.
Accordingly, we apply the $\alpha$-invariant of $\Om_1=[\om_1]$ to conclude the $L^p(\om^n)$ bound of $e^{b_0F-H}$ in \eqref{Linfty estimate v prop 4}.

The proof of \eqref{Linfty estimate eF} is given as following.
While, $\om$ is equivalent to $\tilde\om_t$, i.e. $\tilde\om_t\leq (C_K+t)\omega$, we could replace $L^p(\om^n)$ norm by $L^p(\tilde\om_t)$ norm, 
$$\|e^{b_0F-H}\|_{L^p(\tilde\om_t^n)}\leq (C_K+t)\|e^{b_0F-H}\|_{L^p(\om^n)}.$$
Under the hypothesis $e^H\in L^{p_0}(\tilde\om_t^n)$, the estimate of
$\|e^{F}\|_{ L^p(\tilde\om_t^n)}$ is obtained by applying the H\"older inequality to \eqref{integral F auxiliary}. Moreover, the upper bound of $e^{-f}$ from \lemref{nef tilde f} implies the $L^p(\tilde\om_t^n)$ bound of $e^{\tilde F}$. By the equation 
$
\om_\vphi^n=e^{\tilde F}\tilde\om_t^n,
$ the bound $\|e^{\tilde F}\|_{L^{\tilde p}(\tilde\om_t^n)}$ further
implies the uniform bound of $\vphi$, due to \lemref{big nef approximate reference metric bound}. 

The auxiliary function $\vphi_a$ is defined to be a solution to the following approximation of the singular Monge-Amp\`ere equation
\begin{align}\label{vphi a}
\om_{\vphi_a}^n=E^{-1 }\om_\theta^n e^F\sqrt{F^2+1},\quad E=V^{-1}\int_Xe^F\sqrt{F^2+1}\om_\theta^n.
\end{align}
Similarly, the volume element of the auxiliary function $\om_{\vphi_a}$ is also $L^p$, by applying $\|e^{F}\|_{L^{\tilde p}(\tilde\om_t^n)}$. For the same reason, $\vphi_a$ is also uniformly bounded, too.

Inserting the resulting estimate of $\vphi_a$ into \eqref{Linfty estimate F H}, we have the upper bound of $F-H$. 
\end{proof}



The rest of the proof is to estimate $I$, \eqref{Linfty estimate v prop 2} in Proposition \ref{Linfty estimate v prop}.
\begin{prop}[Estimation of $I$]\label{Estimation of I prop}
In general, we have
\begin{align}\label{Linfty estimate rough I bound}
I\leq e^{-2\inf_X f} \int_{A_R\leq 0}  |b_3^{-1}(b_0R-b_1n)|^{2n} e^{-2nb_4 H}e^{2nb_4b_1\vphi}e^{(2nb_4b_0+2)F} \tilde \om_t^n
\end{align} 
and in the integral domain $\{x\in X\vert A_R\leq 0\}$ of $I$,
\begin{align}\label{F and entropy}
F\leq (\frac{|b_0R-b_1n|}{b_2n})^n (E^\beta_{t,\eps}+2e^{-1}+1).
\end{align}

If we further assume that $|b_0R-b_1n|$ is bounded,
$
e^{-H}\in L^{p_H}(\tilde \om_t^n)
$
and $b_4$ satisfies \eqref{Linfty estimate b4}.
Then
\begin{align}\label{Linfty estimate b4 cor}
I^2\leq  C^2(C_K+1)^n \|e^{-H}\|^{p_H}_{L^{p_H}(\tilde\om_t^n)} \int_{X} e^{-\alpha(\Om_1)\vphi} \om_1^n,
\end{align} 
where $C=e^{-2\inf_X f}  e^{(2nb_4+2)\sup_{A_R\leq0}F} \|b_3^{-1}(b_0R-b_1n)\|^{2n}_\infty$. 
\end{prop}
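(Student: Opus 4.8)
The plan is to start from the definition $I=b_3^{-2n}\int_{\{A_R\le0\}}(A_R)_-^{2n}e^{2nv+2(F-f)}\tilde\om_t^n$, estimate the integrand pointwise on the domain $\{A_R\le0\}$, and then feed in the entropy and the $\alpha$-invariant of $\Om_1$. First I would exploit the signs of the auxiliary constants. On $\{A_R\le0\}$ the quantities $b_2>0$ and $\tr_\vphi\om_{\vphi_a}>0$ give $0\le(A_R)_-=b_0R-b_1n-b_2\tr_\vphi\om_{\vphi_a}\le|b_0R-b_1n|$, so $(A_R)_-^{2n}\le|b_0R-b_1n|^{2n}$. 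Expanding $e^{2nv}=e^{2nb_4b_0F}e^{-2nb_4H}e^{2nb_4b_1\vphi}e^{2nb_4b_2\vphi_a}$ and discarding the last factor (using $\vphi_a\le0$ and $b_2,b_4>0$), together with $e^{-2f}\le e^{-2\inf_Xf}$ from \lemref{nef tilde f}, yields exactly \eqref{Linfty estimate rough I bound}.

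Next I would establish \eqref{F and entropy}. On $\{A_R\le0\}$ one has $\tr_\vphi\om_{\vphi_a}\le|b_0R-b_1n|/b_2$; combining this with the arithmetic–geometric mean inequality $\tr_\vphi\om_{\vphi_a}\ge n(\om_{\vphi_a}^n/\om_\vphi^n)^{1/n}=nE^{-1/n}(F^2+1)^{1/(2n)}$, where the volume ratio is read off from \eqref{vphi a} and \eqref{Singular cscK t eps 3 short}, gives $(F^2+1)^{1/2}\le(|b_0R-b_1n|/(b_2n))^nE$. It then remains to bound $E=V^{-1}\int_X\sqrt{F^2+1}\,\om_\vphi^n\le 1+V^{-1}\int_X|F|\,\om_\vphi^n$. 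The key manoeuvre is to split $\int_X|F|\,\om_\vphi^n=\int_XF\,\om_\vphi^n+2\int_XF_-\,\om_\vphi^n=V E^\beta_{t,\eps}+2\int_{\{F<0\}}F_-e^F\,\om_\theta^n$, where $F_-:=\max\{-F,0\}$; since $xe^{-x}\le e^{-1}$ for $x\ge0$ and $\int_X\om_\theta^n=V$, the last integral is $\le e^{-1}V$, so $E\le E^\beta_{t,\eps}+2e^{-1}+1$, which is \eqref{F and entropy}. This step, converting the one-sided entropy bound into a two-sided $L^1$ control against the volume form $\om_\vphi^n=e^F\om_\theta^n$, is the only genuinely delicate point; everything else is bookkeeping of the signs of $b_1,b_2,b_4$ and of the normalisations $\vphi,\vphi_a\le0$.

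Finally, for \eqref{Linfty estimate b4 cor} I would argue as follows. Under the hypotheses that $|b_0R-b_1n|$ is bounded and the entropy is bounded, \eqref{F and entropy} shows $\sup_{\{A_R\le0\}}F<\infty$, so on the integration domain $e^{(2nb_4b_0+2)F}\le e^{(2nb_4+2)\sup_{\{A_R\le0\}}F}$ (recall $b_0=1$ here). Plugging this into \eqref{Linfty estimate rough I bound} and enlarging the domain to $X$ gives $I\le C\int_Xe^{-2nb_4H}e^{2nb_4b_1\vphi}\tilde\om_t^n$ with $C$ as stated. Applying Cauchy–Schwarz separates the weights: $I^2\le C^2\big(\int_Xe^{-4nb_4H}\tilde\om_t^n\big)\big(\int_Xe^{4nb_4b_1\vphi}\tilde\om_t^n\big)$. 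For the second factor, $b_1<0$ (Remark~\ref{Linfty estimate b1}) and the constraint \eqref{Linfty estimate b4} give $-4nb_4b_1\le\alpha(\Om_1)$; as $\vphi\le0$ is $\om_1$-psh and $\tilde\om_t\le(C_K+1)\om\le(C_K+1)\om_1$ by \lemref{metrics equivalence}, one gets $\int_Xe^{4nb_4b_1\vphi}\tilde\om_t^n\le(C_K+1)^n\int_Xe^{-\alpha(\Om_1)\vphi}\om_1^n$. For the first factor, \eqref{Linfty estimate b4} gives $4nb_4\le p_H$, so H\"older on the finite measure $\tilde\om_t^n$ (absorbing the total volume into the constants) bounds $\int_Xe^{-4nb_4H}\tilde\om_t^n$ by $\|e^{-H}\|^{p_H}_{L^{p_H}(\tilde\om_t^n)}$. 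Multiplying the two bounds yields \eqref{Linfty estimate b4 cor}.
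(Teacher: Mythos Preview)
Your proof is correct and follows essentially the same route as the paper: the pointwise bound on $(A_R)_-$ via $b_2\tr_\vphi\om_{\vphi_a}>0$, the AM--GM lower bound on $\tr_\vphi\om_{\vphi_a}$ coupled with the auxiliary equation \eqref{vphi a}, and the Cauchy--Schwarz split of the final integral are exactly what the paper does. The only difference is that you spell out the inequality $E\le E^\beta_{t,\eps}+2e^{-1}+1$ explicitly (via the decomposition $|F|=F+2F_-$ and $xe^{-x}\le e^{-1}$), whereas the paper simply cites this as Lemma~5.4 of \cite{arXiv:1803.09506}; your argument is the standard proof of that lemma.
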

\begin{proof}
We use the lower bound of $f$ from \lemref{nef tilde f} in \eqref{I},
\begin{align*}
I\leq e^{-2\inf_X f} \int_{A_R\leq 0}  (A_R)_-^{2n} e^{2n v}e^{2F}\tilde \om_t^n.
\end{align*}
Then we deal with each factors in the integrand.
Since $\vphi_a$ is an $\om_t$-psh function, we have $\vphi_a\leq 0$. Thus \eqref{Linfty estimate v} tells us
\begin{align*}
e^{2nv}\leq e^{2nb_4 b_0F} e^{-2nb_4 H}e^{2nb_4b_1\vphi}.
\end{align*}

Using the expression of $A_R$ in \lemref{Linfty estimate tri w}, we get 
$0\geq A_R\geq -b_0R+b_1n.$
So, we estimate 
\begin{align*}
(A_R)_-^{2n}\leq| b_0R-b_1n|^{2n}.
\end{align*}
Inserting these estimates into the expression of $I$, we obtain \eqref{Linfty estimate rough I bound}.

The bound of $F$ is obtained in terms of the entropy $E^\beta_{t,\eps}$, by using the auxiliary function $\vphi_a$.
 Since $A_R\leq 0$, applying the geometric mean inequality to the expression of $A_R$ in \lemref{Linfty estimate tri w}, we have
\begin{align*}
b_2 n( \frac{\om^n_\vphi}{\om^n_{\vphi_a}})^{\frac{-1}{n}}\leq b_2\tr_\vphi\om_{\vphi_a}\leq b_0R-b_1n.
\end{align*}
Substituting \eqref{vphi a} into the inequality above, we have
\begin{align*}
F\leq \sqrt{F^2+1}\leq (\frac{b_0R-b_1n}{b_2n})^n E.
\end{align*}
At last, we use that fact that $E$ is bounded by the entropy $E^\beta_{t,\eps}$ from Lemma 5.4 in \cite{arXiv:1803.09506}, to conclude \eqref{F and entropy}.

When $2nb_4b_0+2\geq 0$, we let $C=e^{-2\inf_X f} b_3^{-1} e^{(2nb_4b_0+2)\sup_X F}$ and get
\begin{align*}
I\leq  C  \int_{A_R\leq 0} (b_0R-b_1n)^{2n}  e^{-2nb_4 H}e^{2nb_4b_1\vphi}\tilde \om_t^n.
\end{align*} 
By H\"older inequality, it is further bounded by
\begin{align*}
 C\|b_0R-b_1n\|_\infty^{2n} \int_X e^{-4nb_4 H}\tilde \om_t^n\int_{X}  e^{4nb_4b_1\vphi}\tilde \om_t^n.
\end{align*} with $p_R> 2n$.
The integral $ \int_{X}   e^{-4nb_4 H}\tilde \om_t^n$ is finite, when $4nb_4\leq p_H$.
As \lemref{integral F auxiliary}, we use $\tilde\om_t\leq \om_K+\om\leq (C_K+1)\om$ by \lemref{metrics equivalence} and $\om\leq \om_1=\om_{sr}+\om$ by semi-positivity of $\om_{sr}$.
So, the integral 
\begin{align*}
\int_Xe^{4nb_4b_1\vphi}\tilde \om_t^n\leq (C_K+1)^n\int_Xe^{4nb_4b_1\vphi} \om_1^n
\end{align*} is also bounded, once $-4nb_4b_1\leq \alpha(\Om_1)$. 
\end{proof}
\begin{rem}
In the proof, we could use 
$ \int_X(b_0R-b_1n)^{p_R}\tilde \om_t^n $, $p_R>2n$ instead.
\end{rem}

\subsection{Applications}\label{Applcations}
Now we are ready to make use of different properties on $\theta$ to estimate $\vphi$ and $F$, with the help of Proposition \ref{Linfty estimate v prop} and its corollaries. 
We clarify the steps in practice. Firstly, we derive \eqref{Linfty estimate tri u} to write down the formulas of $A_\theta$, $A_R$ and $H$. Secondly, we ask $A_\theta$ to be strictly positive to determine the value of $b_1$. While, $b_2$ is chosen as \eqref{Linfty estimate b2} depending on the auxiliary function $\vphi_a$. Thirdly, we use the expression of $H$ to verify both conditions including $\|e^H\|_{L^{p_H^+}(\tilde\om_t^n)}$ and $\|e^{-H}\|_{L^{p_H}(\tilde \om_t^n)}$ in \eqref{Linfty estimate eF} of Proposition \ref{Linfty estimate v prop} and Proposition \ref{Estimation of I prop}, respectively. Consequently, $b_4$ is determined form $b_1$, $p_H$ in \eqref{Linfty estimate b4}. At last, we could obtain the value of $b_3$ by \eqref{Linfty estimate Atheta b3} and compute $I$ in Proposition \ref{Estimation of I prop}.

Now we start our applications.
As we observe in \cite{arXiv:1803.09506} that the particular property of the given $(1,1)$-form $\theta\in C_1(X,D)$ leads to various differential inequalities. The most general bound on $\theta$ is Definition \ref{L infty estimates theta defn}
\begin{align*}
C_l\cdot \tilde\om_t\leq \theta\leq  C_u\cdot\tilde\om_t.
\end{align*}
\begin{prop}\label{L infty estimates f sup vphi prop}
Suppose that $e^{C_l\phi_E}\in L^{p_0}(\tilde \om_t^n)$ for some $p_0\geq 1$.
Then there exists a constant $C$ such that for all $p>p_0$, $\tau>0$,
\begin{align}\label{L infty estimates f sup vphi}
\|e^{F}\|_{L^{p}(\tilde\om_t^n)}, \quad
\|e^{\tilde F}\|_{L^{p}(\tilde\om_t^n)},\quad
 \|\vphi\|_\infty, \quad \sup_X(F-\sigma_s\phi_E)\leq C,
\end{align}
where $\sigma_s=C_l-\tau$.
The constant $C$ depends on $E^\beta_{t,\eps}$, $C_l$, $\alpha(\Om_1)$, $\sup_X R$, $\inf_X f$, $\sup_X\phi_E,n,p$.
\end{prop}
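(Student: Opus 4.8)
The plan is to feed the general bound $C_l\tilde\om_t\le\theta\le C_u\tilde\om_t$ of Definition~\ref{L infty estimates theta defn} into the machinery assembled in Proposition~\ref{Linfty estimate v prop} and Proposition~\ref{Estimation of I prop}, following the recipe described at the start of Section~\ref{Applcations}. Take $b_0=1$ and $w=F+b_1\vphi+b_2\vphi_a$ as in \eqref{Linfty estimate u}, with $\vphi_a$ the auxiliary potential of \eqref{vphi a}. Writing $\om_t=\tilde\om_t-i\p\bar\p\phi_E$ and using $\theta\ge C_l\tilde\om_t$ together with the monotonicity of the trace with respect to $\om_\vphi$, Lemma~\ref{Linfty estimate tri w} yields
\[
\tri_\vphi w\ \ge\ (C_l-b_1-b_2)\,\tr_\vphi\tilde\om_t+(b_1+b_2)\,\tri_\vphi\phi_E+A_R,\qquad A_R=-R+b_1n+b_2\,\tr_\vphi\om_{\vphi_a}.
\]
Hence, putting $H:=(b_1+b_2)\phi_E$ and $u:=w-H$ produces the differential inequality \eqref{Linfty estimate tri u} with $A_\theta=C_l-b_1-b_2$.

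For the choice of constants: given $\tau>0$, fix $b_2>0$ small enough that $b_2p\le\alpha(\Om_1)$, and set $b_1:=C_l-\tau-b_2$, so that $A_\theta=\tau>0$, $b_1<0$ (as required in Remark~\ref{Linfty estimate b1}), and $H=(C_l-\tau)\phi_E=\sigma_s\phi_E$. Since $\phi_E\le 0$ and $\sigma_s<0$, the weight $e^{-H}$ is bounded, hence lies in every $L^{p_H}(\tilde\om_t^n)$, while $e^{H}=e^{C_l\phi_E}e^{-\tau\phi_E}$ lies in $L^{p_H^+}(\tilde\om_t^n)$ for a suitable $p_H^+\ge 1$ by the hypothesis $e^{C_l\phi_E}\in L^{p_0}(\tilde\om_t^n)$ (the extra factor $e^{-\tau\phi_E}$ has only a mild singularity along $E$, which can be absorbed at the cost of lowering $p_H^+$, or by shrinking the Kodaira data $a_0,E$). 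Now fix $b_4>0$ according to \eqref{Linfty estimate b4} and then $b_3$ by \eqref{Linfty estimate Atheta b3}. Since $R$ is bounded, $|R-b_1n|$ is bounded, so on $\{A_R\le 0\}$ the quantity $F$ is controlled by the entropy $E^\beta_{t,\eps}$ through \eqref{F and entropy}; Proposition~\ref{Estimation of I prop} then bounds the integral $I$ via \eqref{Linfty estimate b4 cor}, using $\|e^{-H}\|_{L^{p_H}}$ and $\alpha(\Om_1)$. Proposition~\ref{Linfty estimate v prop} now gives $\sup_X v\le C$ and $\|e^{F-H}\|_{L^p(\om^n)}\le C$; the H\"older step there, combined with $e^{H}\in L^{p_H^+}$, gives $\|e^{F}\|_{L^{\tilde p}(\tilde\om_t^n)}\le C$ and then $\|e^{\tilde F}\|_{L^{\tilde p}(\tilde\om_t^n)}\le e^{-\inf_X f}\|e^{F}\|_{L^{\tilde p}(\tilde\om_t^n)}\le C$ in the admissible range of $\tilde p$. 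Feeding $\om_\vphi^n=e^{\tilde F}\tilde\om_t^n$ into the Kolodziej-type estimate used in Lemma~\ref{big nef approximate reference metric bound} (see \cite{MR2505296}) gives $\|\vphi\|_\infty\le C$, hence $\vphi_a$ is bounded too, and \eqref{Linfty estimate F H} with $H=\sigma_s\phi_E$ upgrades to $\sup_X(F-\sigma_s\phi_E)\le C$.

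The main obstacle is the exponent bookkeeping. The differential inequality forces $A_\theta>0$, hence $b_1+b_2<C_l$ strictly, which means the weight $H$ must be taken slightly more singular than the borderline weight $C_l\phi_E$ — this is precisely the origin of the loss $\sigma_s=C_l-\tau$ for every $\tau>0$ and of the restriction on the Lebesgue exponents for which the $L^p$ bounds on $e^F$ and $e^{\tilde F}$ can be obtained. One must check that after this loss $e^{\pm H}$ still lie in $L^{p_H^+}$, $L^{p_H}$ with $p_H^+,p_H$ large enough that the interpolation in Proposition~\ref{Linfty estimate v prop} yields the stated range, and that the chain of constraints \eqref{Linfty estimate b2}, \eqref{Linfty estimate b4}, \eqref{Linfty estimate Atheta b3} on $b_1,b_2,b_3,b_4$ is simultaneously solvable; everything else is a direct application of the two propositions.
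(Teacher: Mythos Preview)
Your proof is correct and follows essentially the same approach as the paper: you set $b_0=1$, $H=(b_1+b_2)\phi_E$ with $b_1=C_l-\tau-b_2$ so that $A_\theta=\tau>0$ and $H=\sigma_s\phi_E$, and then feed this into Propositions~\ref{Linfty estimate v prop} and~\ref{Estimation of I prop} exactly as the paper does (the paper writes $t_0$ for your $\tau$ and takes $b_2=p^{-1}\alpha(\Om_1)$, but this is the same choice). Your bookkeeping of the integrability of $e^{\pm H}$ and the chain $b_4\to b_3\to I$ matches the paper's; the only cosmetic difference is that the paper handles $e^{-t_0\phi_E}\in L^{p_1}$ by a direct H\"older splitting $p_H^+\le p_0p_1/(p_0+p_1)$ rather than your phrasing about ``mild singularity''.
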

\begin{proof}
We let $b_0=1$ and insert the lower bound \eqref{Cl} of $\theta$, i.e. $\theta\geq  C_l\tilde\om_t$, together with $\om_t=\tilde\om_t-i\p\bar\p\phi_E$ to \lemref{Linfty estimate tri w},
\begin{align*}
\tri_\vphi w
\geq C_l\tr_{\vphi}\tilde\om_t-(b_1+b_2)(\tr_\vphi\tilde\om_t-i\p\bar\p\phi_E)+A_R.
\end{align*}
Comparing with \eqref{Linfty estimate tri u}, we read from this inequality that 
\begin{align*}
H=(b_1+b_2)\phi_E,\quad A_\theta=C_l-(b_1+b_2),\quad A_R=-R+b_1n+b_2\tr_\vphi\om_{\vphi_a}.
\end{align*}  

Given a fixed $p\geq 1$, by \eqref{Linfty estimate b2}, we further take
$b_2=p^{-1}\alpha(\Om_1).$ Letting $A_\theta=t_0>0$, we have $b_1=C_l-b_2-t_0$. Also, from \eqref{Linfty estimate Atheta b3}, we get $b_3= \frac{t_0}{8d^{-2}b_4^{-1}+t_0}$.

As a result, we see  that $H=(C_l-t_0)\phi_E.$
Clearly, $-H$ is bounded above. Moreover, since $e^{C_l\phi_E}\in L^{p_0}(\tilde \om_t^n)$ and $e^{-t_0\phi_E}\in L^{p_1}(\tilde \om_t^n)$ as long as $t_0$ is small enough, we have 
\begin{align*}
e^{H}=e^{(C_l-t_0)\phi_E}\in L^{p^+_H}(\tilde \om_t^n)\text{ for all }p^+_H\leq \frac{p_0p_1}{p_0+p_1}.
\end{align*}
We also choose $b_4\leq (-4nb_1)^{-1}\alpha$ by \eqref{Linfty estimate b4}.
According to Proposition \ref{Estimation of I prop}, we could examine that the integral $I$ is finite and all estimates are independent of $t$ and $\eps$.

Therefore, the hypotheses of $H$ in \eqref{Linfty estimate v prop 2} in Proposition \ref{Linfty estimate v prop} and \eqref{Linfty estimate eF} of Proposition \ref{Linfty estimate v prop} are satisfied and we conclude the estimates of
$\|e^{F}\|_{L^{\tilde p}(\tilde\om_t^n)}$,  
$\|e^{\tilde F}\|_{L^{\tilde p}(\tilde\om_t^n)}$,
$\|\vphi\|_\infty$ and $\sup_X(F-H)\leq C$. 
\end{proof}

\begin{prop}\label{L infty estimates f inf prop}
Under the assumption in Proposition \ref{L infty estimates f sup vphi prop}, it holds for any $\tau>0$, 
\begin{align}\label{L infty estimates f inf}
\inf_X[F-\sigma_i\phi_E]\geq -C,\quad \sigma_i=C_u+\tau.
\end{align}
The constant $C$ depends on $\|\vphi\|_\infty$, $C_u$, $\inf_X R$, $\inf_X f$, $\sup_X\phi_E$, $n$.
\end{prop}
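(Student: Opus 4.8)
The plan is to prove this lower bound by a single application of the maximum principle, taking advantage of the fact that $\|\vphi\|_\infty$ and the reference-metric estimates are already in hand. I would work with the auxiliary function
\begin{align*}
w:=-F+\sigma_i\phi_E-\sigma_i\vphi,\qquad \sigma_i=C_u+\tau,
\end{align*}
and aim to bound $\sup_X w$ from above; the asserted estimate $\inf_X[F-\sigma_i\phi_E]\geq -C$ then follows at once by moving $\sigma_i\vphi$ to the other side and using $\|\vphi\|_\infty$. Since $\phi_E\to-\infty$ along $E$ while $F$ and $\vphi$ remain finite there (the approximate equation \eqref{Singular cscK t eps 3 short} has smooth solutions), $w$ tends to $-\infty$ along $E$, so its maximum is attained at some $x_0\in X\setminus E$ at which $w$, $\vphi$, $\om_\vphi$ and all the data are smooth and $\om_\vphi$ is a genuine Kähler metric.

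The key computation uses $\tilde\om_t=\om_t+i\p\bar\p\phi_E$ and $i\p\bar\p\vphi=\om_\vphi-\om_t$ together with the second equation $\tri_{\vphi} F=\tr_{\vphi}\theta-R$ to rewrite
\begin{align*}
\tri_\vphi w=\tr_\vphi(\sigma_i\tilde\om_t-\theta)+R-\sigma_i n .
\end{align*}
By the upper bound $\theta\leq C_u\tilde\om_t$ of Definition \ref{L infty estimates theta defn} one has $\sigma_i\tilde\om_t-\theta\geq\tau\tilde\om_t\geq 0$, hence $\tri_\vphi w\geq \tau\tr_\vphi\tilde\om_t+\inf_X R-\sigma_i n$. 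Evaluating at $x_0$, where $\tri_\vphi w(x_0)\leq 0$, gives $\tr_\vphi\tilde\om_t(x_0)\leq M:=(\sigma_i n-\inf_X R)/\tau$. Applying the arithmetic-geometric mean inequality to the eigenvalues of $\om_\vphi$ relative to $\tilde\om_t$ yields $\om_\vphi^n/\tilde\om_t^n\geq (n/M)^n$ at $x_0$; since $\om_\vphi^n=e^{\tilde F}\tilde\om_t^n$ by \eqref{Singular cscK t eps tilde} and $\tilde F=F-f$ with $f\geq\inf_X f$ by \lemref{nef tilde f}, this forces $F(x_0)\geq -n\log(M/n)+\inf_X f$.

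Inserting this into $w(x_0)$ and using $\phi_E\leq\sup_X\phi_E$, $\vphi\geq-\|\vphi\|_\infty$ and $\sigma_i\geq 0$ bounds $\sup_X w=w(x_0)$ by a constant depending only on $\sigma_i$, $\tau$, $n$, $\inf_X R$, $\inf_X f$, $\sup_X\phi_E$ and $\|\vphi\|_\infty$, which rearranges to the claim. I do not expect a real obstacle: this is the cheap direction, requiring neither iteration nor the entropy, only the already-established $\|\vphi\|_\infty$ from Proposition \ref{L infty estimates f sup vphi prop}. The two points needing a little care are that the slack $\tau>0$ is essential, since $M$ blows up as $\tau\to 0$ — this is exactly why the statement asserts the bound only for $\sigma_i=C_u+\tau$ with $\tau>0$ — and that the degeneracies of $\phi_E$ along $E$ and of $f$ along $D$ must be tracked; both are harmless here, since the maximum of $w$ automatically lies off $E$ and the behaviour of $f$ near $D$ (where $f\to+\infty$ as $\beta>1$) only improves the lower bound for $F$.
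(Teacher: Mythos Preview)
Your proof is correct and arrives at the same auxiliary function and differential inequality as the paper: with $b_0=-1$, $b_1=-\sigma_i$, $b_2=0$ the paper's $u=w-H$ is exactly your $w=-F+\sigma_i\phi_E-\sigma_i\vphi$, and both obtain $\tri_\vphi w\geq\tau\,\tr_\vphi\tilde\om_t+R-\sigma_i n$. The difference is in how the maximum principle is applied. The paper feeds this inequality into its general ABP-type machinery (Proposition~\ref{Linfty estimate v prop}): it multiplies by a cutoff $\eta$, exponentiates, applies the Aleksandrov maximum principle, and then bounds the resulting integral $I$ in \eqref{Linfty estimate inf F I} using $\|\vphi\|_\infty$ and the boundedness of $e^{-H}$. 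You instead apply the classical pointwise maximum principle directly: since $\sigma_i>0$ forces $w\to-\infty$ along $E$, the maximum is interior, the trace bound $\tr_\vphi\tilde\om_t(x_0)\leq M$ follows, and AM--GM converts this into a lower bound on $e^{\tilde F}(x_0)$ and hence on $F(x_0)$.

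Your route is genuinely simpler for this direction and highlights why the lower bound is the ``cheap'' half: once $\|\vphi\|_\infty$ is in hand from Proposition~\ref{L infty estimates f sup vphi prop}, no entropy, no auxiliary $\vphi_a$, and no integral estimate are needed. The paper's approach has the advantage of uniformity---it reuses the same framework as the harder upper-bound estimate---but for this proposition alone your argument is both shorter and more transparent. One small inaccuracy: for the \emph{approximate} equation $S_\eps\geq\eps>0$, so $f_{t,\eps}$ does not actually blow up near $D$; what matters is only the uniform lower bound $\inf_X f$ from Lemma~\ref{nef tilde f}, which you use correctly.
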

\begin{proof}
We apply Proposition \ref{Linfty estimate v prop} again, taking 
\begin{align*}
w=-F+b_1\vphi,\quad b_0=-1,\quad b_2=0.
\end{align*}
Substituting $\theta\leq C_u\tilde\om_t$ into \lemref{Linfty estimate tri w}, we obtain
\begin{align*}
\tri_\vphi w\geq -C_u\tr_\vphi\tilde\om_t-b_1\tr_\vphi\om_t+A_R, \quad A_R=R+b_1n.
\end{align*}
By $\tilde\om_t=\om_t+i\p\bar\p\phi_E$, it is reduced to  
\begin{align*}
\tri_\vphi w\geq -C_u\tr_\vphi\tilde\om_t-b_1\tr_\vphi(\tilde\om_t-i\p\bar\p\phi_E)+A_R.
\end{align*}
Accordingly, $H=b_1\phi_E$ and $A_\theta=-C_u-b_1$.

We choose $b_1=-C_u-t_0$ such that 
\begin{align*}
A_\theta=t_0,\quad H=-(C_u+t_0)\phi_E.
\end{align*}
We see that $e^{-H}$ is bounded above.

From \eqref{Linfty estimate F H}, $-F-H\leq C(n)d I^{\frac{1}{2n}}$.
We verify the estimate of $I$ in Proposition \ref{Estimation of I prop}. We let $b_4=\frac{1}{n}$. Due to \eqref{Linfty estimate Atheta b3}, we have 
$b_3=\frac{t_0}{8d^{-2}b_4^{-1}+t_0}$. Then we get
\begin{align}\label{Linfty estimate inf F I}
I\leq e^{-2\inf_X f} \int_{A_R\leq 0}  |b_3^{-1}(-R-b_1n)|^{2n} e^{-2 H}e^{-2(C_u+t_0)\vphi} \tilde \om_t^n,
\end{align}
which is finite, by using the upper bound of $-H=(C_u+t_0)\phi_E$ and $\inf_X\vphi$. Therefore, the upper bound of $-F+(C_u+t_0)\phi_E$ is derived.
\end{proof}

We observe that we could remove $\tau$.
\begin{cor}\label{L infty estimates f sup vphi prop cor}
Assume that $|R-C_ln|\leq A_s t$ for some constant $A_s$. Then we have \eqref{L infty estimates f sup vphi} with $\sigma_s=C_l$.
\end{cor}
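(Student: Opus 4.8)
The plan is to re-run the proof of Proposition~\ref{L infty estimates f sup vphi prop}, but to fix the free parameters so that the weight in the maximum-principle argument becomes exactly $H=C_l\phi_E$ rather than $H=(C_l-\tau)\phi_E$. Once $H=C_l\phi_E$, the output of Proposition~\ref{Linfty estimate v prop}, namely $\sup_X(F-H)\leq C(n)d\,I^{\frac{1}{2n}}-b_2\|\vphi_a\|_\infty$, reads $\sup_X(F-C_l\phi_E)\leq C$, which is \eqref{L infty estimates f sup vphi} with $\sigma_s=C_l$; the $L^p(\tilde\om_t^n)$ bounds on $e^F,e^{\tilde F}$ and the bound $\|\vphi\|_\infty\leq C$ come along exactly as before. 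The obstruction in the original proof is that when $\om_t$ is written as $\tilde\om_t-i\p\bar\p\phi_E$, the coefficient of $\tri_\vphi\phi_E$ in $\tri_\vphi w$ and the strictly positive coefficient $A_\theta$ of $\tr_\vphi\tilde\om_t$ are rigidly linked ($H=(b_1+b_2)\phi_E$, $A_\theta=C_l-(b_1+b_2)$), so forcing $b_1+b_2=C_l$ destroys the coercive term.

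To decouple them I will instead use the second decomposition of Remark~\ref{Linfty estimate b1}, $\om_t=\om_{sr}+t\om$, together with $\theta\geq C_l\tilde\om_t$ and the identity $\tilde\om_t=\om_t+i\p\bar\p\phi_E$. Keeping $b_0=1$ and choosing $b_1+b_2<C_l$, the semi-positivity $\om_{sr}\geq 0$ lets me discard the term $(C_l-b_1-b_2)\tr_\vphi\om_{sr}\geq 0$ and retain only $t(C_l-b_1-b_2)\tr_\vphi\om$, which by Lemma~\ref{metrics equivalence} dominates a fixed multiple of $t\,\tr_\vphi\tilde\om_t$. This produces the differential inequality $\tri_\vphi u\geq A_\theta\tr_\vphi\tilde\om_t+A_R$ for $u=w-C_l\phi_E$, i.e. $H=C_l\phi_E$ exactly, with $A_\theta$ now of size $O(t)$ but still strictly positive. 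The integrability conditions on $H$ needed in Proposition~\ref{Linfty estimate v prop} and Proposition~\ref{Estimation of I prop} are essentially free: $e^{-H}=e^{|C_l|\phi_E}$ is bounded, and $e^{H}=e^{C_l\phi_E}\in L^{p_0}(\tilde\om_t^n)$ is the standing assumption of Proposition~\ref{L infty estimates f sup vphi prop}; one then fixes $b_2$ by \eqref{Linfty estimate b2}, $b_3$ by \eqref{Linfty estimate Atheta b3}, $b_4$ by \eqref{Linfty estimate b4}, and invokes the auxiliary function $\vphi_a$ as before.

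The role of the hypothesis $|R-C_l n|\leq A_s t$ is to restore the uniformity in $t$ that is threatened by the now $t$-small coercivity constant $A_\theta$, hence $t$-small cutoff constant $b_3$. Since $b_1$ is forced to satisfy $b_1 n=C_l n+O(t)$, the quantity $b_0R-b_1n$ that bounds $(A_R)_-$ and controls the domain $\{A_R\leq 0\}$ is of size $O(t)$: on $\{A_R\leq 0\}$ the ratio $|b_0R-b_1n|/(b_2 n)$ stays bounded, so \eqref{F and entropy} bounds $F$ there uniformly, and the factor $(A_R)_-^{2n}$ in the integral $I$ of Proposition~\ref{Estimation of I prop} decays at the rate needed to absorb the growth of $b_3^{-2n}$, keeping $I$, and therefore the final constant $C$, uniform in $t,\eps$. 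For $t$ bounded away from $0$ the metric $\om_t$ is uniformly K\"ahler and uniformity is automatic, so only small $t$ needs this refinement. The main obstacle is exactly this bookkeeping: after forcing $H=C_l\phi_E$ one can salvage only an $O(t)$ amount of coercivity, and one must check that the powers of $t$ coming from $b_3^{-2n}$, from $(A_R)_-^{2n}$, from $b_2$, and from the coercivity margin all cancel in the estimate for $I$ — it is here that $|R-C_l n|\leq A_s t$ enters in an essential way.
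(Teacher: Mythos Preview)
Your approach contains a genuine gap in the ``bookkeeping'' step. When you pass from $\tr_\vphi\om_t$ to $t\,\tr_\vphi\om$ by discarding the semi-positive part $\om_{sr}$, the coercivity constant becomes $A_\theta=ct\cdot\delta$ with $\delta:=C_l-b_1-b_2>0$, so $b_3\sim t\delta$. On the other hand $C_l-b_1=\delta+b_2$, hence
\[
|R-b_1n|\;\le\;|R-C_ln|+n(\delta+b_2)\;\le\;A_st+n\delta+nb_2.
\]
The crucial ratio appearing in the bound \eqref{Linfty estimate rough I bound} for $I$ is therefore
\[
\frac{|R-b_1n|}{b_3}\;\sim\;\frac{A_st+n\delta+nb_2}{t\delta}
\;=\;\frac{A_s}{\delta}+\frac{n}{t}+\frac{nb_2}{t\delta},
\]
and the middle term $n/t$ blows up as $t\to0$ \emph{regardless} of how $\delta$ and $b_2$ are chosen. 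Consequently $I=O(t^{-2n})$ and the estimate is not uniform. Your assertion that ``$b_1$ is forced to satisfy $b_1n=C_ln+O(t)$'' is compatible with the required positivity $\delta>0$ only if $\delta=O(t)$, which makes $A_\theta=O(t^2)$; the powers of $t$ then do \emph{not} cancel, contrary to what you claim.

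The paper avoids this by not forcing $H=C_l\phi_E$ at each $t$. It keeps the $\tilde\om_t$--decomposition of Proposition~\ref{L infty estimates f sup vphi prop}, so that $H=(b_1+b_2)\phi_E$ and $A_\theta=C_l-(b_1+b_2)$ with \emph{no} extra factor of $t$. Choosing $b_2=t$ and $b_1=C_l-2t$ gives $A_\theta=t$ and $H=(C_l-t)\phi_E$; then $|R-b_1n|\le(A_s+2n)t$ by the hypothesis, the ratio $|R-b_1n|/b_3$ stays bounded, and $I$ is uniformly controlled. The conclusion $\sigma_s=C_l$ is obtained because the weight $(C_l-t)$ in $H$ converges to $C_l$ as $t\to0$, with the constant in the estimate remaining uniform throughout. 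The extra hypothesis $|R-C_ln|\le A_st$ is used exactly as you guessed, but it only balances a \emph{single} factor of $t$ in $b_3^{-1}$; your decomposition produces two.
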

\begin{proof}
The proof is identical to Proposition \ref{L infty estimates f sup vphi prop}, but choosing $b_0=1$, $A_\theta=t$ and $H=(C_l-t)\phi_E$. We also take $b_2=t$. Since $t\rightarrow 0$, we have
$ pb_2=pt\leq \alpha(\Om_1)$, which is the condition \eqref{Linfty estimate b2}.
Then $b_1=C_l-b_2-t=C_l-2t$ and $b_4=\min\{\frac{1}{n},\frac{\alpha}{-4nb_1}\}$. Also,
\begin{align*} 
\frac{t}{8d^{-2}b_4^{-1}+1} \leq b_3= \frac{t}{8d^{-2}b_4^{-1}+t}\leq \frac{1}{8d^{-2}b_4^{-1}} .
\end{align*} 
The scalar curvature assumption gives
\begin{align*}
|R-b_1n|=|R-(C_l-2t)n|\leq |R-C_ln|+2tn\leq (A_s+2n) t.
\end{align*} 
We have $F\leq (\frac{|R-b_1n|}{b_2n})^n (E^\beta_{t,\eps}+2e^{-1}+1)
$ is bounded in the domain $A_R\leq 0$.
We insert these values to \eqref{Linfty estimate rough I bound} in Proposition \ref{Estimation of I prop} to estimate 
\begin{align*}
I\leq e^{-2\inf_X f}  \int_{A_R\leq 0}  |b_3^{-1}(R-b_1n)|^{2n} e^{-2nb_4 H} e^{2nb_4b_1\vphi}e^{(2nb_4+2)F} \tilde \om_t^n.
\end{align*}  By further using the bound of $e^{-H}$ and $|b_3^{-1}(R-b_1n)|$, we have $I$ is bounded, if $b_4$ is smaller than $\min\{\frac{1}{n},\frac{\alpha}{-2nb_1q}\}$. Consequently, the estimates \eqref{L infty estimates f sup vphi} follow from Proposition \ref{Linfty estimate v prop} and \eqref{Linfty estimate eF} of Proposition \ref{Linfty estimate v prop}. We find that all constants are independent of $t$, so we could further take $t\rightarrow 0$ such that the weight $\sigma_s=C_l$.
\end{proof}

\begin{cor}\label{L infty estimates f inf prop cor}
Assume $|R-C_un|\leq A_s t$ for some constant $A_s$. Then \eqref{L infty estimates f inf} holds with $\sigma_i=C_u$.
\end{cor}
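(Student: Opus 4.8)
The plan is to repeat the argument of Proposition~\ref{L infty estimates f inf prop} verbatim, but to let the ``gap constant'' $A_\theta$ shrink with the perturbation parameter $t$ and to keep track of the $t$-dependence of every quantity so that one may let $t\to0$ at the end; this is exactly the mechanism used to remove $\tau$ in Corollary~\ref{L infty estimates f sup vphi prop cor}. As in Proposition~\ref{L infty estimates f inf prop}, I take $b_0=-1$, $b_2=0$, $w=-F+b_1\vphi$, and substitute the upper bound $\theta\leq C_u\tilde\om_t$ together with $\om_t=\tilde\om_t-i\p\bar\p\phi_E$ into \lemref{Linfty estimate tri w}. This produces the differential inequality \eqref{Linfty estimate tri u} with
\[
H=b_1\phi_E,\qquad A_\theta=-C_u-b_1,\qquad A_R=R+b_1n.
\]
The only modification is that instead of fixing $A_\theta=t_0>0$ I set $A_\theta=t$, hence $b_1=-C_u-t$ and $H=-(C_u+t)\phi_E$; then $-H=(C_u+t)\phi_E$ is bounded above, so $e^{-2H}$ is uniformly bounded and $e^{-H}$ lies in every $L^{p_H}(\tilde\om_t^n)$. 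I keep $b_4=\tfrac1n$, so that \eqref{Linfty estimate Atheta b3} forces $b_3=\tfrac{t}{8d^{-2}b_4^{-1}+t}$, i.e. $b_3^{-1}=8d^{-2}b_4^{-1}t^{-1}+1$.

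With $b_0=-1$ and $b_4=\tfrac1n$ the exponent $2nb_4b_0+2$ appearing in \eqref{Linfty estimate rough I bound} vanishes, so $I$ carries no $e^{cF}$ factor and \eqref{Linfty estimate inf F I} reads
\[
I\leq e^{-2\inf_X f}\int_{A_R\leq0}\big|b_3^{-1}(R+b_1n)\big|^{2n}e^{-2H}e^{-2(C_u+t)\vphi}\,\tilde\om_t^n.
\]
Here is the one place where the hypothesis $|R-C_un|\leq A_st$ is used: $|R+b_1n|=|R-(C_u+t)n|\leq|R-C_un|+tn\leq(A_s+n)t$, so
\[
\big|b_3^{-1}(R+b_1n)\big|\leq\big(8d^{-2}b_4^{-1}t^{-1}+1\big)(A_s+n)t=\big(8d^{-2}b_4^{-1}+t\big)(A_s+n),
\]
which stays bounded as $t\to0$. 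Combining this with the uniform upper bound on $-H=(C_u+t)\phi_E$ and the $L^\infty$ bound of $\vphi$ inherited from Proposition~\ref{L infty estimates f sup vphi prop} (the assumption silently carried over, exactly as in Proposition~\ref{L infty estimates f inf prop}), I conclude that $I$ is bounded by a constant independent of $t$ and $\eps$.

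Feeding this into item~\eqref{Linfty estimate v prop 3} of Proposition~\ref{Linfty estimate v prop}, which with $b_0=-1$, $b_2=0$, $H=-(C_u+t)\phi_E$ says $-F+(C_u+t)\phi_E\leq C(n)d\,I^{1/(2n)}$, I obtain $\inf_X[F-(C_u+t)\phi_E]\geq-C$ with $C$ independent of $t$ and $\eps$. Since every constant is $t$-free, I finally let $t\to0$ and deduce $\inf_X[F-C_u\phi_E]\geq-C$, which is \eqref{L infty estimates f inf} with $\sigma_i=C_u$. The main (indeed essentially the only) obstacle is the uniform control of $b_3^{-1}(R+b_1n)$ as $t\to0$: without the scalar curvature hypothesis the factor $b_3^{-1}$ blows up like $t^{-1}$ and the estimate of $I$ collapses, so the smallness $|R-C_un|\leq A_st$ is precisely what compensates the degeneration of the cutoff parameter $b_3$, mirroring the role of $|R-C_ln|\leq A_st$ in Corollary~\ref{L infty estimates f sup vphi prop cor}.
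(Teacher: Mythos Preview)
Your proposal is correct and follows essentially the same approach as the paper: set $A_\theta=t$ (rather than a fixed $t_0$) in the framework of Proposition~\ref{L infty estimates f inf prop}, so that $b_1=-C_u-t$, $H=-(C_u+t)\phi_E$, $b_4=\tfrac1n$, and then observe that the hypothesis $|R-C_un|\leq A_st$ makes $|b_3^{-1}(R+b_1n)|$ uniformly bounded, whence $I$ is $t$-independent and one may let $t\to0$. Your write-up is in fact more explicit than the paper's (you spell out $b_3^{-1}=8d^{-2}b_4^{-1}t^{-1}+1$ and note that $2nb_4b_0+2=0$ kills the $e^{cF}$ factor), but the argument is the same.
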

\begin{proof}
We learn from Proposition \ref{L infty estimates f inf prop} that $A_\theta=t,  H=-(C_u+t)\phi_E$. We have $b_2=0$ and $b_4=\frac{1}{n}$. Also, $b_1=-C_u-t$ is bounded when $0\leq t\leq 1$ and
\begin{align*}
b_3=\frac{t}{8d^{-2}b_4^{-1}+t}\geq Ct.
\end{align*}
Inserting the upper bound of $e^{-H}$ and $|b_3^{-1}(-R-b_1n)|$ in \eqref{Linfty estimate inf F I}, we get the estimate of $I$ is independent of $t$. Therefore, we conclude from Proposition \ref{L infty estimates f inf prop} with $t\rightarrow 0$.
\end{proof}
\begin{rem}
For the cscK problem, the averaged scalar curvature $R_t$ \eqref{approximate average scalar} of the approximate singular cscK metric is close to $\ul S_\beta=\frac{C_1(X,D)\Om^{n-1}}{\Om^n}$, namely
\begin{align*}
R_t-\ul S_\beta\leq C t.
\end{align*} So, the assumption in both corollaries means some sort of pinching of the eigenvalues of the representative $\theta\in C_1(X,D)$.
\end{rem}



\section{Gradient estimate of $\vphi$}\label{Gradient estimate of vphi}
In this section, we obtain the gradient estimate for the singular cscK metric, extending the results for non-degenerate cscK metrics \cite{MR4301557}. We will use the singular exponent $\sigma_E$ to measure the singularity of the given big and semi-positive class $\Om$. Meanwhile, we will use the degenerate exponent $\sigma_D$ to reflect how the degeneracy of the singular cscK equation \eqref{Singular cscK t eps} could effect the gradient estimate. It is surprising to see that, when the cone angle $\beta> \frac{n+2}{2}$, the degenerate exponent does not appear in the gradient estimate. That means the gradient estimates remains exactly the same to the one for the non-degenerate metric.

\begin{thm}[{Gradient estimate of $\vphi$}]\label{gradient estimate}
Suppose that $\vphi_\eps$ is a solution to the approximate singular scalar curvature equation \eqref{Singular cscK t eps tilde}. 
\begin{enumerate}
\item
Assume that $\Om$ is big and semi-positive. Then there exists a constant $C$ such that
\begin{align*}
 |s_E|^{2a_0 \sigma_E } S_\eps^{\sigma_D}|\p\psi|^2_{\tilde\om_t}\leq A_3, \quad \sigma_D\geq 1.
\end{align*}
The singular exponent $\sigma_E$ is sufficiently large, and determined in \eqref{Gradient estimate: sigmaE} and \eqref{Gradient estimate: sigmaE 2}.

\item
If we further assume that $\Om=[\om_K]$ is K\"ahler, then $\sigma_E=0$ and the degenerate exponent $\sigma_D$ is weaken to satisfy
\begin{equation}\label{gradient estimate: degenerate exponent}
\left\{
\begin{aligned}
 &\sigma_D=0,\text{ when }\beta>\frac{n+2}{2},\\
&\sigma_D>1-\frac{2\beta}{n+2}\geq 0,\text{ when } \beta\leq\frac{n+2}{2}.
   \end{aligned}
\right.
\end{equation} 
Then there holds the gradient estimate
\begin{align*}
S_\eps^{\sigma_D}|\p\vphi_\eps|_{\om_K}^2\leq A_4.
\end{align*}
\end{enumerate}
\end{thm}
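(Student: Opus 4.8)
The plan is to run the weighted integration method of Section~\ref{A priori estimates for approximate degenerate cscK equations} on a gradient quantity in place of $\tr_\om\om_\vphi$. Fix a solution $(\vphi_{t,\eps},F_{t,\eps})$ of \eqref{Singular cscK t eps tilde}, abbreviate $g=\tilde\om_t$, $\psi=\tilde\vphi_{t,\eps}$, $\tilde F=\tilde F_{t,\eps}$, suppress the indices, and set
\begin{align*}
w:=(|\p\psi|^2_{g}+1)\,e^{-C_1\psi},\qquad u:=|s_E|_{h_E}^{2a_0\sigma_E}\,S_\eps^{\sigma_D}\,w,
\end{align*}
with $C_1$ a large positive constant (fixed exactly as in Step~1 of the proof of \thmref{cscK Laplacian estimate}, depending on the bisectional curvature of $g$ and on the bounds for $\theta$ of Definition~\ref{L infty estimates theta defn}) and $\sigma_E,\sigma_D\ge 0$ parameters to be chosen. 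First I would establish a Bochner-type differential inequality: differentiating $|\p\psi|^2_g$ twice, using $\om_\vphi=g+i\p\bar\p\psi$ and the volume-ratio equation $\om_\vphi^n=e^{\tilde F}g^n$ (hence $\Ric(\om_\vphi)=\Ric(g)-i\p\bar\p\tilde F$) to substitute for the Ricci terms, one gets
\begin{align*}
\tri_\vphi w\ \ge\ c_0\,w\,\tr_\vphi g\ +\ e^{-C_1\psi}\,\Re\langle\p\tilde F,\p\psi\rangle_{g}\ -\ C\,(1+\tr_\vphi g),
\end{align*}
the first term coming from $|\nabla\bar\nabla\psi|^2_{g,\vphi}\ge 0$ and $\tri_\vphi e^{-C_1\psi}=e^{-C_1\psi}(C_1\tr_\vphi g-C_1 n+C_1^2|\p\psi|_\vphi^2)$ once $C_1$ is large, and $\langle\p\tilde F,\p\psi\rangle$ (with $\p\tilde F=\p F-\p f$, $|\p f|\le C(1+S_\eps^{-1/2})$ by \lemref{nef tilde f}) being the genuinely fourth-order contribution, the exact analogue of the $\tri F$ term in Proposition~\ref{Laplacian estimate: integral inequality pre prop}. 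Adding the geometric weights via \lemref{h eps} for $\tri_\vphi\log S_\eps$ and via $i\p\bar\p\log|s_E|^2_{h_E}\ge -Cg$ on $M$ (the $2\pi[E]$ part being positive) -- which is also the point where part~(1) forces $\sigma_D\ge 1$, so that $S_\eps^{\sigma_D}$ dominates the $S_\eps^{-1}$ in $|\p f|^2$ -- one reaches a clean differential inequality $\tri_\vphi u\ge c_0\,u\,\tr_\vphi g+e^{-C_1\psi}|s_E|^{2a_0\sigma_E}_{h_E}S_\eps^{\sigma_D}\Re\langle\p\tilde F,\p\psi\rangle-C(1+u)$.

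Next I would multiply this inequality by $u^{2p}$ ($p\ge 1$), integrate against $\om_\vphi^n=e^{\tilde F}g^n$, and integrate by parts: the term $\langle\p\tilde F,\p\psi\rangle$ is handled by writing $e^{\tilde F}\p\tilde F=\p(e^{\tilde F})$ and moving the derivative onto $e^{\tilde F}$ and onto $u^{2p}e^{-C_1\psi}\bar\p\psi$ (which introduces $\p u\cdot\p\psi$, the term $u^{2p}|\p\psi|^2$ already present in $u$, and $u^{2p}e^{-C_1\psi}\tri_g\psi=u^{2p}e^{-C_1\psi}(\tr_g\om_\vphi-n)$), while the $\tr_\vphi g$ terms are treated cohomologically via $\tr_\vphi g\,\om_\vphi^n=n\,g\wedge\om_\vphi^{n-1}$ followed by a further integration by parts moving the $i\p\bar\p\psi$ factors of $\om_\vphi^{n-1}$ onto $u^{2p}$; as foreseen in Remark~\ref{4th term} this costs powers of $S_\eps$ and of $|s_E|$. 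Using the $L^\infty$-bounds of \thmref{L infty estimates Singular equation}, the volume-ratio bound \eqref{Gradient estimate: volume ratio bound}, and the fundamental inequality $\tr_\vphi\om\ge(\tr_\om\om_\vphi)^{1/(n-1)}e^{-\tilde F/(n-1)}$ -- which lets us bypass any a~priori bound on $\p F$ -- one obtains an integral inequality bounding the weighted Dirichlet energy $\int|\p u^p|^2S_\eps^{\gamma}\om_\vphi^n$ (with $\gamma$ a weight exponent built from $\sigma_D,\sigma_E$) together with a positive companion term by lower-order weighted integrals of $u$. Applying the Sobolev inequality of $(X,\om)$ to the Dirichlet term, and then a bookkeeping of the $S_\eps$- and $|s_E|$-exponents entirely parallel to Propositions~\ref{Weighted inequality}--\ref{inverse weighted inequalities} -- constructing weighted and inverse-weighted inequalities with auxiliary parameters so that every integral is subcritical against $\tilde\mu=S_\eps^{k\chi}\om^n$, $\chi=\tfrac{n}{n-1}$ -- yields a genuine iteration inequality $\|u\|_{L^{2p\chi}(\tilde\mu)}\le Cp^{C}\|u\|_{L^{2pa}(\tilde\mu)}+C$ with $a<\chi$, provided $\sigma_D$ satisfies \eqref{gradient estimate: degenerate exponent} in the K\"ahler case and $\sigma_E$ is large enough to defeat the degeneration of $\om_{\theta_{t,\eps}}$ near $E$ recorded in \lemref{big nef approximate reference metric bound}. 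Iterating over $p=\chi_a^m$ and letting $m\to\infty$, as in Section~\ref{Step 6: iteration}, gives $\|u\|_{L^\infty}\le C$, which is the asserted estimate. Part~(2) is the specialisation $\phi_E\equiv 0$, $E=\emptyset$, whence $\sigma_E=0$; there a sharper accounting of the single remaining $S_\eps$-exponent -- distributing the $S_\eps^{-1}$ through Young's inequality against the companion term as in Lemma~\ref{Laplacian estimate: RHS2 bound}, now with the Sobolev--H\"older balance appropriate to a first-order quantity -- relaxes the requirement to $\sigma_D>1-\tfrac{2\beta}{n+2}$ and permits $\sigma_D=0$ once $\beta>\tfrac{n+2}{2}$.

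The main obstacle, exactly as for the Laplacian estimate, is the fourth-order term $\langle\p\tilde F,\p\psi\rangle$: integrating it by parts both loses weights and reintroduces $\p\tilde F$, hence $S_\eps^{-1/2}$ (through $|\p S_\eps|\le CS_\eps^{1/2}$ together with $|\p f|^2\le C(1+S_\eps^{-1})$ from \lemref{nef tilde f}), so the whole difficulty is to calibrate $\sigma_D$ and $\sigma_E$ so that after these losses every resulting integral stays subcritical for $\tilde\mu$ while the iteration exponent $a$ remains strictly below $\chi$. A secondary difficulty, absent from the non-degenerate case \cite{MR4301557}, is that $\p F$ cannot be assumed bounded here, so the $\p\tilde F$-contributions must be absorbed through integration by parts and the fundamental inequality rather than directly; it is precisely this less direct route -- one more power of $u$ to be swallowed by Sobolev -- that shifts the critical cone angle from $\tfrac{n+1}{2}$ to $\tfrac{n+2}{2}$.
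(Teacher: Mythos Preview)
Your outline misses the key algebraic trick that makes the paper's proof work, and the workaround you propose has a genuine gap.

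In the paper the weight is not $e^{-C_1\psi}$ but $e^{H}$ with
\[
H=-F-\sigma_E\psi+e^{-\psi}+\sigma_D\log S_\eps .
\]
The point of putting $-F$ into $H$ is that the Bochner identity for $v=|\p\psi|^2+K$ produces the term $2\Re(\tilde F_i\psi_{\bar i})v^{-1}$, while $\tri_\vphi\log u$ adds $\tri_\vphi H$ and, after the cross terms are organised (Lemma~\ref{Gradient estimate: Differential inequality lem}), the first--order contribution is $2\Re[(H_i+\tilde F_i)\psi_{\bar i}]v^{-1}$. Since $\tilde F=F-f$, one has $H+\tilde F=-f+(-\sigma_E\psi+e^{-\psi})+\sigma_D\log S_\eps$, so \emph{all dependence on $F$ cancels}. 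What remains involves only $\p f$ and $\p\log S_\eps$, both of size $O(S_\eps^{-1/2})$, and there is nothing fourth--order left to integrate by parts. Your weight $e^{-C_1\psi}$ omits the $-F$, so $\langle\p\tilde F,\p\psi\rangle$ survives, and you are forced into the integration--by--parts you describe.

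That integration by parts is where the gap lies. Moving the derivative off $e^{\tilde F}\p\tilde F=\p e^{\tilde F}$ onto $\bar\p\psi$ produces $\int u^{2p}W\,\tri_g\psi\,\om_\vphi^n=\int u^{2p}W(\tr_g\om_\vphi-n)\,\om_\vphi^n$, i.e.\ a $\tr_g\om_\vphi$ term with the wrong sign on the right--hand side. You propose to control it by the fundamental inequality $\tr_\vphi g\ge(\tr_g\om_\vphi)^{1/(n-1)}e^{-\tilde F/(n-1)}$, but that inequality bounds $\tr_\vphi g$ from \emph{below} in terms of $\tr_g\om_\vphi$; it does not let you absorb a bad $\tr_g\om_\vphi$ into the good term $\int u^{2p+1}\tr_\vphi g\,\om_\vphi^n$, because $u$ is built from $|\p\psi|^2$, not from $\tr_g\om_\vphi$. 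The paper does obtain a good $e^H\tr_g\om_\vphi$ term, but it comes from \emph{rewriting} $\sum|\psi_{i\bar i}|^2/(1+\psi_{i\bar i})=\tr_g\om_\vphi-2n+\tr_\vphi g$ rather than dropping it as you do with ``$|\nabla\bar\nabla\psi|^2\ge 0$''; and even with that term present, it is paired with $e^H$, not with the weight your IBP produces, so the coefficients would have to be matched carefully---your sketch does not do this.

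Two further differences worth noting. First, the extra factor $e^{-\psi}$ in $H$ is not cosmetic: it supplies the term $e^{-\psi}|\p\psi|^2_\vphi$ which, combined with $\tr_\vphi g$ via the AM--GM argument of Lemma~\ref{Gradient estimate: Differential inequality G}, yields the superlinear power $u^{1+1/n}$ in Proposition~\ref{Gradient estimate: Differential inequality prop}. This is what drives both the maximum principle (part~(1)) and the iteration (part~(2)). Second, part~(1) is \emph{not} done by integration: once the differential inequality is clean and $\sigma_D\ge 1$ makes $A_we^{H/2}\ge -C$ bounded, one has $\tri_\vphi u\ge C_1u^{1+1/n}-C_2(u+u^{1/2}+1)$ and the ordinary maximum principle gives $\sup u\le C$ immediately (\thmref{gradient estimate sigma 1}). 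The integration method is used only in part~(2) to push $\sigma_D$ below $1$, and there the Sobolev exponent is $\chi=\tfrac{2n}{2n-1}$ (first--order quantity), not $\tfrac{n}{n-1}$.
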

The precise statements will be given in \thmref{gradient estimate sigma 1} and \thmref{gradient estimate sigma 2}, respectively.
\begin{rem}
In the second conclusion, we see that  $\sigma_D$ could be chosen to be zero, when $\beta> \frac{n+2}{2}$.
\end{rem}
\begin{proof}
We denote $\tilde\vphi$ by $\psi$ and all the norms are taken with respect to the K\"ahler metric $\tilde\om_t$ in this proof. We will use the approximate singular scalar curvature equation \eqref{Singular cscK t eps tilde} and omit the lower index for convenience. 
\begin{equation}\label{Singular cscK t eps tilde gradient}
(\tilde\om_t+i\p\bar\p \psi)^n=e^{\tilde F} \tilde\om_t^n,\quad
\tri_{\psi} \tilde F=\tr_{\psi}(\theta-i\p\bar\p f)-R,
\end{equation}
where, $
\tilde F=F-f, f=-(\beta-1)\log S_\eps-h_\theta-c_{t,\eps}-\log\frac{\om^n}{\tilde\om_t^n}.
$
We will divide the proof into several steps in this section.
\end{proof}

\subsection{Differential inequality}
Let $K$ be a positive constant determined later and $H$ be an auxiliary function on $\tilde F$ and $\psi$. 
We set 
\begin{align*}
&v:=|\p\psi|^2+K,\quad u:=e^{H} v.
\end{align*}
Then we calculate that
\begin{lem}\label{Gradient estimate: Differential identity lem}
\begin{align}\label{Gradient estimate: Differential identity}
u^{-1}\tri_\psi u&= \tri_\psi H+|\p H|^2_\psi
+2\frac{H_i\psi_{i\bar i}\psi_{\bar i}+H_i\psi_{\bar j\bar i}\psi_{j}}{(1+\psi_{i\bar i})v}\notag \\
&+\frac{R_{i\bar i j\bar j}(\tilde\om_t)\psi_{j}\psi_{\bar j}+|\psi_{ij}|^2+|\psi_{i\bar i}|^2}{(1+\psi_{i\bar i})v}+\frac{2Re(\tilde F_i\psi_{\bar i})}{v}.
\end{align}
\end{lem}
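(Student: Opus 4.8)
The plan is to prove the identity \eqref{Gradient estimate: Differential identity} by a direct pointwise computation of Bochner type, following Chen--Cheng \cite{MR4301557}. Since the statement is local, I would fix a point $p\in X$, choose normal holomorphic coordinates for $\tilde\om_t$ centred at $p$ (so that $\tilde g_{i\bar j}(p)=\delta_{ij}$ and the first derivatives of $\tilde g$ vanish at $p$), and after a unitary rotation also arrange that $\psi_{i\bar j}(p)$ is diagonal, whence $g_\psi^{i\bar j}(p)=(1+\psi_{i\bar i})^{-1}\delta_{ij}$ and $\tri_\psi=\sum_i(1+\psi_{i\bar i})^{-1}\p_i\p_{\bar i}$ at $p$. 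Writing $u=e^{H}v$ with $v=|\p\psi|^2+K$, the logarithmic product rule immediately gives
\begin{align*}
u^{-1}\tri_\psi u=\tri_\psi H+|\p H|^2_\psi+\frac{\tri_\psi v}{v}+\frac{2\Re\big(g_\psi^{i\bar j}H_i v_{\bar j}\big)}{v},
\end{align*}
so the entire content of the lemma lies in evaluating the last two terms.

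For the cross term I would compute $v_{\bar j}=\psi_{i\bar j}\psi_{\bar i}+\psi_i\psi_{\bar i\bar j}$ at $p$ (where $\tilde\om_t$ is flat to first order), contract with $g_\psi^{i\bar j}=(1+\psi_{i\bar i})^{-1}\delta_{ij}$, and take real parts to read off the term $2\frac{H_i\psi_{i\bar i}\psi_{\bar i}+H_i\psi_{\bar j\bar i}\psi_j}{(1+\psi_{i\bar i})v}$. For $\tri_\psi v$, I would differentiate $v=\tilde g^{k\bar l}\psi_k\psi_{\bar l}+K$ twice, using covariant derivatives with respect to $\tilde\om_t$, which at $p$ reduce to ordinary second derivatives up to the curvature correction appearing at second order. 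This produces exactly three kinds of contributions after contracting with $g_\psi$: the Hessian squares $|\psi_{ij}|^2+|\psi_{i\bar i}|^2$ (understood as the appropriate contractions against $g_\psi$), the curvature term $R_{i\bar i j\bar j}(\tilde\om_t)\psi_j\psi_{\bar j}$ coming from commuting covariant derivatives past the metric, and a third-order term $g_\psi^{k\bar l}\psi_{k\bar l i}\psi_{\bar i}$ together with its complex conjugate. To remove the last one I would differentiate the Monge--Amp\`ere equation in the form $\log\det(\tilde g_{k\bar l}+\psi_{k\bar l})=\tilde F+\log\det\tilde g_{k\bar l}$, which gives $g_\psi^{k\bar l}\psi_{k\bar l i}=\tilde F_i$ at $p$, so that this contribution becomes $2\Re(\tilde F_i\psi_{\bar i})$. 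Dividing by $v$, inserting $g_\psi^{i\bar i}=(1+\psi_{i\bar i})^{-1}$, and collecting everything then yields \eqref{Gradient estimate: Differential identity}.

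The algebra of the product rule and of the various contractions against $g_\psi$ is entirely routine. The one step requiring genuine care is the third-order/commutator step: one must correctly account for the curvature of $\tilde\om_t$ picked up when commuting $\p\bar\p$ past the first covariant derivative of $\psi$, and keep scrupulous track of which tensors are measured in $\tilde\om_t$ and which in $\om_\psi$ — this distinction is precisely what makes the factors $(1+\psi_{i\bar i})^{-1}$ appear in the final identity. I expect this bookkeeping, rather than any conceptual difficulty, to be the main obstacle; it is the standard complex Bochner identity underlying gradient estimates for complex Monge--Amp\`ere type equations, and once it is carried out the lemma follows by simply assembling the terms.
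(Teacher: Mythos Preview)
Your proposal is correct and follows essentially the same approach as the paper: a pointwise Bochner computation in normal coordinates for $\tilde\om_t$ diagonalising $\psi_{i\bar j}$, combined with differentiation of the Monge--Amp\`ere equation to replace the third-order terms by $\tilde F_i\psi_{\bar i}$. The only cosmetic difference is that the paper first computes $\tri_\psi\log v$ and then reaches $u^{-1}\tri_\psi u$ via $u^{-1}\tri_\psi u=|\p\log u|^2_\psi+\tri_\psi\log u$ together with the expansion $|\p\log u|^2_\psi-|\p\log v|^2_\psi=|\p H|^2_\psi+2v^{-1}(\p H,\p v)_\psi$, whereas you use the equivalent product-rule form $u^{-1}\tri_\psi u=\tri_\psi H+|\p H|^2_\psi+v^{-1}\tri_\psi v+2v^{-1}\Re(g_\psi^{i\bar j}H_i v_{\bar j})$ directly.
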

\begin{proof}
We compute the Laplacian of $\log v$ under the normal coordinates with respect to the metric $\tilde\om_t$, 
\begin{align*}
\tri_\psi \log v=-|\p\log v|^2_\vphi+\frac{R_{i\bar i j\bar j}(\tilde\om_t)\psi_{j}\psi_{\bar j}+|\psi_{ij}|^2+|\psi_{i\bar i}|^2}{(1+\psi_{i\bar i}) v}+\frac{2\Re(\tilde F_i\psi_{\bar i})}{v}.
\end{align*}
Applying 
$
\tri_\psi \log u=\tri_\psi H+\tri_\psi\log v,
$
we get
\begin{align*}
u^{-1}\tri_\psi u&=| \p\log u|^2_\psi+\tri_\psi \log u\\
&=|\p\log u|^2_\psi+\tri_\psi H-|\p\log v|^2_\psi\\
&+\frac{R_{i\bar i j\bar j}(\tilde\om_t)\psi_{j}\psi_{\bar j}+|\psi_{ij}|^2+|\psi_{i\bar i}|^2}{(1+\psi_{i\bar i}) v}+\frac{2\Re(\tilde F_i\psi_{\bar i})}{v}.
\end{align*}
We further calculate that
\begin{align*}
|\p\log u|^2_\psi-|\p\log v|^2_\psi&=(\p H,2\p\log v+\p H)_\psi=|\p H|^2_\psi+2(\p H,\p v)_\psi v^{-1}\\
&=|\p H|^2_\psi+2\frac{H_i\psi_{i\bar i}\psi_{\bar i}+H_i\psi_{\bar j\bar i}\psi_{j}}{(1+\psi_{i\bar i})v}.
\end{align*}
In summary, the desired identity is obtained by adding them together.
\end{proof}

The differential inequality is obtained from the identity \eqref{Gradient estimate: Differential identity}, after dropping off the positive terms and carefully choosing the weight function $H$.

Firstly, we remove the positive terms in \eqref{Gradient estimate: Differential identity}.

\begin{lem}\label{Gradient estimate: Differential inequality lem}
Let $K\geq 1$ and $-C_{1.1}:=\inf_{X}R_{i\bar ij\bar j}(\tilde\om_t)$. Then it holds
\begin{align}\label{tri Z}
u^{-1}\tri_\psi u&\geq  \tri_\psi H+2\Re[(H_i+\tilde F_i)\psi_{\bar i}]v^{-1}
\\
&+[(-C_{1.1}-1)+v^{-1}]\tr_{\psi}\tilde\om_t
+(\tr_{\tilde\om_t}\om_\psi-2n)v^{-1}.\notag
\end{align}
\end{lem}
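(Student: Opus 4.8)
The plan is to derive \eqref{tri Z} from the identity \eqref{Gradient estimate: Differential identity} in \lemref{Gradient estimate: Differential identity lem} by discarding several manifestly nonnegative contributions and bounding one term from below using the curvature of $\tilde\om_t$. Throughout I work at a point, in normal coordinates for $\tilde\om_t$ diagonalizing $\psi_{i\bar j}$, so that $\om_\psi$ has eigenvalues $1+\psi_{i\bar i}>0$; recall $v=|\p\psi|^2+K\geq K\geq 1$. The strategy has three parts: first, isolate and keep the gradient cross-terms $2\Re(H_i\psi_{\bar i})v^{-1}$ and $2\Re(\tilde F_i\psi_{\bar i})v^{-1}$ coming respectively from the $H$-term block and the last term of \eqref{Gradient estimate: Differential identity}; second, bound the curvature term $R_{i\bar i j\bar j}(\tilde\om_t)\psi_j\psi_{\bar j}/((1+\psi_{i\bar i})v)$ from below; third, show the remaining Hessian-type terms are bounded below by $[v^{-1}]\tr_\psi\tilde\om_t + (\tr_{\tilde\om_t}\om_\psi - 2n)v^{-1}$ up to the stated constants.

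For the curvature term, write $R_{i\bar i j\bar j}(\tilde\om_t)\psi_j\psi_{\bar j} \geq -C_{1.1}\sum_j \psi_j\psi_{\bar j}\cdot\sum_i 1$ is too lossy; instead observe that $\sum_{i,j}\frac{R_{i\bar ij\bar j}\psi_j\psi_{\bar j}}{(1+\psi_{i\bar i})v}\geq -C_{1.1}\sum_i\frac{1}{1+\psi_{i\bar i}}\cdot\frac{\sum_j|\psi_j|^2}{v}\geq -C_{1.1}\tr_\psi\tilde\om_t\cdot\frac{|\p\psi|^2}{v}\geq -C_{1.1}\tr_\psi\tilde\om_t$, using $|\p\psi|^2/v\leq 1$ and $-C_{1.1}=\inf R_{i\bar ij\bar j}(\tilde\om_t)$. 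For the positive Hessian terms, note $|\psi_{i\bar i}|^2/((1+\psi_{i\bar i})v)\geq 0$ and $|\psi_{ij}|^2/((1+\psi_{i\bar i})v)\geq 0$, so they may simply be dropped — except that one wants to retain the quantity $(\tr_{\tilde\om_t}\om_\psi-2n)v^{-1}$, which should be produced by combining $\sum_i \psi_{\bar i}\psi_i/((1+\psi_{i\bar i})v)$-type leftovers with the elementary inequality $\sum_i \frac{|\psi_{i\bar i}|^2 + \ldots}{(1+\psi_{i\bar i})v}$; more precisely I expect the clean route is to keep the full term $\frac{|\psi_{i\bar i}|^2}{(1+\psi_{i\bar i})v}$, use $\frac{|\psi_{i\bar i}|^2}{1+\psi_{i\bar i}} = 1+\psi_{i\bar i} - 2 + \frac{1}{1+\psi_{i\bar i}}\geq (1+\psi_{i\bar i}) - 2 = \psi_{i\bar i}-1$, so $\sum_i \frac{|\psi_{i\bar i}|^2}{(1+\psi_{i\bar i})v}\geq \frac{\tr_{\tilde\om_t}\om_\psi - 2n}{v} + \frac{1}{v}\tr_\psi\tilde\om_t - \frac{n}{v}\cdot(\text{adjust})$, and then absorb the $v^{-1}$-coefficient of $\tr_\psi\tilde\om_t$ into the curvature block to get the claimed $[(-C_{1.1}-1)+v^{-1}]\tr_\psi\tilde\om_t$. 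The constant $-1$ in front of $\tr_\psi\tilde\om_t$ is the bit of slack one needs to spend to extract $v^{-1}\tr_\psi\tilde\om_t$ cleanly.

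The main obstacle, and the step requiring care rather than routine bookkeeping, is the handling of the two cross-terms: in \eqref{Gradient estimate: Differential identity} the $H$-derivatives appear as $2\frac{H_i\psi_{i\bar i}\psi_{\bar i}+H_i\psi_{\bar j\bar i}\psi_j}{(1+\psi_{i\bar i})v}$, which is \emph{not} simply $2\Re(H_i\psi_{\bar i})v^{-1}$. One must show that after combining with the retained positive Hessian terms (in particular the $|\psi_{ij}|^2$ and $|\psi_{i\bar i}|^2$ terms) and completing squares in the mixed derivatives of $\psi$, the net effect is bounded below by $2\Re(H_i\psi_{\bar i})v^{-1}$, with the error absorbed into the $|\p H|^2_\psi$ term (which is dropped, being nonnegative) or into the $\tr_\psi\tilde\om_t$ slack. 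Concretely I would write $2\frac{H_i\psi_{i\bar i}\psi_{\bar i}}{(1+\psi_{i\bar i})v} = 2\frac{H_i\psi_{\bar i}}{v} - 2\frac{H_i\psi_{\bar i}}{(1+\psi_{i\bar i})v}$ and control the last piece together with $|\psi_{i\bar i}|^2/((1+\psi_{i\bar i})v)$ via Cauchy–Schwarz, and similarly pair $2\frac{H_i\psi_{\bar j\bar i}\psi_j}{(1+\psi_{i\bar i})v}$ against $\frac{|\psi_{ij}|^2}{(1+\psi_{i\bar i})v}$. Once these pairings are done, dropping the remaining nonnegative squares and $|\p H|^2_\psi$, and inserting the curvature bound above, yields exactly \eqref{tri Z}; I would record the computation as a short display and note that $K\geq 1$ is used only to guarantee $v\geq 1$ so that $|\p\psi|^2/v\leq 1$.
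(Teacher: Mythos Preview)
Your overall strategy is the paper's: start from the identity in \lemref{Gradient estimate: Differential identity lem}, bound the curvature term below by $-C_{1.1}\tr_\psi\tilde\om_t$, keep the full quantity $\sum_i\frac{|\psi_{i\bar i}|^2}{(1+\psi_{i\bar i})v}$ and rewrite it via $\frac{|\psi_{i\bar i}|^2}{1+\psi_{i\bar i}}=(1+\psi_{i\bar i})-2+\frac{1}{1+\psi_{i\bar i}}$ as $(\tr_{\tilde\om_t}\om_\psi-2n+\tr_\psi\tilde\om_t)v^{-1}$, and absorb the $H$-cross-terms by completing squares. Two concrete steps in your execution, however, are not right as written.

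First, you propose to control the leftover $-2\frac{H_i\psi_{\bar i}}{(1+\psi_{i\bar i})v}$ ``together with $|\psi_{i\bar i}|^2/((1+\psi_{i\bar i})v)$ via Cauchy--Schwarz.'' This cannot work: you have already committed the entire $|\psi_{i\bar i}|^2$-term to the trace identity above, so it is not available a second time, and in any case there is no useful Cauchy--Schwarz pairing between $H_i\psi_{\bar i}$ and $\psi_{i\bar i}^2$. The correct bound (which you in fact allude to earlier, before contradicting yourself) is Young's inequality
\[
-\sum_i\frac{2H_i\psi_{\bar i}}{1+\psi_{i\bar i}}
\;\geq\; -\sum_i\frac{|H_i|^2}{1+\psi_{i\bar i}}-\sum_i\frac{|\psi_i|^2}{1+\psi_{i\bar i}}
\;\geq\; -|\p H|^2_\psi - v\,\tr_\psi\tilde\om_t,
\]
which produces the $-1$ in the coefficient $(-C_{1.1}-1)$ and costs one copy of $|\p H|^2_\psi$. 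Similarly, the off-diagonal completion of squares requires not only $\frac{|\psi_{ij}|^2}{1+\psi_{i\bar i}}$ but also $|\p H|^2_\psi|\p\psi|^2$ (the $|\p\psi|^2$-portion of $v|\p H|^2_\psi$); the three terms combine into $\sum_{i,j}\frac{|H_i\psi_j+\psi_{ij}|^2}{1+\psi_{i\bar i}}\geq 0$.

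Second, the role of $K\geq 1$ is not what you claim. The curvature estimate uses only $|\p\psi|^2\leq v$, which holds for every $K\geq 0$. The hypothesis $K\geq 1$ enters because the identity supplies $v|\p H|^2_\psi=|\p\psi|^2|\p H|^2_\psi+K|\p H|^2_\psi$; the off-diagonal square consumes $|\p\psi|^2|\p H|^2_\psi$, the diagonal leftover consumes one more $|\p H|^2_\psi$, and the remainder $(K-1)|\p H|^2_\psi$ must be nonnegative to be discarded. With these two corrections your outline becomes exactly the paper's proof.
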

\begin{proof}
By removing the positive terms
\begin{align*}
|\p H|^2_\psi|\p\psi|^2+2\frac{H_i\psi_{\bar j\bar i}\psi_{j}}{1+\psi_{i\bar i}}+\frac{|\psi_{ij}|^2}{1+\psi_{i\bar i}}\geq 0,
\end{align*}
and inserting the lower bound of the bisectional curvature 
$R_{i\bar i j\bar j}(\tilde\om_t)\psi_{j}\psi_{\bar j}\geq -C_{1.1} v,$
into \eqref{Gradient estimate: Differential identity}, we get
\begin{align*}
e^{-H}\tri_\psi u&\geq v\tri_\psi H+K|\p H|^2_\psi
+2\frac{H_i\psi_{i\bar i}\psi_{\bar i}}{1+\psi_{i\bar i}}\notag \\
&-C_{1.1}v\tr_\psi\tilde\om_t+\frac{|\psi_{i\bar i}|^2}{1+\psi_{i\bar i}}+2\Re(\tilde F_i\psi_{\bar i}).
\end{align*}
While, we compute
\begin{align*}
2\frac{H_i\psi_{i\bar i}\psi_{\bar i}}{1+\psi_{i\bar i}} 
=2H_i\psi_{\bar i}-2\frac{H_i\psi_{\bar i}}{1+\psi_{i\bar i}} .
\end{align*}
By Young's inequality, it follows
\begin{align*}
 -\frac{2H_i\psi_{\bar i}}{1+\psi_{i\bar i}}\geq -|\p H|^2_\psi -|\p \psi|^2\tr_\psi\tilde\om_t\geq -|\p H|^2_\psi -v\tr_\psi\tilde\om_t.
\end{align*}
By substitution into the inequality above, we have
\begin{align*}
e^{-H}\tri_\psi u&\geq v\tri_\psi H+(K-1)|\p H|^2_\psi
+(-C_{1.1}-1)v\tr_\psi\tilde\om_t\\
&+\frac{|\psi_{i\bar i}|^2}{1+\psi_{i\bar i}}+2\Re[(H_i+\tilde F_i)\psi_{\bar i}].
\end{align*}

Rewriting the positive term
\begin{align*}
&\frac{|\psi_{i\bar i}|^2}{1+\psi_{i\bar i}}=\frac{|\psi_{i\bar i}|^2-1+1}{1+\psi_{i\bar i}}
=\sum_i[\psi_{i\bar i}-1+\frac{1}{1+\psi_{i\bar i}}]\\
&=\tr_{\tilde\om_t}\om_\psi-2n+\tr_{\psi}\tilde\om_t
\end{align*} and inserting it back to the inequality, we have obtained \eqref{tri Z}.
\end{proof}

Then we continue the proof of Proposition \eqref{Gradient estimate: Differential inequality prop}.
In order to deal with the gradient term in \eqref{tri Z}, we further choose 
\begin{align*}
B=-\sigma_E\psi+e^{-\psi}, \quad H:=-F+B+\sigma_D\log S_\eps.
\end{align*}
In which, we see that
\begin{align*}
e^{-\psi}=e^{-\vphi+\phi_E}=e^{-\vphi} |s_E|^{2a_0}_{h_E},
\end{align*}
which is bounded by using the $L^\infty$-estimate of $\vphi$.

\begin{lem}
\begin{align}\label{Gradient estimate: Differential inequality gradient A}
&2Re[(H_i+\tilde F_i)\psi_{\bar i}]v^{-1}
\geq 2(-\sigma_E-e^{-\psi})(1-Kv^{-1})+A_w v^{-\frac{1}{2}}  
\end{align}
with the weight 
\begin{align}\label{Gradient estimate: Aw}
A_w:=
-2C_{2}-2(\beta-1+\sigma_D)|\p\log S_\eps|.
\end{align}
\end{lem}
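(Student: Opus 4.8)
The plan is to unwind every definition and then reduce to a single Cauchy--Schwarz estimate. First I would expand $H_i+\tilde F_i$ explicitly. Writing $g:=h_\theta+c_{t,\eps}+\log\frac{\om^n}{\tilde\om_t^n}$, so that $f=-(\beta-1)\log S_\eps-g$ and hence $\tilde F_i=F_i+(\beta-1)(\log S_\eps)_i+g_i$, while from $H=-F+B+\sigma_D\log S_\eps$ with $B=-\sigma_E\psi+e^{-\psi}$ one gets $B_i=-(\sigma_E+e^{-\psi})\psi_i$ and therefore $H_i=-F_i-(\sigma_E+e^{-\psi})\psi_i+\sigma_D(\log S_\eps)_i$. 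The two occurrences of $F_i$ cancel and one is left with
\[
H_i+\tilde F_i=-(\sigma_E+e^{-\psi})\psi_i+(\beta-1+\sigma_D)(\log S_\eps)_i+g_i.
\]

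Next I would contract with $\psi_{\bar i}$ in the normal frame for $\tilde\om_t$ and take twice the real part. The first term contributes $-2(\sigma_E+e^{-\psi})|\p\psi|^2_{\tilde\om_t}$; since $|\p\psi|^2_{\tilde\om_t}=v-K$, dividing by $v$ produces exactly $2(-\sigma_E-e^{-\psi})(1-Kv^{-1})$, the first term on the right-hand side of \eqref{Gradient estimate: Differential inequality gradient A}. For the remaining two cross terms I would use Cauchy--Schwarz in $\tilde\om_t$ together with the crude bound $|\p\psi|_{\tilde\om_t}=(v-K)^{1/2}\le v^{1/2}$, obtaining $2(\beta-1+\sigma_D)\,\Re[(\log S_\eps)_i\psi_{\bar i}]\ge -2(\beta-1+\sigma_D)|\p\log S_\eps|\,v^{1/2}$ (here one uses $\beta-1+\sigma_D>0$, valid since $\beta>1$ and $\sigma_D\ge 0$) and $2\Re[g_i\psi_{\bar i}]\ge-2|\p g|_{\tilde\om_t}\,v^{1/2}$. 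Dividing by $v$ and setting $C_2:=\sup_X|\p g|_{\tilde\om_t}$ gives precisely the weight $A_w=-2C_2-2(\beta-1+\sigma_D)|\p\log S_\eps|$ of \eqref{Gradient estimate: Aw}; here $C_2$ is finite and uniform in $t,\eps$ because $h_\theta$ is a fixed smooth function, $c_{t,\eps}$ is a constant, and $\log\frac{\om^n}{\tilde\om_t^n}$ has uniform $C^1$-bounds by \lemref{nef tilde f} and \lemref{metrics equivalence}.

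There is no real obstacle here beyond bookkeeping. The one point that needs a moment's attention is the sign in the Cauchy--Schwarz step: it comes out favorably only because the coefficient $\beta-1+\sigma_D$ of $(\log S_\eps)_i$ is nonnegative, so that the gradient of $\log S_\eps$ enters with the right sign and is carried through as a genuine weight rather than absorbed into a constant --- which is essential, since $|\p\log S_\eps|$ may degenerate like $S_\eps^{-1/2}$ near $D$. All the constants appearing are independent of $t$ and $\eps$, as required for the a priori estimate.
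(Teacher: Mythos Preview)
Your proof is correct and follows essentially the same route as the paper's own argument: both compute $H+\tilde F=B-f+\sigma_D\log S_\eps$ (equivalently your expansion via $g$), observe that the $F$-terms cancel, isolate the $-(\sigma_E+e^{-\psi})|\p\psi|^2$ piece and rewrite it using $|\p\psi|^2=v-K$, and then bound the remaining cross terms by Cauchy--Schwarz with $|\p\psi|\le v^{1/2}$. Your identification of $C_2$ with $\sup_X|\p(h_\theta+\log\frac{\om^n}{\tilde\om_t^n})|_{\tilde\om_t}$ (the constant $c_{t,\eps}$ drops out under $\p$) matches the paper's, and your remark on the sign of $\beta-1+\sigma_D$ is a valid observation, though in the regime $\beta>1$, $\sigma_D\ge0$ considered here it is automatic.
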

\begin{proof} 
We compute 
\begin{align*}
B_i=(-\sigma_E-e^{-\psi})\psi_i\text{ and } H+\tilde F=B-f+\sigma_D\log S_\eps
\end{align*} to get
\begin{align*}
2Re[(H_i+\tilde F_i)\psi_{\bar i}]
= 2(-\sigma_E-e^{-\psi})|\p\psi|^2+2\Re[-f_i\psi_{\bar i}+\sigma_D(\log S_\eps)_i\psi_{\bar i}].
\end{align*}
Inserting $f=-(\beta-1)\log S_\eps-h_\theta-c_{t,\eps}-\log\frac{\om^n}{\tilde\om_t^n}$ into the identity above, we have that
\begin{align*}
-f_i\psi_{\bar i}+\sigma_D(\log S_\eps)_i\psi_{\bar i}
&=(h_\theta+\log\frac{\om^n}{\tilde\om_t^n})_i\psi_{\bar i}
+(\beta-1+\sigma_D)(\log S_\eps)_i\psi_{\bar i}\\
&\geq - [C_{2}+(\beta-1+\sigma_D)|\p\log S_\eps|]\cdot |\p\psi| ,
\end{align*}
where $C_2$ depends on $\|\p(h_\theta+\log\frac{\om^n}{\tilde\om_t^n}) \|_{L^\infty(\tilde\om_t)}$.
Consequently, we have proved \eqref{Gradient estimate: Differential inequality gradient A} by using $|\p\psi|^2=v-K$.
\end{proof}


Meanwhile, we bound the Laplacian term $\tri_\psi H$ in \eqref{tri Z}.
\begin{lem}
\begin{align}\label{Gradient estimate: Differential inequality laplacian A}
\tri_\psi H
\geq \tilde A_\theta\tr_\psi\tilde\om_t+e^{-\psi}|\p\psi|^2_\psi+(\inf R-\sigma_En-e^{-\psi}n).
\end{align}
where, $C_{1.4}:=\inf_{(X,\tilde\om_t)} i\p\bar\p\log S_\eps$ and
\begin{align*}
\tilde A_\theta=-\sup_{(X,\tilde\om_t)}\theta+\sigma_E+e^{-\psi}+\sigma_D C_{1.4}.
\end{align*}
\end{lem}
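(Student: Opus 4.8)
The plan is to compute $\tri_\psi H$ directly. Since $H=-F+B+\sigma_D\log S_\eps$ with $B=-\sigma_E\psi+e^{-\psi}$, and the Laplacian $\tri_\psi=\tr_\psi(i\p\bar\p\,\cdot\,)$ is linear, I would estimate the three summands separately and then add.

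For the $F$-term I would use the scalar curvature equation in \eqref{Singular cscK t eps tilde gradient}: since $\tilde F=F-f$ we have $\tri_\psi F=\tri_\psi\tilde F+\tri_\psi f=\tr_\psi(\theta-i\p\bar\p f)-R+\tr_\psi(i\p\bar\p f)=\tr_\psi\theta-R$, so the $f$-contribution cancels. Using $\theta\le\sup_{(X,\tilde\om_t)}\theta\cdot\tilde\om_t$ as $(1,1)$-forms together with the positivity of $\om_\psi$, this gives $-\tri_\psi F\ge-\sup_{(X,\tilde\om_t)}\theta\cdot\tr_\psi\tilde\om_t+\inf_X R$.

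For the $B$-term I would record two elementary identities. From $i\p\bar\p\psi=\om_\psi-\tilde\om_t$ one gets $\tri_\psi\psi=n-\tr_\psi\tilde\om_t$. Computing $(i\p\bar\p e^{-\psi})_{i\bar j}=e^{-\psi}(\psi_i\psi_{\bar j}-\psi_{i\bar j})$ in a frame normal for $\tilde\om_t$ yields $\tri_\psi(e^{-\psi})=e^{-\psi}\bigl(|\p\psi|^2_\psi-\tri_\psi\psi\bigr)=e^{-\psi}\bigl(|\p\psi|^2_\psi-n+\tr_\psi\tilde\om_t\bigr)$. Hence $\tri_\psi B=\sigma_E\tr_\psi\tilde\om_t-\sigma_E n+e^{-\psi}|\p\psi|^2_\psi+e^{-\psi}\tr_\psi\tilde\om_t-e^{-\psi}n$. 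Finally, for the $\log S_\eps$-term, since $\sigma_D\ge1>0$ and $i\p\bar\p\log S_\eps\ge C_{1.4}\tilde\om_t$ with $C_{1.4}=\inf_{(X,\tilde\om_t)}i\p\bar\p\log S_\eps$ (which follows from \lemref{h eps} taking $\tilde\om_t$ as the background metric), we get $\sigma_D\tri_\psi\log S_\eps\ge\sigma_D C_{1.4}\tr_\psi\tilde\om_t$. Adding the three lower bounds and collecting the coefficient of $\tr_\psi\tilde\om_t$ produces exactly $\tilde A_\theta\tr_\psi\tilde\om_t+e^{-\psi}|\p\psi|^2_\psi+(\inf_X R-\sigma_E n-e^{-\psi}n)$ with $\tilde A_\theta=-\sup_{(X,\tilde\om_t)}\theta+\sigma_E+e^{-\psi}+\sigma_D C_{1.4}$, which is the asserted inequality.

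There is no serious obstacle here: the only genuinely delicate point is the sign in the computation of $i\p\bar\p e^{-\psi}$, which is precisely what makes the term $e^{-\psi}|\p\psi|^2_\psi$ appear with a favorable (positive) sign while simultaneously contributing the extra $e^{-\psi}$ to the coefficient $\tilde A_\theta$ — this is the convexity of $t\mapsto e^{-t}$ being exploited to strengthen the positivity of $\tilde A_\theta$. Beyond the definitions, the only hypotheses used are $\sigma_D\ge0$ (so that the $\log S_\eps$ estimate does not flip sign) and the already-established $L^\infty$ bound on $\vphi$ (which, as remarked just before the lemma, makes $e^{-\psi}=e^{-\vphi}|s_E|^{2a_0}_{h_E}$ bounded, a fact only needed downstream rather than in the lemma itself).
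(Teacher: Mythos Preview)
Your proof is correct and follows essentially the same route as the paper: compute $\tri_\psi H$ term by term, use $\tri_\psi F=\tr_\psi\theta-R$ (which is the second equation in \eqref{Singular cscK t eps}), the identity $\tri_\psi\psi=n-\tr_\psi\tilde\om_t$, the convexity computation $\tri_\psi(e^{-\psi})=e^{-\psi}(|\p\psi|^2_\psi-\tri_\psi\psi)$, and then the lower bound on $i\p\bar\p\log S_\eps$ from \lemref{h eps}. One small remark: you invoke $\sigma_D\ge 1>0$, but the lemma (and its later use in \thmref{gradient estimate sigma 2}) only requires $\sigma_D\ge 0$; your argument of course goes through under this weaker hypothesis as well.
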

\begin{proof}
We take the Laplacian of $B=-\sigma_E\psi+e^{-\psi}$,
\begin{align*}
\tri_\psi B=-\sigma_E\tri_\psi\psi+e^{-\psi}|\p\psi|^2_\psi-e^{-\psi}\tri_\psi\psi,
\end{align*} Then we calculate the Laplacian of the auxiliary function $H$ with the singular scalar curvature equation \eqref{Singular cscK t eps tilde gradient},
\begin{align*}
&\tri_\psi H=-\tri_\psi  F+\tri_\psi B+ \sigma_D\tri_\psi \log S_\eps\\
&=-\tr_{\psi}\theta+R+e^{-\psi}|\p\psi|^2_\psi+(-\sigma_E-e^{-\psi})(n-\tr_\psi\tilde\om_t)
+ \sigma_D\tri_\psi \log S_\eps,
\end{align*}
which implies the lower bound
\begin{align*}
\tri_\psi H&\geq[-\sup_{(X,\tilde\om_t)}\theta+\sigma_E+e^{-\psi}]\tr_\psi\tilde\om_t+\inf R+e^{-\psi}|\p\psi|^2_\psi-(\sigma_E+e^{-\psi})n\\
&+ \sigma_D\tri_\psi \log S_\eps.
\end{align*}
Since $\sigma_D\geq 0$, the asserted inequality follows from the lower bound of $ i\p\bar\p \log S_\eps$ from \lemref{h eps}, namely 
\begin{align*}
\tri_\psi\log S_\eps\geq C_{1.4}\tr_\psi\tilde\om_t.
\end{align*}
\end{proof}

We set three bounded quantities to be
\begin{align*}
A_\theta:=\tilde A_\theta-C_{1.1}-1=-\sup_{(X,\tilde\om_t)}\theta+\sigma_E+e^{-\psi}+\sigma_D C_{1.4}-C_{1.1}-1,
\end{align*}
\begin{align}\label{Gradient estimate: constants A}
& A_s:=\inf R-(n+2)(\sigma_E+e^{-\psi}),\quad A_c:=2K(\sigma_E+e^{-\psi})-2n.
\end{align}
As a result, inserting the functions $A_s,A_c,A_w$, \eqref{Gradient estimate: Differential inequality gradient A} and \eqref{Gradient estimate: Differential inequality laplacian A} back to \eqref{tri Z}, we obtain that
\begin{align*}
u^{-1}\tri_\psi u&\geq  A_\theta  \tr_\psi\tilde\om_t
+e^{-\psi}|\p\psi|^2_\psi +A_s
+(\tr_{\tilde\om_t}\om_\psi +A_c)v^{-1} +A_wv^{-\frac{1}{2}} .
\end{align*} 
In order to apply \lemref{Gradient estimate: Differential inequality G} to deduce the lower bound of the first two terms, we need to verify the assumptions
\begin{align*}
A_\theta\geq C_K e^{-\psi},\quad C_K:=1+\frac{K}{n^2(n-1)}.
\end{align*}
They are satisfied, as long as we choose large $\sigma_E$, i.e.
\begin{align}\label{Gradient estimate: sigmaE}
\sigma_E\geq \frac{K}{n^2(n-1)} e^{-\psi}+\sup_{(X,\tilde\om_t)}\theta-\sigma_D C_{1.4}+C_{1.1}+1.
\end{align}
Consequently, we obtain the desired differential inequality asserted in the following proposition.
\begin{prop}\label{Gradient estimate: Differential inequality prop}
It holds
\begin{align}\label{Gradient estimate: Differential inequality}
\tri_\psi u&\geq  \frac{1}{n-1}e^{-\frac{H+\tilde F}{n}-\psi} u^{1+\frac{1}{n}}\notag\\
&+A_s u
+e^H(\tr_{\tilde\om_t}\om_\psi+A_c) +A_{w}e^{\frac{H}{2}} u^{\frac{1}{2}}.
\end{align}  
\end{prop}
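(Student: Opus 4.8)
The plan is to take the pointwise inequality derived immediately above the statement,
\begin{align*}
u^{-1}\tri_\psi u\geq  A_\theta  \tr_\psi\tilde\om_t
+e^{-\psi}|\p\psi|^2_\psi +A_s
+(\tr_{\tilde\om_t}\om_\psi +A_c)v^{-1} +A_wv^{-\frac{1}{2}},
\end{align*}
and multiply through by the positive quantity $u=e^{H}v$. Three of the five terms then convert mechanically: $e^{H}v\cdot A_s=A_su$, $e^{H}v\cdot(\tr_{\tilde\om_t}\om_\psi+A_c)v^{-1}=e^{H}(\tr_{\tilde\om_t}\om_\psi+A_c)$, and $e^{H}v\cdot A_wv^{-1/2}=A_we^{H}v^{1/2}=A_we^{H/2}u^{1/2}$, using $v^{1/2}=e^{-H/2}u^{1/2}$. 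So the whole content of the Proposition reduces to a lower bound for the remaining term $e^{H}v\bigl(A_\theta\tr_\psi\tilde\om_t+e^{-\psi}|\p\psi|^2_\psi\bigr)$.

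For this I would apply \lemref{Gradient estimate: Differential inequality G}, whose hypothesis $A_\theta\geq C_Ke^{-\psi}$ with $C_K=1+\frac{K}{n^2(n-1)}$ is exactly what the choice of the singular exponent in \eqref{Gradient estimate: sigmaE} provides: since $A_\theta=-\sup_{(X,\tilde\om_t)}\theta+\sigma_E+e^{-\psi}+\sigma_DC_{1.4}-C_{1.1}-1$, taking $\sigma_E$ large forces $A_\theta\geq C_Ke^{-\psi}$ pointwise. The lemma then gives
\begin{align*}
A_\theta\tr_\psi\tilde\om_t+e^{-\psi}|\p\psi|^2_\psi\geq \frac{1}{n-1}\,e^{-\frac{\tilde F}{n}-\psi}\,v^{\frac1n},
\end{align*}
so multiplying by $e^{H}v$ and simplifying weights one gets $e^{H}v\cdot e^{-\frac{\tilde F}{n}-\psi}v^{1/n}=e^{H}e^{-\frac{\tilde F}{n}-\psi}v^{1+\frac1n}$; since $u^{1+\frac1n}=e^{(1+\frac1n)H}v^{1+\frac1n}$, this equals $e^{-\frac{H+\tilde F}{n}-\psi}u^{1+\frac1n}$, which is precisely the first term of \eqref{Gradient estimate: Differential inequality}. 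Collecting the four pieces yields the assertion.

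The only genuinely delicate input is the pointwise algebraic estimate packaged in \lemref{Gradient estimate: Differential inequality G}, and I expect that to be the main obstacle (in the auxiliary lemma, not in the Proposition itself, which is pure bookkeeping). I anticipate it is proven by splitting $A_\theta\tr_\psi\tilde\om_t=e^{-\psi}\tr_\psi\tilde\om_t+(A_\theta-e^{-\psi})\tr_\psi\tilde\om_t$, applying the arithmetic--geometric inequality to obtain $\tr_\psi\tilde\om_t+|\p\psi|^2_\psi\geq n\,e^{-\tilde F/n}(1+|\p\psi|^2)^{1/n}$ and $\tr_\psi\tilde\om_t\geq n\,e^{-\tilde F/n}$, bounding the surplus by $(A_\theta-e^{-\psi})\tr_\psi\tilde\om_t\geq \frac{K}{n^2(n-1)}e^{-\psi}\cdot n\,e^{-\tilde F/n}$, and using the subadditivity $(|\p\psi|^2+K)^{1/n}\le|\p\psi|^{2/n}+K^{1/n}$; matching the $|\p\psi|^{2/n}$ parts needs $n\ge\frac{1}{n-1}$ (valid for $n\ge2$), while matching the constant parts forces $K$ to be taken large, of order $n^{n/(n-1)}$, which is the ``$K$ determined later'' in the set-up. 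All other manipulations are just weight bookkeeping with the factors $e^{H}$ and $e^{\pm\psi}$.
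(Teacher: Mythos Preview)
Your proof of the Proposition is correct and matches the paper's approach exactly: multiply the displayed inequality $u^{-1}\tri_\psi u\geq A_\theta\tr_\psi\tilde\om_t+e^{-\psi}|\p\psi|^2_\psi+A_s+(\tr_{\tilde\om_t}\om_\psi+A_c)v^{-1}+A_wv^{-1/2}$ through by $u=e^{H}v$, convert the three easy terms as you wrote, and invoke \lemref{Gradient estimate: Differential inequality G} (its hypothesis $A_\theta\geq C_Ke^{-\psi}$ being secured by \eqref{Gradient estimate: sigmaE}) to produce the leading term $\frac{1}{n-1}e^{-(H+\tilde F)/n-\psi}u^{1+1/n}$.

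Your parenthetical speculation on the proof of \lemref{Gradient estimate: Differential inequality G} itself, however, diverges from the paper and is shakier than you suggest. The paper does \emph{not} split off $(A_\theta-e^{-\psi})\tr_\psi\tilde\om_t$ or use subadditivity of $t\mapsto t^{1/n}$; instead it works eigenvalue by eigenvalue, combining $\tr_\psi\tilde\om_t\geq(\tr_{\tilde\om_t}\om_\psi\cdot e^{-\tilde F})^{1/(n-1)}$ with Jensen to get
\[
(n-1)G\geq\sum_i\Bigl[\tfrac{n-1}{n}(1+\psi_{i\bar i})^{1/(n-1)}e^{-\tilde F/(n-1)}+\tfrac{1}{n}\tfrac{|\psi_i|^2+n^{-1}K}{1+\psi_{i\bar i}}\Bigr],
\]
and then applies Young's inequality with exponents $\frac{n}{n-1},n$ to each summand. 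In particular there is no largeness requirement on $K$; any $K\geq1$ works.
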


At last, we close this section by proving the required lemma.
\begin{lem}\label{Gradient estimate: Differential inequality G}
For any given function $K$, there holds
 \begin{align*}
G:=C_K\tr_\psi\tilde\om_t+|\p\psi|^2_\psi
 \geq \frac{1}{n-1}e^{-\frac{\tilde F}{n}}  v^{\frac{1}{n}}.
 \end{align*}  
\end{lem}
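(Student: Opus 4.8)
The plan is to prove the pointwise inequality
\[
G:=C_K\tr_\psi\tilde\om_t+|\p\psi|^2_\psi\geq \frac{1}{n-1}e^{-\frac{\tilde F}{n}}v^{\frac{1}{n}}
\]
by working at a fixed point in normal coordinates for $\tilde\om_t$ in which $\om_\psi$ is diagonal, so that $\tr_\psi\tilde\om_t=\sum_i(1+\psi_{i\bar i})^{-1}$ and $|\p\psi|^2_\psi=\sum_i|\psi_i|^2(1+\psi_{i\bar i})^{-1}$, while the Monge-Amp\`ere equation \eqref{Singular cscK t eps tilde gradient} reads $\prod_i(1+\psi_{i\bar i})=e^{\tilde F}$. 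Recall $v=|\p\psi|^2+K$, so $|\p\psi|^2_\psi=\sum_i|\psi_i|^2(1+\psi_{i\bar i})^{-1}$ and $v=\sum_i|\psi_i|^2+K$.

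First I would reduce to a purely algebraic statement about positive numbers: writing $a_i:=1+\psi_{i\bar i}>0$ and $x_i:=|\psi_i|^2\geq 0$, with $\prod_i a_i=e^{\tilde F}$, one must show
\[
C_K\sum_i\frac1{a_i}+\sum_i\frac{x_i}{a_i}\geq\frac{1}{n-1}\Big(\prod_i a_i\Big)^{-\frac1n}\Big(\sum_i x_i+K\Big)^{\frac1n}.
\]
The key estimates are the two elementary bounds $\sum_i a_i^{-1}\geq n\big(\prod_i a_i\big)^{-1/n}$ (AM-GM) and, for the index $j$ realizing $x_j=\max_i x_i$, $x_j a_j^{-1}\geq$ (something controlling $\sum_i x_i$ divided by $a_j$); more precisely, since $\sum_i x_i+K\leq (n+1)\max\{x_j,K\}$ one needs to capture $\max\{x_j,K\}^{1/n}$ from the left side. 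I expect to split into the case $x_j\geq K$, where $|\p\psi|^2_\psi\geq x_j a_j^{-1}$ and one bounds $a_j^{-1}\geq\big(\prod_i a_i\big)^{-1/n}\big(\prod_{i\neq j}a_i^{-1}\big)$ then applies AM-GM to $\prod_{i\neq j}a_i^{-1}$ absorbing the other $\tr_\psi\tilde\om_t$ terms; and the case $K\geq x_j$, where the term $C_K\tr_\psi\tilde\om_t\geq C_K n\big(\prod a_i\big)^{-1/n}$ must beat $\tfrac{1}{n-1}\big(\prod a_i\big)^{-1/n}((n+1)K)^{1/n}$, which is why $C_K$ is taken of the form $1+K/(n^2(n-1))$ — the constant is engineered so that $C_K n\geq\tfrac{1}{n-1}((n+1)K)^{1/n}$ uniformly (using $K^{1/n}\leq 1+K/n$ or a similar convexity bound).

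The main obstacle will be bookkeeping the constants so that the \emph{same} $C_K=1+K/(n^2(n-1))$ works in both regimes and for all $K\geq 1$; the geometric input (normal coordinates, Monge-Amp\`ere equation, $\tr_\psi\tilde\om_t=\sum a_i^{-1}$) is routine, and the inequality $\sum a_i^{-1}\geq n(\prod a_i)^{-1/n}$ is immediate, but matching the power $v^{1/n}$ with the available gradient term $|\p\psi|^2_\psi$ requires care because $|\p\psi|^2_\psi$ is weighted by $a_j^{-1}$ whereas $v$ is not — this is exactly where one spends $(n-1)$ of the $n$ factors $a_i^{-1}$ via AM-GM, leaving one factor $(\prod a_i)^{-1/n}$ to combine with $x_j$, and then uses $(n+1)^{1/n}\leq 2$ and $n-1\geq 1$ to close. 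Once this algebraic lemma is in hand, substituting $H=-F+B+\sigma_D\log S_\eps$ and $e^{-H}u=v$, $\tilde F=F-f$ back converts $G$ into the stated combination, and multiplying through by $e^H$ yields Proposition \ref{Gradient estimate: Differential inequality prop}.
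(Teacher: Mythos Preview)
Your case-splitting strategy is different from the paper's argument, and as written it contains a false step. The inequality you propose in Case~1,
\[
a_j^{-1}\geq\Big(\prod_i a_i\Big)^{-1/n}\prod_{i\neq j}a_i^{-1},
\]
is equivalent to $(\prod_i a_i)^{(n+1)/n}\geq a_j^2$, which fails whenever one eigenvalue dominates (e.g.\ $n=2$, $a_1=1$, $a_2=100$). What you presumably want instead is to combine $x_ja_j^{-1}$ with $C_K\sum_{i\neq j}a_i^{-1}\geq(n-1)(\prod_{i\neq j}a_i)^{-1/(n-1)}$ via the weighted AM--GM with weights $\tfrac1n,\tfrac{n-1}{n}$, which gives
\[
\tfrac{1}{n}x_ja_j^{-1}+\tfrac{n-1}{n}\Big(\prod_{i\neq j}a_i\Big)^{-1/(n-1)}\geq x_j^{1/n}\Big(\prod_i a_i\Big)^{-1/n}.
\]
With that correction Case~1 can be pushed through, but you then have to reconcile the constants coming from both cases with the single value $C_K=1+\tfrac{K}{n^2(n-1)}$, and the bound $\sum x_i+K\leq(n+1)\max\{x_j,K\}$ is wasteful.

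The paper avoids all of this. It does not split into cases at all: after invoking $\tr_\psi\tilde\om_t\geq(\tr_{\tilde\om_t}\om_\psi)^{1/(n-1)}e^{-\tilde F/(n-1)}$ and Jensen's inequality for $t\mapsto t^{1/(n-1)}$, it distributes the constant $K$ evenly as $K/n$ per coordinate and writes, for each $i$,
\[
\frac{n-1}{n}\,a_i^{1/(n-1)}e^{-\tilde F/(n-1)}+\frac{1}{n}\,\frac{|\psi_i|^2+K/n}{a_i}
\;\geq\;\Big(a_i^{1/(n-1)}e^{-\tilde F/(n-1)}\Big)^{(n-1)/n}\Big(\frac{|\psi_i|^2+K/n}{a_i}\Big)^{1/n}
\]
by Young's inequality with conjugate exponents $\tfrac{n}{n-1},n$. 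Summing over $i$ and using $\sum_i(|\psi_i|^2+K/n)^{1/n}\geq(\sum_i|\psi_i|^2+K)^{1/n}=v^{1/n}$ gives the result directly. The constant $C_K=1+\tfrac{K}{n^2(n-1)}$ is precisely what makes the $K/n$ per-coordinate contribution available after using $(n-1)\geq\tfrac1n$; no separate treatment of large versus small $K$ is needed. This is both shorter and yields the exact constant in the lemma without the $(n+1)^{1/n}$ losses your route incurs.
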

\begin{proof}
 By the inequality of arithmetic and geometric means and the equation \eqref{Singular cscK t eps tilde}, we have 
 \begin{align*}
\tr_\psi\tilde\om_t\geq (\tr_{\tilde\om_t}\om_\psi\cdot e^{-\tilde F})^{\frac{1}{n-1}}.
\end{align*} 
Jensen inequality for the concave function $(\cdot)^{\frac{1}{n-1}}$ further implies that
\begin{align*}
(\tr_{\tilde\om_t}\om_\psi)^{\frac{1}{n-1}}\geq \frac{1}{n}\sum_i(1+\psi_{i\bar i})^{\frac{1}{n-1}} n^{\frac{1}{n-1}}.
\end{align*}
Inserting them into the expression of $(n-1)G$, we get
 \begin{align*}
(n-1)G
&\geq \sum_i[\frac{n-1}{n}(1+\psi_{i\bar i})^{\frac{1}{n-1}}e^{-\frac{\tilde F}{n-1}}n^{\frac{1}{n-1}}+\frac{K}{n^2(1+\psi_{i\bar i})}+\frac{(n-1)|\psi_i|^2}{1+\psi_{i\bar i}}].
\end{align*} 
By $n\geq 2$ and $n-1\geq \frac{1}{n}$, it follows
 \begin{align*}
\geq \sum_i[\frac{n-1}{n}(1+\psi_{i\bar i})^{\frac{1}{n-1}}e^{-\frac{\tilde F}{n-1}}+\frac{1}{n}\frac{|\psi_i|^2+n^{-1}K}{1+\psi_{i\bar i}}].
\end{align*}
Using Young's inequality, we thus obtain the lower bound of $(n-1)G$
 \begin{align*}
\geq\sum_i [(1+\psi_{i\bar i})^{\frac{1}{n-1}}e^{-\frac{\tilde F}{n-1}}]^{\frac{n-1}{n}} \cdot [\frac{|\psi_i|^2+n^{-1}K}{1+\psi_{i\bar i}}]^{\frac{1}{n}}
= e^{-\frac{\tilde F}{n}} (|\p\psi|^2+K)^{\frac{1}{n}}.
\end{align*}
 \end{proof}
 
\subsection{Computing weights}
We are examining the coefficients of the inequality \eqref{Gradient estimate: Differential inequality}.
\begin{prop}\label{Gradient estimate: Differential inequality c prop}
Assume that 
\begin{align}\label{Gradient estimate: sigmaE 2}
\sigma_E\geq \sigma_i=C_u+\tau.
\end{align} Then there exists nonnegative constants $C,C_1,C_2$ such that
\begin{align}
&\tri_\psi u\geq  C_1[A_m u^{1+\frac{1}{n}}
+e^H\tr_{\tilde\om_t}\om_\psi]-C_2[u+1-A_w e^{\frac{H}{2}} u^{\frac{1}{2}}],\label{Gradient estimate: Differential inequality c}\\
&A_m:=(n-1)^{-1}e^{-\frac{H+\tilde F}{n}-\psi}\geq C|s_E|_{h_E}^{-2a_0(\frac{\sigma_E}{n}+1)}  S_\eps^{-\frac{\beta-1+\sigma_D}{n}  },\label{Gradient estimate: Differential inequality main}\\
&A_w e^{\frac{H}{2}}\geq - C[1+(\beta-1+\sigma_D)S_\eps^{-\frac{1}{2}}] S_\eps^{\frac{\sigma_D}{2}}  |s_E|^{a_0(\sigma_E-\sigma_i)} .\label{Gradient estimate: Differential inequality Aw}
\end{align}
\end{prop}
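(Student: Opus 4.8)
The plan is to take the differential inequality from Proposition \ref{Gradient estimate: Differential inequality prop} and process the three coefficient functions $A_m$, $A_w e^{H/2}$, and the linear coefficient in front of $u$ one at a time, unwinding the definition $H=-F+B+\sigma_D\log S_\eps$ with $B=-\sigma_E\psi+e^{-\psi}$ and using the $L^\infty$-estimates of $\vphi$ (hence of $\psi=\tilde\vphi$ and of $e^{-\psi}=e^{-\vphi}|s_E|_{h_E}^{2a_0}$) from \thmref{L infty estimates Singular equation}, together with the volume-ratio bound $e^{\tilde F}\sim e^{\sigma_s\phi_E}S_\eps^{\beta-1}\tilde\om_t^n$ from the corollary following it. First I would record that $H+\tilde F = B - f + \sigma_D\log S_\eps$, and that $f=-(\beta-1)\log S_\eps - h_\theta - c_{t,\eps}-\log(\om^n/\tilde\om_t^n)$, so that $H+\tilde F+\psi = -\sigma_E\psi+e^{-\psi}+\psi+(\beta-1+\sigma_D)\log S_\eps+(\text{bounded})$. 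Exponentiating $-\tfrac1n$ of this and absorbing all bounded contributions (including the bounded quantities $\psi$, $e^{-\psi}$, $h_\theta$, $c_{t,\eps}$, $\log(\om^n/\tilde\om_t^n)$) into a uniform constant, the only genuinely singular factors that survive are $e^{\sigma_E\psi/n} = |s_E|_{h_E}^{-2a_0\sigma_E/n}\cdot(\text{bounded})$ (using $\psi=\vphi-\phi_E=\vphi-a_0\log h_E$ and $\|\vphi\|_\infty\le C$) and $S_\eps^{-(\beta-1+\sigma_D)/n}$; this yields \eqref{Gradient estimate: Differential inequality main}, where the extra $|s_E|_{h_E}^{-2a_0}$ comes from the $e^{-\psi}$ in the exponent, giving the exponent $-2a_0(\sigma_E/n+1)$.

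Next I would treat $A_w e^{H/2}$. By definition $A_w=-2C_2-2(\beta-1+\sigma_D)|\p\log S_\eps|$, and $|\p\log S_\eps|=|\p S_\eps|/S_\eps\le C_{2.4}S_\eps^{-1/2}$ by $|\p S_\eps|\le C_{2.4}S_\eps^{1/2}$, so $A_w\ge -C[1+(\beta-1+\sigma_D)S_\eps^{-1/2}]$. For $e^{H/2}$ I would write $H=-F+B+\sigma_D\log S_\eps$ and $-F = -\tilde F - f = -\tilde F+(\beta-1)\log S_\eps+(\text{bounded})$; combined with $B=-\sigma_E\psi+e^{-\psi}$ this gives $H = -\tilde F +(\beta-1+\sigma_D)\log S_\eps -\sigma_E\psi +(\text{bounded})$. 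Using the lower bound $\tilde F\ge \sigma_i\phi_E-C$, i.e. $-\tilde F\le -\sigma_i\phi_E+C = -\sigma_i a_0\log h_E+C$, and $-\sigma_E\psi = -\sigma_E\vphi+\sigma_E a_0\log h_E$ with $\|\vphi\|_\infty\le C$, I get $H\le (\sigma_E-\sigma_i)a_0\log h_E+(\beta-1+\sigma_D)\log S_\eps+C$, hence $e^{H/2}\le C\,|s_E|_{h_E}^{a_0(\sigma_E-\sigma_i)}S_\eps^{(\beta-1+\sigma_D)/2}$. Wait — the stated bound \eqref{Gradient estimate: Differential inequality Aw} has $S_\eps^{\sigma_D/2}$, not $S_\eps^{(\beta-1+\sigma_D)/2}$; since $\beta>1$ and we may assume $S_\eps\le 1$ we have $S_\eps^{(\beta-1+\sigma_D)/2}\le S_\eps^{\sigma_D/2}$, so the claimed inequality follows after multiplying by the $S_\eps^{-1/2}$ from $A_w$ (which lowers the $S_\eps$-exponent by $1$ in the second term) and noting $|s_E|_{h_E}^{a_0(\sigma_E-\sigma_i)}\ge 0$ so the sign works out with the overall minus sign. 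The normalisation $|s_E|_{h_E}\le 1$ and $\phi_E=a_0\log h_E\le 0$ (hence $-\phi_E\ge 0$) will be used throughout to control the signs.

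Finally, for the differential inequality \eqref{Gradient estimate: Differential inequality c} itself, I would go back to Proposition \ref{Gradient estimate: Differential inequality prop} and split the terms: the leading term $\tfrac1{n-1}e^{-(H+\tilde F)/n-\psi}u^{1+1/n}=A_m u^{1+1/n}$ and $e^H\tr_{\tilde\om_t}\om_\psi$ are manifestly nonnegative and go into the $C_1$-bracket; the term $A_s u$ with $A_s=\inf R-(n+2)(\sigma_E+e^{-\psi})$ is bounded below by $-C u$ because $\sigma_E$, $e^{-\psi}$ and $\inf R$ are all bounded (here I use \eqref{Gradient estimate: sigmaE} together with the fact that, once $\psi$ is bounded, the right side of \eqref{Gradient estimate: sigmaE} is a fixed finite number so $\sigma_E$ may be taken to be a uniform constant); the term $e^H A_c$ with $A_c=2K(\sigma_E+e^{-\psi})-2n$ is bounded in absolute value by $C e^H\le C$ after bounding $e^H$ from above as in the previous step (note $e^H\le C\,|s_E|_{h_E}^{a_0(\sigma_E-\sigma_i)}S_\eps^{\beta-1+\sigma_D}\le C$ since $\sigma_E\ge\sigma_i$ and $\beta>1$); and the last term $A_w e^{H/2}u^{1/2}$ is kept as is. Collecting, all the negative and bounded pieces go into $-C_2[u+1-A_w e^{H/2}u^{1/2}]$ (absorbing the constant from $|e^H A_c|\le C$ into the $+1$), which is exactly \eqref{Gradient estimate: Differential inequality c}. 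The main obstacle, and the only step requiring real care, is bookkeeping the competing signs of $\phi_E=a_0\log h_E\le 0$ and the two weight exponents $\sigma_E,\sigma_D$ across the exponentials — in particular making sure that \eqref{Gradient estimate: sigmaE 2}, $\sigma_E\ge\sigma_i$, is genuinely what is needed to keep $|s_E|_{h_E}^{a_0(\sigma_E-\sigma_i)}$ harmless (i.e. $\le 1$ and nonnegative), and keeping track of the extra $e^{-\psi}$ in the exponent of $A_m$ that produces the $+1$ inside $2a_0(\sigma_E/n+1)$; everything else is a routine application of the $L^\infty$- and volume-ratio estimates already established.
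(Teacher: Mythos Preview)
Your approach is essentially the same as the paper's: you take the inequality from Proposition~\ref{Gradient estimate: Differential inequality prop} and estimate each coefficient $A_s$, $A_c$, $A_m$, $e^H$, $A_w e^{H/2}$ one by one using the $L^\infty$-bounds on $\vphi$, $F$ and the formula for $f$. The paper does exactly this, though it bounds $e^H$ directly from $H=-F-\sigma_E\vphi+\sigma_E\phi_E+e^{-\psi}+\sigma_D\log S_\eps$ and the two-sided bound $\sigma_i\phi_E+A_2\le F\le \sigma_s\phi_E+A_1$, obtaining $e^H\le C e^{(\sigma_E-\sigma_i)\phi_E}S_\eps^{\sigma_D}$ in one step rather than detouring through $\tilde F$; your route via $\tilde F$ picks up an extra $S_\eps^{\beta-1}$ that you then have to throw away.

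One genuine slip to fix: for $A_m$ you compute $H+\tilde F+\psi$ and say you exponentiate $-\tfrac1n$ of it, but $A_m$ involves $e^{-(H+\tilde F)/n-\psi}=e^{-(H+\tilde F+n\psi)/n}$, so the quantity to track is $H+\tilde F+n\psi$, not $H+\tilde F+\psi$. Relatedly, the ``extra $|s_E|_{h_E}^{-2a_0}$'' does not come from the bounded function $e^{-\psi}$ sitting inside $B$ (that term only contributes a bounded factor after exponentiation); it must come from the explicit $-\psi$ in the exponent of $A_m$, and since $e^{-\psi}=e^{-\vphi}|s_E|_{h_E}^{2a_0}$ that factor contributes $|s_E|_{h_E}^{+2a_0}$, not $|s_E|_{h_E}^{-2a_0}$. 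Redo this piece carefully (combining the $\sigma_E\psi/n$ from $-(H+\tilde F)/n$ with the full $-\psi$) before writing the final exponent.
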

\begin{proof}
We compute the weights and coefficients in \eqref{Gradient estimate: Differential inequality prop}, including $$A_s, \quad A_c,\quad  e^H, \quad e^{-\frac{H+\tilde F}{n}-\psi},\quad A_w.$$
Since $e^{-\psi}$ is bounded, we see that both $A_s$ and $A_c$ in \eqref{Gradient estimate: constants A} are bounded.

Applying the uniform bounds of $\vphi$ and $F$ from \thmref{L infty estimates Singular equation}, namely $$A_1+\sigma_s\phi_E\geq F\geq \sigma_i\phi_E+A_2$$ to $H=-F-\sigma_E\vphi+\sigma_E\phi_E+e^{-\psi}+\sigma_D\log S_\eps$, we get
\begin{align}\label{Gradient estimate: Differential inequality eH}
C^{-1} e^{(\sigma_E-\sigma_s)\phi_E} S_\eps^{\sigma_D}\leq e^H\leq C e^{(\sigma_E-\sigma_i)\phi_E}S_\eps^{\sigma_D}.
\end{align}
Since $\sigma_E\geq \sigma_i$, we know that $e^{(\sigma_E-\sigma_i)\phi_E}$ is bounded above. 

Inserting $-F+\tilde F=-f$ and $f=-(\beta-1)\log S_\eps-h_\theta-c_{t,\eps}-\log\frac{\om^n}{\tilde\om_t^n}$ into $-\frac{H+\tilde F}{n}-\psi$, we have
\begin{align*}
&-\frac{H+\tilde F}{n}-\psi
=-\frac{-f-\sigma_E\vphi+\sigma_E\phi_E+e^{-\psi}+\sigma_D\log S_\eps}{n}-\vphi+\phi_E\\
&=-\frac{h_\theta+c_{t,\eps}+\log\frac{\om^n}{\tilde\om_t^n}-(\sigma_E+n)(\vphi-\phi_E)+e^{-\psi}+(\beta-1+\sigma_D)\log S_\eps}{n},
\end{align*}
which has the lower bound 
\begin{align*}
-\frac{H+\tilde F}{n}-\psi
\geq -C-(\frac{\sigma_E}{n}+1)\phi_E-\frac{\beta-1+\sigma_D}{n}\log S_\eps.
\end{align*}
As a result, we obtain the strictly positive lower bound of $A_m$ in \eqref{Gradient estimate: Differential inequality main},
which depends on the upper bound of $|s_E|^2_{h_E}$ and $S_\eps$.

By the same proof of \lemref{nef tilde f}, we get
\begin{align*}
A_w=
-2C_{2}-2(\beta-1+\sigma_D)|\p\log S_\eps|
\geq -C[1+(\beta-1+\sigma_D)S_\eps^{-\frac{1}{2}}]   .
\end{align*}
Thus \eqref{Gradient estimate: Differential inequality Aw} in obtained by using \eqref{Gradient estimate: Differential inequality eH}.
\end{proof}
\subsection{Maximum principle}

We continue the proof of \thmref{gradient estimate}. We have proved that, when $\sigma_D\geq 1$ and $\sigma_E\geq \sigma_i$, it follows according to Proposition \ref{Gradient estimate: Differential inequality c prop} that
$$A_we^{\frac{H}{2}}\geq -C S_\eps^{\frac{\sigma_D-1}{2}}   |s_E|^{a_0(\sigma_E-\sigma_i)},$$
which is bounded below, near $D$ and $E$.
Therefore, there exists a non-negative constant $C_1,C_2$ such that
\begin{align}\label{Gradient estimate: Differential inequality 1}
\tri_\psi u&\geq  C_1 u^{1+\frac{1}{n}}-C_2[u+ u^{\frac{1}{2}}+1].
\end{align}

We are ready to apply the maximum principle to prove the gradient estimate, \thmref{gradient estimate}, when $\sigma_D\geq1$.
\begin{thm}\label{gradient estimate sigma 1}
Assume that $\Om$ is big and semi-positive.
Suppose that $\vphi_\eps$ is a solution to the approximate singular scalar curvature equation \eqref{Singular cscK t eps tilde}.
Then there exists a constant $A_3$ such that
\begin{align*}
 |s_E|^{2a_0 (\sigma_E-\sigma_s )} S_\eps|\p\psi|^2_{\tilde\om_t}\leq A_3.
\end{align*}
and $\sigma_E$ is determined in \eqref{Gradient estimate: sigmaE} and \eqref{Gradient estimate: sigmaE 2}.
The constant $A_3$ depends on 
\begin{align*}
&\|\vphi_\eps\|_\infty,\quad \sup_X(F_{t,\eps}-\sigma_s\phi_E),\quad\inf_{X}[F_{t,\eps}-\sigma_i\phi_E], \\
&\inf_{i\neq j}R_{i\bar i j\bar j}(\tilde\om_t),\quad\inf_X R,\quad \sup_{(X,\tilde\om_t)}\theta,\quad \|h_\theta+c_{t,\eps}+\log\frac{\om^n}{\tilde\om_t^n} \|_{C^1(\tilde\om_t)},\\
&\sup_X  S_\eps,\quad \sup_X  S_\eps^{-\frac{1}{2}}|\p S_\eps|_\om,\quad \Theta_D,\quad \sup_X \phi_E,\quad \beta,\quad n.
\end{align*}
\end{thm}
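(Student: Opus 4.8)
The plan is to feed the differential inequality \eqref{Gradient estimate: Differential inequality 1} into a maximum principle argument, tracking the degenerating weights carefully. First I would observe that, by Proposition \ref{Gradient estimate: Differential inequality c prop} together with the assumptions $\sigma_D\geq 1$ and $\sigma_E\geq\sigma_i$, the coefficient $A_w e^{H/2}$ satisfies $A_w e^{H/2}\geq -C\,S_\eps^{(\sigma_D-1)/2}|s_E|^{a_0(\sigma_E-\sigma_i)}$, which is bounded below on all of $X$ (the power of $S_\eps$ is nonnegative and the power of $|s_E|_{h_E}$ is also nonnegative). Likewise the coefficient $A_m$ in front of the good term $u^{1+1/n}$ is bounded below by a strictly positive constant after we multiply $u$ by an appropriate power of $|s_E|_{h_E}$ and $S_\eps$; this is precisely where the final weight $|s_E|^{2a_0(\sigma_E-\sigma_s)}S_\eps$ in the statement comes from. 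So the first step is to record the clean inequality \eqref{Gradient estimate: Differential inequality 1}: $\tri_\psi u\geq C_1 u^{1+1/n}-C_2(u+u^{1/2}+1)$ valid on $M$.

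Next I would localise. Since $\Om$ is only big and semi-positive, $\om_{\vphi_{t,\eps}}$ is merely a smooth K\"ahler metric on $M=X\setminus(D\cup E)$ that degenerates near $D\cup E$, so the function $u=e^H(|\p\psi|^2+K)$ need not attain its supremum on $X$. The device is to replace $u$ by $\hat u := |s_E|^{2a_0 N}_{h_E}\, S_\eps^{\sigma_D}\, u$ for a suitable large $N$ (coming from the exponent $\sigma_E-\sigma_s$ and the computation of $A_m$), and note that $\hat u$ extends continuously by $0$ across $D\cup E$ because of the lower bounds on $H$ and $A_m$ established in Proposition \ref{Gradient estimate: Differential inequality c prop}; hence $\hat u$ attains its maximum at some interior point $z_0\in M$. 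Alternatively one adds a small multiple of $-\log|s_E|^2_{h_E}-\log S_\eps$ to $u$ and lets the coefficient tend to zero, using $\tri_\psi(-\log|s_E|^2_{h_E})\geq -C\tr_\psi\tilde\om_t$ and the lower bound in \lemref{h eps} to control the extra terms. Either way, at the maximum point $\tri_\psi\hat u\leq 0$, so plugging into the inequality obtained from \eqref{Gradient estimate: Differential inequality 1} after conjugating by the weights yields $C_1 \hat u^{1+1/n}\leq C_2(\hat u+\hat u^{1/2}+1)+(\text{weight error terms})$ at $z_0$, where the weight error terms are controlled uniformly using $\sup_X S_\eps$, $\sup_X S_\eps^{-1/2}|\p S_\eps|_\om$, $\Theta_D$ and $\sup_X\phi_E$.

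From the scalar polynomial inequality $C_1 t^{1+1/n}\leq C_2(t+t^{1/2}+1)+C_3$ one concludes $t=\hat u(z_0)\leq C$ for a uniform constant, hence $\hat u\leq C$ everywhere, which unwinding the weights is exactly $|s_E|^{2a_0(\sigma_E-\sigma_s)}S_\eps|\p\psi|^2_{\tilde\om_t}\leq A_3$. I would then list the dependencies of $A_3$ exactly as in the statement: the $L^\infty$ bounds of $\vphi_\eps$ and the one-sided bounds of $F_{t,\eps}$ (from \thmref{L infty estimates Singular equation}), the bisectional curvature lower bound of $\tilde\om_t$, $\inf_X R$, $\sup_{(X,\tilde\om_t)}\theta$, the $C^1$-norm of $h_\theta+c_{t,\eps}+\log\frac{\om^n}{\tilde\om_t^n}$, the quantities $\sup_X S_\eps$, $\sup_X S_\eps^{-1/2}|\p S_\eps|_\om$, $\Theta_D$, $\sup_X\phi_E$, and $\beta,n$.

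The main obstacle will be Step 2: justifying that the maximum principle genuinely applies despite the degeneracy along $D\cup E$. One must verify that after conjugation by $|s_E|^{2a_0 N}_{h_E}S_\eps^{\sigma_D}$ the function $\hat u$ really does vanish at the boundary divisors — this needs the sharp interplay between the power $\sigma_E-\sigma_s$ (determined by \eqref{Gradient estimate: sigmaE} and \eqref{Gradient estimate: sigmaE 2}) and the blow-up rate of $e^{-H-\tilde F}$ in $A_m$ — and that the Laplacian terms generated by the weights, namely $\tri_\psi\log|s_E|^2_{h_E}$ and $\tri_\psi\log S_\eps$, have the right sign or are absorbable by the good term $C_1\hat u^{1+1/n}\tr_\psi\tilde\om_t$. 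The condition $\sigma_D\geq 1$ is exactly what makes $S_\eps^{(\sigma_D-1)/2}$ (appearing through $A_w$ and the gradient error of $S_\eps$) bounded, so that the $u^{1/2}$-term in \eqref{Gradient estimate: Differential inequality 1} does not itself carry a negative power of $S_\eps$; this is the delicate point that distinguishes the $\beta>1$ case from $\beta\leq 1$.
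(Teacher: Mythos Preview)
Your proposal reaches the right conclusion but is built on a misreading of the setup that makes the argument much harder than it needs to be. For fixed $t,\eps>0$ the approximate equation \eqref{Singular cscK t eps tilde} is a genuinely smooth equation on all of $X$: the background $\tilde\om_t=\om_K+t\om$ is K\"ahler, $S_\eps=|s|^2_h+\eps$ is strictly positive, $\phi_E=a_0\log h_E$ is smooth (it is the log of a Hermitian metric, not of $|s_E|^2_{h_E}$), and the solution $(\psi,F)$ is smooth on $X$. Hence $H=-F-\sigma_E\psi+e^{-\psi}+\sigma_D\log S_\eps$ is smooth, and $u=e^H(|\p\psi|^2+K)$ is a smooth function on the compact manifold $X$. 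There is no degeneracy along $D\cup E$ at this stage, so your entire ``localisation'' step---conjugating by $|s_E|^{2a_0N}S_\eps^{\sigma_D}$, or adding log barriers to force boundary vanishing---is unnecessary, and your identified ``main obstacle'' is not an obstacle at all.

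The paper's proof is correspondingly short: since $u$ is smooth it attains its maximum at some $p\in X$, and \eqref{Gradient estimate: Differential inequality 1} at $p$ gives $0\geq C_1 u(p)^{1+1/n}-C_2(u(p)+u(p)^{1/2}+1)$, hence $u(p)\leq C$. Then at any $x\in X$,
\[
|\p\psi|^2(x)+K=e^{-H(x)}u(x)\leq e^{-H(x)}u(p)\leq C\,e^{-H(x)}.
\]
The weight in the statement does \emph{not} come from $A_m$ or from any conjugation of $u$; it comes from unpacking $e^{-H}$ at this last step. Using $F\leq\sigma_s\phi_E+C$, $\|\vphi\|_\infty\leq C$, and the boundedness of $e^{-\psi}$, one gets
\[
e^{-H}=e^{F+\sigma_E\psi-e^{-\psi}}S_\eps^{-\sigma_D}\leq C\,e^{(\sigma_s-\sigma_E)\phi_E}S_\eps^{-\sigma_D}=C\,|s_E|_{h_E}^{2a_0(\sigma_s-\sigma_E)}S_\eps^{-\sigma_D},
\]
which with $\sigma_D=1$ is exactly the asserted bound. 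Note also that $A_m$ is already bounded below by a \emph{positive} constant on $X$ (its lower bound \eqref{Gradient estimate: Differential inequality main} involves \emph{negative} powers of the bounded quantities $|s_E|_{h_E}$ and $S_\eps$), so no further weighting is needed to make \eqref{Gradient estimate: Differential inequality 1} hold with uniform $C_1>0$.
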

\begin{proof}
We proceed with the argument of the maximum principle.
We assume that $p$ is at the maximum point of $Z$. Then the inequality \eqref{Gradient estimate: Differential inequality 1} at $p$ implies that $u(p)$ is uniformly bounded above. But at any point $x\in X$, $u(x)$ is bounded above by the maximum value $u(p),$ which means 
\begin{align*}
|\p\psi|^2(x)+K
\leq   e^{-H(x)}u(p).
\end{align*}
Therefore, using $H=-F-\sigma_E\psi+e^{-\psi}+\sigma_D\log S_\eps$ and the upper bound of $F$, $\|\vphi\|_\infty$, $\|e^{-\psi}\|_\infty$, we obtain the gradient estimate
\begin{align*}
|\p\psi|^2(x)+K&\leq   e^{F+\sigma_E\vphi-\sigma_E\phi_E-e^{-\psi}-\sigma_D\log S_\eps}u(p)\\
&\leq C  e^{\sigma_s\phi_E-\sigma_E\phi_E-\sigma_D\log S_\eps}\\
&= C  |s_E|^{2(\sigma_s-\sigma_E)a_0} S_\eps^{-\sigma_D}.
\end{align*}
At last, the desired weighted gradient estimate is obtained.
\end{proof}

\subsection{Integration method}
In this section, we aims to improve $\sigma_D$ to be less than 1. The weighted integral method is applied to obtain the gradient estimate. In order to explain the ideas well, we restrict ourselves in the K\"ahler class $\Om$.
\begin{thm}\label{gradient estimate sigma 2}
Assume that $\Om=[\om_K]$ is K\"ahler.
Suppose that $\vphi_\eps$ is a solution to the approximate singular scalar curvature equation \eqref{Singular cscK t eps tilde}.  Assume that the degenerate exponent satisfies the condition \eqref{gradient estimate: degenerate exponent}, that is
\begin{align*}
\sigma_D>\max\{1-\frac{2\beta}{n+2},0\}.
\end{align*}
Then there holds the gradient estimate
\begin{align*}
S_\eps^{\sigma_D}|\p\vphi_\eps|_{\om_K}^2\leq A_4.
\end{align*}
The constant $A_4$ depends on the same dependence of $A_3$ in \thmref{gradient estimate sigma 1}, and in additional, the Sobolev constant of $\tilde\om_t$.
\end{thm}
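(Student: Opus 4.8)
The plan is to derive a weighted integral inequality for a suitable power of $u = e^H v$ from the differential inequality in Proposition~\ref{Gradient estimate: Differential inequality prop} (equivalently the cleaned-up form \eqref{Gradient estimate: Differential inequality c}), then run a Moser-type iteration exactly as in the Laplacian estimate of Section~\ref{A priori estimates for approximate degenerate cscK equations}, but now with the exponent bookkeeping adapted to the gradient term. Since $\Om=[\om_K]$ is K\"ahler we may take $\sigma_E=0$ and $\phi_E\equiv 0$, so $H=-F+B+\sigma_D\log S_\eps$ with $B=e^{-\psi}$ bounded, and the positive leading term in \eqref{Gradient estimate: Differential inequality c} is $C_1 A_m u^{1+\frac1n}$ with $A_m\geq C\,S_\eps^{-\frac{\beta-1+\sigma_D}{n}}$, while the troublesome lower-order terms are $C_2(u+1)$ together with the gradient remainder $A_w e^{H/2}u^{1/2}$, where by \eqref{Gradient estimate: Differential inequality Aw} (with $\sigma_E=\sigma_i=0$) one has $A_w e^{H/2}\geq -C[1+(\beta-1+\sigma_D)S_\eps^{-1/2}]S_\eps^{\sigma_D/2}$, i.e. the weight on this term loses a half power of $S_\eps$ exactly as in Remark~\ref{4th term}.

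First I would multiply \eqref{Gradient estimate: Differential inequality c} by $\tilde u^{2p}S_\eps^{\gamma}$ (with $\tilde u=u+K$ and an auxiliary weight exponent $\gamma\geq 0$ that will be taken $0$ here, or kept general to absorb $\phi_E$ in the non-K\"ahler case) and integrate $\om^n_\psi$ over $X$; integration by parts on $\int \tilde u^{2p}\tri_\psi \tilde u$ produces the good gradient term $\tfrac2p\int \tilde u|\p\tilde u^p|^2_\psi$, and using $v|\cdot|_\psi\ge|\cdot|$ together with $\om_\psi^n=e^{\tilde F}\tilde\om_t^n$ and the $L^\infty$-bounds of $F$ converts it into a weighted Dirichlet energy $\int|\p\tilde u^p|^2 S_\eps^{k}\om^n$. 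The term $C_1 A_m u^{1+\frac1n}$ gives, via the volume-ratio equation, the positive term analogous to $LHS_2$, namely $\int \tilde u^{2p} u^{\frac{1}{n}}S_\eps^{-\frac{\beta-1}{n}+\cdots}\om^n_\psi$. Then I would feed the Dirichlet energy into the Sobolev inequality for $\tilde\om_t$, paying the usual price of a lost weight power (Remark~\ref{Laplacian estimate: lose weight}), and balance the resulting $RHS$ terms — in particular the half-power-deficient term coming from $A_w e^{H/2}u^{1/2}$ — against $LHS_2$ by Young's inequality, just as in Step~5 (Proposition~\ref{inverse weighted inequalities}). The cone-angle threshold $\beta>\frac{n+2}{2}$ versus $\beta\le\frac{n+2}{2}$, and the precise admissible range $\sigma_D>\max\{1-\frac{2\beta}{n+2},0\}$, will drop out of requiring the relevant weight exponents $k'$ to be $<1$; the combinatorics here is the analogue of Lemma~\ref{Laplacian estimate: criteria} but with $n+2$ replacing $n+1$ because the nonlinear power is $1+\frac1n$ (Hessian-type) rather than $\frac{n}{n-1}$.

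Once the iteration inequality $\|\tilde u\|_{L^{2p\chi}(\tilde\mu)}^{2p}\le C[p^{3}\|\tilde u\|_{L^{2pa}(\tilde\mu)}^{2p}+1]$ with $1<a<\chi=\frac{n}{n-1}$ is in hand (via Proposition~\ref{Weighted inequality} applied to the $RHS$ terms), I would run the Moser iteration verbatim as in Section~\ref{Step 6: iteration}: set $\chi_a=\chi/a>1$, iterate over $p=\chi_a^i$, sum the convergent series $\sum i\chi_a^{-i}$ and $\sum\chi_a^{-i}$, and conclude $\|\tilde u\|_{L^\infty}\le C\|\tilde u\|_{L^{2a\tilde p_0}(\tilde\mu)}$. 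The right-hand side is then controlled by the already-established $W^{2,p}$-estimate (\thmref{w2pestimates degenerate Singular equation}, i.e. \eqref{almost admissible w2p sigmaD}) and the $L^\infty$-bounds of $\vphi$ and $F$, once one checks the elementary weight-exponent comparison $(2a\tilde p_0+1)\gamma+\cdots\ge\sigma_D^2$, exactly the step carried out at the end of Section~\ref{Step 6: iteration}. Unwinding $u=e^Hv=e^{-F+e^{-\psi}+\sigma_D\log S_\eps}(|\p\psi|^2+K)$ and using the two-sided bound on $e^H$ from \eqref{Gradient estimate: Differential inequality eH} then yields $S_\eps^{\sigma_D}|\p\vphi_\eps|^2_{\om_K}\le A_4$.

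The main obstacle, and the one genuinely new feature compared to the Laplacian estimate, is the gradient remainder term $A_w e^{H/2}u^{1/2}$: it carries only $S_\eps^{(\sigma_D-1)/2}$ worth of weight (a half power short), and unlike the analogous fourth-order term in Section~\ref{Step 2} it cannot simply be integrated by parts away, so it must be dominated by the positive term $LHS_2$ coming from $u^{1+1/n}$. Getting this absorption to work is precisely what forces $\sigma_D>1-\frac{2\beta}{n+2}$ (and permits $\sigma_D=0$ only when $\beta>\frac{n+2}{2}$), and verifying that the corresponding sub-critical exponent $k'<1$ holds under exactly that hypothesis — together with checking that the Sobolev-loss and the weight-loss are simultaneously compatible — is the delicate computational heart of the argument; everything else is a faithful transcription of the machinery already developed for \thmref{cscK Laplacian estimate}.
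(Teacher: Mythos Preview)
There is a genuine gap in your conversion of the Dirichlet energy. In the Laplacian estimate the quantity $u=e^{-C_1\vphi}\,v\,S_\eps^\gamma$ is built from $v=\tr_{\tilde\om_t}\om_\psi$, so after integration by parts the factor $\tilde u\ge u$ supplies exactly the $\tr_{\tilde\om_t}\om_\psi$ needed to pass from $|\p\tilde u^p|^2_\psi$ to $|\p\tilde u^p|^2$. In the gradient estimate, however, $v=|\p\psi|^2+K$ and $u=e^Hv$ contains \emph{no} factor of $\tr_{\tilde\om_t}\om_\psi$; the inequality ``$v|\cdot|_\psi\ge|\cdot|$'' you invoke is simply false for this $v$, and there is nothing in $\tilde u$ to compensate. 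Consequently your passage to a weighted $L^2$ Dirichlet energy $\int|\p\tilde u^p|^2 S_\eps^k\,\om^n$, and hence the whole $W^{1,2}$-Sobolev iteration with $\chi=\tfrac{n}{n-1}$, does not get off the ground.

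The paper's fix is to use the \emph{second} positive term $e^H\tr_{\tilde\om_t}\om_\psi$ in \eqref{Gradient estimate: Differential inequality c}: after multiplying by $\tilde u^{p-1}$ and integrating, one has both $(p-1)\int\tilde u^{p-2}|\p u|^2_\psi\,\om_\psi^n$ and $\int\tilde u^{p-1}e^H\tr_{\tilde\om_t}\om_\psi\,\om_\psi^n$ on the left. Cauchy--Schwarz on the integrand, together with $|\p u|^2_\psi\cdot\tr_{\tilde\om_t}\om_\psi\ge|\p u|^2$, produces the $L^1$ gradient $\sqrt{p-1}\int\tilde u^{p-3/2}S_\eps^{\sigma_D/2}|\p u|\,\om_\psi^n$ (Proposition~\ref{Gradient estimate: Rough iteration inequality}). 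One then applies the $W^{1,1}\hookrightarrow L^{2n/(2n-1)}$ Sobolev inequality, so $\chi=\tfrac{2n}{2n-1}$ rather than $\tfrac{n}{n-1}$, and the critical weight threshold becomes $k'<\tfrac12$ (Proposition~\ref{Gradient estimate: Weighted inequality}), not $k'<1$; it is under \emph{this} bookkeeping that the condition $\sigma_D>1-\tfrac{2\beta}{n+2}$ emerges (Proposition~\ref{Gradient estimate: Inverse weighted inequality}). A second, smaller gap: the seed $L^p$-norm for the iteration cannot be read off the $W^{2,p}$-estimate, which controls $\tr_{\om}\om_\vphi$ and not $|\p\psi|^2$; the paper instead extracts an \emph{a priori} $L^p$ bound on $u$ directly from the integral inequality (Proposition~\ref{Gradient estimate: Lp control prop}), using that $LHS_2\sim\int u^{p+1/n}S_\eps^{-(\beta-1+\sigma_D)/n}$ already dominates each term of $RHS_1$ via Young.
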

\begin{rem}
Note $\tilde\om_t$ is equivalent to $\om_K$.
\end{rem}
We first obtain the general integral inequality. Then we apply the weighted analysis similar to the proof for the Laplacian estimate in Section \ref{A priori estimates for approximate degenerate cscK equations} to proceed the iteration argument.

\begin{prop}[Integral inequality]\label{Gradient estimate: Integral inequality}
Assume that $\Om$ is big and semi-positive and $\vphi_\eps$ is a solution to the approximate singular scalar curvature equation \eqref{Singular cscK t eps}. We set $\tilde u:=u+K_0$. Then we have
\begin{align*}
& \int_X   \tilde u^{p-1} u^{1+\frac{1}{n}}e^{[\sigma_i-(\frac{\sigma_E}{n}+1)]\phi_E}  S_\eps^{\beta-1-\frac{\beta-1+\sigma_D}{n}}\tilde\om^n_t\\
&+\int_X\tilde u^{p-2}[  (p-1) |\p u|_\psi^2+\tilde ue^{(\sigma_E-\sigma_i)\phi_E} S_\eps^{\sigma_D} \tr_{\tilde\om_t}\om_\psi ]e^{\sigma_i\phi_E}  S_\eps^{\beta-1}\tilde\om^n_t\notag\\
&\leq C  \int_X \tilde u^{p-1} [ u + e^{-(\frac{\sigma_E}{n}+1)\phi_E } 
+u^{\frac{1}{2}} e^{(\sigma_E-\sigma_i)\phi_E} S_\eps^{\frac{\sigma_D-1}{2}} ]e^{\sigma_s\phi_E}S_\eps^{\beta-1}\tilde\om^n_t.\notag
\end{align*} 
The dependence of $C$ is the same to what of $A_3$.
\end{prop}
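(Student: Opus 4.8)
The plan is to promote the pointwise differential inequality \eqref{Gradient estimate: Differential inequality c} of Proposition~\ref{Gradient estimate: Differential inequality c prop} to an integral one by testing it against $\tilde u^{p-1}$ and integrating against the volume form $\om_\psi^n=e^{\tilde F}\tilde\om_t^n$. Since $S_\eps>0$, the $(t,\eps)$-approximate equation \eqref{Singular cscK t eps tilde} has smooth coefficients, so $\psi=\tilde\vphi_{t,\eps}$, $F$, $H$, and hence $u=e^Hv=e^H(|\p\psi|^2+K)$ and $\tilde u=u+K_0$, are smooth on the compact manifold $X$; all integrations by parts below are therefore legitimate, and $\tilde u\ge K_0>0$, $u\le\tilde u$, $\p\tilde u=\p u$. (The proposition is stated for a general big and semi-positive $\Om$, so the weights $\phi_E$ and the large exponent $\sigma_E$ are present; the K\"ahler case \thmref{gradient estimate sigma 2} then amounts to setting $\phi_E\equiv 0$, $\sigma_E=0$.)

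First I would multiply \eqref{Gradient estimate: Differential inequality c} by $\tilde u^{p-1}$ and integrate over $X$ against $\om_\psi^n$. Using $\int_X\tilde u^{p-1}\tri_\psi u\,\om_\psi^n=-(p-1)\int_X\tilde u^{p-2}|\p u|_\psi^2\,\om_\psi^n$ (integration by parts, $\om_\psi^{n-1}$ closed) and moving the three nonnegative contributions to the left — recall $A_w\le 0$, so $-A_we^{H/2}u^{1/2}\ge 0$ stays on the right — one obtains
\begin{align*}
&(p-1)\int_X\tilde u^{p-2}|\p u|_\psi^2\,\om_\psi^n
+C_1\int_X\tilde u^{p-1}\big[A_m u^{1+\tfrac1n}+e^H\tr_{\tilde\om_t}\om_\psi\big]\om_\psi^n\\
&\qquad\le C_2\int_X\tilde u^{p-1}\big[u+1-A_w e^{\tfrac H2}u^{\tfrac12}\big]\om_\psi^n.
\end{align*}
The second step is to trade the measure $\om_\psi^n=e^{\tilde F}\tilde\om_t^n$ for explicit weights. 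Since $\tilde F=F+(\beta-1)\log S_\eps$ up to a smooth, uniformly $C^1$-bounded function, the $L^\infty$-bounds $\sigma_i\phi_E+A_2\le F\le\sigma_s\phi_E+A_1$ of \thmref{L infty estimates Singular equation} give $c\,e^{\sigma_i\phi_E}S_\eps^{\beta-1}\le e^{\tilde F}\le C\,e^{\sigma_s\phi_E}S_\eps^{\beta-1}$ uniformly. On the left: for the gradient term and the $A_m$-term I use the lower bound $e^{\tilde F}\ge c\,e^{\sigma_i\phi_E}S_\eps^{\beta-1}$, and for the $A_m$-term I combine it with the pointwise estimate \eqref{Gradient estimate: Differential inequality main}, which produces exactly the factor $e^{[\sigma_i-(\sigma_E/n+1)]\phi_E}S_\eps^{\beta-1-(\beta-1+\sigma_D)/n}$; for the $e^H\tr_{\tilde\om_t}\om_\psi$-term the crucial point is that $e^H$ carries a factor $e^{-F}$, so $e^He^{\tilde F}$ is comparable (via $\|\vphi\|_\infty$, $\|e^{-\psi}\|_\infty$, and the smooth $h_\theta+c_{t,\eps}+\log(\om^n/\tilde\om_t^n)$) to $e^{\sigma_E\phi_E}S_\eps^{\sigma_D+\beta-1}$, the two copies of $e^{\pm F}$ cancelling exactly; factoring out $e^{\sigma_i\phi_E}S_\eps^{\beta-1}$ leaves $e^{(\sigma_E-\sigma_i)\phi_E}S_\eps^{\sigma_D}$, as in the statement. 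On the right: I use $e^{\tilde F}\le C\,e^{\sigma_s\phi_E}S_\eps^{\beta-1}$, bound $1\le e^{-(\sigma_E/n+1)\phi_E}$ (valid since $\phi_E\le 0$), and feed in \eqref{Gradient estimate: Differential inequality Aw} for $-A_we^{H/2}$ together with $[1+(\beta-1+\sigma_D)S_\eps^{-1/2}]S_\eps^{\sigma_D/2}\le CS_\eps^{(\sigma_D-1)/2}$ (valid because $S_\eps$ is uniformly bounded). Collecting these substitutions yields the three integrands $u$, $e^{-(\sigma_E/n+1)\phi_E}$, $u^{1/2}e^{(\sigma_E-\sigma_i)\phi_E}S_\eps^{(\sigma_D-1)/2}$, each carrying the common weight $e^{\sigma_s\phi_E}S_\eps^{\beta-1}$, which is precisely the asserted inequality; the constant $C$ inherits only the dependence listed for $A_3$ in \thmref{gradient estimate sigma 1}.

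The genuinely delicate point is not any single estimate but the bookkeeping of the $\phi_E$- and $S_\eps$-exponents: the weights on the right must be recorded \emph{exactly}, so that in the subsequent argument — Sobolev inequality applied to the gradient term, followed by Young's inequality with weights and an iteration, in the spirit of Steps~3--6 of \secref{A priori estimates for approximate degenerate cscK equations} — they are absorbed by the nonnegative left-hand terms. This is why the factors $e^{-(\sigma_E/n+1)\phi_E}$ and $S_\eps^{-(\beta-1+\sigma_D)/n}$ are tracked with care, and it is at that later stage (not here) that the hypothesis \eqref{gradient estimate: degenerate exponent} on $\sigma_D$ and the largeness of $\sigma_E$ from \eqref{Gradient estimate: sigmaE}, \eqref{Gradient estimate: sigmaE 2} become essential. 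For the present proposition it suffices to carry out the testing–integration-by-parts step and the weight substitutions above.
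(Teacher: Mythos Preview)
Your proof is correct and follows essentially the same route as the paper: multiply the differential inequality \eqref{Gradient estimate: Differential inequality c} by $\tilde u^{p-1}$, integrate by parts against $\om_\psi^n$, then convert to the measure $\tilde\om_t^n$ via the two-sided volume ratio bound \eqref{Gradient estimate: volume ratio bound}. Your explicit remark that $e^{H}e^{\tilde F}$ is comparable to $e^{\sigma_E\phi_E}S_\eps^{\sigma_D+\beta-1}$ because the $e^{\pm F}$ factors cancel is a useful clarification of the weight bookkeeping that the paper leaves implicit; note only that $\phi_E$ need not be $\le 0$ but is bounded above by $\sup_X\phi_E$ (listed among the dependences of $A_3$), so $1\le C\,e^{-(\sigma_E/n+1)\phi_E}$ holds with a harmless constant.
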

\begin{proof}
Substituting the differential inequality \eqref{Gradient estimate: Differential inequality c} into the identity
\begin{align*}
&(p-1)\int_X \tilde u^{p-2}|\p u|_\psi^2\om_\psi^n=\int_X \tilde u^{p-1}(-\tri_\psi u)\om_\psi^n,\quad p\geq 2,
\end{align*} 
we have the integral inequality
\begin{align*}
& \int_X \tilde u^{p-1} u^{1+\frac{1}{n}}e^{-(\frac{\sigma_E}{n}+1)\phi_E } S_\eps^{-\frac{\beta-1+\sigma_D}{n}  } \om_\psi^n\\
&+\int_X\tilde u^{p-2}[(p-1) |\p u|_\psi^2+\tilde ue^H\tr_{\tilde\om_t}\om_\psi ]\om_\psi^n\\
&\leq C  \int_X \tilde u^{p-1} [ u
+e^{-(\frac{\sigma_E}{n}+1)\phi_E } 
+ u^{\frac{1}{2}} e^{(\sigma_E-\sigma_i)\phi_E}  S_\eps^{\frac{\sigma_D-1}{2}}]\om_\psi^n.
\end{align*} 
Inserting the volume ratio bound \eqref{Gradient estimate: volume ratio bound},
$
\frac{1}{C} e^{\sigma_i\phi_E}  S_\eps^{\beta-1}\ \leq \frac{\om^n_{\psi}}{\tilde\om^n_t}\leq C e^{\sigma_s\phi_E} S_\eps^{\beta-1}
$ 
into the inequality above, we obtain the integral inequality.
\end{proof}

In the following sections, we focus on the degenerate scale curvature equation in a given K\"ahler class, where $a_0=t=0$ and $\phi_E=0$. Notions introduced in Section \ref{Perturbed Kahler metrics} become $$\om_{sr}=\tilde\om_{t=0}=\om_{t=0}=\om_K,\quad \om_\psi=\om_\vphi.$$ The following corollary is obtained immediately from inserting the bound of $e^H$ in \eqref{Gradient estimate: Differential inequality eH} into Proposition \ref{Gradient estimate: Integral inequality}.
\begin{cor}[Integral inequality: degenerate equation]\label{Gradient estimate: Integral inequality degenerate}
Assume that $\Om$ is K\"ahler. Then the integral inequality in Proposition \ref{Gradient estimate: Integral inequality} reduces to
\begin{align*}
LHS_1+LHS_2\leq C  \cdot RHS_1,
\end{align*} 
where, we set 
\begin{align*}
LHS_1&:=\int_X\tilde u^{p-2}[(p-1)|\p u|_\psi^2 
+\tilde u S_\eps^{\sigma_D}\tr_{\tilde\om_t}\om_\psi] \om_\psi^n,\notag\\
LHS_2&:=\int_X  \tilde u^{p-1} u^{1+\frac{1}{n}}   S_\eps^{-\frac{\beta-1+\sigma_D}{n}} \om_\psi^n,\\
RHS_1&:=  \int_X  \tilde u^{p-1}  [ u 
+1 
+u^{\frac{1}{2}} S_\eps^{\frac{\sigma_D-1}{2}} ] \om_\psi^n.
\end{align*} 
\end{cor}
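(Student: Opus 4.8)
The plan is to obtain the corollary as a direct specialization of Proposition \ref{Gradient estimate: Integral inequality} to the case of a K\"ahler class, the only nontrivial input being the volume ratio bound \eqref{Gradient estimate: volume ratio bound}, which is used to trade the reference measure $S_\eps^{\beta-1}\tilde\om_t^n$ for the Monge--Amp\`ere volume $\om_\vphi^n$.

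First I would record the simplifications available when $\Om=[\om_K]$ is K\"ahler. Then Kodaira's Lemma is not needed, so one may take $a_0=0$, whence $\phi_E\equiv 0$ and $|s_E|_{h_E}\equiv 1$; one may also send the perturbation parameter $t$ to $0$, so that $\om_{sr}=\om_K=\tilde\om_t=\om_t$ coincide and $\psi=\vphi$, $\om_\psi=\om_\vphi$. With $\phi_E\equiv 0$ every exponential weight $e^{c\,\phi_E}$ occurring in Proposition \ref{Gradient estimate: Integral inequality} collapses to $1$; in particular the singular exponent $\sigma_E$ drops out entirely, and the two-sided bound \eqref{Gradient estimate: Differential inequality eH} for $e^{H}$ reduces to $C^{-1}S_\eps^{\sigma_D}\le e^{H}\le C\,S_\eps^{\sigma_D}$.

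Next I would substitute $\phi_E\equiv 0$ into the integral inequality of Proposition \ref{Gradient estimate: Integral inequality}. Its first term then reads $\int_X\tilde u^{p-1}u^{1+\frac1n}S_\eps^{\beta-1-\frac{\beta-1+\sigma_D}{n}}\tilde\om_t^n$; in the second term the coefficient of $\tr_{\tilde\om_t}\om_\psi$ becomes $\tilde u\,S_\eps^{\sigma_D}$ and the common measure becomes $S_\eps^{\beta-1}\tilde\om_t^n$; and the right-hand side also carries the measure $S_\eps^{\beta-1}\tilde\om_t^n$. Writing $S_\eps^{\beta-1-\frac{\beta-1+\sigma_D}{n}}\tilde\om_t^n=S_\eps^{-\frac{\beta-1+\sigma_D}{n}}\bigl(S_\eps^{\beta-1}\tilde\om_t^n\bigr)$, all three integrals then appear against the common reference measure $S_\eps^{\beta-1}\tilde\om_t^n$.

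Finally I would apply the volume ratio bound \eqref{Gradient estimate: volume ratio bound}, which for $\phi_E\equiv 0$ says $C^{-1}S_\eps^{\beta-1}\tilde\om_t^n\le\om_\vphi^n\le C\,S_\eps^{\beta-1}\tilde\om_t^n$, so that $S_\eps^{\beta-1}\tilde\om_t^n$ and $\om_\vphi^n$ are uniformly comparable. Using the upper bound $\om_\vphi^n\le C\,S_\eps^{\beta-1}\tilde\om_t^n$ to estimate the two left-hand integrals from below by their $\om_\vphi^n$-versions, one recognizes the first as $LHS_2$ and the second as $LHS_1$; using the lower bound $\om_\vphi^n\ge C^{-1}S_\eps^{\beta-1}\tilde\om_t^n$ to estimate the right-hand integral from above, one recognizes $C\cdot RHS_1$. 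Collecting these gives $LHS_1+LHS_2\le C\cdot RHS_1$. There is no genuine obstacle here — the substance lies entirely in Proposition \ref{Gradient estimate: Integral inequality} — and the only point demanding a little care is that both inequalities of \eqref{Gradient estimate: volume ratio bound} are invoked (the upper one on the left-hand side, the lower one on the right-hand side), which also makes transparent that the resulting constant is independent of $\eps$ and of the auxiliary parameters $a_0,t$ sent to $0$.
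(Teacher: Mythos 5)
Your proposal is correct and follows essentially the same route as the paper: specialize Proposition \ref{Gradient estimate: Integral inequality} to $\phi_E\equiv 0$ (so all $e^{c\phi_E}$ weights collapse and $e^{H}$ is comparable to $S_\eps^{\sigma_D}$) and use the two-sided volume ratio bound \eqref{Gradient estimate: volume ratio bound} to pass between $S_\eps^{\beta-1}\tilde\om_t^n$ and $\om_\psi^n$, with the two directions of the bound applied to the left- and right-hand sides exactly as you indicate. The only cosmetic difference is that the paper reads the corollary off the intermediate $\om_\psi^n$-form in the proof of that proposition (inserting the bound on $e^{H}$ directly), whereas you round-trip through the final $\tilde\om_t^n$-form; both give the same inequality up to constants.
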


\subsection{Rough iteration inequality}
We now deal with the lower bound of $LHS_1$ by applying the Sobolev inequality.
We introduce some notions for convenience, $q:=p-\frac{1}{2}$, $\chi:=\frac{2n}{2n-1}$ and
\begin{align*}
k:=p\sigma_D+\beta-1,\quad
k_\sigma:=\frac{\sigma_D}{2}+\beta-1+\sigma,\quad \tilde\mu:=S^{k_\sigma\chi}_\eps\tilde\om_t^n.
\end{align*} 
Clearly, $k_\sigma=k+\sigma-q\sigma_D$ and 
\begin{align*}
\|\tilde u\|^{q}_{L^{q\chi}(\tilde\mu)}=[\int_X \tilde u^{q\chi}\tilde\mu ]^{\chi^{-1}}=[\int_X (\tilde u^{p-\frac{1}{2}}S^{k_\sigma}_\eps)^{\frac{2n}{2n-1}}\tilde\om_t^n ]^{\frac{2n-1}{2n}}.
\end{align*} 
\begin{prop}[Rough iteration inequality]\label{Gradient estimate: Rough iteration inequality}
It holds
\begin{align*}
LHS_0+\sqrt{q} LHS_2
\leq C (\sqrt{q}RHS_1+RHS_2),
\end{align*} 
where, we denote $LHS_0:=[\int_X (u\tilde u^{q-1})^{\chi}\tilde\mu]^{\chi^{-1}}$ and
\begin{align}\label{Gradient estimate: RHS2}
RHS_2:=\int_X u\tilde u^{q-1}S^{\frac{\sigma_D}{2}+\sigma}_\eps\om_\psi^n+k_\sigma\int_X u\tilde u^{q-1} S^{\frac{\sigma_D-1}{2}+\sigma}_\eps \om_\psi^n.
\end{align} 
\end{prop}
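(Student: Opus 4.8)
\textbf{Proof proposal for the Rough iteration inequality (Proposition \ref{Gradient estimate: Rough iteration inequality}).}

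The plan is to convert the integral inequality of Corollary \ref{Gradient estimate: Integral inequality degenerate} into one that involves a gain in exponent via the Sobolev inequality, exactly in the spirit of Step 3 of the Laplacian estimate (Proposition \ref{Rough iteration inequality prop}), but now tracking the half-integer exponent $q = p-\frac12$ and the weaker Sobolev exponent $\chi = \frac{2n}{2n-1}$ that is natural at the level of the gradient (first order) rather than the Laplacian (second order). First I would start from the term $LHS_1$, which contains the good gradient piece $(p-1)\int_X \tilde u^{p-2}|\p u|_\psi^2\,\om_\psi^n$ together with the positive trace term $\int_X \tilde u^{p-1} S_\eps^{\sigma_D}\tr_{\tilde\om_t}\om_\psi\,\om_\psi^n$. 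Using the volume-ratio bound \eqref{Gradient estimate: volume ratio bound} to pass from $\om_\psi^n$ to $S_\eps^{\beta-1}\tilde\om_t^n$ (up to a uniform constant, since $\Om$ is K\"ahler so $\phi_E$ drops out), the gradient piece becomes comparable to $\int_X|\p \tilde u^{p-1/2}|^2 S_\eps^{\beta-1+\text{(lower order)}}\,\tilde\om_t^n$ after the elementary identity $\tilde u^{p-2}|\p u|^2 \sim q^{-2}|\p(\tilde u^{q})|^2$ (here is where $q=p-\frac12$ and the constant $\sqrt q$ enter), together with \lemref{Laplacian estimate: key trick} to replace $\p(\tilde u^{q})$ by $\p(u\,\tilde u^{q-1})$ at the cost of $u\le\tilde u$.

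Next I would absorb the weight $S_\eps$ into the differentiated quantity by writing $|\p(u\tilde u^{q-1})|^2 S_\eps^{k_\sigma} = |\p(u\tilde u^{q-1}S_\eps^{k_\sigma/2})|^2 - u^2\tilde u^{2q-2}|\p S_\eps^{k_\sigma/2}|^2$, exactly as in \eqref{Laplacian estimate: main term of LHS}, and apply the Sobolev inequality for $\tilde\om_t$ to $f = u\tilde u^{q-1}S_\eps^{k_\sigma/2}$ to bootstrap the $L^2$ norm to an $L^{2\chi}$ norm; this produces the term $LHS_0 = \|u\tilde u^{q-1}\|_{L^\chi(\tilde\mu)}^{\cdots}$ with $\tilde\mu = S_\eps^{k_\sigma\chi}\tilde\om_t^n$, plus an error term $\int_X u^2\tilde u^{2q-2}S_\eps^{k_\sigma}\,\tilde\om_t^n$ from the Sobolev remainder and another error $\int_X u^2\tilde u^{2q-2}S_\eps^{k_\sigma-1}\,\tilde\om_t^n$ coming from $|\p S_\eps^{k_\sigma/2}|^2 \le C k_\sigma^2 S_\eps^{k_\sigma-1}$ via $|\p S_\eps|^2\le C S_\eps$. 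Reconverting $\tilde\om_t^n$ back into $\om_\psi^n$ (using the lower volume-ratio bound this time) packages these two errors into $RHS_2$ as stated in \eqref{Gradient estimate: RHS2}, where the shift of $\sigma$ in the weight is the bookkeeping device matching what was done in Section \ref{A priori estimates for approximate degenerate cscK equations}. Finally, feeding the Sobolev output back into Corollary \ref{Gradient estimate: Integral inequality degenerate} and collecting the positive terms $LHS_0$ and $\sqrt q\,LHS_2$ on the left gives the asserted rough iteration inequality, with the three-term $RHS_1$ carried through unchanged up to the constant.

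The main obstacle I anticipate is the loss of half a power of $S_\eps$ in the second error term of $RHS_2$ (the term with exponent $\frac{\sigma_D-1}{2}+\sigma$), which is the analogue of Remark \ref{Laplacian estimate: lose weight}: subtracting $1$ from the weight of $S_\eps$ makes that term genuinely singular near $D$ unless $\sigma_D$ is taken large enough, and controlling it is precisely what will force the threshold $\sigma_D > \max\{1-\frac{2\beta}{n+2},0\}$ in \thmref{gradient estimate sigma 2}. Handling this is deferred to the subsequent weighted and inverse-weighted inequalities (the analogues of Proposition \ref{Weighted inequality} and Proposition \ref{inverse weighted inequalities}), where one uses the positive term $LHS_2$, whose weight $S_\eps^{-\frac{\beta-1+\sigma_D}{n}}$ is favorable, to absorb $RHS_2$ via Young's inequality after computing the sub-critical exponent. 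For the present proposition, though, it is enough to be careful that all the constant factors of $q$ (equivalently $p$) that appear — one $\sqrt q$ from $\tilde u^{p-2}|\p u|^2\sim q^{-2}|\p\tilde u^{q}|^2$ balanced against $(p-1)$, and the explicit $k_\sigma$ in front of the second error — are tracked so the final inequality has exactly the stated powers of $q$.
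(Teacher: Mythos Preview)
Your proposal follows the $W^{1,2}$-Sobolev template of the Laplacian estimate (Proposition \ref{Rough iteration inequality prop}), but this does not carry over here, and the mismatch is visible in the very form of the statement you are trying to prove. The term $LHS_1$ in Corollary \ref{Gradient estimate: Integral inequality degenerate} contains the gradient $|\p u|_\psi^2$ measured in the \emph{$\om_\psi$-norm}, not the $\tilde\om_t$-norm. In the Laplacian section the analogous term carried an extra factor of $u$ (containing $v=\tr_\om\om_\vphi$), which is exactly what made the conversion $v|\cdot|_\vphi^2\ge|\cdot|^2$ possible (see \eqref{Laplacian estimate: C16}). Here no such factor is available, so you cannot simply ``use the volume-ratio bound to pass from $\om_\psi^n$ to $S_\eps^{\beta-1}\tilde\om_t^n$'' and arrive at $\int|\p\tilde u^q|^2 S_\eps^{\cdots}\tilde\om_t^n$.

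The paper's device is to combine the \emph{two} terms of $LHS_1$ by Young's inequality (AM--GM):
\[
(p-1)\tilde u^{p-2}|\p u|_\psi^2+\tilde u^{p-1}S_\eps^{\sigma_D}\tr_{\tilde\om_t}\om_\psi
\;\ge\;2\sqrt{p-1}\,\tilde u^{p-3/2}S_\eps^{\sigma_D/2}|\p u|_\psi\sqrt{\tr_{\tilde\om_t}\om_\psi}
\;\ge\;2\sqrt{p-1}\,\tilde u^{p-3/2}S_\eps^{\sigma_D/2}|\p u|,
\]
using $|\p u|_\psi^2\cdot\tr_{\tilde\om_t}\om_\psi\ge|\p u|^2$. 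This simultaneously converts the norm and produces a $W^{1,1}$ quantity $\int|\p\tilde u^q|\,S_\eps^{\sigma_D/2}\om_\psi^n$ with $q=p-\tfrac12$. After \lemref{Laplacian estimate: key trick} and the volume bound, one applies the \emph{$L^1$-Sobolev inequality}, whose exponent is $\chi=\frac{2n}{2n-1}$ --- precisely the $\chi$ in the statement, not the $\frac{n}{n-1}$ your $L^2$-Sobolev route would give. The error from splitting off the weight is then $u\tilde u^{q-1}|\p S_\eps^{k_\sigma}|$, linear in $u\tilde u^{q-1}$, which matches the stated $RHS_2$; your squared version $u^2\tilde u^{2q-2}|\p S_\eps^{k_\sigma/2}|^2$ does not. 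So the missing idea is the AM--GM step that consumes the trace term to perform the norm conversion and drops the argument to $W^{1,1}$.
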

\begin{proof}
Applying Young's inequality to the $LHS_1$ of the integral inequality, \corref{Gradient estimate: Integral inequality degenerate}, we get
\begin{align*}
LHS_1\geq \sqrt{p-1}\int_X \tilde u^{p-\frac{3}{2}}S_\eps^{\frac{\sigma_D}{2}} |\p u|\om_\psi^n
=q^{-1}\sqrt{p-1}\int_X |\p \tilde u^{q}|  S_\eps^{\frac{\sigma_D}{2}} \om_\psi^n.
\end{align*} 
Making use of the key \lemref{Laplacian estimate: key trick}, we get
\begin{align*}
LHS_1\geq q^{-1}\sqrt{p-1}\int_X |\p (u\tilde u^{q-1})|  S_\eps^{\frac{\sigma_D}{2}} \om_\psi^n.
\end{align*} 
The lower bound of the volume ratio $F$ in \eqref{Gradient estimate: volume ratio bound}  further gives
\begin{align*}
LHS_1&\geq Cq^{-1}\sqrt{\frac{1}{2}(p-\frac{1}{2})}\int_X |\p (u\tilde u^{q-1})|S^{\frac{\sigma_D}{2}+\beta-1}_\eps\tilde\om_t^n\\
&\geq Cq^{-\frac{1}{2}}\int_X |\p(u \tilde u^{q-1})|S^{\frac{\sigma_D}{2}+\beta-1+\sigma}_\eps\tilde\om_t^n\\
&\geq C q^{-\frac{1}{2}}[\int_X |\p (u\tilde u^{q-1}S^{k_\sigma}_\eps)|\tilde\om_t^n
-\int_X u\tilde u^{q-1} |\p S^{k_\sigma}_\eps|  \tilde\om_t^n].
\end{align*} 
The Sobolev imbedding theorem estimates the first part,
\begin{align*}
\int_X |\p (u\tilde u^{q-1}S^{k_\sigma}_\eps)|\tilde\om_t^n
&\geq C_S^{-1}\left[\int_X (u\tilde u^{q-1}S^{k_\sigma}_\eps)^{\chi}\tilde\om_t^n \right]^{\chi^{-1}}
-\int_X u\tilde u^{q-1}S^{k_\sigma}_\eps\tilde\om_t^n\\
&=C_S^{-1} LHS_0-\int_X u\tilde u^{q-1}S^{k_\sigma}_\eps\tilde\om_t^n.
\end{align*} 
Meanwhile, the second part is estimated by
\begin{align*}
-\int_X u\tilde u^{q-1} |\p S^{k_\sigma}_\eps|  \tilde\om_t^n
\geq -k_\sigma\int_X u\tilde u^{q-1} S^{k_\sigma-\frac{1}{2}}_\eps \tilde\om_t^n.
\end{align*} 
In summary, we use the volume ratio bound \eqref{Gradient estimate: volume ratio bound} again and combine these inequalities together to get the lower bound 
\begin{align*}
LHS_1&\geq C q^{-\frac{1}{2}}\left\{C_S^{-1}LHS_0
-RHS_2\right\}.
\end{align*} 
Therefore, the required inequality is obtained by inserting the inequality of $LHS_1$ to the integral inequality \eqref{Gradient estimate: Integral inequality degenerate}, namely
$
LHS_1+LHS_2\leq C  \cdot RHS_1.
$

\end{proof}

\begin{cor}\label{Gradient estimate: Rough iteration inequality cor}
\begin{align*}
\|\tilde u\|^{q}_{L^{q\chi}(\tilde\mu)}+\sqrt{q} LHS_2
\leq C (\sqrt{q}RHS_1+RHS_2+1).
\end{align*} 
\end{cor}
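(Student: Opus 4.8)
The statement to prove is \corref{Gradient estimate: Rough iteration inequality cor}, which upgrades the rough iteration inequality of Proposition \ref{Gradient estimate: Rough iteration inequality} by replacing the abstract quantity $LHS_0 = [\int_X (u\tilde u^{q-1})^\chi\tilde\mu]^{\chi^{-1}}$ with the genuine norm $\|\tilde u\|^q_{L^{q\chi}(\tilde\mu)}$.

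\medskip

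The plan is to compare $LHS_0$ with $\|\tilde u\|^q_{L^{q\chi}(\tilde\mu)}$ by substituting $u = \tilde u - K_0$ into the integrand of $LHS_0$. First I would expand
\[
(u\tilde u^{q-1})^\chi = (\tilde u - K_0)^\chi \tilde u^{(q-1)\chi},
\]
and use the elementary inequality $(a-b)^\chi \geq c(n) a^\chi - C(n) b^\chi$ valid for $a \geq b \geq 0$ and $\chi = \tfrac{2n}{2n-1} > 1$ (this is the exact analogue of the manipulation already carried out in the proof of Proposition \ref{Rough iteration inequality prop}, where the same substitution $u = \tilde u - K$ was made and Young's inequality $\tilde u^{(q-1)\chi} \leq \tilde u^{q\chi} + 1$ was invoked). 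This gives
\[
\int_X (u\tilde u^{q-1})^\chi\tilde\mu \geq c(n)\int_X \tilde u^{q\chi}\tilde\mu - C(n) K_0^\chi\int_X \tilde u^{(q-1)\chi}\tilde\mu,
\]
and then Young's inequality $\tilde u^{(q-1)\chi}\leq \tilde u^{q\chi}+1$ together with the choice of $K_0$ small enough that $c(n) - C(n)K_0^\chi \geq \tfrac12 c(n)$ (recall the measure is normalised to total mass one, so $\int \tilde\mu = 1$) yields
\[
\int_X (u\tilde u^{q-1})^\chi\tilde\mu \geq \tfrac{c(n)}{2}\Bigl(\int_X \tilde u^{q\chi}\tilde\mu - 1\Bigr).
\]
Raising to the power $\chi^{-1}$ and using $(x-1)^{\chi^{-1}} \geq c\, x^{\chi^{-1}} - c$ for $x \geq 0$ (again $\chi^{-1} < 1$) gives $LHS_0 \geq c\,\|\tilde u\|^q_{L^{q\chi}(\tilde\mu)} - C$.

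\medskip

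Inserting this into Proposition \ref{Gradient estimate: Rough iteration inequality}, namely $LHS_0 + \sqrt q\, LHS_2 \leq C(\sqrt q\, RHS_1 + RHS_2)$, I would absorb the additive constant $C$ onto the right-hand side (which is why the $+1$ appears in the corollary) and rename constants to obtain
\[
\|\tilde u\|^q_{L^{q\chi}(\tilde\mu)} + \sqrt q\, LHS_2 \leq C(\sqrt q\, RHS_1 + RHS_2 + 1),
\]
which is exactly the asserted inequality.

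\medskip

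I do not expect any serious obstacle here: the corollary is a cosmetic reformulation obtained by the same $u = \tilde u - K_0$ bookkeeping used earlier in the Laplacian argument, and the only mild point of care is that the constant $K_0$ introduced in Proposition \ref{Gradient estimate: Integral inequality} must be chosen small (subject also to $0 < K_0 < 1$ so the Young step behaves), which is harmless since $K_0$ was at our disposal from the start. The genuine difficulty of the whole gradient estimate lies downstream — in estimating the terms of $RHS_1$ and $RHS_2$ against the good positive term $LHS_2$ via inverse weighted and weighted inequalities, exactly as in Section \ref{A priori estimates for approximate degenerate cscK equations}, and in checking the sub-critical exponent condition coming from \eqref{gradient estimate: degenerate exponent} — but that is the content of the subsequent sections, not of this corollary.
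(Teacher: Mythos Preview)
Your proposal is correct and follows essentially the same route as the paper: substitute $u=\tilde u-K_0$ in $LHS_0^\chi$, use the convexity inequality $(a-b)^\chi\ge c(n)a^\chi-C(n)b^\chi$, control the lower-order term $\tilde u^{(q-1)\chi}$ by Young's inequality, choose $K_0$ small (with $\int_X\tilde\mu$ normalised to $1$), and insert the resulting lower bound for $LHS_0$ into Proposition~\ref{Gradient estimate: Rough iteration inequality}. Your write-up is in fact slightly more explicit than the paper's about the final step of passing from $LHS_0^\chi\ge\tfrac{C(n)}{2}(\|\tilde u\|_{L^{q\chi}(\tilde\mu)}^{q\chi}-1)$ to the stated inequality.
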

\begin{proof}
It follows from $u=\tilde u-K_0$ that
\begin{align*}
LHS^\chi_0=\int_X (u\tilde u^{q-1})^{\chi}\tilde\mu
\geq C(n)[\int_X \tilde u^{q\chi}\tilde\mu-K_0^{\chi}
\int_X \tilde u^{(q-1)\chi}\tilde\mu] .
\end{align*}
By Young's inequality, which states that
\begin{align*}
 \tilde u^{(q-1)\chi}\leq   \frac{q-1}{q}\tilde u^{q\chi}+\frac{1}{q}\leq \tilde u^{q\chi}+1,
 \end{align*}
 if follows
  \begin{align*}
LHS^\chi_0=\int_X (u\tilde u^{q-1})^{\chi}\tilde\mu
\geq C(n)[(1-K_0^{\chi})\int_X \tilde u^{q\chi}\tilde\mu-K_0^{\chi}\int_X\tilde\mu] .
\end{align*}
 Without loss of generality, we normalise $\int_X\tilde\mu=1$.
 Then picking small $K_0$ satisfying $K_0^{\chi}\leq \frac{1}{2}$, we get
 \begin{align*}
LHS^\chi_0
\geq C(n)[\frac{1}{2}\int_X \tilde u^{q\chi}\tilde\mu-K_0^{\chi}] 
\geq \frac{C(n)}{2}[\int_X \tilde u^{q\chi}\tilde\mu-1].
\end{align*}
Inserting to the rough iteration inequality, Proposition \ref{Gradient estimate: Rough iteration inequality}, we obtain the desired result. 

\end{proof}
\subsection{$L^p$ control}
In this section, we derive the $L^p$ estimate of $u$ from the integral inequality, Corollary \ref{Gradient estimate: Integral inequality degenerate}.
We set
\begin{align*}
\widetilde{LHS_2}:=\int_X   \tilde u^{p+\frac{1}{n}}   S_\eps^{-\frac{\beta-1+\sigma_D}{n}} \om_\psi^n,
\end{align*} 
which will be used to bound $RHS_1$.
\begin{prop}[$L^p$ control]\label{Gradient estimate: Lp control prop}
Assume that the degenerate exponent satisfies the condition \eqref{gradient estimate: degenerate exponent}. Then
\begin{align*}
&LHS_2\leq \widetilde{LHS_2} \leq C\cdot RHS_1
\leq  C(p), \quad \forall p\geq 2.
\end{align*} 
\end{prop}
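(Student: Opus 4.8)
The plan is to bound each of the three terms in $RHS_1$ against the positive left-hand quantities $LHS_1$, $LHS_2$ (equivalently $\widetilde{LHS_2}$) plus a controlled error, exactly in the spirit of the inverse weighted inequalities of Section~\ref{Step 3: inverse weighted inequalities}. Recall from Corollary~\ref{Gradient estimate: Integral inequality degenerate} that
\[
RHS_1=\int_X\tilde u^{p-1}\bigl[u+1+u^{\frac12}S_\eps^{\frac{\sigma_D-1}{2}}\bigr]\om_\psi^n,
\]
while the good term on the left contains $\tilde u^{p-1}u^{1+\frac1n}S_\eps^{-\frac{\beta-1+\sigma_D}{n}}$. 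Since $u\le\tilde u$, the first step is to replace $RHS_1$ by $\widetilde{RHS_1}:=\int_X\tilde u^{p-1}\bigl[\tilde u+1+\tilde u^{\frac12}S_\eps^{\frac{\sigma_D-1}{2}}\bigr]\om_\psi^n$ and prove $LHS_2\le\widetilde{LHS_2}$ trivially. Then for the first term $\int_X\tilde u^{p}\om_\psi^n$ I would interpolate between $\widetilde{LHS_2}=\int_X\tilde u^{p+\frac1n}S_\eps^{-\frac{\beta-1+\sigma_D}{n}}\om_\psi^n$ and the unit measure: Young's inequality with exponents $\frac{n+1}{n}$ and $n+1$ gives
\[
\tilde u^p=\bigl(\tilde u^{p+\frac1n}S_\eps^{-\frac{\beta-1+\sigma_D}{n}}\bigr)^{\frac{n}{n+1}}\cdot\tilde u^{p-\frac{?}{}}S_\eps^{\frac{\beta-1+\sigma_D}{n+1}},
\]
so that $\int_X\tilde u^p\om_\psi^n\le\tau\widetilde{LHS_2}+C(\tau)\int_X\tilde u^{?}S_\eps^{\frac{\beta-1+\sigma_D}{n+1}}\om_\psi^n$, and after carefully matching the power of $\tilde u$ (which should drop to $p$ again, using that the interpolation is at the same $L^p$-scale; the extra $\frac1n$ of exponent is absorbed by the $\frac{n}{n+1}$ weight) the residual integrand becomes $\tilde u^p S_\eps^{\frac{\beta-1+\sigma_D}{n+1}}$. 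The constant term $\int_X\tilde u^{p-1}\om_\psi^n$ is handled identically (it is even weaker), and the third term $\int_X\tilde u^{p-1}u^{\frac12}S_\eps^{\frac{\sigma_D-1}{2}}\om_\psi^n$ is handled by Young's inequality with three factors, peeling off $\bigl(\tilde u^{p+\frac1n}S_\eps^{-\frac{\beta-1+\sigma_D}{n}}\bigr)^{\theta_1}$ as before and collecting the remainder as $\tilde u^p S_\eps^{m}$ for some exponent $m$ depending on $\sigma_D$, $\beta$, $n$.

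The crux is then to verify that every residual exponent of $S_\eps$ obtained in this way is nonnegative (so that $S_\eps\le 1$ makes $S_\eps^{m}\le 1$ and the residual integral is bounded by $\int_X\tilde u^p\om_\psi^n$, which is itself $\le C(p)$ by the $W^{2,p}$-estimate of Theorem~\ref{w2pestimates degenerate Singular equation} after substituting back $u=e^H v$ and using the volume ratio bound \eqref{Gradient estimate: volume ratio bound} together with the $L^\infty$-bounds of $\vphi$ and $F$). For the first two terms the residual exponent is $\frac{\beta-1+\sigma_D}{n+1}\ge0$ automatically since $\beta>1$ and $\sigma_D\ge0$. For the third term the relevant exponent works out to something like $\frac{\sigma_D-1}{2}+\frac{\beta-1+\sigma_D}{n+1}\cdot(\text{something})$, and forcing it to be nonnegative is precisely where the hypothesis $\sigma_D>\max\{1-\tfrac{2\beta}{n+2},0\}$ enters: the critical case $\sigma_D=1-\tfrac{2\beta}{n+2}$ makes the exponent vanish, matching the stated threshold $\beta=\tfrac{n+2}{2}$ at which $\sigma_D$ can be taken to be $0$. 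I would therefore devote the bulk of the proof to this bookkeeping: compute the exact Young exponents $\theta_i$, write the residual power of $S_\eps$ as a function of $\sigma_D,\beta,n$, and check the inequality under \eqref{gradient estimate: degenerate exponent}.

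Having dominated $RHS_1$ by $\tau(\widetilde{LHS_2}+LHS_2)+C(\tau)\int_X\tilde u^p\om_\psi^n$, I absorb the $\tau$-terms into the left side of Corollary~\ref{Gradient estimate: Integral inequality degenerate} (choosing $\tau$ small relative to the constant $C$ there), which yields
\[
LHS_1+LHS_2\le \widetilde{LHS_2}\le C\int_X\tilde u^p\om_\psi^n\le C(p).
\]
The final bound $\int_X\tilde u^p\om_\psi^n\le C(p)$ is where I invoke that $\tilde u=u+K_0=e^H v+K_0$ with $v=|\p\psi|^2+K$, that $e^H$ is two-sidedly bounded by powers of $S_\eps$ times bounded factors via \eqref{Gradient estimate: Differential inequality eH}, and $\om_\psi^n=e^{\tilde F}\tilde\om_t^n\le C S_\eps^{\beta-1}\tilde\om_t^n$; since $|\p\psi|^2_{\tilde\om_t}\le C\,\tr_{\tilde\om_t}\om_\psi$, the $p$-th power of $\tilde u$ against $\om_\psi^n$ is controlled by $\int_X(\tr_{\tilde\om_t}\om_\psi)^{p}S_\eps^{\sigma}\,\tilde\om_t^n$ for a suitable $\sigma$, which is exactly the $W^{2,p}$-estimate. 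The main obstacle, as indicated, is not any single estimate but the simultaneous choice of the Young exponents so that (i) the power of $\tilde u$ returns to $p$ and (ii) the leftover power of $S_\eps$ stays $\ge 0$ under the sharp threshold \eqref{gradient estimate: degenerate exponent}; getting the arithmetic to land exactly on $\beta=\tfrac{n+2}{2}$ is the delicate point, and I expect it to mirror closely Lemma~\ref{Laplacian estimate: RHS1 good} with $n$ replaced by the gradient-scaling $2n$ and $\chi=\tfrac{2n}{2n-1}$.
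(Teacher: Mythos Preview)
Your overall strategy---absorb $RHS_1$ into $\tau\widetilde{LHS_2}$ plus a residual via Young's inequality---matches the paper's, but the execution has a genuine gap at the final step.

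The paper chooses Young exponents that \emph{depend on $p$}: for the term $\int_X\tilde u^p\om_\psi^n$ it takes $p_1=\tfrac{p+n^{-1}}{p}$ and $q_1=np+1$, so that $(\tilde u^p)^{p_1}=\tilde u^{p+\frac1n}$ exactly matches the $\tilde u$-power in $\widetilde{LHS_2}$ and the residual $b^{q_1}$ carries \emph{no} factor of $\tilde u$ at all. Similarly for the other two terms, with $p_2=\tfrac{p+n^{-1}}{p-1}$ and $p_3=\tfrac{p+n^{-1}}{p-\frac12}$. The residuals are then just $\int_X S_\eps^{k_i}\om_\psi^n$, and the condition \eqref{gradient estimate: degenerate exponent} enters as an \emph{integrability} condition on $S_\eps^{k_3}$ (the exponent $k_3$ may be negative; one needs $k_3+2(\beta-1)+2n>0$), not as a nonnegativity condition. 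The whole argument is then self-contained: no external $L^p$ bound on $\tilde u$ is needed.

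Your version instead uses fixed exponents $\tfrac{n+1}{n},\,n+1$, which leaves a residual still containing $\tilde u^{p-1}$ (not $\tilde u^p$ as you wrote---check the arithmetic: $(\tilde u^{p+\frac1n})^{\frac{n}{n+1}}=\tilde u^{\frac{np+1}{n+1}}$, so the leftover power is $\tfrac{p-1}{n+1}$, raised to $n+1$). You then propose to bound $\int_X\tilde u^{p}\om_\psi^n$ by the $W^{2,p}$-estimate of Theorem~\ref{w2pestimates degenerate Singular equation} via the inequality $|\p\psi|^2_{\tilde\om_t}\le C\,\tr_{\tilde\om_t}\om_\psi$. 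That pointwise inequality is \emph{false}: the gradient $|\p\psi|^2$ is a first-order quantity and is not controlled by the second-order $\tr_{\tilde\om_t}\om_\psi$ (take $\psi$ affine on a chart). Nor can you fall back on the rough gradient estimate of Theorem~\ref{gradient estimate sigma 1}, since that only gives $|\p\psi|^2\le C S_\eps^{-1}$, and plugging this into $\int_X|\p\psi|^{2p}S_\eps^{p\sigma_D+\beta-1}\tilde\om_t^n$ forces $\sigma_D\ge 1$ for all $p$, defeating the purpose. So the closing step of your argument does not go through, and the fix is precisely the paper's: pick the Young exponents to annihilate $\tilde u$ in the residual.
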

\begin{proof}
We keep the first term on the left hand side of the integral inequality from \corref{Gradient estimate: Integral inequality degenerate}, 
\begin{align*}
&LHS_2=\int_X  \tilde u^{p-1} u^{1+\frac{1}{n}}   S_\eps^{-\frac{\beta-1+\sigma_D}{n}} \om_\psi^n\\&\leq  C RHS_1=  \int_X  \tilde u^{p-1}  [ u 
+1 
+u^{\frac{1}{2}} S_\eps^{\frac{\sigma_D-1}{2}} ] \om_\psi^n.\notag
\end{align*} 

Replacing $\tilde u$ with $u+K_0$, we get
\begin{align*}
 \widetilde{LHS_2}
\leq C(K_0)[LHS_2+\int_X \tilde u^{p-1}   \tilde   S_\eps^{-\frac{\beta-1+\sigma_D}{n}} \om_\psi^n].
\end{align*}
Then Young's inequality with conjugate exponents $\frac{p+\frac{1}{n}}{p-1}$ gives
\begin{align*}
\int_X \tilde u^{p-1}   \tilde   S_\eps^{-\frac{\beta-1+\sigma_D}{n}} \om_\psi^n
\leq \tau\widetilde{LHS_2}+C(\tau)\int_X   \tilde   S_\eps^{-\frac{\beta-1+\sigma_D}{n}} \om_\psi^n.
\end{align*}
The latter integral is bounded by a constant $C$ independent of $p$, since
\begin{align*}
-\frac{\beta-1+\sigma_D}{n}+2(\beta-1)+2n>0.
\end{align*}
In conclusion, we shows that
\begin{align}\label{Gradient estimate: Integral inequality degenerate partial}
 \widetilde{LHS_2}\leq C(LHS_2+1)\leq C(RHS_1+1).
\end{align}

In order to proceed further, we apply Young's inequality to each term in $RHS_1$.
For the first term in $RHS_1$, we pick the conjugate exponents
$
p_1=\frac{p+n^{-1}}{p},q_1=\frac{p+n^{-1}}{n^{-1}}
$, then
\begin{align*}
\int_X  \tilde u^{p}\om_\psi^n&=\int_X (\tilde  u^{p} S_\eps^{-\frac{\beta-1+\sigma_D}{n p_1}} )S_\eps^{\frac{\beta-1+\sigma_D}{n p_1}} \om_\psi^n 
\leq \tau\widetilde{LHS_2}
+C(p,\tau)\int_X S_\eps^{k_1}\om^n_\psi.\notag
\end{align*} 
The exponent $k_1$ is then computed as
\begin{align*}
k_1:=\frac{\beta-1+\sigma_D}{n}\frac{q_1}{p_1}
=(\beta-1+\sigma_D)p\geq 0.
\end{align*}

Similarly, we estimate the second term as
\begin{align*}
\int_X  \tilde  u^{p-1}\om_\psi^n&=\int_X ( \tilde u^{p-1} S_\eps^{-\frac{\beta-1+\sigma_D}{n p_2}} )S_\eps^{\frac{\beta-1+\sigma_D}{np_2}}\om_\psi^n 
\leq \tau\widetilde{LHS_2}
+C(p,\tau)\int_X S_\eps^{k_2}\om^n_\psi
\end{align*}
with the conjugate exponents $p_2=\frac{p+n^{-1}}{p-1}, q_2=\frac{p+n^{-1}}{1+n^{-1}}
$ and
\begin{align*}
k_2&:=\frac{(\beta-1+\sigma_D)(p-1)}{n+1}\geq 0.
\end{align*}

The third term reduces to
\begin{align*}
 \int_X \tilde  u^{p-\frac{1}{2}}S_\eps^{\frac{\sigma_D-1}{2}}  \om_\psi^n
 &=\int_X (\tilde  u^{p-\frac{1}{2}} S_\eps^{-\frac{\beta-1+\sigma_D}{n p_3}} )S_\eps^{\frac{\beta-1+\sigma_D}{n p_3}+\frac{\sigma_D-1}{2}} \om_\psi^n \\
&\leq \tau\widetilde{LHS_2}
+C(p,\tau)\int_X S_\eps^{k_3}\om^n_\psi
\end{align*}
with $p_3=\frac{p+n^{-1}}{p-\frac{1}{2}}, q_3=\frac{p+n^{-1}}{\frac{1}{2}+n^{-1}}
$ and
\begin{align*}
k_3:=(\frac{\beta-1+\sigma_D}{n p_3}+\frac{\sigma_D-1}{2})q_3
=\frac{(\beta-1)(2p-1)-(np+1)}{n+2}+\sigma_D p.
\end{align*}

In order to make the last integrand $S_\eps^{k_3}$ integrable, we need 
\begin{align*}
k_3+2(\beta-1)+2n>0,
\end{align*} 
 which is equivalent to
\begin{align*}
[(n+2)\sigma_D+2(\beta-1)] p+(\beta-1)(2n+3)+2n(n+2)>np+1.
\end{align*} 
Clearly, it is sufficient to ask $(n+2)\sigma_D+2(\beta-1)\geq n$.
Therefore, we see that the integrable condition holds when $\sigma_D\geq1-\frac{2\beta}{n+2}$.

Adding them together, we have obtained that there exists a constant $C(p)$ depending on $p$ such that
\begin{align*}
RHS_1\leq 3 \tau\widetilde{LHS_2}+C(p,\tau) \int_X (S_\eps^{k_1}+S_\eps^{k_2}+S_\eps^{k_3})\om^n_\psi<C(p)
\end{align*}
Inserting it into \eqref{Gradient estimate: Integral inequality degenerate partial}, we get
\begin{align*}
 \widetilde{LHS_2}\leq C[3\tau  \widetilde{LHS_2}+C(p,\tau)\int_X (S_\eps^{k_1}+S_\eps^{k_2}+S_\eps^{k_3})\om^n_\psi+1]
\end{align*}
Taking sufficiently small $\tau$, we obtain the estimate of  $\widetilde{LHS_2}$ and $RHS_1.$
\end{proof}

\subsection{Weighted inequality}

In this section, we deal with the term
\begin{align*}
  \int_X    \tilde u^{q} S_\eps^{\frac{\sigma_D}{2}+\sigma-k'}\om^n_\psi,
\end{align*} 
which appears on the right hand side of the rough iteration inequality, Corollary \ref{Gradient estimate: Rough iteration inequality cor}. We determine the range of $k'$ in this term such that it is bounded by $\|u\|^{q}_{L^{qa}(\tilde\mu)}$ for some $1<a<\frac{2n}{2n-1}$.

\begin{prop}[Weighted inequality]\label{Gradient estimate: Weighted inequality}
Assume $n\geq 2$ and $ k'<\frac{1}{2}$.
Then there exists $1<a<\frac{2n}{2n-1}$ such that
\begin{align*}
  \int_X    \tilde u^{q} S_\eps^{\frac{\sigma_D}{2}+\sigma-k'}\om^n_\psi
 \leq C_{5.1} \|u\|^{q}_{L^{qa}(\tilde\mu)}.
\end{align*} 
Here $\frac{1}{a}+\frac{1}{c}=1$ and $C_{5.1}=\| S_\eps^{k_\sigma-k_\sigma\chi-k'}\|_{L^{c}(\tilde\mu)}$ is finite for some $c>2n$.
\end{prop}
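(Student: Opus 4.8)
The plan is to prove Proposition \ref{Gradient estimate: Weighted inequality} by the same H\"older-interpolation device already used for the Laplacian estimate in Proposition \ref{Weighted inequality}. First I would rewrite the integral against the measure $\tilde\mu = S_\eps^{k_\sigma\chi}\tilde\om_t^n$. Using the volume ratio bound \eqref{Gradient estimate: volume ratio bound}, the term $\om^n_\psi$ is comparable to $e^{\sigma_s\phi_E}S_\eps^{\beta-1}\tilde\om_t^n$, and since $\Om$ is K\"ahler we have $\phi_E=0$; thus up to a uniform constant
\begin{align*}
  \int_X \tilde u^{q}S_\eps^{\frac{\sigma_D}{2}+\sigma-k'}\om^n_\psi
  \leq C\int_X \tilde u^{q}S_\eps^{\frac{\sigma_D}{2}+\sigma-k'+\beta-1}\tilde\om_t^n
  = C\int_X \tilde u^{q}S_\eps^{k_\sigma-k'}\tilde\om_t^n
  = C\int_X \tilde u^{q}S_\eps^{k_\sigma-k_\sigma\chi-k'}\,\tilde\mu,
\end{align*}
where I used $k_\sigma=\frac{\sigma_D}{2}+\beta-1+\sigma$ by definition.

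Next I would apply the generalised H\"older inequality with conjugate exponents $\frac1a+\frac1c=1$ to split the last integrand:
\begin{align*}
  \int_X \tilde u^{q}S_\eps^{k_\sigma-k_\sigma\chi-k'}\,\tilde\mu
  \leq \Big(\int_X \tilde u^{qa}\,\tilde\mu\Big)^{1/a}
  \Big(\int_X S_\eps^{(k_\sigma-k_\sigma\chi-k')c}\,\tilde\mu\Big)^{1/c}
  = \|\tilde u\|^{q}_{L^{qa}(\tilde\mu)}\cdot C_{5.1},
\end{align*}
with $C_{5.1}=\|S_\eps^{k_\sigma-k_\sigma\chi-k'}\|_{L^c(\tilde\mu)}$. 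The point is then to choose the exponents so that $C_{5.1}<\infty$ and simultaneously $a<\chi=\frac{2n}{2n-1}$. Writing $\tilde\mu = S_\eps^{k_\sigma\chi}\tilde\om_t^n$ and recalling that $S_\eps$ behaves like $|s|_h^2+\eps$, the finiteness of the last integral near $D$ is governed by the local integrability of $S_\eps^{(k_\sigma-k_\sigma\chi-k')c+k_\sigma\chi}$ against $\tilde\om_t^n$; since $D$ has real codimension $2$, the condition is
\begin{align*}
  2\big[(k_\sigma-k_\sigma\chi-k')c + k_\sigma\chi\big] + 2n > 0,
\end{align*}
which rearranges to $c< c_0:=2n\,\dfrac{k_\sigma+2n-1}{k_\sigma+k'(2n-1)}$ (using $\chi-1=\frac{1}{2n-1}$).

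The final step is to check that the hypothesis $k'<\frac12$ forces $c_0>2n$, hence one can pick $c\in(2n,c_0)$, and the corresponding $a=\frac{c}{c-1}$ then satisfies $a<\frac{2n}{2n-1}=\chi$ because $c>2n$. Indeed $c_0>2n$ is equivalent to $k_\sigma+2n-1>k_\sigma+k'(2n-1)$, i.e. to $k'<1$, which is implied by $k'<\frac12$; so there is in fact room to spare. This yields the claimed inequality with a uniform constant. I expect no serious obstacle here: the only delicate point is bookkeeping the exponent of $S_\eps$ correctly through the substitution $\tilde\mu = S_\eps^{k_\sigma\chi}\tilde\om_t^n$ and confirming that the codimension-$2$ integrability threshold is the one quoted; this is exactly parallel to the computation in the proof of Proposition \ref{Weighted inequality}, with $n-1$ replaced by $2n-1$ and $k_\gamma$ by $k_\sigma$.
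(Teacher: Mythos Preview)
Your approach is essentially identical to the paper's: pass from $\om_\psi^n$ to $\tilde\om_t^n$ via the volume ratio bound, rewrite against $\tilde\mu$, then apply H\"older with exponents $a,c$ and check the $S_\eps$-integrability threshold to find the admissible range for $c$. One arithmetic slip: when you solve the inequality $2[(k_\sigma-k_\sigma\chi-k')c+k_\sigma\chi]+2n>0$ with $\chi=\tfrac{2n}{2n-1}$, the correct bound is
\[
c<c_0=2n\,\frac{k_\sigma+\tfrac{2n-1}{2}}{k_\sigma+k'(2n-1)},
\]
not $2n\,\dfrac{k_\sigma+2n-1}{k_\sigma+k'(2n-1)}$. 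With the correct $c_0$, the condition $c_0>2n$ is equivalent to $k'<\tfrac12$, so the hypothesis is sharp rather than having ``room to spare''. This does not affect the validity of your argument, since you are assuming $k'<\tfrac12$ anyway.
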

\begin{proof}
With the help of the bound of the volume ratio $\tilde F$, we get
\begin{align*}
 \int_X    \tilde u^{q} S_\eps^{\frac{\sigma_D}{2}+\sigma-k'}\om^n_\psi
  \leq C&\int_X \tilde u^{q}S_\eps^{k_\sigma-k'}  \tilde\om_t^n
=C\int_X \tilde u^{q}S_\eps^{k_\sigma-k_\sigma\chi-k'} \tilde\mu.
\end{align*}
By applying the generalisation of H\"older's inequality with the conjugate exponents $a,c$, it is
\begin{align*}
\leq C\|\tilde u\|^{q}_{L^{qa}(\tilde\mu)}(\int_X S_\eps^{(k_\sigma-k_\sigma\chi-k')c} \tilde\mu)^{\frac{1}{c}}.
\end{align*}
As we have seen before, the last integral is finite, if we let 
\begin{align*}
2(k_\sigma-k_\sigma\chi-k')c+2k_\sigma\chi+2n>0,
\end{align*} which is equivalent to
\begin{align*}
c<2n\frac{k_\sigma+2^{-1}(2n-1)}{k_\sigma+k'(2n-1)}:=c_0.
\end{align*} 
The assumption $k'<\frac{1}{2}$ infers that $c_0>2n$. Consequently, the exponent $c$ is chosen to be between $2n$ and $c_0$ such that $a<\frac{2n}{2n-1}$.
\end{proof}
\subsection{Inverse weighted inequality}
We further estimate the the right hans side of the rough iteration inequality, Corollary \ref{Gradient estimate: Rough iteration inequality cor}, which contains two parts, $RHS_1$ and $RHS_2$,
\begin{align*}
RHS_1&=  \int_X  \tilde u^{p-1}  [ u 
+1 
+u^{\frac{1}{2}} S_\eps^{\frac{\sigma_D-1}{2}} ] \om_\psi^n,\\
RHS_2&=\int_X \tilde u^{q-1} u S^{\frac{\sigma_D}{2}+\sigma}_\eps\om_\psi^n+k_\sigma\int_X \tilde u^{q-1} u S^{\frac{\sigma_D-1}{2}+\sigma}_\eps \om_\psi^n.
\end{align*} 

By comparing the weights in each term on the both sides of the rough iteration inequality,
\begin{align*}
-\frac{\beta-1+\sigma_D}{n}, \quad \frac{\sigma_D-1}{2},
\end{align*}roughly speaking,
we observe that the critical cone angle is 
\begin{align*}
 \frac{\beta-1}{n}=\frac{1}{2}.
\end{align*}
We will see accurate proof in the next inverse weighted inequality.

We examine all five integrals in $RHS_1$ and $RHS_2$.
\begin{align*}
& I:=\int_X \tilde u^{p-1} u \om_\psi^n ,\quad
 II:=\int_X \tilde u^{p-1}\om_\psi^n,\quad
III:=\int_X \tilde u^{p-1} u^{\frac{1}{2}}S_\eps^{\frac{\sigma_D-1}{2}}\om_\psi^n ,\\
 &VI:=\int_X\tilde u^{q-1} u S^{\frac{\sigma_D}{2}+\sigma}_\eps\om_\psi^n,\quad
 V:=\int_X\tilde u^{q-1} u S_\eps^{\frac{\sigma_D-1}{2}+\sigma}\om_\psi^n.
\end{align*}

\begin{prop}[Inverse weighted inequality]\label{Gradient estimate: Inverse weighted inequality}
Assume that the degenerate exponent satisfies $\sigma_D<1$, $\sigma=0$ and the condition \eqref{gradient estimate: degenerate exponent}, equivalently,
\begin{align}
 &\sigma_D=0,\text{ when }\frac{\beta-1}{n}>\frac{1}{2};\label{Gradient estimate: Inverse weighted inequality angle}\\
&\sigma_D>1-\frac{2\beta}{n+2}\geq 0,\text{ when } \frac{\beta-1}{n}\leq\frac{1}{2}.\label{Gradient estimate: Inverse weighted inequality sigmaD}
   \end{align}
Then there exists $0<k'<\frac{1}{2}$ such that
\begin{align*}
I,II,III,VI,V
\leq \eps  \widetilde{LHS_2}
+C(\eps, n)\int_X  \tilde u^{q}S_\eps^{\frac{\sigma_D}{2}+\sigma-k'}\om_\psi^n.
\end{align*} 
\end{prop}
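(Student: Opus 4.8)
The plan is to bound each of the five integrals $I, II, III, VI, V$ appearing in $RHS_1$ and $RHS_2$ by applying Young's inequality with carefully chosen conjugate exponents, absorbing the leading power $\tilde u^{p+\frac1n}$ into $\widetilde{LHS_2}$ and pushing the residual into a single weighted integral of the form $\int_X \tilde u^q S_\eps^{\frac{\sigma_D}{2}+\sigma-k'}\om_\psi^n$. This is the exact counterpart of \lemref{Laplacian estimate: RHS1 good} and \lemref{Laplacian estimate: RHS2 bound} in the Laplacian estimate, so I would proceed term-by-term in the same style. For each of $I,II,III$ (which carry the exponent $\tilde u^{p-1}$ and weights $1$, $1$, $S_\eps^{\frac{\sigma_D-1}{2}}$ respectively), I would split the integrand as $(\tilde u^{p+\frac1n}S_\eps^{-\frac{\beta-1+\sigma_D}{n}})^{1/a_i}$ times the complementary factor, apply Young's inequality with a small parameter $\eps$ to get $\eps\,\widetilde{LHS_2}$ plus a term with exponent $\tilde u^{q}$ (using $\tilde u\geq K_0$ to convert $\tilde u^{p-1-(\,\cdot\,)}$ or $\tilde u^{p-\frac12-(\,\cdot\,)}$ into a pure power of $\tilde u$), then read off the resulting weight $k_i$ and set $k_i':=\frac{\sigma_D}{2}+\sigma-(k+\sigma-q\sigma_D-\text{weight})$. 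For $VI$ and $V$ (carrying $\tilde u^{q-1}$ and weights $S_\eps^{\frac{\sigma_D}{2}+\sigma}$, $S_\eps^{\frac{\sigma_D-1}{2}+\sigma}$), the term $VI$ is the "good" term, essentially already of the target form up to $\tilde u\geq K_0$; the trouble term is $V$, whose weight loses an extra $\frac12$, and it must be tamed with the help of $LHS_2$ (or $\widetilde{LHS_2}$) via Young's inequality exactly as in \lemref{Laplacian estimate: RHS2 bound}.

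After obtaining a candidate exponent $k_i'$ for each term, the second step is to verify $k_i'<\frac12$ under the hypotheses \eqref{Gradient estimate: Inverse weighted inequality angle}–\eqref{Gradient estimate: Inverse weighted inequality sigmaD}; this is the point where the critical cone angle $\frac{\beta-1}{n}=\frac12$ enters. For the regime $\frac{\beta-1}{n}>\frac12$ one sets $\sigma_D=\sigma=0$, and all the $k_i'$ become explicit expressions in $\beta,n$ that are strictly below $\frac12$ precisely because $\beta-1>\frac n2$; for the regime $\frac{\beta-1}{n}\leq\frac12$ the surviving weights force the condition $(n+2)\sigma_D+2(\beta-1)\geq n$, i.e.\ $\sigma_D\geq 1-\frac{2\beta}{n+2}$, which is \eqref{gradient estimate: degenerate exponent}; here one checks the inequalities $\sigma_D<1$ and $\sigma_D+\sigma<1$ together with $k_i'<\frac12$ by direct arithmetic, just as in \lemref{Laplacian estimate: criteria}. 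I would take $k'$ to be the maximum of the finitely many $k_i'$ so that the single weighted integral dominates all contributions, and then invoke \propref{Gradient estimate: Lp control prop} (which gives $\widetilde{LHS_2}\leq C(p)$) to legitimise absorbing the $\eps\,\widetilde{LHS_2}$ terms after choosing $\eps$ small.

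The main obstacle I expect is the trouble term $V$: its weight $S_\eps^{\frac{\sigma_D-1}{2}+\sigma}$ is one half-power more singular than the ``good'' term, so the naive Young splitting does not by itself produce an exponent below $\frac12$ — one genuinely needs to spend part of the positive quantity $LHS_2$ (the $u^{1+1/n}S_\eps^{-\frac{\beta-1+\sigma_D}{n}}$ term) to compensate. This is the same phenomenon flagged in Remark \ref{4th term} and Remark \ref{Laplacian estimate: lose weight} for the Laplacian estimate, and the bookkeeping of which conjugate exponent to use for $V$ (the analogue of $a_7=\frac{n}{2(n-1)}$, here something like $a=\frac{2n}{2(2n-1)}$ reflecting the $\chi=\frac{2n}{2n-1}$ Sobolev exponent rather than $\frac{n}{n-1}$) is the delicate computational heart of the proof. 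Once $V$ is controlled, the remaining terms $I,II,III,VI$ are routine, and summing the resulting inequalities yields the claimed bound with $k':=\max_i k_i'<\frac12$, which then feeds into the weighted inequality \propref{Gradient estimate: Weighted inequality} and the subsequent iteration to close the gradient estimate in \thmref{gradient estimate sigma 2}.
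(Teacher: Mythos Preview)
Your proposal is correct and follows essentially the same approach as the paper: term-by-term Young's inequality against $LHS_2$ (equivalently $\widetilde{LHS_2}$), use of $\tilde u\geq K_0$ to adjust exponents down to $\tilde u^{q}$, explicit computation of the residual weight exponents $k_i'$, and finally verification that $\max_i k_i'<\tfrac12$ under the two regimes \eqref{Gradient estimate: Inverse weighted inequality angle}--\eqref{Gradient estimate: Inverse weighted inequality sigmaD}. The only minor deviation is that the paper splits with the factor $(\tilde u^{p-1}u^{1+\frac1n}S_\eps^{-\frac{\beta-1+\sigma_D}{n}})^{1/a_i}$ (i.e.\ $LHS_2$ itself, keeping the explicit $u$-power) rather than $(\tilde u^{p+\frac1n}S_\eps^{-\frac{\beta-1+\sigma_D}{n}})^{1/a_i}$, and uses specific conjugate pairs such as $a_1=\tfrac{n+2}{n}$ for $I$ and $b_3=\tfrac{3n+4}{2n+4}$ for $III$; term $III$ is just as ``troublesome'' as $V$ (same lost half-power), and both yield the same $k'$ when $\sigma=0$, which is where the threshold $\sigma_D>1-\tfrac{2\beta}{n+2}$ actually bites.
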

\begin{proof}

We will apply the $\eps$-Young's inequality and proceed similar to the proof of Proposition \ref{Gradient estimate: Lp control prop}, by using the positive term
\begin{align*}
&LHS_2=\int_X  \tilde u^{p-1} u^{1+\frac{1}{n}}   S_\eps^{-\frac{\beta-1+\sigma_D}{n}} \om_\psi^n.
\end{align*} But the constant should be chosen to be independent of $p$.

In order to estimate the first one, we decompose the first term as
\begin{align*}
I&=\int_X   \tilde u^{p-1} u \om_\psi^n\\
&=\int_X   (  \tilde u^{p-1} u^{1+\frac{1}{n}}    S_\eps^{-\frac{\beta-1+\sigma_D}{n}})^{\frac{1}{a_1}} 
 \tilde u^{\frac{p-1}{b_1}} u^{1-(1+\frac{1}{n})\frac{1}{a_1}}S_\eps^{(\frac{\beta-1+\sigma_D}{n})\frac{1}{a_1}}\tilde\om_t^n\\
&\leq \eps  LHS_2
+C(\eps, n)\int_X  \tilde u^{p-1}u^{1-\frac{b_1}{a_1 n}}S_\eps^{k_1}\om_\psi^n.
\end{align*}
By the choice of the conjugate exponents $a_1:=\frac{n+2}{n}$ and $b_1:=\frac{n+2}{2}$,
we get $1-\frac{b_1}{a_1 n}=\frac{1}{2}$ and
\begin{align*}
I\leq \eps  LHS_2+C(\eps, n)\int_X  \tilde u^{q}S_\eps^{k_1}\om_\psi^n.
\end{align*}
We further compute 
\begin{align*}
k_1&=(\frac{\beta-1+\sigma_D}{n})\frac{b_1}{a_1}=\frac{\beta-1+\sigma_D}{2}.
\end{align*}
Due to the weighted inequality, Proposition \ref{Gradient estimate: Weighted inequality}, we need to put condition on
\begin{align*}
k_1'=\frac{\sigma_D}{2}+\sigma-k_1=\frac{-(\beta-1)}{2}+\sigma.
\end{align*}

The second one is direct, by $\tilde u\geq K_0$,
\begin{align*}
II=\int_X \tilde u^{p-1}\om_\psi^n\leq \int_X \tilde u^{q} K_0^{-\frac{1}{2}}\om_\psi^n.
\end{align*}
So, the exponent is
\begin{align*}
k_2'=\frac{\sigma_D}{2}+\sigma.
\end{align*}

The third term $III$ is
\begin{align*}
&\int_X \tilde u^{p-1} u^{\frac{1}{2}} S_\eps^{\frac{\sigma_D-1}{2}}\om_\psi^n\\
&=\int_X   (  \tilde u^{p-1} u^{1+\frac{1}{n}}   S_\eps^{-\frac{\beta-1+\sigma_D}{n}})^{\frac{1}{a_3}}  \tilde u^{\frac{p-1}{b_3}} u^{\frac{1}{2}-(1+\frac{1}{n})\frac{1}{a_3}}S_\eps^{\frac{\sigma_D-1}{2}+(\frac{\beta-1+\sigma_D}{n})\frac{1}{a_3}}\om_\psi^n.
\end{align*}
By Young's inequality, 
\begin{align*}
III\leq \eps  LHS_2
+C(\eps, n)\int_X  \tilde u^{p-1}u^{\frac{b_3}{2}-(1+\frac{1}{n})\frac{b_3}{a_3}}S_\eps^{k_3}\om_\psi^n.
\end{align*}
We choose the exponents $b_3=\frac{3n+4}{2n+4}> 1$ such that
\begin{align*}
\frac{b_3}{2}-(1+\frac{1}{n})\frac{b_3}{a_3}=\frac{1}{4}.
\end{align*}
Accordingly, 
\begin{align*}
III
\leq \eps  LHS_2
+C(\eps, n)\int_X  \tilde u^{p-\frac{3}{4}}S_\eps^{k_3}\om_\psi^n.
\end{align*}
Since $\tilde u\geq K_0$, we have $\tilde u^{-\frac{1}{4}}\leq K_0^{-\frac{1}{4}}$.
Hence,
\begin{align*}
III\leq \eps  LHS_2
+C(\eps, n, K_0)\int_X  \tilde u^{q}S_\eps^{k_3}\om_\psi^n.
\end{align*}
We calculate the exponent $k_3=[\frac{\sigma_D-1}{2}+(\frac{\beta-1+\sigma_D}{n})a_3^{-1}]b_3$, which is 
\begin{align*}
k_3=\frac{\sigma_D(na_3+2)-na_3+2(\beta-1)}{2n(a_3-1)}
=(\frac{\sigma_D}{2}+\sigma)-k'_3.
\end{align*}
Consequently, we have
\begin{align*}
k'_3=\frac{-\sigma_D(n+2)-2(\beta-1)+na_3}{2n(a_3-1)}+\sigma.
\end{align*}

We estimate the fourth one,
\begin{align*}
&IV=\int_X \tilde u^{q-1} uS^{\frac{\sigma_D}{2}+\sigma}_\eps\om_\psi^n\leq \int_X     \tilde u^{q}S_\eps^{k_4}\om_\psi^n.
\end{align*}
where
\begin{align*}
k_4=\frac{\sigma_D}{2}+\sigma
=(\frac{\sigma_D}{2}+\sigma)-k_4',\quad k'_4=0.
\end{align*}

The fifth term is then estimated by
\begin{align*}
V\leq\eps LHS_2+C(\eps,n)\int_X     \tilde u^{q}S_\eps^{k_5}\om_\psi^n.
\end{align*}
The exponent $k_5=[(\frac{\sigma_D-1}{2}+\sigma)+(\frac{\beta-1+\sigma_D}{n})a_5^{-1}]b_5$ is further reduced to
\begin{align*}
k_5=\frac{\sigma_D(na_5+2)+na_5(2\sigma -1)+2(\beta-1)}{2n(a_5-1)}
=(\frac{\sigma_D}{2}+\sigma)-k_5'
\end{align*}
and 
\begin{align*}
k_5'=\frac{-\sigma_D(n+2)-2(\beta-1)+na_5}{2n(a_5-1)}-\frac{\sigma}{a_5-1}.
\end{align*}

At last, we verify the conditions such that $k_i'<\frac{1}{2}, i=1,2,3,4,5$. 
From $\sigma=0$ and $\sigma_D<1$, we have
\begin{align*}
k_1'=\frac{-(\beta-1)}{2}+\sigma<\frac{1}{2},\quad k'_2=\frac{\sigma_D}{2}+\sigma<\frac{1}{2}, \quad
k'_4=0<\frac{1}{2}.
\end{align*}
By using the weight condition \eqref{Gradient estimate: Inverse weighted inequality angle}, namely $\beta-1>\frac{n}{2}$, we have
\begin{align*}
k'_3=\frac{-2(\beta-1)+na_3}{2n(a_3-1)}<\frac{-n+na_3}{2n(a_3-1)}=\frac{1}{2}.
\end{align*}
Meanwhile, under the weight condition \eqref{Gradient estimate: Inverse weighted inequality sigmaD}, i.e. $\sigma_D>1-\frac{2\beta}{n+2}$, we see that
\begin{align*}
k'_3&=\frac{-\sigma_D(n+2)-2(\beta-1)+na_3}{2n(a_3-1)}\\
&<\frac{-[1-\frac{2\beta}{n+2}](n+2)-2(\beta-1)+na_3}{2n(a_3-1)}
=\frac{-n+na_3}{2n(a_3-1)}=\frac{1}{2}.
\end{align*}
The expression of $k'_5$ is the same to $k_3'$ when $\sigma=0$. So, $$k'_5<\frac{1}{2},$$ provided the weight conditions \eqref{Gradient estimate: Inverse weighted inequality angle} and \eqref{Gradient estimate: Inverse weighted inequality sigmaD} respectively.
In conclusion, simplify choosing $k'=\max\{k_1',k_2',k_3',k_4',k_5'\}$, we arrive at our desired inverse weighted inequality.
\end{proof}



\subsection{Iteration inequality}

In this section, we combined all inequalities in the previous sections to obtain the iteration inequality and complete the iteration scheme. 
\begin{prop}[Iteration inequality]\label{Gradient estimate: Iteration inequality prop}
\begin{align}\label{Gradient estimate: Iteration inequality}
\|\tilde u\|_{L^{q\chi}(\tilde\mu)}
\leq C q^{\frac{1}{2q}} \|\tilde u\|_{L^{qa}(\tilde \mu)}, \quad 1<a<\chi.
\end{align} 
\end{prop}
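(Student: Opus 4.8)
The plan is to assemble the iteration inequality \eqref{Gradient estimate: Iteration inequality} by chaining together the rough iteration inequality in Corollary \ref{Gradient estimate: Rough iteration inequality cor}, the $L^p$ control in Proposition \ref{Gradient estimate: Lp control prop}, the inverse weighted inequality in Proposition \ref{Gradient estimate: Inverse weighted inequality} and the weighted inequality in Proposition \ref{Gradient estimate: Weighted inequality}, in exactly that order. First I would start from
\[
\|\tilde u\|^{q}_{L^{q\chi}(\tilde\mu)}+\sqrt{q}\,LHS_2
\leq C(\sqrt{q}\,RHS_1+RHS_2+1),
\]
and dispose of the $\sqrt{q}\,RHS_1$ term using Proposition \ref{Gradient estimate: Lp control prop}, which bounds $RHS_1\leq\widetilde{LHS_2}$ up to a constant; however, since $\widetilde{LHS_2}$ carries a factor growing with $p$, one must track the $q$-dependence carefully, and this is precisely why the exponent $q^{\frac12 q^{-1}}$ will appear after taking the $q$-th root. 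Then I would feed $RHS_2$ into the inverse weighted inequality, Proposition \ref{Gradient estimate: Inverse weighted inequality}, which under the hypothesis \eqref{gradient estimate: degenerate exponent} gives, for a suitable $0<k'<\tfrac12$,
\[
RHS_2\leq \varepsilon\,\widetilde{LHS_2}+C(\varepsilon,n)\int_X \tilde u^{q}S_\eps^{\frac{\sigma_D}{2}+\sigma-k'}\om_\psi^n .
\]

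Next I would absorb all the $\widetilde{LHS_2}$ terms produced on the right into the positive term on the left: since $LHS_2\leq\widetilde{LHS_2}$ and $\widetilde{LHS_2}$ is controlled by $LHS_2+1$ through the argument in \eqref{Gradient estimate: Integral inequality degenerate partial}, choosing $\varepsilon$ small (independent of $q$) lets me hide the $\varepsilon\widetilde{LHS_2}$ contributions and retain
\[
\|\tilde u\|^{q}_{L^{q\chi}(\tilde\mu)}\leq C\Big(q\int_X \tilde u^{q}S_\eps^{\frac{\sigma_D}{2}+\sigma-k'}\om_\psi^n+1\Big),
\]
after converting the $\om_\psi^n$ integral back to a $\tilde\om_t^n$ integral by the volume ratio bound \eqref{Gradient estimate: volume ratio bound}. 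Finally I would invoke the weighted inequality, Proposition \ref{Gradient estimate: Weighted inequality}: because $k'<\tfrac12$, there is $1<a<\chi=\tfrac{2n}{2n-1}$ with
\[
\int_X \tilde u^{q}S_\eps^{\frac{\sigma_D}{2}+\sigma-k'}\om_\psi^n\leq C_{5.1}\|\tilde u\|^{q}_{L^{qa}(\tilde\mu)},
\]
so that $\|\tilde u\|^{q}_{L^{q\chi}(\tilde\mu)}\leq Cq\,\|\tilde u\|^{q}_{L^{qa}(\tilde\mu)}+C$. Taking $q$-th roots and using that, after normalising $\int_X\tilde\mu=1$, we may assume $\|\tilde u\|_{L^{qa}(\tilde\mu)}\geq1$ (otherwise there is nothing to prove), the additive constant is absorbed and one lands on $\|\tilde u\|_{L^{q\chi}(\tilde\mu)}\leq Cq^{1/q}\|\tilde u\|_{L^{qa}(\tilde\mu)}$; a mild sharpening of the constant bookkeeping (the $\sqrt{q}$ from $LHS_2$ and the linear $q$ recombine) yields the stated exponent $q^{\frac{1}{2q}}$, recalling $q=p-\tfrac12$ so $q\sim p$ for large $p$.

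The main obstacle I expect is the uniform-in-$q$ bookkeeping of constants: Proposition \ref{Gradient estimate: Lp control prop} and several Young-inequality splits naturally produce constants $C(p)$ depending on $p$, and for the iteration in the next subsection to converge one needs the polynomial growth in $q$ to be isolated cleanly into the single factor $q^{\frac{1}{2q}}$, with the remaining constant $C$ genuinely independent of $q$. Concretely, the delicate point is that when I absorb $RHS_2$ I must choose $\varepsilon$ independent of $q$ while the coefficient of $\widetilde{LHS_2}$ coming from $\sqrt{q}\,RHS_1$ does grow; the resolution is to use the $\sqrt{q}\,LHS_2$ slack on the left-hand side of the rough iteration inequality to soak up the $\sqrt{q}$-weighted part, so that only an overall factor linear in $q$ survives on the right, and then $(Cq)^{1/q}=C^{1/q}q^{1/q}\leq C'q^{1/q}$. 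A secondary technical care is that Propositions \ref{Gradient estimate: Inverse weighted inequality} and \ref{Gradient estimate: Weighted inequality} require $\sigma=0$ and the precise sub-critical exponent relations \eqref{Gradient estimate: Inverse weighted inequality angle}–\eqref{Gradient estimate: Inverse weighted inequality sigmaD}, so one must verify these are compatible with the hypothesis \eqref{gradient estimate: degenerate exponent} of the theorem — which is exactly the content of the case analysis $\beta>\tfrac{n+2}{2}$ versus $\beta\leq\tfrac{n+2}{2}$ already built into those propositions.
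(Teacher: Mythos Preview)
Your overall architecture (rough iteration $\to$ inverse weighted $\to$ weighted) is right, but the detour through Proposition~\ref{Gradient estimate: Lp control prop} is both unnecessary and the source of the difficulty you flag. First, you have the direction of that proposition backwards: it asserts $\widetilde{LHS_2}\leq C\cdot RHS_1\leq C(p)$, not $RHS_1\leq\widetilde{LHS_2}$. Second, even reading it as the uniform bound $RHS_1\leq C(p)$, that constant is produced by Young-inequality splits whose exponents depend on $p$, and you would then have to argue separately that $C(p)$ grows at most polynomially in $p$ for the iteration to converge --- exactly the bookkeeping headache you identify as the main obstacle.

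The paper avoids this entirely. The key observation you are missing is that Proposition~\ref{Gradient estimate: Inverse weighted inequality} already handles \emph{both} $RHS_1$ and $RHS_2$: the five terms $I,II,III,IV,V$ listed there are precisely the three summands of $RHS_1$ together with the two summands of $RHS_2$. So one applies the inverse weighted inequality once to the sum $RHS_1+RHS_2$, obtaining
\[
RHS_1+RHS_2\leq 4\tau\,LHS_2+C(\tau,n)\int_X \tilde u^{q}S_\eps^{\frac{\sigma_D}{2}+\sigma-k'}\om_\psi^n,
\]
with $C(\tau,n)$ independent of $q$. Feeding this into Corollary~\ref{Gradient estimate: Rough iteration inequality cor} (in the slightly weakened form $\|\tilde u\|^{q}_{L^{q\chi}(\tilde\mu)}+\sqrt{q}\,LHS_2\leq C\sqrt{q}(RHS_1+RHS_2+1)$) and choosing $\tau$ small, the $4C\sqrt{q}\,\tau\,LHS_2$ is absorbed by $\sqrt{q}\,LHS_2$ on the left, leaving
\[
\|\tilde u\|^{q}_{L^{q\chi}(\tilde\mu)}\leq C\sqrt{q}\Bigl(\int_X \tilde u^{q}S_\eps^{\frac{\sigma_D}{2}+\sigma-k'}\om_\psi^n+1\Bigr).
\]
Now Proposition~\ref{Gradient estimate: Weighted inequality} and the normalisation $\|\tilde u\|_{L^{qa}(\tilde\mu)}\geq 1$ give $\|\tilde u\|^{q}_{L^{q\chi}(\tilde\mu)}\leq C\sqrt{q}\,\|\tilde u\|^{q}_{L^{qa}(\tilde\mu)}$, and taking $q$-th roots yields the exponent $q^{\frac{1}{2q}}$ directly --- no ``mild sharpening'' needed. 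The $L^p$ control proposition is used only later, to bound the initial term of the iteration, not inside the iteration step itself.
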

\begin{proof}
Since $q=p-\frac{1}{2}>1$, we obtain the rough iteration inequality \corref{Gradient estimate: Rough iteration inequality cor}, which states
\begin{align*}
\|\tilde u\|^{q}_{L^{q\chi}(\tilde\mu)}+\sqrt{q}LHS_2
\leq C\sqrt{q} (RHS_1+RHS_2+1).
\end{align*} 
Applying the inverse weighted inequality, Proposition \ref{Gradient estimate: Inverse weighted inequality}, we have
\begin{align*}
RHS_1+RHS_2
\leq 4\tau  LHS_2
+C(\tau, n)\int_X  \tilde u^{q}S_\eps^{\frac{\sigma_D}{2}+\sigma-k'}\om_\psi^n.
\end{align*} 
Choosing sufficiently small $\tau$ and inserting back to the rough iteration inequality, we get
\begin{align*}
\|\tilde u\|^{q}_{L^{q\chi}(\tilde\mu)}
\leq C\sqrt{q}( \int_X  \tilde u^{q}S_\eps^{\frac{\sigma_D}{2}+\sigma-k'}\om_\psi^n+1).
\end{align*}
Then the weighted inequality, Proposition \ref{Gradient estimate: Weighted inequality}, implies the desired interation inequality
\begin{align*}
\|\tilde u\|^{q}_{L^{q\chi}(\tilde\mu)}
\leq C\sqrt{q}(  \|\tilde u\|^{q}_{L^{qa}(\tilde\mu)}+1).
\end{align*}

\end{proof}

Finally, we finish the proof of the gradient estimate, \thmref{gradient estimate}. We assume $\|\tilde u\|_{L^{q_0a}(\tilde \mu)} \geq 1$ for some $q_0\geq\frac{3}{2}$ and rewrite the iteration inequality \eqref{Gradient estimate: Iteration inequality} by using $\tilde\chi:=\frac{\chi}{a}>1$,
\begin{align*}
\|\tilde u\|_{L^{q \chi }(\tilde\mu)}
\leq C^{q^{-1}} q^{\frac{1}{2q}} \|\tilde u\|_{L^{qa}(\tilde \mu)} .
\end{align*} 
To proceed the iteration argument, we set $q=\tilde\chi^m$, then we have
\begin{align*}
&\|\tilde u\|_{L^{\tilde\chi^{m}\chi }(\tilde\mu)}
\leq C^{\tilde\chi^{-m}} \tilde\chi^{\frac{m}{2\tilde\chi^m}} \|\tilde u\|_{L^{\tilde\chi^m a}(\tilde \mu)}
= C^{\tilde\chi^{-m}}  \tilde\chi^{\frac{m}{2\tilde\chi^m}} \|\tilde u\|_{L^{\tilde\chi^{m-1} \chi}(\tilde \mu)}\\ 
&\leq C^{\tilde\chi^{-m}+\tilde\chi^{-m+1}}  \tilde\chi^{\frac{m}{2\tilde\chi^m}+\frac{m-1}{2\tilde\chi^{m-1}}} \|\tilde u\|_{L^{\tilde\chi^{m-2} a}(\tilde \mu)}\\
&\leq\cdots\leq  C^{\sum_{i_0\leq i\leq m} \tilde\chi^{-i}} \tilde\chi^{\sum_{i_0\leq i\leq m}\frac{i}{2\tilde\chi^i}} \|\tilde u\|_{L^{ \tilde\chi^{i_0-1} a}(\tilde \mu)}.
\end{align*} 
At the final step, we choose sufficiently large $m$ and let $i_0$ satisfy
\begin{align*}
\tilde q_0:=\tilde\chi^{i_0-1} a\geq q_0.
\end{align*}

Since two series in the coefficient are convergent, it remains to check the bound of the last integral
$\|\tilde u\|^{\tilde q_0}_{L^{ \tilde q_0}(\tilde \mu)}$, which is equals to 
\begin{align*}
\|\tilde u\|^{\tilde q_0}_{L^{ \tilde q_0}(\tilde \mu)}
=\int_X \tilde u^{\tilde q_0}S_\eps^{k_\sigma\chi}\tilde\om_t^n
=\int_X \tilde u^{\tilde q_0}S_\eps^{[\frac{\sigma_D}{2}+\beta-1+\sigma]\frac{2n}{2n-1}}\tilde\om_t^n.
\end{align*}
In order to estimate the bound of $\|\tilde u\|^{\tilde q_0}_{L^{ \tilde q_0}(\tilde \mu)}$ from the $L^p$ bound of $u$ proved in Proposition \ref{Gradient estimate: Lp control prop}, i.e.
\begin{align*}
\int_X \tilde  u^{\tilde q_0+\frac{1}{n}}   S_\eps^{-\frac{\sigma_D}{n}+\frac{n-1}{n}(\beta-1)} \tilde\om_t^n \leq  C(\tilde q_0),
\end{align*} 
we verify that $\tilde u^{\tilde q_0}<\tilde  u^{\tilde q_0+\frac{1}{n}} $, $ \sigma=0 $ and the weights
\begin{align*}
\frac{\sigma_D}{2}\frac{2n}{2n-1}>-\frac{\sigma_D}{n}, \quad(\beta-1)\frac{2n}{2n-1}>\frac{n-1}{n}(\beta-1).
\end{align*} 

Let  $m\rightarrow\infty$, we thus obtain the gradient estimate of $|\p\psi|^2$ from $\tilde u=e^H(|\p\psi|^2+K)+K_0$ and the bound of $e^H$.

\section{$W^{2,p}$-estimate}\label{W2p estimate}

\begin{thm}[$W^{2,p}$-estimate for singular equation]\label{w2pestimates degenerate Singular equation}
For any $p\geq 1$, there exits a constant $A_5$ such that
\begin{align*}
\int_X   (\tr_{\tilde\om_t}\om_\vphi)^{p}   |s_E|^{\sigma_E}_{h_E} S_\eps^{\sigma_D}  \tilde\om_t^n\leq A_5,\quad  \sigma_D>(\beta-1)\frac{n-2-2np^{-1}}{n-1+p^{-1}}
\end{align*} 
where, the singular exponent $\sigma_E$ is defined as following
\begin{align}\label{w2pestimates sigma E}
\sigma_E>2a_0\frac{(n-2)\sigma_s+(b_1-\sigma_sp)p(n-1)-2np^{-1}}{n-1+p^{-1}}.
\end{align} and $b_1$ is given in \eqref{W2p estimate AR b1} and \eqref{W2p estimate b1 2}. The constant $A_5$ depends on the $L^\infty$-estimates
\begin{align*}
&\sup_X(F_{t,\eps}-\sigma_s\phi_E),\quad\inf_X(F-\sigma_i\phi_E),\quad \|\vphi\|_\infty
\end{align*}
and the quantities of the background metric $\tilde\om_t$,
\begin{align*}
&-C_{1.1}=\inf_{i\neq j}R_{i\bar i j\bar j}(\tilde\om_t), \quad \|e^{-f}\|_{p_0,\tilde\om_t^n},p_0\geq p+1,\quad \sup_{(X,\tilde\om_t)} i\p\bar\p f,\\
 &\sup_X\phi_E,\quad
 \sup_{(X,(1+\eps)\om_{K})}\theta,\quad\inf_X R,\quad Vol([\tilde\om_t]),\quad n , \quad p.
\end{align*}
\end{thm}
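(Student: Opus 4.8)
The plan is to follow the scheme of the Laplacian estimate of Section~\ref{A priori estimates for approximate degenerate cscK equations} --- Yau's second order inequality turned into a weighted integral inequality by integration by parts --- but to stop after a single weighted integration rather than to iterate; this is exactly why only an $L^p$-bound, and not an $L^\infty$-bound, is obtained, and it is run here in the big and semi-positive setting so that the two weights $|s_E|_{h_E}$ (for the singularity of $\Om$, via Kodaira's lemma) and $S_\eps$ (for the degeneration) are both present. Write $\psi=\tilde\vphi_{t,\eps}$, $v=\tr_{\tilde\om_t}\om_{\vphi_{t,\eps}}$, and set
\begin{align*}
u:=|s_E|^{a_E}_{h_E}\,S_\eps^{\gamma}\,e^{-C_1\psi}\,v ,
\end{align*}
with the weight exponents $a_E,\gamma$ and the auxiliary exponent $b_1$ calibrated against $\sigma_E,\sigma_D$. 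Combining Yau's inequality \eqref{Yau computation}, the identity $\tri_{\vphi}\psi=n-\tr_{\vphi}\tilde\om_t$, the lower bound for $i\p\bar\p\log S_\eps$ from \lemref{h eps}, and a sufficiently large $C_1$ to beat the bisectional-curvature term, yields a differential inequality of the shape
\begin{align*}
\tri_{\vphi}u\ \geq\ c\,u\,\tr_{\vphi}\tilde\om_t\ +\ |s_E|^{a_E}_{h_E}S_\eps^{\gamma}e^{-C_1\psi}\,\tri\tilde F\ -\ C(1+u) ,
\end{align*}
in which $\tri\tilde F$ no longer carries the denominator $v$ of \eqref{Yau computation}, since $u/v=|s_E|^{a_E}_{h_E}S_\eps^{\gamma}e^{-C_1\psi}$.

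Multiplying by $\tilde u^{p-1}$, $\tilde u:=u+K$, and integrating against $\om_{\vphi}^n=e^{\tilde F}\tilde\om_t^n$, the left-hand side gives a nonpositive contribution $-(p-1)\int_X\tilde u^{p-2}|\p u|^2_{\vphi}\om_{\vphi}^n$, while the fundamental inequality $\tr_{\vphi}\tilde\om_t\ge v^{\frac1{n-1}}e^{-\frac{\tilde F}{n-1}}$ --- together with the $L^\infty$-estimates of $\psi$ and $\tilde F$ from \thmref{L infty estimates Singular equation} and the volume-ratio bound \eqref{Gradient estimate: volume ratio bound} --- turns the good term into a lower bound for a weighted $L^{\,p+\frac1{n-1}}$-quantity
\begin{align*}
\mathcal L:=\int_X \tilde u^{\,p+\frac1{n-1}}\,|s_E|^{\sigma_E}_{h_E}\,S_\eps^{\sigma_D}\,\tilde\om_t^{\,n} .
\end{align*}
Hence it remains to dominate every other term by $\tau\mathcal L$, $\tau$ small, plus a constant uniform in $t,\eps$, the left-hand gradient term being available to soak up the rougher pieces.

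The hard part is the fourth order term $\tri\tilde F=\tri F-\tri f$; unlike the Laplacian estimate, where a bound on $\|\p F\|_{L^\infty(\om_{\vphi})}$ was assumed (\lemref{Laplacian estimate: p F}), here one must avoid any condition on $\p F$ (cf.\ Remark~\ref{rough W2p estimate}). I would integrate by parts as in Proposition~\ref{Laplacian estimate: integration inequality pro}, expanding $\int_X(\p\tilde F,\p\phi)_{\tilde\om_t}\tilde\om_t^n$ for the weighted test function $\phi=\tilde u^{p-1}|s_E|^{a_E}_{h_E}S_\eps^{\gamma}e^{-C_1\psi}e^{\tilde F}$ according to which factor gets the derivative. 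The delicate piece is the $e^{\tilde F}$-piece $\int_X\phi|\p\tilde F|^2\tilde\om_t^n\le 2\int_X\phi|\p F|^2\tilde\om_t^n+C\int_X\phi(1+(\beta-1)^2S_\eps^{-1})\tilde\om_t^n$, the last integral being harmless by \lemref{nef tilde f}. Rather than bounding $|\p F|^2$ pointwise, I would control $\int_X\phi|\p F|^2\tilde\om_t^n$ by \emph{one more} integration by parts, now against $\om_{\vphi}^n$ and using the \emph{second} PDE $\tri_{\vphi}F=\tr_{\vphi}\theta-R$: estimating $|\p F|^2_{\tilde\om_t}\le v|\p F|^2_{\vphi}$ and observing $v\phi e^{-\tilde F}=\tilde u^{p-1}u$, this integral becomes $-\int_X F\,[\,\tri_{\vphi}F\cdot\tilde u^{p-1}u+(\p F,\p(\tilde u^{p-1}u))_{\vphi}\,]\om_{\vphi}^n$, where $\tri_{\vphi}F\le C\tr_{\vphi}\tilde\om_t+C$, $|F|\le C(1+|\log|s_E|_{h_E}|)$ by \thmref{L infty estimates Singular equation}, and the inner product is absorbed into the left-hand gradient term via $|\xi|^2_{\tilde\om_t}\le v|\xi|^2_{\vphi}$ --- no derivative bound on $F$ ever enters. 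The other pieces of $\p\phi$, namely those involving $\p\tilde u$, $\p(|s_E|^{a_E}_{h_E}S_\eps^{\gamma})$ and $\p\psi$, are split by Young's inequality, their $|\p\tilde F|^2$-halves folded back into the term just treated, the $\p\tilde u$-halves absorbed into the left-hand gradient term, the $\p\psi$-half estimated by the weighted gradient estimate \thmref{gradient estimate}, and the weight-gradient half bounded by \lemref{h eps} and \lemref{nef tilde f}; finally $\tri f=-(\beta-1)\tri\log S_\eps-\tri h_\theta-\tri\log(\om^n/\tilde\om_t^n)$ contributes directly, through \lemref{h eps}, the weight $(\beta-1)S_\eps^{-1}$ characteristic of the regime $\beta>1$.

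At the end every surviving term has the form $\int_X\tilde u^{m}|s_E|^{k_E}_{h_E}S_\eps^{k_D}e^{\tilde F}\tilde\om_t^n$ with $m\le p$ (up to a harmless $1+|\log|s_E||$ factor, which only costs an arbitrarily small loss of weight and is covered by the strict inequalities on $\sigma_E,\sigma_D$); using $e^{\tilde F}\le Ce^{\sigma_s\phi_E}S_\eps^{\beta-1}$ and Young's inequality against $\mathcal L$ with conjugate exponents $\tfrac{p+1/(n-1)}{m}$ and its dual, each such term becomes $\tau\mathcal L$ plus a residual integral $\int_X|s_E|^{k'_E}_{h_E}S_\eps^{k'_D}\tilde\om_t^n$, finite uniformly in $t,\eps$ precisely when $k'_D>-1$ and $k'_E>-2$. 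Chasing these finiteness constraints --- where the $p^{-1}$ and the $2n$ in the statement enter through the $S_\eps^{\beta-1}$ volume weight, the $+\tfrac1{n-1}$ integrability gain, and the exponents furnished by \thmref{gradient estimate} --- fixes the admissible range \eqref{w2pestimates sigma E} for $\sigma_E$ (through $b_1$) and the condition $\sigma_D>(\beta-1)\frac{n-2-2np^{-1}}{n-1+p^{-1}}$. Choosing $\tau$ small, absorbing the $\tau\mathcal L$-terms, and invoking $\tilde u\ge u$ with the $L^\infty$-estimates once more gives $\int_X(\tr_{\tilde\om_t}\om_{\vphi})^p|s_E|^{\sigma_E}_{h_E}S_\eps^{\sigma_D}\tilde\om_t^n\le A_5$. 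The main obstacle, to summarize, is precisely the $|\p F|^2$-absorption inside the $\tri\tilde F$-term --- trading the pointwise $\p F$-bound for an extra integration by parts powered by the scalar curvature equation, while keeping exact track of all the weight exponents; the price is that one reaches only $L^p$, with $\sigma_E,\sigma_D$ weakly $p$-dependent, rather than $L^\infty$.
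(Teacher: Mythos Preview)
Your strategy follows the Laplacian-estimate template of Section~\ref{A priori estimates for approximate degenerate cscK equations}, and you correctly identify that the whole point of the $W^{2,p}$-estimate is to avoid any hypothesis on $\p F$ (cf.\ Remark~\ref{rough W2p estimate}). But your proposed mechanism for killing the $|\p F|^2$-term does not close. After your first integration by parts, the $e^{\tilde F}$-factor produces $\int_X\tilde u^{p-1}W|\p\tilde F|^2\,\om_\vphi^n$ with the \emph{wrong sign}. You then bound $|\p F|_{\tilde\om_t}^2\le v|\p F|^2_\vphi$, set $g=\tilde u^{p-1}u$, and integrate by parts a second time to obtain $-\int_X F[\tri_\vphi F\cdot g+(\p F,\p g)_\vphi]\,\om_\vphi^n$. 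The cross term $\int_X F(\p F,\p g)_\vphi\,\om_\vphi^n$ is the problem: Young's inequality against $\int g|\p F|^2_\vphi$ gives back half of what you started with plus $\tfrac12\int F^2|\p g|^2_\vphi/g$, and since $|\p g|^2_\vphi/g\le p^2(\tilde u/u)\tilde u^{p-2}|\p u|^2_\vphi$, this is of order $p^2\sup F^2$ times $LHS_1/(p-1)$. Even in the degenerate case with $F$ bounded and $K=0$ this is $\sim Cp\cdot LHS_1$, which cannot be absorbed; in the singular case $F$ grows like $\log|s_E|$ and the situation is worse. Rewriting the cross term as $\tfrac12\int F^2\tri_\vphi g$ is no better, since you have only a lower bound on $\tri_\vphi u$, not an upper bound.

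The paper's device is different and is the key idea you are missing: put $F$ into the auxiliary function with weight $b_0=p$, i.e.\ take $H=pF+b_1\psi$ and $u=e^{-H}\tilde v$. Then $-\int u^{p-1}e^{-H}\tilde\tri\tilde F\,\om_\vphi^n$ is rewritten, via $-\tilde\tri\tilde F=\tfrac{1}{p-1}\tilde\tri[(\tilde F-H)+(H-p\tilde F)]$, as two pieces. The first, $\tfrac{1}{p-1}\int u^{p-1}e^{\tilde F-H}\tilde\tri(\tilde F-H)\,\tilde\om_t^n$, after one integration by parts yields $-\tfrac{1}{p-1}\int u^{p-1}e^{\tilde F-H}|\p(\tilde F-H)|^2_{\tilde\om_t}\,\tilde\om_t^n$ with a \emph{negative} sign, so it can be dropped; its cross term with $\p u$ is exactly $\tfrac14 LHS_1$ by Cauchy--Schwarz, because the factor $(p-1)^2/(b_0-1)=p-1$ matches. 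The second piece involves only $\tilde\tri(H-p\tilde F)=\tilde\tri(pf+b_1\psi)=p\,\tilde\tri f+b_1(v-n)$, which contains no $F$ at all and is estimated directly via the upper bound on $i\p\bar\p f$ from \lemref{nef tilde f}. Thus the entire $\p F$-obstruction is removed by an algebraic choice of test function rather than by a second integration by parts.
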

\begin{rem}
When $n=2$, we have $\sigma_D=0$.
\end{rem}
We obtain the $W^{2,p}$-estimate for degenerate equation as below.
\begin{thm}\label{w2pestimates degenerate equation}
Suppose that $\Om=[\om_K]$ is K\"ahler. Then the singular exponent $\sigma_E$ vanishes and 
\begin{align*}
\int_X   (\tr_{\om_K}\om_{\vphi_\eps})^{p}   S_\eps^{\sigma_D}  \om_K^n\leq A_5,\quad  \sigma_D:=(\beta-1)\frac{n-2}{n-1+p^{-1}}.
\end{align*} 
Moreover, written in terms of the volume element $\om_{\vphi_\eps}^n$, 
\begin{align}\label{w2pestimates degenerate equation simple}
\int_X   (\tr_{\om_K}\om_{\vphi_\eps})^{p}   S_\eps^{-\frac{\beta-1}{n-1}}  \om_{\vphi_\eps}^n\leq A_6.
\end{align} 
\end{thm}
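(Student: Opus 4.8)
The plan is to specialise Theorem~\ref{w2pestimates degenerate Singular equation} to the K\"ahler case $\Om=[\om_K]$ and then repackage the resulting weighted $L^p$ bound in terms of the volume element $\om^n_{\vphi_\eps}$. First I would observe that when $\Om$ is already K\"ahler, Kodaira's Lemma is not needed: one may take $a_0=0$, $E=\emptyset$, $\phi_E\equiv 0$, and $t=0$, so that $\om_{sr}=\tilde\om_t=\om_t=\om_K$ and $\om_\psi=\om_\vphi$, as already noted in Section~\ref{Gradient estimate of vphi}. Consequently the factor $|s_E|^{\sigma_E}_{h_E}$ disappears from the estimate in Theorem~\ref{w2pestimates degenerate Singular equation}, because $\sigma_E$ is forced to $0$ once $a_0=0$ (inspect \eqref{w2pestimates sigma E}: the right-hand side is $2a_0(\cdots)$, hence vanishes, and the hypothesis on $\sigma_E$ is satisfied by $\sigma_E=0$). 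This immediately yields
\begin{align}\label{W2p degenerate pre}
\int_X (\tr_{\om_K}\om_{\vphi_\eps})^p S_\eps^{\sigma_D}\,\om_K^n\leq A_5,\quad
\sigma_D>(\beta-1)\frac{n-2-2np^{-1}}{n-1+p^{-1}}.
\end{align}
Since the inequality in \eqref{W2p degenerate pre} holds for every admissible $\sigma_D$ strictly above that threshold, and since $S_\eps\leq C$ is uniformly bounded (Lemma~\ref{h eps} and the normalisation of the reference metric), I may replace the exponent by the slightly larger, cleaner value $\sigma_D:=(\beta-1)\frac{n-2}{n-1+p^{-1}}$, at the cost of enlarging the constant; this gives the first displayed estimate of the theorem.

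For the second displayed estimate \eqref{w2pestimates degenerate equation simple}, the idea is to convert the weight against $\om_K^n$ into a weight against $\om^n_{\vphi_\eps}$ using the volume-ratio equation. By \eqref{Degenerate cscK 1 approximation} (in the form used throughout, $\om^n_{\vphi_\eps}=e^{\tilde F_\eps}\om^n$ with $e^{\tilde F_\eps}=e^{F_\eps+h_\theta+c_\eps}S_\eps^{\beta-1}$, equivalently $\om^n_{\vphi_\eps}\asymp S_\eps^{\beta-1}\om_K^n$ by the $L^\infty$-bound on $F_\eps$ in Theorem~\ref{L infty estimates degenerate equation}), one has $\om_K^n\asymp S_\eps^{1-\beta}\om^n_{\vphi_\eps}$ up to a uniform constant. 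Substituting this into the first estimate and collecting the exponents of $S_\eps$,
\begin{align*}
\sigma_D+(1-\beta)=(\beta-1)\frac{n-2}{n-1+p^{-1}}-(\beta-1)
=-(\beta-1)\frac{1+p^{-1}}{n-1+p^{-1}},
\end{align*}
which is strictly larger than $-\frac{\beta-1}{n-1}$; since $S_\eps$ is bounded, $S_\eps^{\sigma_D+1-\beta}\geq c\,S_\eps^{-\frac{\beta-1}{n-1}}$ for a uniform $c>0$, and \eqref{w2pestimates degenerate equation simple} follows with $A_6$ depending only on $A_5$, $\sup_X S_\eps$, $\beta$, $n$, $p$ and the $L^\infty$-bound on $F_\eps$.

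The only genuine content here is Theorem~\ref{w2pestimates degenerate Singular equation}; the present statement is a corollary obtained by setting the Kodaira data trivial and doing bookkeeping on the weight exponents. The main point to be careful about is the direction of the inequalities when $\beta>1$: because $S_\eps^{\beta-1}$ is \emph{small} near $D$, passing between $\om_K^n$ and $\om^n_{\vphi_\eps}$ trades a small weight for a singular one, so I must check that the resulting exponent $-\frac{\beta-1}{n-1}$ is indeed \emph{dominated} (not dominating) by what \eqref{W2p degenerate pre} produces — which is exactly the computation above, valid since $\frac{1+p^{-1}}{n-1+p^{-1}}<\frac{1}{n-1}$ for $n\geq 2$. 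No maximum principle or new integral estimate is needed beyond what is already established.
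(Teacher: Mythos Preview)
Your approach is the right one and matches the paper's: Theorem~\ref{w2pestimates degenerate equation} is a direct specialisation of Theorem~\ref{w2pestimates degenerate Singular equation} with $a_0=0$, $\phi_E\equiv 0$, so $\sigma_E=0$, followed by a change of volume form using the $L^\infty$ bound on $F_\eps$.

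There is, however, a genuine error in your exponent comparison at the end. You assert $\frac{1+p^{-1}}{n-1+p^{-1}}<\frac{1}{n-1}$ for $n\geq 2$, but this is false: cross-multiplying gives $(1+p^{-1})(n-1)\geq n-1+p^{-1}$, i.e. $(n-1)p^{-1}\geq p^{-1}$, which holds with equality for $n=2$ and strictly for $n\geq 3$. Thus in fact
\[
\sigma_D+(1-\beta)=-(\beta-1)\frac{1+p^{-1}}{n-1+p^{-1}}\ \leq\ -\frac{\beta-1}{n-1},
\]
the \emph{opposite} of what you claim. Fortunately, this is precisely the direction you need: with the normalisation $S_\eps\leq 1$ used throughout, a \emph{smaller} exponent gives a \emph{larger} weight, so $S_\eps^{\sigma_D+1-\beta}\geq S_\eps^{-\frac{\beta-1}{n-1}}$, and \eqref{w2pestimates degenerate equation simple} follows from the first estimate as you intended. (Equivalently and perhaps more transparently: after substituting $\om^n_{\vphi_\eps}\leq C\,S_\eps^{\beta-1}\om_K^n$, the effective weight against $\om_K^n$ is $S_\eps^{(\beta-1)\frac{n-2}{n-1}}$, whose exponent exceeds the threshold $(\beta-1)\frac{n-2-2np^{-1}}{n-1+p^{-1}}$ in Theorem~\ref{w2pestimates degenerate Singular equation}, so the bound applies directly.) Your final paragraph gets the ``domination'' language backwards as a consequence of the same sign slip; once you reverse the inequality, the argument is complete.
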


In the proof, we omit the indexes as before. We also write $\psi:=\tilde\vphi=\vphi-\phi_E$.
We will use the following notations in this section,
\begin{align}\label{w2p u}
v:=\tr_{\tilde\om_t}\om_\vphi,\quad \tilde v:=v+K,\quad  K\geq 0.
\end{align}
The proof is divided into the following steps.

\subsection{Differential inequality}
\begin{lem}\label{w2pestimates tri tilde v}
Let the constant $-C_{1.1}=\inf_{i\neq j}R_{i\bar i j\bar j}(\tilde\om_t)$. Then
 \begin{align*}  
\tri_\vphi \log \tilde v \geq - C_{1.1}\tr_\vphi\tilde\om_t+\frac{\tilde\tri \tilde F}{\tilde v},
\end{align*}
where $\tilde\tri$ is the Laplacian operator regarding to the metric $\tilde\om_t$.
\end{lem}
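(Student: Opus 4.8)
The plan is to reduce the inequality to Yau's classical second-order estimate, applied with $\tilde\om_t$ as the background K\"ahler metric, and then to carry out a short elementary manipulation that accounts for the shift from $v=\tr_{\tilde\om_t}\om_\vphi$ to $\tilde v=v+K$.

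First I would record Yau's computation in the present setting. Since $\tilde\om_t=\om_K+t\om$ is a smooth K\"ahler metric (\lemref{metrics equivalence}) and $\om_\vphi$ solves the Monge-Amp\`ere part $\om_\vphi^n=e^{\tilde F}\tilde\om_t^n$ of \eqref{Singular cscK t eps tilde}, the standard second-order computation --- performed at a fixed point in $\tilde\om_t$-normal coordinates in which $\om_\vphi$ is diagonal, differentiating $\log\det\om_\vphi$ twice to rewrite the fourth-order terms through $\tilde\tri\tilde F$ and the curvature of $\tilde\om_t$, and absorbing the third-order terms into $|\p v|^2_\vphi/v$ by Cauchy--Schwarz --- produces exactly the inequality already used above in \eqref{Yau computation}, now with $\om$ replaced by $\tilde\om_t$ and $-C_{1.1}=\inf_{i\neq j}R_{i\bar i j\bar j}(\tilde\om_t)$:
\begin{align*}
\tri_\vphi\log v\geq -C_{1.1}\tr_\vphi\tilde\om_t+\frac{\tilde\tri\tilde F}{v}.
\end{align*}
I would simply invoke this as Yau's estimate (cf.~\cite{MR480350}), since it is used in precisely this form throughout the paper; the same normalization of the bisectional curvature lower bound appears, e.g., in \eqref{Laplacian estimate: laplacian w} and in the proof of \lemref{big nef approximate reference metric bound}.

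The remaining step is to pass from $v$ to $\tilde v=v+K$. Because $K\geq0$ is a constant, $\p\tilde v=\p v$ and $\tri_\vphi\tilde v=\tri_\vphi v$, so
\begin{align*}
\tilde v\,\tri_\vphi\log\tilde v=\tri_\vphi v-\frac{|\p v|^2_\vphi}{\tilde v}\geq\tri_\vphi v-\frac{|\p v|^2_\vphi}{v}=v\,\tri_\vphi\log v ,
\end{align*}
where the inequality uses only $0<v\leq\tilde v$ and $|\p v|^2_\vphi\geq0$. Combining with the previous display gives $\tilde v\,\tri_\vphi\log\tilde v\geq -C_{1.1}v\,\tr_\vphi\tilde\om_t+\tilde\tri\tilde F$; dividing by $\tilde v>0$ and using $v/\tilde v\leq1$, $\tr_\vphi\tilde\om_t>0$ and $C_{1.1}\geq0$ --- which we may assume, replacing the curvature lower bound by $\min\{0,\cdot\}$ if necessary, at the cost only of weakening the inequality --- yields the asserted bound $\tri_\vphi\log\tilde v\geq -C_{1.1}\tr_\vphi\tilde\om_t+\tilde\tri\tilde F/\tilde v$.

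I do not expect a genuine obstacle here: the only non-formal ingredient is Yau's inequality, which is standard and has already been used several times above, and the rest is bookkeeping. The one point requiring a word of care is the sign convention for $C_{1.1}$ together with the division by $\tilde v$, handled exactly as indicated above.
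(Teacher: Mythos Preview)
Your argument is correct. The reduction to Yau's inequality for $v$ followed by the elementary shift $\tilde v\,\tri_\vphi\log\tilde v\geq v\,\tri_\vphi\log v$ is valid, and the final division by $\tilde v$ with $C_{1.1}\geq 0$ is exactly the point where care is needed---you handled it properly.

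The paper proceeds slightly differently: rather than quoting Yau's bound for $v$ as a black box and then shifting, it re-runs the Bochner--Yau computation directly for $\tilde v$. Since $K$ is constant, $\tri_\vphi\tilde v=\tri_\vphi v$, and one has the identity
\[
\tri_\vphi\tilde v=g^{i\bar j}g_\vphi^{k\bar l}g_\vphi^{p\bar q}\p_{\bar l}g_{\vphi p\bar j}\p_k g_{\vphi i\bar q}+\tilde\tri\tilde F-S(\tilde\om_t)+g_\vphi^{k\bar l}{R^{i\bar j}}_{k\bar l}(\tilde\om_t)g_{\vphi i\bar j}.
\]
The third-order term is bounded below by $\tilde v|\p\log\tilde v|^2_\vphi$ (using $1+\vphi_{p\bar p}\leq v\leq\tilde v$), and the curvature term is handled by P\u aun's trick, yielding $-S(\tilde\om_t)+g_\vphi^{k\bar l}{R^{i\bar j}}_{k\bar l}g_{\vphi i\bar j}\geq -C_{1.1}\,\tr_\vphi\tilde\om_t\cdot\tilde v$. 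The two approaches are equivalent in content---both ultimately rest on the same Cauchy--Schwarz absorption of third-order terms and the same curvature lower bound, and both tacitly use $C_{1.1}\geq 0$ at the step $v\leq\tilde v$. Your version is a bit more economical in that it recycles \eqref{Yau computation} verbatim; the paper's version is more self-contained and makes the P\u aun symmetrisation explicit.
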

\begin{proof}
The proof of the Laplacian of $\tilde v=\tr_{\tilde\om_t}\om_\vphi+K$ is slightly different from Yau's computation for $v=\tr_{\tilde\om_t}\om_\vphi$. We include the proof of $\tri_\vphi \tilde v$ as below 
and refer to Lemma 3.4 \cite{MR3405866} and Proposition 2.22 \cite{MR4020314} for more references.
The Laplacian of $\tilde v$ is given by the identity
\begin{align*}
\tri_\vphi \tilde v
=g^{i\bar j}g_\vphi^{k\bar l}g_\vphi^{p\bar q}\p_{\bar l}g_{\vphi p\bar j}
\p_{k}g_{\vphi i\bar q}-\tr_{\tilde\om_t}Ric(\om_\vphi)
+g_\vphi^{k\bar l}{R^{i\bar j}}_{k\bar l}(\tilde\om_t)g_{\vphi i\bar j}.
\end{align*}
By the volume ratio $\om_\vphi^n=e^{\tilde F}\tilde\om_t^n$, we have 
$$Ric(\om_\vphi)=Ric(\tilde\om_t)-i\p\bar\p\tilde F$$ and then
\begin{align*}
\tri_\vphi \tilde v
=g^{i\bar j}g_\vphi^{k\bar l}g_\vphi^{p\bar q}\p_{\bar l}g_{\vphi p\bar j}
\p_{k}g_{\vphi i\bar q}+ \tilde\tri \tilde F-S(\tilde\om_t)
+g_\vphi^{k\bar l}{R^{i\bar j}}_{k\bar l}(\tilde\om_t)g_{\vphi i\bar j}.
\end{align*}
Actually, it holds under the normal coordinates that
\begin{align*}
g^{i\bar j}g_\vphi^{k\bar l}g_\vphi^{p\bar q}\p_{\bar l}g_{\vphi p\bar j}
\p_{k}g_{\vphi i\bar q}= \frac{1}{1+\vphi_{k\bar k}}\frac{1}{1+\vphi_{p\bar p}}\vphi_{p\bar i\bar k}\vphi_{i\bar p k}.
\end{align*}
By $1+\vphi_{p\bar p}\leq \sum_{p}(1+\vphi_{p\bar p})=v\leq \tilde v$, we have
\begin{align*}
g^{i\bar j}g_\vphi^{k\bar l}g_\vphi^{p\bar q}\p_{\bar l}g_{\vphi p\bar j}
\p_{k}g_{\vphi i\bar q}
\geq v^{-1} |\p v |^2_\vphi\geq \tilde v^{-1} |\p \tilde v|^2_\vphi=\tilde v |\p \log \tilde v|^2_\vphi.
\end{align*}
 The term involving curvature reduces to the following inequality, by M. Paun's trick \cite{MR2470619},
 \begin{align*}
&-S(\tilde\om_t)+g_\vphi^{k\bar l}{R^{i\bar j}}_{k\bar l}(\tilde\om_t)g_{\vphi i\bar j}
=R_{i\bar i k\bar k}(\tilde\om_t)\sum_{1\leq i,k\leq n}[\frac{1+\vphi_{i\bar i}}{1+\vphi_{k\bar k}}-1]\\
&=R_{i\bar i k\bar k}(\tilde\om_t)\sum_{1\leq i<k\leq n}[\frac{1+\vphi_{i\bar i}}{1+\vphi_{k\bar k}}-1]
+R_{i\bar i k\bar k}(\tilde\om_t)\sum_{1\leq k<i\leq n}[\frac{1+\vphi_{i\bar i}}{1+\vphi_{k\bar k}}-1].
\end{align*}
By symmetric of the Riemannian curvature, it becomes
 \begin{align*}
=R_{i\bar i k\bar k}\sum_{1\leq i<k\leq n}[\frac{1+\vphi_{i\bar i}}{1+\vphi_{k\bar k}}+\frac{1+\vphi_{k\bar k}}{1+\vphi_{i\bar i}}-2],
\end{align*}
 which is nonnegative and leads to
  \begin{align*}
&\geq -C_{1.1}\sum_{1\leq i<k\leq n}[\frac{1+\vphi_{i\bar i}}{1+\vphi_{k\bar k}}+\frac{1+\vphi_{k\bar k}}{1+\vphi_{i\bar i}}-2]\\
&\geq -C_{1.1} \tr_\vphi\tilde\om_t \cdot v
\geq -C_{1.1} \tr_\vphi\tilde\om_t \cdot \tilde v.
\end{align*}
 
 Inserting all these inequalities to the identity
 \begin{align*}  
\tri_\vphi \log \tilde v =\tilde v^{-1}\tri_\vphi \tilde v-|\p\log \tilde v|^2_\vphi,
\end{align*}
 we obtain that the inequality of $\tri_\vphi\log \tilde v$.
 
\end{proof}

Furthermore, we calculate the Laplacian of $u$, which multiplies $\tilde v$ with the weight $e^{-H}$, i.e.
\begin{align*}
u=e^{-H} \tilde v.
\end{align*}
Then
$
\tri_\vphi \log u =-\tri_\vphi H+\tri_\vphi \log \tilde v.
$ Combining with \eqref{w2pestimates tri tilde v}, we have
\begin{lem}
$ 
\tri_\vphi \log u \geq -\tri_\vphi H- C_{1.1}\tr_\vphi\tilde\om_t+\frac{\tilde\tri \tilde F}{\tilde v}.
$
\end{lem}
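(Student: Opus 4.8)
The plan is to obtain this Lemma as an immediate consequence of the preceding \lemref{w2pestimates tri tilde v} together with the chain rule for the Laplacian $\tri_\vphi$ of the metric $\om_\vphi$. Since $u = e^{-H}\tilde v$ with $\tilde v = \tr_{\tilde\om_t}\om_\vphi + K$ strictly positive on $M$, taking logarithms gives $\log u = -H + \log\tilde v$ as smooth functions on $M$. Because $\tri_\vphi$ is a linear second-order operator with no zeroth-order term, it is additive on sums of functions, so $\tri_\vphi \log u = -\tri_\vphi H + \tri_\vphi\log\tilde v$; this is precisely the identity already recorded just above the statement.

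The second and final step is to substitute into this identity the lower bound for $\tri_\vphi \log\tilde v$ furnished by \lemref{w2pestimates tri tilde v}, namely
\begin{align*}
\tri_\vphi \log\tilde v \geq -C_{1.1}\tr_\vphi\tilde\om_t + \frac{\tilde\tri \tilde F}{\tilde v},
\end{align*}
where $-C_{1.1} = \inf_{i\neq j}R_{i\bar i j\bar j}(\tilde\om_t)$ is the lower bound of the bisectional curvature of $\tilde\om_t$ and $\tilde\tri$ denotes the Laplacian of $\tilde\om_t$. Combining the two displays yields exactly
\begin{align*}
\tri_\vphi \log u \geq -\tri_\vphi H - C_{1.1}\tr_\vphi\tilde\om_t + \frac{\tilde\tri \tilde F}{\tilde v},
\end{align*}
which is the assertion.

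There is no substantial obstacle here: all the real content sits in \lemref{w2pestimates tri tilde v}, which in turn rests on Yau's second-order identity for $\tri_\vphi(\tr_{\tilde\om_t}\om_\vphi)$, M. Paun's symmetrisation of the curvature term, and the replacement of $v=\tr_{\tilde\om_t}\om_\vphi$ by $\tilde v = v+K$. The only points needing a word of care — the inequality $\tr_\vphi\tilde\om_t\cdot v \le \tr_\vphi\tilde\om_t\cdot\tilde v$ (valid since $K\ge 0$) and the domination of the gradient term $g^{i\bar j}g_\vphi^{k\bar l}g_\vphi^{p\bar q}\p_{\bar l}g_{\vphi p\bar j}\p_{k}g_{\vphi i\bar q}$ by $\tilde v^{-1}|\p\tilde v|^2_\vphi$, so that it absorbs the term $-|\p\log\tilde v|^2_\vphi$ arising from $\tri_\vphi\log\tilde v = \tilde v^{-1}\tri_\vphi\tilde v - |\p\log\tilde v|^2_\vphi$ — are already dealt with inside \lemref{w2pestimates tri tilde v}. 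Hence the present Lemma follows with no further computation.
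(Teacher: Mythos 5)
Your proof is correct and follows exactly the paper's route: the paper likewise writes $\log u = -H+\log\tilde v$, uses linearity of $\tri_\vphi$ to get $\tri_\vphi\log u = -\tri_\vphi H + \tri_\vphi\log\tilde v$, and then substitutes the lower bound from \lemref{w2pestimates tri tilde v}. Nothing further is needed.
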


In particular, we define $H$ as
\begin{align}\label{w2p H}
H:=b_0 F+b_1\psi+b_2 f,\quad b_0\geq 0.
\end{align}
\begin{prop}[Differential inequality]\label{W2p estimate: Differential inequality prop}
\begin{align}\label{W2p estimate: Differential inequality}
\tri_\vphi u\geq  A_\theta u \tr_\vphi\tilde\om_t+(A_R-b_2 \tri_\vphi f) u+e^{-H}\tilde\tri \tilde F,
\end{align}
where we set the constants 
\begin{align}\label{W2p estimate AR b1}
A_\theta:=-b_0C_u+b_1- C_{1.1},\quad A_R:=b_0\inf_X R-b_1n<0,
\end{align}
and choose the positive $b_1$ sufficiently large such that $A_\theta\geq1$.
\end{prop}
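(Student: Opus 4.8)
The plan is to derive the differential inequality by a direct pointwise computation, feeding the Paun-type Laplacian estimate of \lemref{w2pestimates tri tilde v} into $\log u$ and then exponentiating. Recall that $u = e^{-H}\tilde v$ with $\tilde v = \tr_{\tilde\om_t}\om_\vphi + K$ and $H = b_0 F + b_1\psi + b_2 f$, $b_0 \geq 0$. First I would write $\tri_\vphi\log u = -\tri_\vphi H + \tri_\vphi\log\tilde v$ and apply \lemref{w2pestimates tri tilde v}, which gives $\tri_\vphi\log\tilde v \geq -C_{1.1}\tr_\vphi\tilde\om_t + \tilde v^{-1}\tilde\tri\tilde F$; this reduces the whole statement to controlling $\tri_\vphi H$.

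The key identities are $\tri_\vphi F = \tr_\vphi\theta - R$ (the second equation of \eqref{Singular cscK t eps}) and $\tri_\vphi\psi = \tr_\vphi(\om_\vphi - \tilde\om_t) = n - \tr_\vphi\tilde\om_t$, the latter because $\om_\vphi = \tilde\om_t + i\p\bar\p\psi$. Hence $\tri_\vphi H = b_0(\tr_\vphi\theta - R) + b_1(n - \tr_\vphi\tilde\om_t) + b_2\tri_\vphi f$, and the term $b_2\tri_\vphi f$ is simply carried along unbounded (it is dealt with later through \lemref{nef tilde f}). Substituting and using the one-sided bound $\theta \leq C_u\tilde\om_t$ from Definition~\ref{L infty estimates theta defn} — legitimate because $b_0 \geq 0$, so $-b_0\tr_\vphi\theta \geq -b_0 C_u\tr_\vphi\tilde\om_t$ — together with $b_0 R \geq b_0\inf_X R$, the coefficient of $\tr_\vphi\tilde\om_t$ collapses to $b_1 - C_{1.1} - b_0 C_u = A_\theta$ and the remaining constant term becomes bounded below by $A_R - b_2\tri_\vphi f$ with $A_R = b_0\inf_X R - b_1 n$. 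This yields $\tri_\vphi\log u \geq A_\theta\tr_\vphi\tilde\om_t + A_R - b_2\tri_\vphi f + \tilde v^{-1}\tilde\tri\tilde F$.

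To pass from $\log u$ to $u$ I would multiply by $u > 0$ and use $\tri_\vphi u = u\,\tri_\vphi\log u + u\,|\p\log u|^2_\vphi \geq u\,\tri_\vphi\log u$, discarding the nonnegative gradient term, while noting $u\tilde v^{-1} = e^{-H}$ so that $\tilde v^{-1}\tilde\tri\tilde F$ becomes $e^{-H}\tilde\tri\tilde F$; this is precisely the claimed inequality $\tri_\vphi u \geq A_\theta u\tr_\vphi\tilde\om_t + (A_R - b_2\tri_\vphi f)u + e^{-H}\tilde\tri\tilde F$. For the closing assertion, since $A_\theta = b_1 - C_{1.1} - b_0 C_u$ is strictly increasing in $b_1$ with the other quantities fixed by the data, one chooses $b_1$ large enough that $A_\theta \geq 1$; enlarging $b_1$ further so that $b_1 n > b_0\inf_X R$ also forces $A_R < 0$, and this is compatible with $A_\theta \geq 1$.

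I do not expect a genuine obstacle in this proposition: it is a bookkeeping consequence of \lemref{w2pestimates tri tilde v} and the PDE. The only points that need care are the identity $\tri_\vphi\psi = n - \tr_\vphi\tilde\om_t$, which relies on $\psi = \vphi - \phi_E$ being the potential of $\om_\vphi$ relative to $\tilde\om_t$, and the sign discipline when invoking the one-sided bounds $\theta \leq C_u\tilde\om_t$ and $R \geq \inf_X R$, both of which use $b_0 \geq 0$. The real analytic difficulty — estimating $\tri_\vphi f$, integrating the inequality against a weighted volume form, applying the Sobolev inequality, and running the weighted and inverse-weighted iteration — belongs to the arguments that consume this proposition rather than to its proof.
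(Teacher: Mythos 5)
Your proposal is correct and follows essentially the same route as the paper: compute $\tri_\vphi(-H)$ term by term using the equation $\tri_\vphi F=\tr_\vphi\theta-R$ and $\tri_\vphi\psi=n-\tr_\vphi\tilde\om_t$, bound $\tr_\vphi\theta\leq C_u\tr_\vphi\tilde\om_t$ (using $b_0\geq 0$), add the result to the inequality of \lemref{w2pestimates tri tilde v}, and pass from $\log u$ to $u$ by discarding $u|\p\log u|^2_\vphi$. The sign discipline and the role of $b_1$ are handled exactly as in the paper.
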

\begin{proof}
We use the upper bound of $\theta$ \eqref{Cu}, namely $\tr_\vphi\theta\leq C_u\tr_{\vphi}\tilde\om_t$, to compute the Laplacian of the auxiliary function $H$, 
\begin{align*}
\tri_\vphi (-H)&= -b_0(\tr_\vphi\theta-R)-b_1(n-\tr_\vphi\tilde\om_t)-b_2 \tri_\vphi f\\
&\geq (-b_0C_u+b_1) \tr_\vphi\tilde\om_t+b_0R-b_1n-b_2 \tri_\vphi f.
\end{align*}

Adding the inequalities for $\tri_\vphi(-H)$ and $\tri_\vphi \log \tilde v$ together, we arrive at an inequality for $\log u$,
\begin{align*}
\tri_\vphi \log u\geq  A_\theta \tr_\vphi\tilde\om_t+(A_R-b_2 \tri_\vphi f)+\frac{\tilde\tri \tilde F}{\tilde v}.
\end{align*}
Alternatively, rewritten in the form of $u=e^{-H}v$, it reduces to the desired inequality for $u$.

\end{proof}
\subsection{Integral inequality}
We integrate the differential inequality \eqref{W2p estimate: Differential inequality} into an integral inequality, by multiplying it with $-u^{p-1}$, $\forall p\geq 1$ and integrating over $X$ with respect to $\om^n_\vphi$, 
\begin{align}\label{W2p semi positive LHSRHS}
LHS_1+LHS_2
\leq \int_X[-A_R+b_2 \tri_\vphi f] u^p\om_\vphi^n+II.
\end{align}
where, we denote $II:=-\int_Xu^{p-1} e^{-H}\tilde\tri \tilde F\om_\vphi^n$,
\begin{align*}
&LHS_1:=(p-1)\int_Xu^{p-2}|\p u|^2_\vphi\om_\vphi^n,\quad
\widetilde{LHS_2}:=  \int_X A_\theta u^p \tr_\vphi\tilde\om_t\om_\vphi^n.
\end{align*}
Aapplying the fundamental inequality
$
\tr_\vphi\tilde\om_t\geq e^{-\frac{\tilde F}{n-1}} v^{\frac{1}{n-1}},
$ we have
\begin{align*}
\widetilde{LHS_2}\geq LHS_2:=  \int_X  u^p e^{-\frac{\tilde F}{n-1}} v^{\frac{1}{n-1}}\om_\vphi^n.
\end{align*}

Now we deal with the second term $II$, which involves $\tilde\tri\tilde F$.

\begin{prop}\label{W2p II inequality}
Take $b_0=p$, then 
\begin{align*}
&\frac{3}{4}LHS_1+\widetilde{LHS_2}\leq- \int_XA_Ru^p\om_\vphi^n
+RHS_2
\end{align*}
where
\begin{align*}
RHS_2&:=b_2 \int_X\tri_\vphi f u^p\om_\vphi^n
+ \frac{b_0+b_2}{b_0-1}\int_Xu^{p-1} e^{-H} \tilde\tri f  \om_\vphi^n\\
&+\int_X \frac{b_1}{b_0-1} u^{p-1} e^{-H}v     \om_\vphi^n.
\end{align*}
\end{prop}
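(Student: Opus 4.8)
The starting point is the integral inequality \eqref{W2p semi positive LHSRHS}, in which every term except
$II=-\int_X u^{p-1}e^{-H}\tilde\tri\tilde F\,\om_\vphi^n$ is already in the desired shape. So the plan is to prove the single bound
$$II\;\le\;\tfrac14\,LHS_1+\frac{b_0+b_2}{b_0-1}\int_X u^{p-1}e^{-H}\tilde\tri f\,\om_\vphi^n+\frac{b_1}{b_0-1}\int_X u^{p-1}e^{-H}v\,\om_\vphi^n ,$$
after which the proposition drops out by inserting this into \eqref{W2p semi positive LHSRHS} and retaining $\tfrac34 LHS_1$ on the left. The term $II$ is the only place where the fourth-order nature of the equation enters, and controlling it is the whole content of the step.

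The first move is to reconcile the operator $\tilde\tri$ (the Laplacian of $\tilde\om_t$) with the measure $\om_\vphi^n$. Using $\om_\vphi^n=e^{\tilde F}\tilde\om_t^n$ together with $\tilde\tri(e^{\tilde F})=e^{\tilde F}\bigl(\tilde\tri\tilde F+|\p\tilde F|^2_{\tilde\om_t}\bigr)$, one rewrites $\tilde\tri\tilde F\,\om_\vphi^n=\tilde\tri(e^{\tilde F})\,\tilde\om_t^n-|\p\tilde F|^2_{\tilde\om_t}\,\om_\vphi^n$, and then Stokes' theorem (valid since $X$ is closed and $\tilde\om_t$ is K\"ahler) converts the fourth-order integral into first-order gradient pairings,
$$II=\int_X\bigl(\p(u^{p-1}e^{-H}),\,\p\tilde F\bigr)_{\tilde\om_t}\om_\vphi^n+\int_X u^{p-1}e^{-H}|\p\tilde F|^2_{\tilde\om_t}\,\om_\vphi^n .$$
Next I would expand $\p(u^{p-1}e^{-H})=(p-1)u^{p-2}e^{-H}\p u-u^{p-1}e^{-H}\p H$ and split $\p H=b_0\p\tilde F+(b_0+b_2)\p f+b_1\p\psi$ using $H=b_0F+b_1\psi+b_2f$ and $\tilde F=F-f$. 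The coefficient in front of $\int_X u^{p-1}e^{-H}|\p\tilde F|^2_{\tilde\om_t}\om_\vphi^n$ then becomes $1-b_0$; this is precisely why the choice $b_0=p$ is forced — it makes that coefficient $1-p<0$, producing a good negative term $-\mathcal G$, with $\mathcal G:=(p-1)\int_X u^{p-1}e^{-H}|\p\tilde F|^2_{\tilde\om_t}\om_\vphi^n$, into which the residual cross terms must be absorbed.

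The remaining three groups of terms are then disposed of in turn. The pairing $(p-1)\int_X u^{p-2}e^{-H}(\p u,\p\tilde F)_{\tilde\om_t}\om_\vphi^n$ is handled by Cauchy--Schwarz, using $|\alpha|^2_{\tilde\om_t}\le v\,|\alpha|^2_\vphi$ and $e^{-H}v\le u$, which yields $\tfrac14 LHS_1$ plus a controlled multiple of $\mathcal G$. The term $-(b_0+b_2)\int_X u^{p-1}e^{-H}(\p f,\p\tilde F)_{\tilde\om_t}\om_\vphi^n$ is integrated by parts once more — moving the derivative off $\tilde F$ and again using $\om_\vphi^n=e^{\tilde F}\tilde\om_t^n$ — which turns $(\p f,\p\tilde F)$ into $\tilde\tri f$ and produces exactly the $\frac{b_0+b_2}{b_0-1}\int_X u^{p-1}e^{-H}\tilde\tri f\,\om_\vphi^n$ term together with lower-order gradient integrals absorbed into $\mathcal G$ (the denominator $b_0-1$ entering through the weight used in that absorption). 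Finally, $-b_1\int_X u^{p-1}e^{-H}(\p\psi,\p\tilde F)_{\tilde\om_t}\om_\vphi^n$ is integrated by parts using $i\p\bar\p\psi=\om_\vphi-\tilde\om_t$, so that $(\p\psi,\p\tilde F)$ becomes $\tilde\tri\psi=v-n$, giving the $\frac{b_1}{b_0-1}\int_X u^{p-1}e^{-H}v\,\om_\vphi^n$ term modulo a harmless multiple of $\int_X u^{p-1}e^{-H}\om_\vphi^n$ and further gradient pairings absorbed into $\mathcal G$ and $\tfrac14 LHS_1$.

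The main obstacle is the bookkeeping in these absorptions. Every integration by parts re-creates pairings involving $\p F$ (through $\p H$) and $\p\psi$, and neither $|\p F|^2_{\tilde\om_t}$ nor $|\p\psi|^2_{\tilde\om_t}$ is bounded a priori at this stage; one must verify that after each step these are either of negative sign or dominated by the single good term $\mathcal G$ and by $\tfrac14 LHS_1$, and in particular that no fourth-order quantity is regenerated. Meeting this constraint is what pins down $b_0=p$ and the Cauchy--Schwarz weights, and it relies on the quantitative control of $|\p f|^2_{\tilde\om_t}$, $\tilde\tri f$ and $i\p\bar\p f$ from \lemref{nef tilde f}, of $i\p\bar\p\log S_\eps$ from \lemref{h eps}, and on the $L^\infty$-bounds of $\vphi$ and of $F-\sigma_s\phi_E$, $F-\sigma_i\phi_E$ from \thmref{L infty estimates Singular equation} to keep the bounded coefficients bounded. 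Once this structure is in place, what remains is the purely algebraic check that the denominators $b_0-1$ and the constants in $RHS_2$ come out as stated.
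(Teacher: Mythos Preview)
Your outline has a real budget problem in the absorption step. After your first integration by parts you correctly arrive at
\[
II=(p-1)\!\int u^{p-2}e^{-H}(\p u,\p\tilde F)_{\tilde\om_t}\om_\vphi^n-(p-1)\!\int u^{p-1}e^{-H}|\p\tilde F|^2_{\tilde\om_t}\om_\vphi^n
-(b_0+b_2)\!\int u^{p-1}e^{-H}(\p f,\p\tilde F)\om_\vphi^n-b_1\!\int u^{p-1}e^{-H}(\p\psi,\p\tilde F)\om_\vphi^n.
\]
Now estimate the first integral by Cauchy--Schwarz together with $|\p u|^2_{\tilde\om_t}\le v|\p u|^2_\vphi$ and $e^{-H}v\le u$: the sharp split is
\[
(p-1)\!\int u^{p-2}e^{-H}(\p u,\p\tilde F)\om_\vphi^n\le \tfrac{\lambda}{2}\,LHS_1+\tfrac{1}{2\lambda}\,\mathcal G,
\]
so getting $\le\tfrac14 LHS_1$ forces $\lambda=\tfrac12$ and hence consumes \emph{all} of $\mathcal G$. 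Nothing is left to absorb the $(\p f,\p\tilde F)$ and $(\p\psi,\p\tilde F)$ cross terms. Your proposed fix --- integrate those by parts again --- regenerates $(\p H,\p f)$ and $(\p H,\p\psi)$, hence new $(\p\tilde F,\cdot)$ pairings, and in addition produces $\int u^{p-1}e^{-H}|\p f|^2\om_\vphi^n$ and $\int u^{p-1}e^{-H}|\p\psi|^2\om_\vphi^n$; the latter is not controlled at this point of the paper (the gradient estimate comes \emph{after} the $W^{2,p}$ estimate). The hand-wave ``the denominator $b_0-1$ enters through the weight used in that absorption'' does not account for these extra terms, and your direct IBP on $(\p f,\p\tilde F)$ and $(\p\psi,\p\tilde F)$ gives coefficients $b_0+b_2$ and $b_1$, not $\frac{b_0+b_2}{b_0-1}$ and $\frac{b_1}{b_0-1}$.

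The paper avoids all of this with one algebraic move: write $-\tilde\tri\tilde F=\frac{1}{b_0-1}\tilde\tri\bigl[(\tilde F-H)+(H-b_0\tilde F)\bigr]$ and split $II=II_1+II_2$. The point is that $H-b_0\tilde F=(b_0+b_2)f+b_1\psi$ contains no $F$, so $II_2$ is \emph{directly} $\frac{1}{b_0-1}\int u^{p-1}e^{-H}[(b_0+b_2)\tilde\tri f+b_1(v-n)]\om_\vphi^n$ with the correct coefficients, no further IBP needed. Only $II_1$ requires integration by parts, and there the cross term $(\p u,\p(\tilde F-H))$ matches the good term $|\p(\tilde F-H)|^2$ exactly, so a single Cauchy--Schwarz gives $II_1\le\frac{p-1}{4(b_0-1)}LHS_1=\tfrac14 LHS_1$ cleanly. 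This is the missing idea in your argument.
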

\begin{proof}
By $\om_\vphi^n=e^{\tilde F}\tilde \om_t^n$, we get 
$II=-\int_Xu^{p-1} e^{\tilde F-H}\tilde\tri \tilde F\tilde\om_t^n,$ which is decomposed to
\begin{align*}
II=II_1+II_2:=\frac{1}{b_0-1}\int_Xu^{p-1} e^{\tilde F-H}\tilde\tri [(\tilde F-H)+(H-b_0\tilde F)]\tilde\om_t^n.
\end{align*}

By integration by parts, the first part $II_1$ becomes
\begin{align*}
&=-\frac{p-1}{b_0-1}\int_Xu^{p-2} e^{\tilde F-H}(\p u,\p (\tilde F-H))_{\tilde\om_t}\tilde\om_t^n\\
&-\frac{1}{b_0-1}\int_Xu^{p-1} e^{\tilde F-H}|\p (\tilde F-H)|^2_{\tilde\om_t}\tilde\om_t^n.
\end{align*}
We choose $b_0>1$ such that the constant before the second integral is negative. Then H\"older inequality gives us the upper bound of $II_1$,
\begin{align*}
II_1\leq\frac{(p-1)^2}{4(b_0-1)}\int_Xu^{p-3} e^{\tilde F-H}|\p u|^2_{\tilde\om_t}\tilde\om_t^n.
\end{align*}
Using $|\p u|^2_{\tilde\om_t}\leq v |\p u|^2_{\vphi}$ and $u=e^{-H} v$, we deduce that
\begin{align*}
II_1\leq\frac{(p-1)^2}{4(b_0-1)}\int_Xu^{p-2} |\p u|^2_{\vphi}\om_\vphi^n=\frac{p-1}{4(b_0-1)}LHS_1.
\end{align*}

In order to estimate $II_2$, we calculate $$H-b_0\tilde F=b_0F+b_1\tilde\vphi-b_0\tilde F+b_2f=(b_0+b_2) f+b_1\tilde\vphi$$ and
\begin{align*}
\tilde\tri (H-b_0\tilde F)=(b_0+b_2) \tilde\tri f+b_1 v-b_1 n.
\end{align*}
By substitution into the part $II_2$, we get 
\begin{align*}
II_2=\frac{1}{b_0-1}\int_Xu^{p-1} e^{-H} [(b_0+b_2) \tilde\tri f+b_1 v-b_1 n]      \om_\vphi^n.
\end{align*}
If we further choose $b_0>1, b_1>0$, the negative term could be dropped immediately. Hence, $II_2$ reduces to
\begin{align*}
\leq \frac{1}{b_0-1}\int_Xu^{p-1} e^{-H} [(b_0+b_2) \tilde\tri f+b_1 v]      \om_\vphi^n.
\end{align*}

Inserting $II_1$ and $II_2$ back to \eqref{W2p semi positive LHSRHS} and choosing $b_0$ depending on $p$ such that $$1-\frac{p-1}{4(b_0-1)}>0,$$ we have arrives at the desired weighted inequality.
\end{proof}

In order to estimate $\tilde\tri f$, we need to use the upper bound of $i\p\bar\p f$.

\begin{lem}
If $b_2=0$, then $H=b_0 F+b_1\psi$ and
\begin{align}\label{w2p estimates: upper bound}
LHS_2\leq C\int_X[ u^p+u^{p-1} e^{-H} +u^{p-1} e^{-H}  v  ]  \om_\vphi^n.
\end{align}
\end{lem}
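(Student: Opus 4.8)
The plan is to read off the lemma as the $b_2=0$ specialisation of Proposition \ref{W2p II inequality}, after discarding the nonnegative gradient term and crudely bounding the two remaining lower‑order pieces. First I would set $b_2=0$ in \eqref{w2p H}; then $H=b_0F+b_1\psi$, the term $b_2\int_X\tri_\vphi f\,u^p\om_\vphi^n$ in $RHS_2$ disappears, and Proposition \ref{W2p II inequality} reduces to
\[
\tfrac34 LHS_1+\widetilde{LHS_2}\le-\int_XA_Ru^p\om_\vphi^n
+\frac{b_0}{b_0-1}\int_Xu^{p-1}e^{-H}\tilde\tri f\,\om_\vphi^n
+\frac{b_1}{b_0-1}\int_Xu^{p-1}e^{-H}v\,\om_\vphi^n .
\]

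Next I would absorb the first two terms on the right into a single constant. The quantity $-A_R=b_1n-b_0\inf_XR$ is fixed (here $b_0=p$, $b_1$ is the large constant from Proposition \ref{W2p estimate: Differential inequality prop}, and $R$ is bounded under the running hypotheses), so $-\int_XA_Ru^p\om_\vphi^n\le C\int_Xu^p\om_\vphi^n$. For the Laplacian term I would invoke the \emph{upper} bound $i\p\bar\p f_{t,\eps}\le C\om$ from \lemref{nef tilde f}, which yields $\tilde\tri f=\tr_{\tilde\om_t}(i\p\bar\p f)\le C$ and hence $\int_Xu^{p-1}e^{-H}\tilde\tri f\,\om_\vphi^n\le C\int_Xu^{p-1}e^{-H}\,\om_\vphi^n$. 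Together with the $v$-term this shows the whole right‑hand side is dominated by $C\int_X[u^p+u^{p-1}e^{-H}+u^{p-1}e^{-H}v]\om_\vphi^n$.

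Finally I would drop the nonnegative term $\tfrac34 LHS_1\ge0$ from the left, and use the fundamental inequality $\tr_\vphi\tilde\om_t\ge e^{-\tilde F/(n-1)}v^{1/(n-1)}$ together with $A_\theta\ge1$ (guaranteed by the choice of $b_1$ in Proposition \ref{W2p estimate: Differential inequality prop}) to conclude $\widetilde{LHS_2}\ge LHS_2$; combining gives $LHS_2\le\widetilde{LHS_2}\le C\int_X[u^p+u^{p-1}e^{-H}+u^{p-1}e^{-H}v]\om_\vphi^n$, which is \eqref{w2p estimates: upper bound}. I do not expect a genuine obstacle: the only two points needing care are that for $\beta>1$ it is the \emph{upper} bound on $i\p\bar\p f$ (not a lower bound, as in the $0<\beta<1$ regime) that is available and that is precisely what is used here, and that $e^{-H}$ is deliberately retained as an explicit weight rather than estimated, since it will be controlled in the subsequent steps via the $L^\infty$-bounds on $\vphi$ and $F$ from \thmref{L infty estimates Singular equation}.
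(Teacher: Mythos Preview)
Your proposal is correct and follows essentially the same route as the paper: set $b_2=0$ in Proposition~\ref{W2p II inequality}, bound $\tilde\tri f$ from above using the upper bound on $i\p\bar\p f$ from \lemref{nef tilde f}, and absorb the constant $-A_R$ into $C$. The paper's proof is slightly terser (it only records the bound on $RHS_2$ and leaves implicit the steps of dropping $\tfrac34 LHS_1\ge0$ and passing from $\widetilde{LHS_2}$ to $LHS_2$ that you spell out), but the argument is the same.
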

\begin{proof}
Taking $b_2=0$ in \lemref{W2p II inequality}, we get
\begin{align*}
RHS_2= \frac{b_0}{b_0-1}\int_Xu^{p-1} e^{-H} \tilde\tri f  \om_\vphi^n
+\frac{b_1}{b_0-1}\int_X u^{p-1} e^{-H}v     \om_\vphi^n.
\end{align*}
We make use of the particular property of $i\p\bar\p f$ in the degenerate situation as shown in \lemref{nef tilde f}, i.e. it is bounded above. So, we obtain that
\begin{align*}
RHS_2\leq \frac{b_0 n \sup_{(X,\tilde\om_t)} i\p\bar\p f   }{b_0-1}\int_Xu^{p-1} e^{-H}     \om_\vphi^n+ \frac{b_1}{b_0-1}\int_Xu^{p-1} e^{-H}  v  \om_\vphi^n.
\end{align*} 
\end{proof}

\begin{cor}We further take $K=0$ and rewrite \eqref{w2p estimates: upper bound} in the form
\begin{align}\label{w2p estimates: upper bound cor}
LHS_2=\int_X   v^{p+\frac{1}{n-1}}  \mu
\leq C\int_X (v^p+v^{p-1})   e^{\frac{\tilde F}{n-1}}\mu
\end{align}
where we denote
\begin{align}\label{W2p mu}
L:=-pH+\tilde F-\frac{\tilde F}{n-1},\quad \mu=e^L\tilde\om_t^n.
\end{align}

\end{cor}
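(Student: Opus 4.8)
The statement is a purely algebraic reformulation of the preceding Lemma, so the plan is short. First I would set $K=0$ in \eqref{w2p u}, so that $\tilde v=v$; then from the definition $u=e^{-H}\tilde v$ one has $u=e^{-H}v$, whence each of the three monomials on the right of \eqref{w2p estimates: upper bound} collapses to a power of $v$ times $e^{-pH}$. Explicitly $u^p=e^{-pH}v^p$, $u^{p-1}e^{-H}=e^{-pH}v^{p-1}$, and $u^{p-1}e^{-H}v=e^{-pH}v^p$, so the right-hand side of \eqref{w2p estimates: upper bound} equals $C\int_X(2v^p+v^{p-1})e^{-pH}\om_\vphi^n$; absorbing the factor $2$ into $C$ via $2v^p+v^{p-1}\le 2(v^p+v^{p-1})$ gives $C\int_X(v^p+v^{p-1})e^{-pH}\om_\vphi^n$.

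Next I would insert the Monge-Amp\`ere identity $\om_\vphi^n=e^{\tilde F}\tilde\om_t^n$ coming from \eqref{Singular cscK t eps tilde} and split the exponential weight as $e^{-pH}e^{\tilde F}=e^{\frac{\tilde F}{n-1}}\,e^{-pH+\tilde F-\frac{\tilde F}{n-1}}=e^{\frac{\tilde F}{n-1}}\,e^{L}$, with $L$ and $\mu=e^{L}\tilde\om_t^n$ exactly as in \eqref{W2p mu}. This turns the right-hand side into $C\int_X(v^p+v^{p-1})e^{\frac{\tilde F}{n-1}}\mu$, which is the asserted bound. For the left-hand side I would recall the definition $LHS_2=\int_X u^p e^{-\frac{\tilde F}{n-1}}v^{\frac{1}{n-1}}\om_\vphi^n$ from the Integral inequality subsection; with $u=e^{-H}v$ and $\om_\vphi^n=e^{\tilde F}\tilde\om_t^n$ this becomes $\int_X v^{p+\frac{1}{n-1}}e^{-pH+\tilde F-\frac{\tilde F}{n-1}}\tilde\om_t^n=\int_X v^{p+\frac{1}{n-1}}\mu$, which is the equality in \eqref{w2p estimates: upper bound cor}.

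I do not expect any analytic obstacle: the substance is already contained in the preceding Lemma (which used the upper bound of $i\p\bar\p f$ from \lemref{nef tilde f}) and in Proposition~\ref{W2p estimate: Differential inequality prop}. The only point needing care is the bookkeeping of the three exponential factors $e^{-pH}$, $e^{\tilde F}$ and $e^{-\frac{\tilde F}{n-1}}$, so that they combine consistently with the definitions of $L$ and $\mu$; the constant $C$ inherited from the Lemma is enlarged only by the harmless factor from $2v^p+v^{p-1}\le 2(v^p+v^{p-1})$. The reason for recasting everything against the measure $\mu$ instead of $\om_\vphi^n$ is that $\mu=e^{L}\tilde\om_t^n$ has, after using the $L^\infty$-estimates of $F$ and $\psi$, a weight comparable to a power of $S_\eps$, which is the shape needed to run the Sobolev/Moser iteration behind the $W^{2,p}$-estimate; this remark, however, plays no role in the proof of the corollary itself.
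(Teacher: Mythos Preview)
Your argument is correct and is precisely the intended reformulation: the paper states this corollary without proof because it is an immediate substitution of $K=0$, $u=e^{-H}v$, and $\om_\vphi^n=e^{\tilde F}\tilde\om_t^n$ into \eqref{w2p estimates: upper bound} and into the definition of $LHS_2$, exactly as you carry out.
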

We further treat the terms on the right hand side. 
\begin{prop}
\begin{align}\label{W2p semi positive LHSRHS 1}
\int_X   v^{p+\frac{1}{n-1}}  \mu
\leq C\int_X v^{p-1}  e^h \mu,\quad e^h:=e^{\frac{n}{n-1}\sigma_s\phi_E+\frac{-f}{n-1}}.
\end{align}
where the constant $C$ depends on 
\begin{align*}
\sup_X(F-\sigma_s\phi_E),\quad A_\theta,\quad A_R,\quad b_0(p),\quad b_1, \quad\sup_{(X,\tilde\om_t)} i\p\bar\p f.
\end{align*}
\end{prop}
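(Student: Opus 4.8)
The plan is to post-process the integral inequality \eqref{w2p estimates: upper bound cor},
\[
\int_X v^{p+\frac{1}{n-1}}\,\mu \le C\int_X \bigl(v^p+v^{p-1}\bigr)\,e^{\frac{\tilde F}{n-1}}\,\mu,
\]
whose constant already records the dependence on $A_\theta$, $A_R$, $b_0(p)$, $b_1$ and $\sup_{(X,\tilde\om_t)} i\p\bar\p f$. Only two adjustments remain: the term involving $v^p$ carries one power of $v$ more than the right-hand side of \eqref{W2p semi positive LHSRHS 1}, so it must be absorbed, and the weight $e^{\tilde F/(n-1)}$ must be replaced by the cleaner weight $e^h$.

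First I would treat the $v^p$ term by interpolating between the exponents $p+\frac{1}{n-1}$ and $p-1$. Since $\frac{n-1}{n}\bigl(p+\frac{1}{n-1}\bigr)+\frac{1}{n}(p-1)=p$, we may write $v^p=\bigl(v^{p+\frac{1}{n-1}}\bigr)^{\frac{n-1}{n}}\bigl(v^{p-1}\bigr)^{\frac{1}{n}}$, and Young's inequality with conjugate exponents $\frac{n}{n-1}$ and $n$ gives, for every $\tau>0$,
\[
v^p\,e^{\frac{\tilde F}{n-1}}\le\tau\,v^{p+\frac{1}{n-1}}+C(\tau)\,v^{p-1}\,e^{\frac{n\tilde F}{n-1}}.
\]
Next I would estimate the two exponential weights $e^{\tilde F/(n-1)}$ and $e^{n\tilde F/(n-1)}$ by $e^h$. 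Writing $\tilde F=F-f$ and using the $L^\infty$-estimate $\sup_X(F-\sigma_s\phi_E)\le C$ from \thmref{L infty estimates Singular equation}, together with the one-sided bound $e^{-f}\le C$ from \lemref{nef tilde f} — which yields both $e^{-\frac{f}{n-1}}\le C$ and $e^{-\frac{nf}{n-1}}=e^{-f}\bigl(e^{-f}\bigr)^{\frac{1}{n-1}}\le C\,e^{-\frac{f}{n-1}}$ — and the signs $\sigma_s\le 0$, $\phi_E\le 0$ (so that $\sigma_s\phi_E\ge 0$ and $e^{\frac{\sigma_s\phi_E}{n-1}}\le e^{\frac{n\sigma_s\phi_E}{n-1}}$), one sees that both weights are dominated by $C\,e^{\frac{n}{n-1}\sigma_s\phi_E+\frac{-f}{n-1}}=C\,e^h$. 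Substituting these bounds into \eqref{w2p estimates: upper bound cor}, choosing $\tau$ small, and moving $\tau\int_X v^{p+\frac{1}{n-1}}\mu$ to the left-hand side produces \eqref{W2p semi positive LHSRHS 1}, the only new dependence being on $\sup_X(F-\sigma_s\phi_E)$ (and on the fixed constant in $e^{-f}\le C$).

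This step is essentially bookkeeping rather than new analysis. The one point that deserves care is the absorption of $\tau\int_X v^{p+\frac{1}{n-1}}\mu$ into the left-hand side: it is legitimate precisely because for each fixed $\eps$ the approximant $\vphi_\eps$ is smooth, so $\int_X v^{p+\frac{1}{n-1}}\mu<\infty$ and a genuine bound — not a circular identity — results, and this finiteness should be recorded explicitly. The other thing to keep straight is the direction of the weight comparisons when trading $e^{\tilde F/(n-1)}$ and $e^{n\tilde F/(n-1)}$ for the single weight $e^h$, which is exactly why the signs of $\sigma_s$ and $\phi_E$ and the upper bound for $e^{-f}$ all have to enter.
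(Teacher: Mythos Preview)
Your argument is correct and follows the paper's own proof essentially line for line: Young's inequality with exponents $\frac{n}{n-1}$ and $n$ to interpolate the $v^p$ term between $v^{p+\frac{1}{n-1}}$ and $v^{p-1}$, absorption of the $\tau$-term, and then the weight comparison using $\tilde F\le \sigma_s\phi_E-f+C$ together with $e^{-f}\le C$. One small inaccuracy: the paper does not normalise $\phi_E\le 0$, only $\phi_E\le\sup_X\phi_E<\infty$; since $\sigma_s<0$ this still gives $\sigma_s\phi_E\ge\sigma_s\sup_X\phi_E$, hence $e^{\sigma_s\phi_E/(n-1)}\le C\,e^{n\sigma_s\phi_E/(n-1)}$ with an extra harmless constant, so your conclusion stands.
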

\begin{proof}
Applying Young's inequality with $a=\frac{n}{n-1}$ and $b=n$, we have
\begin{align*}
&\int_X v^p e^{\frac{\tilde F}{n-1}}  \mu
=\int_X v^{(p+\frac{1}{n-1})\frac{1}{a}}  v^{\frac{p}{b}-\frac{1}{(n-1)a}}e^{\frac{\tilde F}{n-1}}  \mu\\
&\leq \tau LHS_2+\int_X  v^{p-\frac{b}{(n-1)a}}e^{\frac{b\tilde F}{n-1}}  \mu
=\tau LHS_2+\int_X  v^{p-1}e^{\frac{n\tilde F}{n-1}}  \mu.
\end{align*}
Inserting in to \eqref{w2p estimates: upper bound cor}, we get
\begin{align*}
LHS_2
\leq C[\int_X v^{p-1}  \cdot e^{\frac{\tilde F}{n-1}}\mu+\tau LHS_2+\int_X  v^{p-1}e^{\frac{n\tilde F}{n-1}}  \mu].
\end{align*}
Choosing sufficiently small $\tau$, we obtain
\begin{align*}
LHS_2
\leq C\int_X v^{p-1} ( e^{\frac{\tilde F}{n-1}}+e^{\frac{n\tilde F}{n-1}})  \mu.
\end{align*}

Recall the volume ratio estimate from \eqref{Gradient estimate: volume ratio bound},
$
C^{-1} e^{\sigma_i\phi_E-f}   \leq e^{\tilde F}\leq C e^{\sigma_s\phi_E-f} 
$ and $e^{-f}$ is of the order $S_\eps^{\beta-1}$.
We could compare the weights $e^{\frac{n\tilde F}{n-1}} $ and $e^{\frac{\tilde F}{n-1}} $ and observe that both of them are estimated by $e^h$. Thus we have obtained \eqref{W2p semi positive LHSRHS 1}.
\end{proof}

Now we are ready to derive the $W^{2,p}$-estimates from \eqref{W2p semi positive LHSRHS 1}.
We first compute the weight $\mu=e^L$.
\begin{lem}\label{W2p eL}We have
\begin{align*}
&L\leq[pb_1-\sigma_i(pb_0-\frac{n-2}{n-1})]\phi_E-\frac{n-2}{n-1}f+C,\\
&L\geq[pb_1-\sigma_s(pb_0-\frac{n-2}{n-1})]\phi_E-\frac{n-2}{n-1}f-C.
\end{align*}
The constant $C$ depends on $\|\vphi\|_\infty$, $\sup_X(F-\sigma_s\phi_E)$, $\inf_X(F-\sigma_i\phi_E)$.
\end{lem}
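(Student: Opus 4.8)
The plan is to expand $L=-pH+\tilde F-\frac{\tilde F}{n-1}$ from \eqref{W2p mu} explicitly in terms of $F$, $\vphi$, $\phi_E$ and $f$, and then to feed in the two one-sided $L^\infty$-bounds on $F$ from \thmref{L infty estimates Singular equation} so as to trade the uncontrolled function $F$ for a constant multiple of $\phi_E$, up to a bounded error.

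First I would substitute $H=b_0F+b_1\psi$ (we are in the case $b_2=0$, $K=0$ fixed just above), $\tilde F=F-f$ and $\psi=\vphi-\phi_E$ into \eqref{W2p mu}, obtaining
\begin{align*}
L=-pH+\Big(1-\tfrac{1}{n-1}\Big)\tilde F
=\Big(\tfrac{n-2}{n-1}-pb_0\Big)F-pb_1\vphi+pb_1\phi_E-\tfrac{n-2}{n-1}f.
\end{align*}
Write $c_F:=\tfrac{n-2}{n-1}-pb_0$. Since $p\geq 1$, $b_0=p\geq 1$ and $n\geq 2$, we have $\tfrac{n-2}{n-1}<1\leq pb_0$, so $c_F<0$; moreover $\|\vphi\|_\infty\leq C$ by \thmref{L infty estimates Singular equation}, so the term $-pb_1\vphi$ is absorbed into the error constant.

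Because $c_F<0$, for the upper bound of $L$ I would invoke the lower bound $F\geq \sigma_i\phi_E-C$ (the second estimate in \thmref{L infty estimates Singular equation}, with $\sigma_i=C_u+\tau$), which gives $c_FF\leq c_F\sigma_i\phi_E+C$; substituting and collecting the $\phi_E$-terms yields
\begin{align*}
L\leq\big[pb_1-\sigma_i\big(pb_0-\tfrac{n-2}{n-1}\big)\big]\phi_E-\tfrac{n-2}{n-1}f+C.
\end{align*}
Symmetrically, for the lower bound I would use the upper bound $F\leq \sigma_s\phi_E+C$ (the first estimate, $\sigma_s=C_l-\tau$), which for $c_F<0$ gives $c_FF\geq c_F\sigma_s\phi_E-C$, and hence
\begin{align*}
L\geq\big[pb_1-\sigma_s\big(pb_0-\tfrac{n-2}{n-1}\big)\big]\phi_E-\tfrac{n-2}{n-1}f-C,
\end{align*}
with $C$ depending only on $\|\vphi\|_\infty$, $\sup_X(F-\sigma_s\phi_E)$ and $\inf_X(F-\sigma_i\phi_E)$, as claimed.

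This argument is essentially bookkeeping and I do not expect a serious obstacle; the one point requiring care is pairing the sign of $c_F$ (the coefficient of $F$) with the correct one of the two one-sided $L^\infty$-bounds, so that the uncontrolled term is replaced by a term with the right sign. I would also note in passing that when $n=2$ the coefficient $\tfrac{n-2}{n-1}$ of $f$ vanishes, so $L$ is then controlled by $\phi_E$ alone, which is consistent with the remark that $\sigma_D=0$ in that case.
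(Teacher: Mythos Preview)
Your proof is correct and follows essentially the same route as the paper: expand $L$ via $H=b_0F+b_1\psi$, $\tilde F=F-f$, $\psi=\vphi-\phi_E$ to obtain $L=(-pb_0+\tfrac{n-2}{n-1})F-\tfrac{n-2}{n-1}f-pb_1\vphi+pb_1\phi_E$, then use the two-sided bound $\sigma_i\phi_E-C\leq F\leq\sigma_s\phi_E+C$ together with $\|\vphi\|_\infty\leq C$. Your explicit tracking of the sign of $c_F$ and which one-sided bound to pair with it is a welcome clarification of exactly the point the paper leaves implicit.
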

\begin{proof}
We compute that
\begin{align*}
L&=-pb_0 F-pb_1\psi+\frac{n-2}{n-1}\tilde F\\
&=(-pb_0+\frac{n-2}{n-1})F-\frac{n-2}{n-1}f-pb_1\vphi+pb_1\phi_E.
\end{align*}
Making use of the bound of $F$ namely, 
\begin{align*}
\sigma_i\phi_E-C\leq F\leq \sigma_s\phi_E+C
\end{align*}
and the bound of $\vphi$ from \thmref{L infty estimates Singular equation}, we obtain the bound of $L$.

\end{proof}
Then we estimate $LHS_2$.
\begin{prop}
We choose 
\begin{align}\label{W2p estimate b1 2}
b_1>(b_0-\frac{n-2}{p(n-1)})\sigma_i-(1+\frac{1}{p(n-1)})\sigma_s-\frac{2n}{p}.
\end{align}
 Then
\begin{align*}
\int_X   v^{p+\frac{1}{n-1}}  \mu
\leq C.
\end{align*}
\end{prop}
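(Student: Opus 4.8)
The plan is to close the $p$-induction with a single application of Young's inequality followed by a weight computation. I start from \eqref{W2p semi positive LHSRHS 1}, which already puts $\int_X v^{p+\frac{1}{n-1}}\mu$ alone on the left and controls it by $C\int_X v^{p-1}e^h\mu$. To the factor $v^{p-1}$ on the right I apply Young's inequality with conjugate exponents $a=\frac{p+\frac{1}{n-1}}{p-1}$ and $b=\frac{(n-1)p+1}{n}$, giving $\int_X v^{p-1}e^h\mu\le\tau\int_X v^{p+\frac{1}{n-1}}\mu+C(\tau)\int_X e^{bh}\mu$. The first term is absorbed into the left side of \eqref{W2p semi positive LHSRHS 1} for $\tau$ small; this absorption is legitimate once $\int_X v^{p+\frac{1}{n-1}}\mu<\infty$ a priori, which holds for each fixed $t,\eps$ since $v$ is smooth and bounded while $\mu$ is integrable by the computation below. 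It then remains to show $\int_X e^{bh}\mu=\int_X e^{bh+L}\tilde\om_t^n\le C$ uniformly in $t,\eps$.

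For that I expand $bh+L$ using $h=\frac{n}{n-1}\sigma_s\phi_E-\frac{1}{n-1}f$ together with the upper bound for $L$ from \lemref{W2p eL} (recall $b_0=p$ is fixed in Proposition~\ref{W2p II inequality}). Since $b\cdot\frac{n}{n-1}=p+\frac{1}{n-1}$, the coefficient of $\phi_E$ in $bh+L$ is exactly $a_E:=(p+\frac{1}{n-1})\sigma_s+pb_1-\sigma_i(pb_0-\frac{n-2}{n-1})$, the coefficient of $-f$ is the constant $a_f:=\frac{(n-1)p+1}{n(n-1)}+\frac{n-2}{n-1}>0$, and the remaining terms are bounded. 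The point of the hypothesis \eqref{W2p estimate b1 2} is that, after multiplying through by $p$, it is precisely the inequality $a_E>-2n$, and this threshold $-2n$ is \emph{independent of $p$}.

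Integrability then follows term by term. For the $f$-part, $f$ is bounded below and $e^{-f}$ is comparable to $S_\eps^{\beta-1}$ by \lemref{nef tilde f}, so $e^{-a_f f}$ is uniformly bounded because $\beta>1$ and $n\ge2$ force $a_f>0$. For the $\phi_E$-part, $e^{\phi_E}=|s_E|_{h_E}^{2a_0}$ is bounded (normalized $\le 1$), so $a_E>-2n$ gives $e^{a_E\phi_E}\le C|s_E|_{h_E}^{-4na_0}$, and $\int_X|s_E|_{h_E}^{-4na_0}\tilde\om_t^n<\infty$ holds because Kodaira's lemma permits taking $a_0$ small — small in terms of $n$ and the multiplicities of $E$ only, not of $p$, which is exactly where the $p$-independence of $-2n$ is used. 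Combining the two bounds yields $\int_X e^{bh}\mu\le C$ with $C$ independent of $t,\eps$, and substituting back into the absorbed form of \eqref{W2p semi positive LHSRHS 1} gives $\int_X v^{p+\frac{1}{n-1}}\mu\le C$.

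I expect the only genuine obstacle to be the exponent bookkeeping that produces the clean universal bound $a_E>-2n$: one must check that the contributions of $h$ (through both its $\phi_E$- and its $-f$-parts), of the conjugate exponent $b$, and of the $L$-bound combine so that the surviving $\phi_E$-exponent is bounded below by a constant free of $p$, since this is what dictates the precise form of \eqref{W2p estimate b1 2} and lets one fixed $a_0$ work simultaneously for all $p\ge 1$. A minor side check is that $b_1$ must also meet the earlier constraint $A_\theta=-b_0C_u+b_1-C_{1.1}\ge 1$ from Proposition~\ref{W2p estimate: Differential inequality prop}; since \eqref{W2p estimate b1 2} is just an additional lower bound, one simply chooses $b_1$ above both. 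The a-priori finiteness needed to justify the absorption is itself a consequence of the same weight estimate together with the smoothness of $\vphi_\eps$.
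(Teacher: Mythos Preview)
Your proposal is correct and follows essentially the same route as the paper: the paper uses H\"older's inequality with the same exponent $a=\frac{p+\frac{1}{n-1}}{p-1}$ (so your $b$ equals the paper's $\frac{a}{a-1}=\frac{p(n-1)+1}{n}$) and then divides through by $I^{1/a}$, whereas you use Young's inequality with absorption---these are equivalent moves and lead to the identical weight integral $\int_X e^{bh+L}\tilde\om_t^n$. Your coefficients $a_E,a_f$ coincide with the paper's $k_1,k_2$, and the integrability condition $a_E>-2n$ is exactly the paper's $k_1+2n>0$, which is precisely the hypothesis \eqref{W2p estimate b1 2}.
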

\begin{proof}
We apply the H\"older inequality with $a=\frac{p+\frac{1}{n-1}}{p-1}$ to \eqref{W2p semi positive LHSRHS 1}
\begin{align*}
\int_X   v^{p+\frac{1}{n-1}}  \mu
\leq C\int_X v^{p-1}  e^h \mu
\leq C(\int_X v^{p+\frac{1}{n-1}}\mu)^{\frac{1}{a}}
(\int_X e^{\frac{ah}{a-1}}\mu)^{1-\frac{1}{a}}.
\end{align*}

We now estimate the integral $\int_X e^{\frac{ah}{a-1}}\mu$.
We calculate that $\frac{a}{a-1}=\frac{p(n-1)+1}{n}$. Then we insert $h$ \eqref{W2p semi positive LHSRHS 1} and the estimate of $\mu$ \eqref{W2p mu} from \lemref{W2p eL} to the last integrand
\begin{align*}
 e^{\frac{ah}{a-1}}\mu=e^{\frac{p(n-1)+1}{n-1}\sigma_s\phi_E+\frac{-f}{n-1}\frac{p(n-1)+1}{n}}e^L\tilde\om_t^n.
\end{align*}
We write 
\begin{align*}e^{\frac{ah}{a-1}}\mu\leq  e^{k_1\phi_E+k_2(-f)}
\end{align*} and compare the coefficients 
\begin{align*}
&k_1=pb_1-(pb_0-\frac{n-2}{n-1})\sigma_i+(p+\frac{1}{n-1})\sigma_s,\\
&k_2=\frac{p}{n}+\frac{1}{(n-1)n}+\frac{n-2}{n-1}.
\end{align*}
Since $k_2>0$, we have $ e^{k_2(-f)}$ is bounded above. Also, $e^{k_1\phi_E}$ is integrable, if $k_1+2n>0$.

\end{proof}
\begin{rem}
We further expand $k_1$,
\begin{align*}
k_1=pb_1-(pb_0-\frac{n-2}{n-1})C_u+(p+\frac{1}{n-1})C_l-(pb_0-\frac{n-3}{n-1}+p)\tau.
\end{align*}
Since $\tau$ could be very small, it is sufficient to ask $$pb_1-(pb_0-\frac{n-2}{n-1})C_u+(p+\frac{1}{n-1})C_l\geq 0.$$
Choosing large $b_1$ as in Proposition \ref{W2p estimate: Differential inequality prop}, for example,
$$
b_1= C_ub_0-\frac{n}{n-1}C_l+C_{1.1}+1,
$$ we also obtain 
\begin{align*}
&k_1\geq -\tau (pb_0-\frac{n-3}{n-1}+p).
\end{align*}
Hence, $e^{k_1\phi_E}$ is integral once $\tau$ is small enough.
\end{rem}

Therefore, we complete the proof of the $W^{2,p}$-estimates as following.
\begin{proof}[Proof of \thmref{w2pestimates degenerate Singular equation}]
We set $$\tilde L=k_1\phi_E+k_2(-f),$$ where $k_1,k_2$ are will be determined in the following argument.
The H\"older inequality with $a=\frac{p+\frac{1}{n-1}}{p}$ gives
\begin{align*}
\int_X   v^{p} e^{\tilde L} \tilde\om^n_t
=  \int_X   v^{p} e^{\frac{L}{a}}  e^{\tilde L-\frac{L}{a}}\tilde\om^n_t
\leq (\int_X   v^{p+\frac{1}{n-1}} e^L \tilde\om^n_t)^{\frac{1}{a}}(\int_X e^{\frac{a\tilde L-L}{a-1}} \tilde\om^n_t)^{\frac{a-1}{a}}.
\end{align*}
By \lemref{W2p eL}, we get
\begin{align*}
-L&\leq [-pb_1+\sigma_s(pb_0-\frac{n-2}{n-1})]\phi_E+\frac{n-2}{n-1}f+C.
\end{align*}
In order to guarantee the last integral to be finite, we need
\begin{align*}
&\frac{1}{a-1}[ak_1-pb_1+\sigma_s(pb_0-\frac{n-2}{n-1})]+2n>0,\\
&\frac{1}{a-1}(ak_2-\frac{n-2}{n-1})+2n>0.
\end{align*}
We calculate that $\frac{1}{a-1}=p(n-1)$.  These are exactly the hypotheses in \thmref{w2pestimates degenerate Singular equation}.
\end{proof}

\bibliographystyle{abbrv}
\bibliography{Large}

\end{document}